\newtheorem{theorem}{Theorem}[section]
\newtheorem{lemma}[theorem]{Lemma}
\newtheorem{proposition}[theorem]{Proposition}
\newtheorem{corollary}[theorem]{Corollary}
\newtheorem{definition}[theorem]{Definition}
\newtheorem{notation}[theorem]{Notation}
\newtheorem{remark}[theorem]{Remark}
\DeclareMathOperator{\spec}{spec}
\DeclarePairedDelimiter\ceil{\lceil}{\rceil}
\DeclarePairedDelimiter\floor{\lfloor}{\rfloor}
\DeclareMathOperator{\dom}{Dom}
\let\Im\relax
\DeclareMathOperator{\Im}{Im}
\let\Re\relax
\DeclareMathOperator{\Re}{Re}
\DeclarePairedDelimiterX\innerp[2]{\langle}{\rangle}{#1,#2}
\DeclareMathOperator*{\fsum}{\sum^{\mathrm{finite}}}
\DeclareMathOperator{\kadm}{\mathit{k}-adm}
\numberwithin{equation}{section}
\newcommand{\subalign}[1]{%
  \vcenter{%
    \Let@ \restore@math@cr \default@tag
    \baselineskip\fontdimen10 \scriptfont\tw@
    \advance\baselineskip\fontdimen12 \scriptfont\tw@
    \lineskip\thr@@\fontdimen8 \scriptfont\thr@@
    \lineskiplimit\lineskip
    \ialign{\hfil$\m@th\scriptstyle##$&$\m@th\scriptstyle{}##$\hfil\crcr
      #1\crcr
    }%
  }%
}
\begin{document}

\title{Construction of blow-up solutions for the focusing energy-critical nonlinear wave equation in $\mathbb R^4$ and $\mathbb R^5$}
\titlemark{Construction of blow-up solutions to (NLW)}



\emsauthor{1}{
	\givenname{Dylan}
	\surname{Samuelian}
        \mrid{}
	\orcid{}}{D.~Samuelian}

\Emsaffil{1}{
	\department{Institute of Mathematics – EPFL SB MATH}
	\organisation{Ecole Polytechnique Fédérale de Lausanne (EPFL)}
	\rorid{02s376052}
	\address{MA A2 383 (Bâtiment MA), Station 8}
	\zip{CH-1015}
	\city{Lausanne}
	\country{Switzerland}
	\affemail{dylan.samuelian@epfl.ch}}

\classification[35B44]{35C08}

\keywords{nonlinear wave equation, partial differential equation, focusing, soliton, critical, bubble}

\begin{abstract}
We construct solutions $u(x,t)$ to the focusing, energy-critical, nonlinear wave equation \begin{equation} \partial_{tt}u - \Delta u - |u|^{p-1}u = 0, \quad t \geq 0, \ x \in \mathbb{R}^d, \ d \geq 3, \ p = (d+2)/(d-2) \end{equation} in dimension $d \in \{4,5\}$, exhibiting finite-time Type II blow-up precisely at $x = t = 0$ with a prescribed polynomial blow-up rate of $t^{-1-\nu}$, where $\nu > 1$ for $d = 4$ and $\nu > 3$ for $d = 5$. Such solutions have been constructed by Krieger-Schlag-Tataru for $d = 3$ and by Jendrej for $d = 5$. The work of Jendrej includes the extremal case $\nu = 3$, which our method does not address, and the regime $\nu > 8$. The major difference between dimensions $4$ and $5$ consists in the renormalization procedure. In $d = 4$, we essentially follow the Krieger-Schlag-Tataru scheme developed for the 3-dimensional equation. This scheme has been applied with success for other equations such as the 3D-critical NLS, Schrödinger maps or wave maps. In all of these cases, the polynomial structure of the nonlinearity permits the use of simple algebraic manipulations to control error terms. By contrast, the case $d = 5$ requires a modified setup due to the lower regularity of the nonlinearity, which complicates the treatment of nonlinear error terms.
\end{abstract}

\maketitle


\tableofcontents

\section{Introduction}

We study the focusing, energy-critical, nonlinear wave equation
\begin{equation}
    \partial_{tt}u - \Delta u - |u|^{p-1}u = 0, \quad t \geq 0, \ x \in \mathbb R^d, \ d \geq 3, \ p = (d+2)/(d-2) \quad \text{(NLW)}.
\label{wave eq}
\index{p@$p = (d+2)/(d-2)$, the critical exponent}
\end{equation}
The equation is called energy-critical because it is invariant under the scaling $u_{\lambda}(x,t) = \lambda^{(d-2)/2}u(\lambda x, \lambda t)$. Moreover, the $\dot{H}^1 \times L^2$ norm of the rescaled data $(u_{\lambda}, \partial_t u_{\lambda})$ at $t = 0$ is independent of $\lambda$. The equation is locally well-posed (in the sense that a solution given by the Duhamel formula exists) for initial data in $\dot{H}^1 \times L^2$. If a solution fails to be global, then its $L_{t,x}^{\frac{2(d+1)}{(d-2)}}$-norm blows up (\cite{Kenig_Merle_local_wellposed_low_dim}, \cite{Critical_local}). Changing the sign of the nonlinearity leads to the defocusing case, which is known to be globally well-posed (\cite{Critical_defocusing_global}).

A classical example of blow-up is due to Levine (\cite[Section 12.5.1]{evans10}). Utilizing the conserved energy functional
\begin{equation}
E(u(t),\partial_tu(t)) = \int_{\mathbb R^d} \underbrace{\frac{1}{2} |\nabla_{t,x}u(t)|^2}_{kinetic \ en.} \underbrace{- \frac{1}{p+1}|u(t)|^{p+1}}_{potential \ en.}dx
\index{E(u(t),\partial_tu(t))@$E(u(t),\partial_tu(t))$, the conserved energy}
\end{equation}
of the solution, Levine showed that if the initial data is smooth and compactly supported with negative energy $E(u(0),\partial_tu(0)) < 0$, then the solution cannot exist globally in time. Denoting by $T_{\max} \in [0,+\infty]$ the maximal forward time of existence of the solution, $$(u(t),\partial_tu(t)) \in C([0,T_{\max}), \dot{H}^1 \times L^2),$$ 
 two distinct types of solutions are distinguished:
\begin{enumerate}
    \item Type I: $||(u(t),\partial_tu(t))||_{L^{\infty}([0,T_{\max}), \dot{H}^1 \times L^2)} = +\infty,$
    \item Type II: $||(u(t),\partial_tu(t))||_{L^{\infty}([0,T_{\max}), \dot{H}^1 \times L^2)} < +\infty.$
\end{enumerate}
Examples of Type I blow-up solutions can be obtained by considering the solution
$$
u_T(x,t) = c_p(T-t)^{-\frac{2}{p-1}}, \quad c_p = \left(\frac{2(p+1)}{(p-1)^2}\right)^{\frac{1}{p-1}}
$$
to the wave equation. Selecting initial data $(u_T(x,0), \partial_t u_T(x,0))$ with an appropriate spatial cutoff produces such a blow-up at a finite time $T_{\max} \in (0,T]$ (\cite[Section 6]{Subcritical_local}).

In the following, we are interested in constructing radially symmetric Type II blow-ups. The study of Type II radial solutions is closely related to the stationary radial solution
\begin{equation}
    W(x) = \left( 1 + \frac{|x|^2}{d(d-2)} \right)^{-\frac{d-2}{2}}
    \index{W(x)@$W(x)$, the ground state}
\end{equation}
called the ground state (or ``bubble'' of energy), which is also an extremizer for the Sobolev embedding $\dot{H}^1 \hookrightarrow L^{p + 1}$. Roughly speaking, the soliton resolution conjecture (recently solved in \cite{soliton_odd}, \cite{soliton_jendrej}) asserts that any radial Type II solution behaves asymptotically as a superposition of dynamically scaled bubbles plus a radiation term, which is given by a free wave if the solution is global and a stationary element in $\dot{H}^1$ otherwise. 

The goal of this paper is to construct explicit solutions $u(x,t)$ of (\ref{wave eq}) in dimension $d \in \{4,5\}$, exhibiting finite-time Type II blow-up precisely at the origin $x = t = 0$, with a prescribed polynomial blow-up rate $t^{-1-\nu}$, where $\nu > 1$ for $d = 4$ and $\nu > 3$ for $d = 5$. Such solutions were previously constructed by Krieger-Schlag-Tataru for $d = 3$ (\cite{Krieger_2012}) with $\nu > 0$ and by Jendrej for $d = 5$ (\cite{Jendrej_blowup5d}) with $\nu \in \{3\} \cup (8,+\infty)$.

As in \cite{Krieger_2012}, where they prove that the previously known range of exponents $\nu > 1/2$ for $d = 3$ can be relaxed to $\nu > 0$, it is likely that our assumption $\nu > 1$ in dimension $4$ can be relaxed to $\nu > 0$. This restriction is a technicality arising from a nonlinear Sobolev estimate. In Proposition \ref{prop:local lipschitz}, we exploit the embedding of the algebra $H^{1 + \frac{(6-d)\nu}{2}}(\mathbb R^d)$ into $C^0_b(\mathbb R^d)$ to establish a local Lipschitz property on a nonlinear operator, which is essential to apply the Banach Fixed Point theorem. A similar limitation arises in the work of Jendrej \cite[Lemma 4.6, Proposition 4.7]{Jendrej_blowup5d}, where it results in the condition $\nu > 8$ for $d = 5$. Our analysis improves the permissible range to all $\nu > 3$ in the fifth dimension. However, in contrast to the fourth dimension, the restriction $\nu > 3$ is more than technical: while it also occurs in the nonlinear estimates from Proposition \ref{prop:local lipschitz}, the restriction is crucial in ensuring the positivity of the approximation $u_2$ (see (\ref{H(z) definition}) and (\ref{v_2 dominant component})), which is needed to handle the absolute value in the nonlinearity $F(x) = |x|^{p-1}x$ without losing regularity. 
    
To our knowledge, the $d = 4$ case has not been previously addressed. We also remark that in dimension $d = 6$, infinite-time superposition of two bubbles have been constructed (\cite{jendrej2022soliton}). Although the soliton resolution conjecture is now proven, explicit constructions of finite- or infinite-time solutions exhibiting a dynamically scaled bubble profile have only been achieved in low dimensions $3 \leq d \leq 6$. Whether such constructions can be extended to higher dimensions, where the nonlinearity lacks twice differentiability, remains an open question. 

The main result of this paper is the following theorem:

\begin{notation}
    We write $u(x) \in H^{s-}(\mathbb R^d)$ or $|u(x)| \leq |1-x|^{s-}$ if the property holds with exponent $s-\delta$ for all sufficiently small $\delta > 0$ instead of $s$. A similar meaning applies to expressions such as $u(x) \in H^{s+}(\mathbb R^d)$ or $|u(x)| \leq |1-x|^{s+}$. 
    \index{Hs-@$H^{s\pm}(\mathbb R^d)$}
    \index{u(x) < 1-x@$\lvert u(x) \rvert \leq \lvert 1-x \rvert^{s\pm}$}
\end{notation}

\begin{theorem}\label{thm:blow-up, main thm}
    Let $d \in \{4,5\}$, $\nu > \nu_0(d)$ where $\nu_0(4) = 1$ and $\nu_0(5) = 3$, $\delta > 0$, $N_0 \gg 1+\nu$ be fixed. There exists a radial solution $u(x,t)$ of (\ref{wave eq}) on $\mathbb R^d \times [0,t_0] $, $t_0 \ll 1$, which has the form:
\begin{enumerate}
    \item $u(x,t) = \lambda(t)^{\frac{d-2}{2}}W(\lambda(t)x) + \eta(x,t)$ inside the backward light cone $$C = \{(x,t): 0 \leq |x| \leq t, 0 < t \leq t_0\}.$$ 
    Moreover, if $d = 5$, $u(x,t) > 0$ on $C$.
    \item $\lambda(t) = t^{-1-\nu}$ and the solution blows up at $r = t = 0$.
    \item $\eta(x,t)$ can be decomposed as
$\eta(x,t) = u^e(x,t) + \varepsilon(x,t)$, where $u^e \in C^{\frac{1}{2}+\frac{6-d}{2}\nu-}(C)$ and
\begin{align*}
\sup_{0 < t < t_0} t^{-\frac{6-d}{2}\nu-1}||u^e||_{H^{1+\frac{6-d}{2}\nu-}(\mathbb R^d)} +  t^{-\frac{6-d}{2}\nu} ||\partial_t u^e||_{H^{\frac{6-d}{2}\nu-}(\mathbb R^d)} &< +\infty \\
\sup_{0 < t < t_0} t^{-N_0}||\varepsilon||_{H^{1+\frac{6-d}{2}\nu-}(\mathbb R^d)} + t^{-N_0+1}||\partial_t \varepsilon||_{H^{\frac{6-d}{2}\nu-}(\mathbb R^d)} &< +\infty
\end{align*}
    \item The local energy 
$$E_{loc} = \int_{|x| < t} \eta_t^2 + |\nabla \eta|^2  + |\eta|^{p+1} dx$$
of $\eta(x,t)$ in the light cone $|x| \leq t$ vanishes as $t \to 0$
    \item Outside the light cone, the energy of the solution can be controlled
$$\int_{|x| \geq t} u_t^2 + |\nabla u|^2  + |u|^{p+1} dx \leq \delta$$
for all sufficiently small $t > 0$.
\end{enumerate}
\index{N0@$N_0$, the smallness of the approximation error $e_k$ for large $k$, as well as the $\varepsilon$ part from Theorem \ref{thm:blow-up, main thm}}
\end{theorem}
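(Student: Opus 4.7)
The plan is to follow the Krieger--Schlag--Tataru scheme. Inside the backward light cone $C$ I would switch to the self-similar variables $a = \lambda(t) r$ and $\tau = \int_t^{t_0}\lambda(s)\,ds$ and write $u(x,t) = \lambda(t)^{(d-2)/2} v(a,\tau)$. With $\lambda(t) = t^{-1-\nu}$ the radial NLW then becomes a perturbation of the linearization operator
\[
\mathcal{L} v := -\partial_a^2 v - \frac{d-1}{a}\partial_a v - pW(a)^{p-1} v
\]
by terms of lower order in powers of $\tau$ (equivalently, powers of $t$). The construction splits into two parts: first, build an arbitrarily accurate approximate solution by an iteration that inverts $\mathcal{L}$, and second, close the construction by a nonlinear fixed point for the remainder $\varepsilon$.

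\textbf{Iterative ansatz and fixed point.} I would start with $u_0(x,t) = \lambda(t)^{(d-2)/2} W(\lambda(t)x)$, whose error $e_0 = \Box u_0 - |u_0|^{p-1}u_0$ is an explicit expression decaying only polynomially in $t$. Iteratively set $u_k = u_{k-1} + v_k$ where $v_k$ solves $\mathcal{L} v_k = (\text{leading part of } e_{k-1} \text{ in its } \tau\text{-expansion})$. The main analytic tool is a distorted Fourier transform diagonalizing $\mathcal{L}$ on radial functions in $\mathbb{R}^d$, $d\in\{4,5\}$. Because $\Lambda W := \tfrac{d-2}{2}W + x\cdot\nabla W$ is a resonance of $\mathcal{L}$ (reflecting the scaling invariance of $W$), at each step one must project orthogonally to $\Lambda W$ and absorb the projection through a modulation of $\lambda$, which is consistent with the prescribed rate $t^{-1-\nu}$ up to admissible lower-order corrections. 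Iterating, the error gains one factor of $\tau^{-1}$ per step in weighted $H^{1+\nu/2-}\times H^{\nu/2-}$ norms, so after $k$ steps, with $k$ large enough, one has $e_k = O(t^{N_0 + C})$. Writing $u = u_k + \varepsilon$ then reduces to solving a nonlinear wave equation with source $e_k$ and linear part $\partial_t^2 - \Delta - p u_k^{p-1}$; a contraction in a suitable weighted energy space yields $\varepsilon$ with the decay stated in (3). The decomposition $\eta = u^e + \varepsilon$ in the theorem just separates the main low-order corrections $u^e$, whose regularity $C^{1/2+\nu/2-}$ is dictated by the asymptotic behavior of the $v_k$'s at the lateral boundary $a = 1$ of $C$, from the fast-decaying remainder $\varepsilon$.

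\textbf{Extension outside $C$ and main obstacle.} To extend $u$ past the cone I would use finite speed of propagation: at $t = t_0$ prescribe small compactly supported exterior data matching the interior trace on $r = t_0$, and solve backward; tuning the size of this data makes the exterior energy smaller than $\delta$. Positivity in $d=5$ follows because $\lambda^{3/2}W(\lambda\cdot)$ is strictly positive and dominates $\eta$ in $L^\infty$ via the Sobolev embedding $H^{1+\nu/2-}(\mathbb{R}^5) \hookrightarrow L^\infty$, which is what forces $\nu > 3$. The main obstacle I expect is constructing the distorted Fourier calculus for $\mathcal{L}$ in $d\in\{4,5\}$ with enough pointwise and symbol-class control to invert $\mathcal{L}$ with correct asymptotic expansions at $a = 0$, $a = 1$, $a = \infty$, and to produce the sharp dispersive estimates that close the fixed point in the weighted norms. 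A closely related, $d=5$-specific difficulty is the limited smoothness of $|u|^{p-1}u$ with $p = 7/3$: all nonlinear estimates have to live at the regularity threshold $1+\nu/2$, which is precisely what matches the condition $\nu > 3$ to the embedding into $L^\infty$. In $d=4$ the nonlinearity is the smooth cubic $u^3$, which relaxes this threshold and allows the broader range $\nu > 1$, but requires a separate distorted Fourier analysis for $\mathcal{L}$ in that dimension.
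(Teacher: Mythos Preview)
Your high-level picture is in the right family (KST-type approximate solution plus distorted-Fourier fixed point), but several structural features of the paper's construction are missing or misidentified, and one of your claimed mechanisms is not used at all.

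\textbf{The iteration does not just invert $\mathcal{L}$.} You propose a single type of corrector $v_k$ solving $\mathcal{L} v_k = (\text{leading error})$. The paper instead \emph{alternates} between two different equations: for odd $k$ one solves the elliptic problem $(t\lambda)^2\mathcal{L}_R v_{2k+1} = t^2 e^0_{2k}$ in the variable $R=\lambda r$ (treating $t$ as a parameter), which improves the error near the origin; for even $k$ one solves the full wave equation $-\Box v_{2k} = -e^0_{2k-1}$ in the self-similar variable $a=r/t\in(0,1)$, which after a change of variables becomes an \emph{inhomogeneous hypergeometric ODE} and improves the error near the cone tip $a=1$. The $(1-a)^{1/2+\nu/2}$ singularity in $u^e$ that you attribute to ``asymptotic behavior at the lateral boundary'' comes precisely from the indicial exponent $\gamma-\alpha-\beta=\tfrac12+\tfrac12\nu$ of this hypergeometric equation. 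Neither step uses the distorted Fourier transform; that tool enters only later for the fixed point in $\varepsilon$. Relatedly, your $a=\lambda(t)r$ is the paper's $R$, not its $a=r/t$; the two play very different roles.

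\textbf{There is no modulation of $\lambda$.} You invoke orthogonal projection away from $\Lambda W$ absorbed by modulating $\lambda$. The paper fixes $\lambda(t)=t^{-1-\nu}$ exactly from the outset; no modulation is performed. The resonance $\phi(R)=R^{(d-1)/2}\Lambda W$ does appear, but as part of an explicit fundamental system for the ODE $\mathcal{L}u=0$ used in variation of parameters, and later as an eigenfunction in the spectral measure. The approximation scheme does not need an orthogonality condition.

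\textbf{The main $d=5$ obstacle is not addressed.} You correctly flag the low regularity of $|u|^{p-1}u$ with $p=7/3$, but your proposed cure (``estimates at the regularity threshold'') is not how the paper handles it in the approximation step. The actual mechanism is algebraic: one partitions the cone into three regions $R\lesssim (t\lambda)^{2/3}$, $R\sim (t\lambda)^{2/3}$, $R\gtrsim (t\lambda)^{2/3+\varepsilon}$ via cut-offs, so that on each region a \emph{different} convergent multinomial expansion of $F(u_0+v_1+\cdots)$ around the locally dominant profile is available. Tracking these expansions forces the elaborate function spaces $S^{2n}(R^I,\log(R)^J)$, $\mathcal{Q}_\beta$, and the correction/error spaces $V_k$, $E_{ori,k}$, $E_{tip,k}$. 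This is the central technical novelty in $d=5$; without it the nonlinear error terms cannot be propagated through the iteration. Finally, the restriction $\nu>3$ has \emph{two} sources: the Sobolev embedding you mention (Proposition~\ref{prop:local lipschitz}), and, independently, the condition $\tilde\alpha\tilde\beta>0$ needed for monotonicity and positivity of the hypergeometric correction $H(z)$ that makes $u_2>0$.
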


Our proof proceeds in two main steps. In sections \ref{section:renormalization step, basic idea} to \ref{section:renormalization step, next iterates}, starting from the bubble $u_0 = \lambda^{\frac{d-2}{2}}W(\lambda x)$, we linearize and simplify (NLW) to iteratively construct a sequence of approximate solutions $u_k = u_0 + v_1 + ... + v_k$ to (NLW) on a cone $0 < r < t < t_0 \ll 1$. The correction terms $v_k$ are smooth on $0 \leq r < t < t_0$, except for a logarithmic-power singularity of the form $(1-a)^{\frac{1}{2}+\frac{1}{2}\nu}\log(1-a)^j$, $a = r/t$ at the boundary $r = t$. Moreover, the pointwise error $|F(u_k)-\square u_k|$ decreases at each iteration. The term $u^e(x,t)$ from Theorem \ref{thm:blow-up, main thm} is precisely the difference $u_k(x,t)-u_0(x,t)$ for sufficiently large $k$, extended from the cone to all of $\mathbb R^d$ while keeping the same size and regularity.

 In sections \ref{section:spectral theory of the linearized operator} to \ref{section:exact solution fourier method}, we find an exact solution $\lambda^{\frac{d-2}{2}}W + u^e + \varepsilon$ within cone by solving a fixed-point problem in a generalized Fourier space $L^2(\mathbb R, d\rho(\xi))$. This space arises naturally when analyzing the spectral properties of the perturbed Schrödinger operator $$
 \mathcal{L} = -\partial_{RR} - pW(R)^{p-1} + \frac{1}{R^2} \cdot \left( \frac{(d-3)(d-1)}{4} \right), \quad R = \lambda |x|, 
 $$ which emerges from the linearization of (NLW) around the ground state $u_0$. Finally, we show in section \ref{section:end of proof} using finite propagation of speed and well-posedness theory that this solution extends outside the cone.

The principal difference between the cases $d = 4$ and $d = 5$ in this paper lies precisely in this renormalization procedure. For $d = 4$, discussed separately in Appendix \ref{section:appendix, dimension 4}, our method closely follows the approach of Krieger-Schlag-Tataru developed originally the 3-dimensional case. Their scheme has been successfully applied to various other equations, including the 3D-critical NLS, Schrödinger maps, wave maps (see, e.g.,  \cite{ortoleva2012nondispersive}).  In all of these cases, the polynomial structure of the nonlinearity permits the use of simple algebraic manipulations to control error terms. By contrast, the case $d = 5$ requires a modified setup due to the lower regularity of the nonlinearity, which complicates the treatment of nonlinear error terms. We address this difficulty by carefully distinguishing three distinct spatial regions:
$$
R \lesssim (t\lambda)^{\frac{2}{3}}, \quad (t\lambda)^{\frac{2}{3}} \lesssim R \lesssim (t\lambda)^{\frac{2}{3}+\varepsilon}, \quad R \gtrsim (t\lambda)^{\frac{2}{3}+\varepsilon}
$$
using cutoff functions. This allows for the use of convergent multinomial expansions on each region to treat the nonlinear errors. Yet, the renormalization step encounters serious challenges in higher dimensions $d \geq 6$, the main difficulty being that solving equation (\ref{even v}) introduces singularities at the tip of the cone $r = t$ unless $\nu > 0$ is very small (Remark \ref{rmk:loss of regularity high dimension}). In that scenario, the resulting solutions exhibit low regularity, which would necessitate modifying the Banach spaces employed in the fixed-point argument. Additionally, positivity of the approximations $u_k$ must be carefully verified to remove the absolute value in the nonlinearity during the approximation step, which might further restrict the range of admissible values for $\nu$. Indeed, this explicitly occurs in dimension $d = 5$ where one must impose the condition $\nu > 3$ to ensure positivity. Finally, for $d \geq 6$, the generalized eigenfunction $\phi(R,\xi)$, $R,\xi \geq 0$, of the perturbed Schrödinger operator has a singularity at $\xi \sim 0$, $R^2\xi \sim  1$, which gets worse as the dimension increases (Proposition \ref{prop:spectral density properties}, Corollary \ref{phi upper bound R^2z > 1}). Such singularities could yield less favorable estimates in the transference identity (Section \ref{section:transferrance identity}), when passing back and forth from physical to generalized Fourier space.







\section{Renormalization Step: Basic idea} \label{section:renormalization step, basic idea}
This section outlines the strategy for constructing an approximate solution to the nonlinear wave equation (\ref{wave eq}) within the cone $0 \leq |x| \leq t$ for small times $0 < t \leq t_0$. The core of our method is an iterative process starting from the scaled ground state $u_0$, and successively adding correction terms $v_k$. At each step, we produce an improved approximation $u_{k+1} = u_k + v_k$ where the corresponding approximation error $e_k = F(u_k) - \square u_k$, $\square = \partial_{tt} - \Delta$, $F(x) = |x|^{p-1}x$,  has decreased. \index{square@$\square = \partial_{tt} - \Delta$, d'Alembert operator} \index{F(x)@$F(x) = \lvert x \rvert^{p-1}x$, the nonlinearity} These corrections are determined by solving a pair of linearized equations: an elliptic-type equation to cancel the dominant error near the origin, and a wave-type equation to cancel the dominant error near the boundary of the backward light cone. 

Let $R = \lambda(t)r$, $u_0(R) = \lambda^{(d-2)/2}(t)W(R)$, $d \in \{4,5\}$.
\index{R@$R = \lambda(t)\lvert x \rvert$, a radial variable}
\index{lambda t@$\lambda(t) = t^{-1-\nu}$, polynomial rate of blow-up}
\index{u0@$u_0(R) = \lambda^{(d-2)/2}(t)W(R)$} From the current approximation $u_k$, we set $u_{k+1} = u_k + v_k$ where $v_k$ is a correction term and at each step, we compute the error $e_k = F(u_k) - \square u_k$. If $u$ were an exact solution to $\square u = F(u)$, then the difference $\varepsilon = u - u_{k-1}$ would satisfy
\begin{align*}
    -\square \varepsilon = - \square u + \square u_{k-1} = -F(u) - e_{k-1} + F(u_{k-1}),
\end{align*}
i.e.,
\begin{align} \label{space-variable equation epsilon}
    -\square \varepsilon = -F(u_{k-1}+\varepsilon) - e_{k-1} + F(u_{k-1}).
\end{align}
Linearizing around $\varepsilon = 0$, we obtain the approximation
\begin{equation*}
    -\square \varepsilon + F'(u_{k-1}) \varepsilon + e_{k-1} \approx 0.
\end{equation*}
Further approximating $u_{k-1}$ by $u_0$, we simplify this to
\begin{equation}
    -\square \varepsilon + pu_0^{p-1} \varepsilon + e_{k-1} \approx 0.
\end{equation}
Thus, the correction terms $v_k$ are constructed, roughly, as follows:
\begin{align}
    \Delta v_{1} + pu_0^{p-1} v_{1} + e_{0} &= 0, \quad e_0 = u_0^p - \square u_0
    \label{first v} \\
    - \square v_{2k} + e^0_{2k-1} &= 0, \quad k \geq 1
    \label{even v}
\end{align}
and
\begin{equation}
    \Delta v_{2k+1} + pu_0^{p-1} v_{2k+1} + e^0_{2k} = 0, \quad k \geq 1
    \label{odd v}
\end{equation}
in radial coordinates with zero Cauchy data at the origin. From this point onwards, we focus exclusively on the case $d = 5$ and readers are referred to Appendix \ref{section:appendix, dimension 4} for $d = 4$. We note that equation (\ref{first v}), which can already be solved explicitly using $e_0$ (see Section \ref{section:renormalization step, first iterate}), is treated separately from equations (\ref{even v}) and (\ref{odd v}), for which a careful analysis of the nonlinearity is necessary to isolate a suitable forcing term $e^0_k$. The overall strategy is the same in both dimensions $d \in \{4,5\}$, but the definition of the forcing terms $e_k^0$ differs slightly since no cutoff is used in dimension $d = 4$.

Equation (\ref{even v}) will be solved in the self-similar variable $a = r/t$, $a \in (0,1)$. This allows us to improve the approximation error near the tip of the cone. The forcing term $e^0_{2k-1}$ for (\ref{even v}) is extracted from $e_{2k-1}$ by keeping only the non-negligible component near the tip of the cone. The remainder 
$$
t^2e^1_{2k-1}:= t^2[e_{2k-1}-e^0_{2k-1}]
$$
is then negligible near the cone tip, and we subsequently improve upon it near the origin in the next iteration. Thus, the updated error is given by
\begin{align*}
    t^2e_{2k} &= t^2[F(u_{2k}) - \square u_{2k}] = t^2[F(v_{2k} + u_{2k-1}) - \square (v_{2k} + u_{2k-1})] \\
    &= t^2[e_{2k-1} - \square v_{2k}] + t^2[F(v_{2k} + u_{2k-1}) - F(u_{2k-1})] \\
    &= t^2[e_{2k-1}-e^0_{2k-1}] + t^2[F(v_{2k} + u_{2k-1}) - F(u_{2k-1})] \\
    &= t^2e^1_{2k-1} + t^2[F(v_{2k} + u_{2k-1}) - F(u_{2k-1})],
\end{align*}
where we used $F(u_{2k-1}) - \square u_{2k-1} = e_{2k-1}$.
\index{a@$a = \lvert x\rvert/t$, a self-similar variable}
The nonlinear part $F(v_{2k} + u_{2k-1}) - F(u_{2k-1})$, which is supported near the tip of the cone, is smaller in magnitude compared to $e_{2k-1}$. It is included within $e^1_{2k}$ and will be further improved upon when constructing the subsequent correction $v_{2k+2}$.

Equation (\ref{odd v}) is solved in the variables $(R,t) = (r\lambda(t),t)$, treating $t$ as a parameter. It allows improving our current error near the origin. The forcing term $e^0_{2k}$ for (\ref{odd v}) is precisely given by the non-negligible component of $e^1_{2k-1}$. The resulting remainder 
$$
t^2e^1_{2k}:= t^2[e_{2k}-e^0_{2k}] = t^2[e^1_{2k-1}-e^0_{2k}] + t^2[F(v_{2k} + u_{2k-1}) - F(u_{2k-1})]
$$
has better smallness properties throughout the entire cone, thus requiring no immediate further improvement. The new error becomes
\begin{align*}
    t^2e_{2k+1} &= t^2[F(u_{2k+1}) - \square u_{2k+1}] \\
    &= t^2[F(v_{2k+1} + u_{2k}) - F(u_{2k}) + F(u_{2k}) - \square (v_{2k+1} + u_{2k})] \\
    &= t^2e^1_{2k} - t^2\partial_t^2 v_{2k+1} + t^2[F(v_{2k+1} + u_{2k}) - F(u_{2k}) - F'(u_0)v_{2k+1}],
\end{align*}
where we have used $F(u_{2k}) - \square u_{2k} = e_{2k}$ and $-\square v_{2k+1} = -\partial_t^2v_{2k+1} - e^0_{2k} - pu_0^{p-1}v_{2k+1}$.At this stage, the error $e_{2k+1}$ has better smallness on the whole cone compared to $e_{2k}$. 

This iterative process is then repeated finitely many times. Specifically, if $1/3 > \varepsilon > 0$, $N_0  \gg \nu > 3$ are fixed, then performing $K_0 = K_0(N_0,\varepsilon) \in \mathbb N$ iterations, where $$2 + \left( \frac{2}{3}-2\varepsilon \right)(K_0-1) \geq N_0$$ leads to an approximate solution of (\ref{wave eq}) with an error of order $\lambda^{\frac{3}{2}}(t\lambda)^{-N_0}$.
\index{N0@$N_0$, the smallness of the approximation error $e_k$ for large $k$, as well as the $\varepsilon$ part from Theorem \ref{thm:blow-up, main thm}}

\section{Renormalization Step: The First Iterate} \label{section:renormalization step, first iterate}

As previously mentioned, (\ref{first v}) can be explicitly solved using a power-series Ansatz (also known as Frobenius method). In this section, we explicitly compute the first correction $v_1$ by solving the elliptic-type equation (\ref{first v}). We then carefully analyze its analytic properties and asymptotic behavior of this first correction, since this first correction forms the foundation for all subsequent steps in the renormalization procedure.

Let us set $u_0(R,t) = \lambda^{\frac{3}{2}}W(R)$, where $W$ is the ground state solution. We define the constants
\begin{equation}
    C_1(\nu) = \frac{105}{128}\pi \nu (1+\nu), \quad C_2(\nu) = \frac{1}{4}(\nu-3)(\nu-5)C_1(\nu),
\index{C1@$C_1(\nu)$, $C_2(\nu)$, some constants coming from $v_1$} \label{eq:constant c_1,c_2}
\end{equation}
which will appear later. First, observe that both $u_0$, $t^2 e_0 \in \lambda^{\frac{3}{2}}C^{\omega}([0,+\infty])$, meaning that they are real-analytic, with an even expansion at $R = 0$ and a regular expansion at $R = +\infty$ with dominant term of order $R^{-3}$. Explicitly,
\begin{align*}
    t^2 e_0(R,t) &= -\lambda^{\frac{3}{2}} \cdot \frac{45 \sqrt{15} (\nu +1) \left(225 (3 \nu +5)+(3 \nu +1) R^4-210 (\nu +1) R^2\right)}{4 \left(R^2+15\right)^{7/2}} \\
    &= \lambda^{\frac{3}{2}} \cdot E_0(R).
\end{align*}
In radial coordinates, (\ref{odd v}) is expressed as
$$t^2 \mathcal{L}_r v_1(r,t) = t^2 e_0(r,t), \quad r \geq 0, \quad
\mathcal{L}_r = -\partial_r^2 - \frac{4}{r} \partial_r - pW(r)^{p-1},$$ where $t$ is treated as a parameter. We seek a solution in the variables $(R,t) = (r\lambda,t)$ variables and rewrite the equation as
\begin{equation}
    (t \lambda)^2 \mathcal{L} v_1(R,t) = t^2 e_0(R,t), \quad R \geq 0, \quad \mathcal{L} = -\partial_R^2 - \frac{4}{R} \partial_R - pW(R)^{p-1}. \label{odd v, change of variables}
\end{equation}
Since $t$ is a parameter and the variables of the forcing term $t^2e_0(R,t)$ are separated, we can expect to find a solution $v_1(R,t)$ in the form
$$v_1(R,t) = \frac{\lambda^{\frac{3}{2}}}{(t\lambda)^2} V_1(R),$$
where $V_1(R)$ is analytic on $[0,+\infty)$ with an even expansion at $R = 0$ starting from order $2$. At $R = +\infty$, an expansion involving powers and logarithms is also expected, consistent with the Frobenius method applied to find solutions to ordinary differential equations with regular singular points.

To facilitate our analysis, instead of restricting ourselves to the positive real line, we shall adopt a complex-analytic framework. Setting $R = z$, we note that the operator $\mathcal{L}$ has a regular singularity at $z = 0$. Consequently, $V_1(z)$ can be found near the origin by looking for a power series solution as in Theorem \ref{thm:inhomogeneous fuchs ode} ($r_1 = 0, r_2 = -3, \beta = 2$). Since 
 $u_0$ and $e_0$ both extend holomorphically to the half-plane
\begin{align*}
    \{z \in \mathbb C: |z| < \sqrt{15} \} \cup \{z \in \mathbb C: \Re(z) > 0\},
\end{align*}
\index{z@$z$, a complex variable which replaces $R$ or $a$ depending on the context}
standard ODE theory ensures that $V_1$ admits a holomorphic extension to the same domain. To study the behaviour of $V_1$ at infinity on the right half-plane, we consider the change of variables $V(z) = V_1(z^{-1})$. Then $V(z)$ solves
\begin{equation}\label{step 1 at infinity}
    V''(z) - \frac{2}{z}V'(z) + z^{-4}pW(z^{-1})^{p-1}V(z) = -z^{-4}E_0(z^{-1}), \quad |z| < \frac{1}{\sqrt{15}}, \ \Re(z) > 0.
\end{equation}
This is again a regular singular equation at $z = 0$, because
$$(z^{-2}+15)^{-1/2} = \frac{z}{(1+15z^2)^{1/2}}$$
on this side of the half-plane. Therefore, $V(z)$ coincides with the particular solution
$$z \sum_{n=0}^{+\infty}v_nz^n + v_{-1}u_1(z)\log(z)$$
given by Theorem \ref{thm:inhomogeneous fuchs ode} ($r_1=3, r_2 = 0, \beta = 1$), modulo some linear combination of the fundamental system $\{u_1(z),u_2(z)\}$ (see (\ref{fuchs fundamental system}) from Appendix \ref{section:appendix, ode}) which introduces a dominant term of order $z^0$. On the real-line, one explicitly has
\begin{equation} \label{v_1 formula}
    V_1 (R) = \frac{\begin{aligned}
                   &\sqrt{15} (\nu +1) \Big[-360 \left(R^2-15\right) R^5-100800 \nu  \left(R^2-15\right) R^3 \text{arccoth} \left(\frac{30}{R^2}+1\right) \\
                   &-75 \nu  \left(13 R^6-1397 R^4+6195 R^2+4725\right) R \\
                   &+7 \sqrt{15} \nu  \left(R^8+300 R^6-20250 R^4+67500 R^2+50625\right) \arctan \left(\frac{R}{\sqrt{15}}\right)\Big]
                \end{aligned}}{64 R^3 \left(R^2+15\right)^{5/2}}.
\end{equation}
It follows that $V_1(R)$ is real-analytic on $[0,+\infty)$ with an even Taylor expansion at zero starting from order $2$ with a positive coefficient. At infinity, it has an asymptotic expansion of the form:
$$\underbrace{\frac{105}{128} \pi \nu (1+\nu)}_{=:C_1(\nu)} R^0 - \frac{45}{8}\sqrt{15}(1+\nu)(1+3\nu)R^{-1} + \frac{55125}{256}\pi \nu(1+\nu)R^{-2} + \mathcal{O}(\log(R)R^{-3}),$$
which is positive as well. However, $V_1(R)$ may take negative values on a fixed compact interval in $(0,+\infty)$; this does not pose a problem because if $t_0$ is small enough, then $u_0(R,t) + v_1(R,t) > 0$ on $[0,+\infty) \times (0,t_0]$.

Functions such as $u_0$ and $v_1$, which have an even power series expansion at $R = 0$ and a series of power and logarithms (with bounded logarithmic exponent) at infinity, are regrouped into the following function space $S^{2n}(R^I,\log(R)^J)$. This function space is used to describe correction and error terms near the origin of the light cone.

\begin{definition}[Space $S^{2n}(R^I,\log(R)^J)$] \label{space s^m(r^k)}
Let $R_0 = 4\sqrt{15}$ be fixed. \index{R0@$R_0 = 4\sqrt{15}$, the threshold distinguishing small and large $R$} \index{S-space@$S^{2n}(R^I,\log(R)^J)$, a vector space of smooth functions with respect to the radial variable $R$} For $I,J,n \in \mathbb N_{\geq 0}$, define $S^{2n}(R^I,\log(R)^J)$ as the vector space of smooth functions $w(R)$ on $(0,+\infty)$ which satisfy:
\begin{enumerate}
    \item $w$ admits a holomorphic extension to an open neighbourhood of the set
$$\{z \in \mathbb C: |z| \leq \sqrt{15}/2 \} \cup \{z \in \mathbb C: \Re(z) > 0, |z| \geq R_0\}.$$
    \item $w$ has a zero of order $2n$ at $z = 0$ with an even Taylor expansion.
    \item For $\Re(z) > 0$ and $|z| \geq R_0$, $w(z)$ has the following expansion:
    $$w(z) =  z^I \sum_{j=0}^{J}w_{j}(z^{-1})
 \log(z)^j,$$
    where the functions $w_{j}(y)$ are holomorphic on a neighbourhood of $\{y \in \mathbb C: |y| \leq R_0^{-1}\}$.
\end{enumerate}
For $I < 0$, the space $S^{2n}(R^I,\log(R)^J)$ is defined as the subspace of $S^{2n}(R^0,\log(R)^J)$ for which each function $w_{j}$ in the expansion at infinity has a zero of order at least $|I|$. 
\end{definition}

\begin{remark} \label{rmk: s^2n properties}
 We start with a few remarks regarding the spaces $S^{2n}(R^I, \log(R)^J)$.
\begin{enumerate} 
\item For any $l \in \mathbb N$, $(R\partial_R)^l S^{2n}(R^I,\log(R)^J) \subset S^{2n}(R^I,\log(R)^J)$. 
    \item An element $w \in S^{2n}(R^m,\log(R)^J)$ is not necessarily holomorphic on the whole half-plane $\Re(z) > 0$, allowing room for localized cutoff.
    \item The subspaces $S^{2n}(R^I,\log(R)^J)$ with $I < 0$ are only relevant for the functions $u_0$, $t^2e_0$ and $u_0^p$. In these cases, one has $u_0, t^2e_0 \in \lambda^{\frac{3}{2}} S^0(R^{-3}, \log(R)^0)$, $u_0^p \in \lambda^{\frac{3p}{2}} S^0(R^{-7},\log(R)^0)$ and $v_1 \in \lambda^{\frac{3}{2}} (t\lambda)^{-2} S^2(R^{0},\log(R))$, where no logarithm occurs in the dominant term at infinity. More precisely, $w_0(y) = \mathcal{O}(1)$ is the dominant component, while $w_{j}(y) = \mathcal{O}(y^3)$, $j \neq 0$.
    \item One has
    \begin{align*}
        t^2 u_0^p \left( \frac{v_1}{u_0} \right)^{n} &\in \dfrac{\lambda^{\frac{3}{2}}}{(t\lambda)^{2(n-1)}} S^{2n}(R^{-7+3n},\log(R)^{n} ).
    \end{align*}
\end{enumerate} 
\end{remark}

\section{Renormalization Step: The Hypergeometric Function and the second iterate}

Having established the appropriate function spaces required for solving (\ref{odd v}), it is instructive, before proceeding with the full iterative scheme, to motivate and introduce the function spaces $\mathcal{Q}$ that will occur when solving equation (\ref{even v}) near the tip of the cone. The simplest case for equation (\ref{even v}), when rewritten in radial coordinates, corresponds to a forcing term that is a pure power of $t$:
$$
t^2\left(-\partial_t^2 + \partial_r^2 + \frac{d-1}{r}\partial_r\right) v =t^s.
$$
This scenario will appear as the dominant part, near the tip of the cone, of the error $e_1$ that remains after the first iteration. Equivalently, if we look for a solution $v(r,t) = t^s w(r,t)$, then: 
$$
t^2 \left( - \left( \partial_t + \frac{s}{t} \right)^2 + \partial_r^2 + \frac{d-1}{r}\partial_r \right)w  = 1.
$$
Introducing the self-similar variable $a = r/t$, this equation becomes $L_{s} w(a) = 1$, $0 < a < 1$, where
$$L_{s} = (1-a^2)\partial_{aa} + ((d-1)a^{-1} + 2as - 2a)\partial_a + (s - s^2).$$
Seeking a solution of the form $w(a) = W(a^2)$, we naturally reduce to an hypergeometric equation for $W(z)$:
$$z(1-z)W''(z) + \left( \frac{d}{2} + z \left(s - \frac{3}{2} \right) \right) W'(z) + \frac{s - s^2}{4} = 1, \quad 0 < z < 1$$
whose parameters are
$$
\tilde{\alpha} = -\frac{s}{2}, \tilde{\beta} = -\frac{s}{2} + \frac{1}{2}, \tilde{\gamma} = \frac{d}{2},
$$
and for which a particular solution is explicitly given by
$$
\frac{1}{\alpha \beta}[F(\tilde{\alpha},\tilde{\beta};\tilde{\gamma},z) - 1],
$$
where $F$ is the Gauss hypergeometric function defined as follows.
\begin{definition}\label{gauss hypergeometric definition}
Let $\alpha, \beta, \gamma \in \mathbb R$, $\gamma \notin \mathbb Z_{\leq 0}$. The Gauss hypergeometric function $F$ is defined by the series
\begin{equation*} 
F(\alpha,\beta;\gamma;z) = \sum_{n=0}^{+\infty} \frac{(\alpha)_n(\beta)_n}{(\gamma)_nn!}z^n, \quad |z| < 1, \quad (x)_n = x(x+1)...(x+n-1), \quad (x)_0 = 1
\end{equation*}
\index{F(alpha)@$F(\alpha,\beta;\gamma;z)$, the Gauss Hypergeometric function}
which converges absolutely for $|z| < 1$, as well as for $|z| \leq 1$ if $\gamma-\alpha-\beta > 0$.
\end{definition}

The hypergeometric function satisfies the hypergeometric equation
\begin{equation}\label{hypergeometric ode}
    z(1-z)w''(z) + (\gamma - (\alpha + \beta + 1)z)w'(z) - \alpha \beta w(z) = 0, \quad 0 < z <1.
\end{equation}
This equation has regular singular points at $z = 0$ and $z = 1$. The roots of the indicial equation at these points are respectively $\{0,1-\gamma\}$ and $\{0,\gamma-\alpha-\beta\}$. Fundamental systems can be found as described in Appendix \ref{section:appendix, ode} (see (\ref{fuchs fundamental system})).

In our application, we focus on the case where $\gamma > 0$, $\alpha \beta > 0$, which ensures monotonicity and positivity of $F(\alpha,\beta;\gamma,z) - 1$ on $[0,1)$, as shown in Corollary \ref{monotonicity hypergeom function}. This property is essential to ensure positivity of the second correction term $v_2$. Additionally, we will also impose $\gamma - \alpha - \beta > 0$, which allows us to determine which indicial root is the largest at $z = 1$ and guarantees continuity of $F(\alpha,\beta;\gamma,z)$ up to the boundary $|z| = 1$.

\begin{lemma}[A monotonicity and positivity result]\label{monotonicity hypergeom eq}
Let $\alpha \beta > 0$ and $g(z) \in C^0([0,1))$ be non-negative. Suppose $w(z)$ is a $C^1([0,1)) \cap C^2((0,1))$-solution to the inhomogeneous hypergeometric equation
\begin{equation*}
    z(1-z)w''(z) + (\gamma - (\alpha + \beta + 1)z)w'(z) - \alpha \beta w(z) = g(z), \quad 0 \leq z < 1.
\end{equation*}
with initial conditions $w(0) \geq  0, w'(0) > 0$. Then $w(z)$ is strictly increasing on $[0,1)$.

\end{lemma}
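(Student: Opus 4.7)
The natural strategy is a first-zero (barrier) argument applied to $w'$, using the ODE to force a sign contradiction on $w''$ at the candidate critical point.

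The plan is to argue by contradiction: suppose there exists some $z^\ast \in (0,1)$ with $w'(z^\ast) = 0$, and set
\[
z_0 = \inf\{z \in (0,1) : w'(z) = 0\}.
\]
Since $w'$ is continuous on $[0,1)$ and $w'(0) > 0$, we have $z_0 > 0$, $w'(z_0) = 0$, and $w'(z) > 0$ for all $z \in [0, z_0)$. In particular $w$ is strictly increasing on $[0, z_0)$, and together with the hypothesis $w(0) \ge 0$ this gives $w(z_0) > 0$.

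The next step extracts a one-sided bound on $w''(z_0)$. Because $w'(z) > 0 = w'(z_0)$ for $z$ slightly less than $z_0$, the difference quotient
\[
\frac{w'(z_0) - w'(z)}{z_0 - z} = \frac{-w'(z)}{z_0 - z}
\]
is negative for $z \in (0, z_0)$, and passing to the limit $z \to z_0^-$ yields $w''(z_0) \le 0$. Now evaluate the inhomogeneous hypergeometric equation at $z_0$; the term involving $w'(z_0)$ drops out and one is left with
\[
z_0(1-z_0)\, w''(z_0) = g(z_0) + \alpha\beta\, w(z_0).
\]
By the hypotheses $g \ge 0$ and $\alpha\beta > 0$, together with $w(z_0) > 0$, the right-hand side is strictly positive; and since $z_0(1-z_0) > 0$ this forces $w''(z_0) > 0$, contradicting the previous inequality. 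Hence no such $z^\ast$ exists, $w'$ is everywhere positive on $[0,1)$, and $w$ is strictly increasing.

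I do not expect a real obstacle here: the argument is a standard maximum-principle / barrier-type contradiction, and the monotony of $w$ is a pure consequence of the sign structure of the coefficients in the equation ($\alpha\beta > 0$ plus the factor $z(1-z) > 0$ on $(0,1)$). The only point that deserves a line of care is the justification that $w''(z_0) \le 0$ at the first candidate zero of $w'$, which requires only $C^2$ regularity at interior points — already granted by assumption $w \in C^2((0,1))$. Since $z_0$ lies in the open interval $(0,1)$, no delicate boundary analysis at $z=0$ or $z=1$ is needed.
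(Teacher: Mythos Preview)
Your proof is correct and follows essentially the same approach as the paper: both argue by contradiction at the first zero $z_0$ of $w'$, use strict monotony on $[0,z_0)$ to get $w(z_0)>0$, and then read off $w''(z_0)>0$ from the ODE to reach a contradiction. The only cosmetic difference is that you phrase the contradiction via the one-sided difference quotient $w''(z_0)\le 0$, while the paper phrases it as ``$z_0$ is a local strict minimum of $w$, so $w$ decreases to the left of $z_0$''; these are the same observation.
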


\begin{proof}
At $z = 0$, we have that $w'(0) > 0$, which implies that $w(z)$ is strictly increasing near $z = 0$. Let $I = [0,z_0)$ be the maximal interval on which $w'(z) > 0$. Suppose for a contradiction that $z_0 < 1$. Then $w'(z_0) = 0$ and $w(z_0) > w(0) \geq 0$ by continuity and strict monotonicity. Evaluating the differential equation at $z_0$, we obtain:
$$w''(z_0) = \frac{\alpha \beta}{z_0(1-z_0)}w(z_0) + \frac{g(z_0)}{z_0(1-z_0)} > 0,$$
which implies that $z_0$ is a local strict minimum of $w(z)$ and $w(z)$ is strictly decreasing on the left of $z_0$: a contradiction.
\end{proof}

\begin{corollary}\label{monotonicity hypergeom function}
If $\gamma > 0$ and $\alpha \beta > 0$, then $z \mapsto F(\alpha,\beta;\gamma,z) - 1$ is strictly increasing on $[0,1)$.
\end{corollary}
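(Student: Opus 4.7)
The plan is to reduce directly to Lemma \ref{monotonicity hypergeom eq} by setting $w(z) = F(\alpha,\beta;\gamma,z) - 1$ and checking each hypothesis. Since $F(\alpha,\beta;\gamma,z)$ is given by an absolutely convergent power series on $|z|<1$, it is real-analytic on $[0,1)$, so $w \in C^1([0,1)) \cap C^2((0,1))$ automatically.

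Next I would verify the boundary conditions at $z=0$. From Definition \ref{gauss hypergeometric definition}, $F(\alpha,\beta;\gamma,0) = 1$, so $w(0) = 0 \geq 0$. Differentiating the series term by term, the coefficient of $z$ in $F$ is $\alpha\beta/\gamma$, hence
\[
w'(0) = \frac{\alpha\beta}{\gamma} > 0
\]
since $\alpha\beta > 0$ and $\gamma > 0$ by assumption.

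Then I would compute the inhomogeneous equation satisfied by $w$. Substituting $F = w + 1$ into the hypergeometric equation (\ref{hypergeometric ode}) and using that the derivatives of the constant $1$ vanish, all terms involving $w''$ and $w'$ are unchanged while the zero-order term becomes $-\alpha\beta(w+1)$, so
\[
z(1-z)w''(z) + (\gamma - (\alpha+\beta+1)z)w'(z) - \alpha\beta w(z) = \alpha\beta,
\]
with forcing $g(z) \equiv \alpha\beta$, which is strictly positive (in particular non-negative and continuous on $[0,1)$). All the hypotheses of Lemma \ref{monotonicity hypergeom eq} are therefore met, and the lemma yields that $w$ is strictly increasing on $[0,1)$, which is exactly the claim.

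There is no real obstacle here; the only thing worth double-checking is the sign of $g$ (i.e.\ that the sign convention in the inhomogeneous equation of Lemma \ref{monotonicity hypergeom eq} matches what one obtains after the substitution $F = w+1$), and the nondegeneracy $w'(0) > 0$, which relies crucially on both $\gamma > 0$ and $\alpha\beta > 0$ to avoid a vanishing or wrong-sign slope at the origin.
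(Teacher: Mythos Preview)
Your proof is correct and follows essentially the same approach as the paper: set $w(z) = F(\alpha,\beta;\gamma,z) - 1$, check $w(0) = 0$, $w'(0) = \alpha\beta/\gamma > 0$, verify that $w$ satisfies the inhomogeneous hypergeometric equation with constant forcing $g \equiv \alpha\beta > 0$, and apply Lemma~\ref{monotonicity hypergeom eq}.
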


\begin{proof}
    The proof follows from Lemma \ref{monotonicity hypergeom eq} since $w(z) = F(\alpha,\beta;\gamma,z) - 1$ solves
    $$z(1-z)w''(z) + (\gamma - (\alpha + \beta + 1)z)w'(z) - \alpha \beta w(z) = \alpha \beta, \quad 0 \leq z < 1$$
    with $w(0) = 0$ and $w'(0) = \alpha \beta \gamma^{-1} > 0$.
\end{proof}


We now define a specific hypergeometric-type function
    \begin{align} \label{H(z) definition}
    H(z) &= 4 C_1(\nu) (F(\tilde{\alpha},\tilde{\beta};\tilde{\gamma},z)-1 ), \quad 0 \leq z < 1 \\
    s &=  \frac{3}{2}(-1-\nu) + 2\nu, \tilde{\alpha} = -\frac{s}{2}, \tilde{\beta} = -\frac{s}{2} + \frac{1}{2}, \tilde{\gamma} = \frac{5}{2} \notag
    \end{align} \index{H(z)@$H(z)$, an hypergeometric-like function}
     with $C_1$ defined as in (\ref{eq:constant c_1,c_2}). It turns out that the term
     $$
      \frac{\lambda^{\frac{2}{3}}}{(t\lambda)^2}H(a^2) 
     $$
    will be the dominant component of the second correction term $v_2$. The parameters satisfy $\tilde{\alpha} \tilde{\beta} > 0$ if $\nu > 3$, $\tilde{\gamma} > 0$ and $\tilde{\gamma}-\tilde{\alpha}-\tilde{\beta} = \frac{1}{2} + \frac{1}{2}\nu > 0$. Therefore, $H(z)$ is strictly increasing on $[0,1)$ (Corollary \ref{monotonicity hypergeom function}), holomorphic on $|z| < 1$ and continuous on $|z| \leq 1$. Using a fundamental system for the hypergeometric equation (see (\ref{fuchs fundamental system})), we obtain the singular expansion at $z = 1$:
    \begin{align} \label{expansion of H(z) near z = 1}
    F(\tilde{\alpha},\tilde{\beta};\tilde{\gamma},z)-1 = q_0(1-z)+q_1(1-z)(1-z)^{\frac{1}{2}+\frac{1}{2}\nu}+q_2(1-z)(1-z)^{\frac{1}{2}+\frac{1}{2}\nu}\log(1-z),
    \end{align}
    where $q_i(y)$ is holomorphic on $|y| < 1$. Moreover, since $H(1) > 0$, one has $q_0(0) > 0$ as well. 
    
    As with the space $S^{2n}(R^I, \log(R)^J)$, we now define a family of functions $\mathcal{Q}$ in the self-similar variable $a = r/t$ having a series of power and logarithms at $a = 0$ and $a = 1$. These functions will be used to describe correction and error terms near the tip of the cone. Their structure and regularity are motivated by the Frobenius method applied to the hypergeometric equation. However, extra care must be taken with regard to the domain on which our functions are holomorphic. Although $H(z)$ is holomorphic on $|z| < 1$, the nonlinear term
    $$
    \left( u_0 + v_1 + \frac{\lambda^{\frac{2}{3}}}{(t\lambda)^2}H(a^2) \right)^p
    $$
    has, near $r \sim t$, $a = 1$, a dominant component of the form
    $$
    \frac{\lambda^{\frac{2}{3}}}{(t\lambda)^2}\left( H(a^2) + C_1(\nu) \right)^p
    $$
    with the constant term coming from $v_1$. This expression is holomorphic only on the subset of the unit disk where $H(z) + C_1(\nu) \neq 0$, which may be smaller than $|z| < 1$. Thus, to include this function in the family $\mathcal{Q}$, we cannot demand holomorphy on the entire unit disk. Fix $U$ to be an open, simply-connected neighbourhood $U \subset B(0,1) \subset \mathbb C$ such that
    \begin{enumerate}
        \item $(0,1) \subset U$,
        \item $U$ contains $\{a \in B(0,1): |a-0| \leq a_0 \text{ or } |a-1| \leq a_0\}$ for some $0 < a_0 \ll 1$,
        \item $H(a)$ and $H(a) + C_1(\nu)$ (defined in (\ref{H(z) definition})) are non-zero everywhere on $U \setminus \{0\}$,
        \item If $q_0(y)$ is the analytic function appearing in the expansion of $H(z)$ near $z = 1$ (see (\ref{expansion of H(z) near z = 1})), then $q_0(y)$ and $q_0(y) + C_1(\nu)$ are non-zero everywhere on $|y| \leq a_0$.
    \end{enumerate}
    
\begin{definition}[Space $\tilde{\mathcal{Q}}$ and $\mathcal{Q}$]\label{definition of tilde Q}
\index{Q@$\tilde{\mathcal{Q}}$, $\mathcal{Q}$, families of holomorphic functions with respect to the self-similar variable $a$}
Let $U$ be as described above. Let $\tilde{\mathcal{Q}}_{\beta}$, $\beta \in \mathbb R$, be the vector space of real-analytic functions $q(a): (0,1) \rightarrow \mathbb R$ satisfying:
\begin{enumerate}
\item $q(a)$ extends holomorphically to $U$.
    \item Near $a = 0$, one has the finite expansion:
    $$q(a) = q_0(a) + \sum_{j=1}^{L}q_{j}(a) \log(a)^j,$$
    where $0 \leq L < +\infty$ and each $q_{j}$ is holomorphic on a neighbourhood of $|a-0| \leq a_0$.
    \item Near $a = 1$, one has the expansion:
    $$q(a) = q_{0,0}(1-a) + \sum_{i=1}^{+\infty}(1-a)^{\beta(i)} \sum_{j=0}^{L_i}q_{i,j}(1-a)\log(1-a)^j,$$
    where $L_i \geq 0$, $\sup_{i\geq 1}L_i/i < +\infty$ , the functions $q_{i,j}(y)$ are holomorphic on a neighbourhood of $|y| \leq a_0$, $\beta(i) \geq \beta$ and either the expansion is finite, i.e.,
    $$|\{(i,j) \in \mathbb N^2: q_{i,j}(y) \not \equiv  0\}| < +\infty, $$
    or the growth of $i \mapsto \beta(i)$ is at least linear, i.e., 
    $$\beta(i+1) > \beta(i), \quad \lim \limits_{i \to +\infty}\beta(i) = +\infty, \quad \inf_{i \geq 2}\left(\frac{\beta(i)-\beta}{i}\right) > 0.$$
    Moreover, there should exist $\varepsilon > 0$ and $C > 0$ for which
    $$|\beta(i)| + ||q_{i,j}||_{A(|y| \leq a_0+\varepsilon)} \leq C^i \quad \forall 0 \leq j \leq L_i, \forall i \in \mathbb N_{\geq 1}.$$
    where $A\left(|y| \leq R \right)$ is the Wiener algebra (see Definition \ref{wiener space}). In other words, the growth of the sequences $\beta(i)$ and $q_{i,j}$ must be at most exponential exponential.
\end{enumerate}
Finally, we define $\mathcal{Q}_{\beta}$ as the vector space of real-analytic functions on $(0,1)$ of the form $a \mapsto \tilde{q}(a^2)$ for some $\tilde{q} \in \tilde{\mathcal{Q}}_{\beta}$.
\end{definition}

\begin{remark}
We start with some remarks concerning the spaces $\mathcal{Q}$ and $\tilde{\mathcal{Q}}$.
\begin{enumerate}
    \item We will primarily work with the spaces $\tilde{\mathcal{Q}}_{\frac{1}{2} + \frac{1}{2}\nu}$ and $\mathcal{Q}_{\frac{1}{2} + \frac{1}{2}\nu}$, but it is convenient, for theoretical results, to allow any $\beta \in \mathbb R$.
    \item As established in (\ref{H(z) definition}) and (\ref{expansion of H(z) near z = 1}), $H(a) \in \tilde{\mathcal{Q}}_{\frac{1}{2} + \frac{1}{2}\nu}$ with a finite expansion at $a = 1$. Moreover, for any exponent $e \in \mathbb R$, $(H(a) + C_1(\nu))^e \in \tilde{\mathcal{Q}}_{\frac{1}{2} + \frac{1}{2}\nu}$ as well. The expansion near $a = 1$ is obtained via a binomial expansion:
    \begin{align*}
        (H(z) + C_1(\nu))^e = \sum_{n=0}^{\infty} \binom{e}{n} &\left( q_0(1-z) + C_1(\nu) \right)^{e-n} \cdot \\
        &(1-z)^{\left( \frac{1}{2} + \frac{1}{2}\nu\right) n} \left( q_1(1-z) + q_2(1-z)\log(1-z) \right)^n.
    \end{align*}
    \item We do not require that the expansion of a $\tilde{Q}$-element near $a = 1$ converges everywhere on $|a-1| < a_0$. However, the coefficient functions $q_{i,j}$ must all be defined and holomorphic around the disk $|a-1| \leq a_0$. The growth condition on $q_{i,j}$ ensures uniform and absolute convergence of the expansion on a smaller ball (depending only on $a_0$) around $a = 1$.
    \item The linear growth estimate on $\beta(i)$ is essential. If we solve the hypergeometric equation near $z = 1$ with a forcing term of the form $(1-z)^{\beta(i)}q_{i,j}(1-z)\log(1-z)^j$, then the estimate (\ref{exponential upper bound on w_k}) on the solution from Theorem \ref{thm:inhomogeneous fuchs ode} takes the form:
    \begin{equation*}
          ||w(1-z)||_{A(|1-z| < a_0 + \varepsilon)}  \leq C^i \cdot ||q_{i,j}||_{L^{\infty}(|1-z| < a_0 + \varepsilon}.
   \end{equation*}
   Thus, if the growth of $q_{i,j}$ is at most exponential in $i$, then so is the growth of the solution. Accordingly, if we solve (\ref{even v}) with a forcing term from a family $\mathcal{Q}_{\beta}$, we expect the solution to remain within $\mathcal{Q}_{\tilde{\beta}}$.
\end{enumerate}
\end{remark}

\begin{proposition}[Product rules] \label{q algebra calculation rules}
    The following algebraic rules hold: 
    \begin{enumerate}
        \item[A.] Differentiation: $\partial_a \left( a^{\delta} \mathcal{Q}_{\beta} \right)  \subset \delta a^{\delta-1} \mathcal{Q}_{\beta} + a^{\delta-1} \mathcal{Q}_{\beta-1}$ for any $\beta, \delta \in \mathbb R$. 
        \item[B.] Summation: $\mathcal{Q}_{\beta_1} + \mathcal{Q}_{\beta_2} \subset \mathcal{Q}_{\min\{\beta_1,\beta_2\}}$ for any $\beta_1,\beta_2 \in \mathbb R$. 
        \item[C.] Product: $\mathcal{Q}_{\beta_1} \cdot \mathcal{Q}_{\beta_2} \subset  \mathcal{Q}_{\min\{\beta_1,\beta_2,\beta_1+\beta_2\}}$ for any $\beta_1,\beta_2 \in \mathbb R$. In particular, if $\beta \geq 0$, then $\mathcal{Q}_{\beta}$ is an algebra.
    \end{enumerate}
\end{proposition}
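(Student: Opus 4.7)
The three claims follow by verifying that each operation preserves the defining features of $\tilde{\mathcal{Q}}_\beta$ --- holomorphic extension to $U$, log-expansion near $a=0$, and the structured $(1-a)^{\beta(i)}$-expansion near $a=1$ with coefficient growth bound $||q_{i,j}||_{\infty}\leq C^i$ --- and then noting that when the inputs lie in $\mathcal{Q}_\beta$, i.e.\ factor through $a\mapsto a^2$, the outputs do as well. My plan is to dispatch the algebraic closures B and C first, then handle the differentiation rule A.

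For B, I would verify that termwise addition of the two expansions preserves all three properties directly: the exponent set at $a=1$ becomes the sorted union $\{\beta_1(i)\}\cup\{\beta_2(j)\}$, which still grows at least linearly, and the coefficient bounds merge with $C=\max\{C_1,C_2\}$. For C, I would multiply the two expansions near $a=1$ and identify three contributing families of terms: pure cross-terms of exponent $\beta_1(i)+\beta_2(j)$, and mixed terms pairing the holomorphic part $q_{0,0}$ of one factor with the singular expansion of the other (contributing exponents $\beta_1(i)$ and $\beta_2(j)$ respectively). Sorting the combined exponents yields a sequence whose smallest value is $\min\{\beta_1,\beta_2,\beta_1+\beta_2\}$, and the coefficients obey a Cauchy-product bound $\sum C_1^i C_2^j\leq(C_1+C_2)^k$, giving the required uniform $C^k$ estimate. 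Closure near $a=0$ and holomorphy on $U$ are immediate from the analogous statements for convergent log-series.

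For A, the key local calculation at $a=1$ is that $\partial_a(1-a)^{\beta(i)}=-\beta(i)(1-a)^{\beta(i)-1}$ and $\partial_a\log(1-a)=-(1-a)^{-1}$ both decrement the exponent by exactly one; multiplication by $a^{\pm 1}$ (holomorphic and nonvanishing near $a=1$) preserves the class, and the factor $\beta(i)$ introduced by differentiation is absorbed into the $C^i$ bound since $\beta(i)$ grows at most linearly. Near $a=0$, the relation $q(a)=\tilde{q}(a^2)$ gives $a\partial_a q=2a^2\tilde{q}'(a^2)$ and $a^{-1}\partial_a q=2\tilde{q}'(a^2)$, so both outputs remain functions of $a^2$; substituting the log-expansion of $\tilde{q}$ and regrouping by powers of $\log(a^2)=2\log(a)$ gives the required form. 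The main obstacle I anticipate is twofold: first, the combinatorial bookkeeping in Part C, where the three families of exponents must be sorted into a single sequence $\beta(k)$ obeying the strict linear-growth condition $\inf_{k\geq 2}(\beta(k)-\beta)/k>0$ while the Cauchy-product estimates survive the reindexing; second, for $a^{-1}\partial_a$ near $a=0$, the coefficient of $\log(a)^{j-1}$ in the output contains a factor $\tilde{q}_j(a^2)/a^2$, whose holomorphy at $a=0$ requires $\tilde{q}_j(0)=0$, a property that must be tracked as a structural consequence of how the relevant $\tilde{q}$ are generated from $H(a^2)$ and holomorphic data rather than assumed abstractly.
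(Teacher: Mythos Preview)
The paper states this proposition without proof, so your outline \emph{is} the intended verification, and it is correct for parts B and C and for $a\partial_a$ in part A.

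Your self-identified concern about $a^{-1}\partial_a$ near $a=0$ is genuine and not an artifact of your setup. The abstract definition of $\tilde{\mathcal{Q}}_\beta$ does not impose $\tilde{q}_j(0)=0$ for $j\geq 1$, so $\tilde{q}'(z)$ can acquire a simple pole at $z=0$ through the term $j\tilde{q}_j(z)z^{-1}\log(z)^{j-1}$, and then $a^{-1}\partial_a q = 2\tilde q'(a^2)$ falls outside $\mathcal{Q}_{\beta-1}$. The inclusion $a^{-1}\partial_a\mathcal{Q}_\beta\subset\mathcal{Q}_{\beta-1}$ is therefore not literally true at that level of generality. The resolution is exactly what you suspect: everywhere the paper applies $a^{-1}\partial_a$ or $a^{-2}$, the input already carries an $a^2$ prefactor --- the $Q_l$ produced in Theorem~\ref{hypergeometric ode with error forcing term} lie in $a^{\delta+2}\mathcal{Q}_\beta$, and the recursion there feeds back $a^{-2}Q_k$ and $a^{-1}Q_{l-1}'$ --- so the potential pole is always absorbed. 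Read the proposition as a working rule valid for the objects that actually arise rather than as a theorem about the abstract class.

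Your Part C reindexing worry is legitimate but routine: because each $\beta_m(\cdot)$ grows at least linearly, the number of pairs $(i,j)$ with $\beta_1(i)+\beta_2(j)$ (or $\beta_1(i)$, or $\beta_2(j)$) below a fixed threshold is bounded linearly in the threshold, so after sorting the union into a single sequence $\beta(k)$ the linear-growth lower bound persists and the coefficient bounds $C_1^i C_2^j$ combine into a $(C')^k$ estimate.
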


\begin{proof}
    We prove only the differentiation rule. Suppose $q(a) = a^{\delta}Q(a^2)$, where $Q \in \tilde{\mathcal{Q}}_{\beta}$. Then, it holds that
    $$
    \partial_a q(a) = \delta a^{\delta-1}Q(a^2) + 2a^{\delta+1}Q'(a^2) = a^{\delta-1} \left( \delta Q(a^2) + 2a^2Q'(a^2) \right).
    $$
    It suffices to show that $a\partial_a Q(a) \in \tilde{\mathcal{Q}}_{\beta-1}$. Since $Q(a)$ is holomorphic on $U$, so is $a\partial_aQ$. Near $a = 0$, one has
    \begin{align*}
        Q(a) &= q_0(a) + \sum_{j=1}^{L}q_{j}(a) \log(a)^j, \\
        aQ'(a) &=aq_0'(a) + \sum_{j=1}^{L}aq_{j}'(a) \log(a)^j + \sum_{j=1}^{L} j q_{j}(a) \log(a)^{j-1},
    \end{align*}
    where $0 \leq L < +\infty$ and each $q_{j}, a\partial_a q_j$ is holomorphic on a neighbourhood of $|a-0| \leq a_0$. Near $a = 1$,  
   \begin{align*}
       Q(a) &= q_{0,0}(1-a) + \sum_{i=1}^{+\infty}(1-a)^{\beta(i)} \sum_{j=0}^{L_i}q_{i,j}(1-a)\log(1-a)^j,
   \end{align*}
    where $0 \leq L_i \leq L(i+1)$ for some $L > 0$, each $q_{i,j}(y)$ is holomorphic on $|y| < a_0 + \varepsilon$, $\beta(i) \geq \beta$ and we assume without loss of generality that we have an infinite series with 
    $$\beta(i+1) > \beta(i), \quad \lim \limits_{i \to +\infty}\beta(i) = +\infty, \quad \inf_{i \geq 2}\left(\frac{\beta(i)-\beta}{i}\right) > 0,$$
    as well as
    $$|\beta(i)| + ||q_{i,j}||_{A(|y| \leq a_0+\varepsilon)} \leq C^i \quad \forall 0 \leq j \leq L_i, \forall i \in \mathbb N_{\geq 1},$$
    for some constants $C, \varepsilon > 0$. Formally consider the sum of derivatives:
    \begin{align*}
           S(a) = q_{0,0}'(1-a) &- \sum_{i=1}^{+\infty}(1-a)^{\beta(i)-1} \sum_{j=0}^{L_i}\beta(i)q_{i,j}(1-a)\log(1-a)^j \\
           &-\sum_{i=1}^{+\infty}(1-a)^{\beta(i)} \sum_{j=0}^{L_i}q_{i,j}'(1-a)\log(1-a)^j \\
           &-\sum_{i=1}^{+\infty}(1-a)^{\beta(i)-1} \sum_{j=0}^{L_i}jq_{i,j}(1-a)\log(1-a)^{j-1}.
    \end{align*}
    For all $0 \leq j \leq L_i, \forall i \in \mathbb N_{\geq 1}$, one has
    $$
    ||\beta(i)q_{i,j}||_{A(|y| \leq a_0+\varepsilon)} +  ||j q_{i,j}||_{A(|y| \leq a_0+\varepsilon)}  \leq C^iC^i + L(i+1)C^i \leq \tilde{C}^i,
    $$
    as well as
    $$
||q_{i,j}'||_{A(|y| \leq a_0+\varepsilon/2)} \lesssim_{a_0,\varepsilon} ||q_{i,j}||_{A(|y| \leq a_0+\varepsilon)} \leq \tilde{C}^i.
    $$
    Hence, the sum of derivatives converges normally on some ball $|a-1| \lesssim \min\{\tilde{C}^{-1},a_0\}$. Integrating, we recover $Q(a)$ modulo some additive constant. Thus, $S(a) = Q'(a)$ by the Identity Theorem, and $Q'(a)$ has the desired expansion near $a = 1$.
\end{proof}

\section{Renormalization Step: Preliminaries for the next iterates} \label{section:renormalization step, preliminaries next iterates}

This section establishes the technical framework required for all subsequent renormalization steps. We first partition the light cone into three distinct regions to analyze nonlinear error terms such as:
$$
t^2e_1(R,t) = t^2[F(u_0 + v_1) - F(u_0) - F'(u_0)v_1] - t^2\partial_{tt} (v_1(r\lambda,t))
$$
through a multinomial expansion within each region. The idea is that it will be easier to solve (\ref{even v}) and (\ref{odd v}) for each term in the expansion and sum everything back. The multinomial expansion depends on whether $u_0 \geq v_1$, $u_0 \sim v_1$ or $u_0 \leq v_1$, which corresponds to the three regions $R \lesssim (t\lambda)^{\frac{2}{3}}, R \sim (t\lambda)^{\frac{2}{3}}, R \gtrsim (t\lambda)^{\frac{2}{3}}$ of the light cone. 

The first step is to fix an appropriate constant $m \ll 1$ and define the region $R \leq m(t\lambda)^{\frac{2}{3}}$. This constant is chosen so that $(u_0 + v_1)^p$ can be reliably expanded around $u_0$ in that region $R \leq m(t\lambda)^{\frac{2}{3}}$ using a binomial expansion. Moreover, $m$ should be sufficiently small so that one can keep only the terms from the expansion with a bounded logarithmic exponent. This restriction is important, as solving equation (\ref{odd v, change of variables}) at $R = +\infty$ with a logarithmic forcing term $R^{I}\log(R)^J$, $I \leq 0$, introduces factors of order $J!$ in the solution. Even if $|I| \gg J$, the bound (\ref{exponential upper bound on w_k}) from Theorem \ref{thm:inhomogeneous fuchs ode} is not available. This makes controlling convergence of solutions more difficult if one allows for a sequence of logarithmic terms $\log(R)^J$, $J \to +\infty$, in the binomial expansion.

Once the constant $m$ is defined, we will divide the cone into three main regions, define the correction and error function spaces and prove multiple computation rules that will facilitate the iteration scheme.

\begin{definition}[Constant $m$]
    Let $t_0 \ll 1$ be sufficiently small so that
    \begin{enumerate}
        \item $|u_0(z,t)| \geq 2|v_1(z,t)|$ on $$\left( \{z \in \mathbb C: |z| \leq \sqrt{15}/2\} \cup \{z \in \mathbb R: z \in [0,2\sqrt{15}]\} \right) \times (0,t_0].$$
        \item $u_0(R,t) + v_1(R,t) > 0$ on $[0,+\infty) \times (0,t_0]$.
    \end{enumerate}
    For $\Re(z) > 0, |z| > \sqrt{15}$, consider the expansion:
     $$\frac{v_1(z,t)}{u_0(z,t)} = \frac{z^{3}}{(t\lambda)^{2}} \left(w_0(z^{-1}) + z^{-3}w_1(z^{-1})\log(z) \right),$$
 where $w_j(y)$ is holomorphic on $|y| < (\sqrt{15})^{-1}$, $w_j(0) \neq 0$. Fix $m \ll 1$ any constant for which
    \begin{equation}
        (2m)^3 \max_{j \in \{0,1\}}\left[1+||w_j||_{A\left(|y| \leq 2/(3\sqrt{15}) \right)}\right] \leq \frac{1}{4}. \label{m definition}
    \end{equation}
    \index{m@$m$, the constant in $R \leq m(t\lambda)^{\frac{2}{3}}$} \sloppy In particular, this choice ensures:
    $|u_0(z,t)| \geq 2|v_1(z,t)|$
 on $$\{(z,t) \in \mathbb C \times (0,1): \Re(z) > 0, 2\sqrt{15} \leq |z| \leq m(t\lambda)^{\frac{2}{3}}, 0 < t \leq t_0\},$$
    which is a non-empty set for $t_0$ small enough. 
\end{definition}

Given this constant $m$, we can decompose the cone into four regions.

\begin{definition}[$k$-admissible pairs]
Define the overlapping regions:
\begin{align*}
    C_{\mathrm{ori}} &= \{(R,t): 0 < t \leq t_0, 0 \leq R \leq m (t\lambda)^{\frac{2}{3}}\}, \\
    C_{\mathrm{mid}} &= \{(R,t): 0 < t \leq t_0, \frac{m}{2}(t\lambda)^{\frac{2}{3}} \leq R \leq 2(t\lambda)^{\frac{2}{3}+\varepsilon}\}, \\
    C_{\frac{2}{3}+\varepsilon} &= \{(R,t): 0 < t \leq t_0,(t\lambda)^{\frac{2}{3}+\varepsilon} \leq R \leq 2(t\lambda)^{\frac{2}{3}+\varepsilon}\}, \\
    C_{\mathrm{tip}} &= \{(R,t): 0 < t \leq t_0, (t\lambda)^{\frac{2}{3}+\varepsilon} \leq R \leq (t\lambda)\}. 
\end{align*}
\index{C1@$C_{\mathrm{ori}}$, the subset $0 \leq R \leq m (t\lambda)^{\frac{2}{3}}$ of the cone}
\index{C2@$C_{\mathrm{mid}}$, the subset $\frac{m}{2}(t\lambda)^{\frac{2}{3}} \leq R \leq 2(t\lambda)^{\frac{2}{3}+\varepsilon}$ of the cone}
\index{C3@$C_{\frac{2}{3}+\varepsilon}$, the subset $(t\lambda)^{\frac{2}{3}+\varepsilon} \leq R \leq 2(t\lambda)^{\frac{2}{3}+\varepsilon}$ of the cone}
\index{C4@$C_{\mathrm{tip}}$, the subset $(t\lambda)^{\frac{2}{3}+\varepsilon} \leq R \leq (t\lambda)$ of the cone}
A pair of indices $(\alpha,i) \in \mathbb R \times \mathbb Z$ will be called $k$-admissible for $k > 1$ on a region $C_x \neq C_{\mathrm{ori}}$ if 
$$\forall (R,t) \in C_{x}: \frac{|R|^i}{(t\lambda)^{\alpha}} \lesssim \frac{1}{(t\lambda)^{2+\left( \frac{2}{3}-2\varepsilon \right) \cdot (k-1)}}.$$
\index{k-admi@$k$-admissible pairs}
When $k = 1$, the pair $(\alpha,i)$ is called 1-admissible if $(\alpha,i) = (2,0)$ or if it is 2-admissible.

On $C_{\mathrm{ori}}$, the pair will be called $1$-admissible if $(\alpha,i) = (2,0)$ or 
$$\forall (R,t) \in C_{\mathrm{ori}}: \frac{|R|^i}{(t\lambda)^{\alpha}} \lesssim \frac{1}{(t\lambda)^{2+\frac{2}{3}}}$$
and $k$-admissible for $k > 1$ if $(\alpha-2,i-2)$ is $(k-1)$-admissible. In particular, this implies
$$\forall (R,t) \in C_{\mathrm{ori}}: \frac{|R|^i}{(t\lambda)^{\alpha}} \lesssim \frac{1}{(t\lambda)^{2+ \frac{2}{3} \cdot (k-1)}}.$$
We will usually omit the region $C_x$ as it will be clear from the context.
\end{definition}

The notion of admissible pairs allows to easily quantify the smallness of our correction and error terms. We also observe that if $(\alpha,i)$ is $k$-admissible on $C_{\mathrm{ori}}$, then $i \geq 0$ and $(\alpha,i)$ is also $k$-admissible on $C_{\mathrm{mid}}$. Similarly, if $(\alpha,i)$ is $k$-admissible on $C_{\mathrm{mid}}$ or $C_{\mathrm{tip}}$, then $(\alpha,i)$ is also $k$-admissible on $C_{\frac{2}{3}+\varepsilon}$.

Next, we endow the space $w \in S^{2n}(R^I, \log(R)^J)$ with a norm giving control on $w$ on $R \leq m(t\lambda)^{\frac{2}{3}}$. This norm is useful to consider series of $S^{2n}(R^I, \log(R)^J)$ elements and ensure their convergence.

\begin{definition}[Norm on the space $S^{2n}(R^I,\log(R)^J)$] \label{norm of space s^m(r^k)}
 For $I,J,k,n \in \mathbb N_{\geq 0}$, consider $w \in S^{2n}(R^I,\log(R)^J)$ and its expansion:
 $$w(z) =  z^I \sum_{j=0}^{J}w_{j}(z^{-1})\log(z)^j$$
 on $R \geq R_0$. Define the following semi-norms:
\begin{align*}
||w||_{S, \mathrm{ori}} &= ||w(z)||_{A(|z| \leq \sqrt{15}/2)} + ||w(z)||_{L^{\infty}(z \in [\sqrt{15}/2, R_0] \subset \mathbb R)}, \\
||w||_{S,I,J,\infty} &=  m^I \max_{0 \leq j \leq J} ||w_{j}(y)||_{A(|y| \leq R_0^{-1})},
\end{align*}
\index{S-space norms@$\lvert \lvert w \rvert \rvert_{S, \mathrm{ori}}, \lvert \lvert w \rvert \rvert_{S,I,J,\infty}$, semi-norms on $S^{2n}(R^I,\log(R)^J)$}
the sum of which creates a norm.
\end{definition}

\begin{remark}
 We start with a few observations regarding the semi-norms on $S^{2n}(R^I, \log(R)^J)$.
\begin{enumerate} 
    \item One has a product structure: if $v \in S^{2n_1}(R^{I_1},\log(R)^{J_1})$ and $w \in S^{2n_2}(R^{I_2},\log(R)^{J_2})$ for $n_1,n_2,I_1,I_2,J_1,J_2 \geq 0$, then $v \cdot w \in S^{2(n_1+n_2)}(R^{I_1+I_2},\log(R)^{J_1+J_2})$ and
    \begin{align*}
||v \cdot w||_{S, \mathrm{ori}} &\leq ||v||_{S, \mathrm{ori}} \cdot ||w||_{S, \mathrm{ori}}, \\
||v \cdot w||_{S, I_1 + I_2, J_1 + J_2, \infty} &\leq  ||v||_{S,I_1,J_1, \infty} \cdot ||w||_{S,I_2,J_2, \infty}.
\end{align*}
    \item We will not define, nor use, a norm $|| \cdot ||_{S,I,J,\infty}$ with $I < 0$.
    \item If $v \in S^{0}(R^{-I_1},\log(R)^{J_1})$ is fixed and $w \in S^{2n}(R^{I_2},\log(R)^{J_2})$ for $n,I_1,I_2,J_1,J_2 \geq 0$, then $v \cdot w \in S^{2n}(R^{-I_1+I_2},\log(R)^{J_1+J_2})$ and
    \begin{align*}
||v \cdot w||_{S, \mathrm{ori}} &\leq C(I_1, m, R_0, v) \cdot ||w||_{S, \mathrm{ori}}, \\
||v \cdot w||_{S, -i + I_2, J_1 + J_2, \infty} &\leq C(I_1, m, R_0, v) \cdot ||w||_{S,I_2,J_2, \infty} \quad \forall 0 \leq i \leq \min\{I_1,I_2\},
\end{align*}
i.e., the product map is continuous.
\end{enumerate} 
\end{remark}

\begin{notation}
Throughout the whole paper, $\chi_{[a,+\infty)}: \mathbb R \rightarrow [0,1]$ denotes a fixed smooth transition function which satisfies $\chi(x) = 0$ in a neighbourhood of $x = a$, $\chi(x) = 1$ in an open neighbourhood of $[2a,+\infty)$ and $\chi(x) > 0$ on $\mathrm{int}(\mathrm{supp}(\chi_{[a,+\infty)}))$. In particular, such a transition function is supported on $[a,+\infty)$. Explicitly, up to an affine transformation (depending only on $a \in \mathbb R$), one can choose
$$
\chi(x) = \frac{f(x)}{f(x) + f(1-x)}, \quad f(x) = \begin{cases}
    e^{-\frac{1}{x}}, \quad &x > 0 \\
    0, \quad &x \leq 0
\end{cases}
$$
\index{chi@$\chi_{[a,+\infty)}$, a fixed transition function}
\end{notation}

On our three main regions of the light cone, observe that:
 \begin{enumerate}
     \item $C_{\mathrm{ori}}$: $F(u_0+v_1) = (u_0 + v_1)^p$ can be expanded around $u_0$.
     \item $C_{\mathrm{mid}}$:  $(u_0+v_1)^p$ can be expanded around the dominant component $\lambda^{\frac{3}{2}}(R^{-3} (15)^{\frac{3}{2}} + (t\lambda)^{-2} C_1(\nu))$ of $u_0 + v_1$.
     \item $C_{\mathrm{tip}}$:  $(u_0+v_1)^p$ can be expanded around the dominant component $\lambda^{\frac{3}{2}}(t\lambda)^{-2}C_1(\nu)$ of $v_1$.
 \end{enumerate}
 On $C_{\mathrm{mid}}$ and $C_{\mathrm{tip}}$, we are able to terminate the expansions of $(u_0 + v_1)^p$ as each term $(v_1/u_0)^n$ exhibits improved decay in either $R$ or $(t\lambda)$. Near the origin, thanks to $m$ being chosen small enough, we will be able to discard any logarithmic power above some threshold. Indeed, consider the binomial expansion:
 $$
 t^2(u_0 + v_1)^p = t^2u_0^p\left( 1 + \frac{v_1}{u_0} \right) = t^2 u_0^p \sum_{n=0}^{+\infty} \binom{p}{n} \left( \frac{v_1}{u_0} \right)^n,
 $$
 where we recall that $|v_1/u_0| \leq 1/2$ on $C_{\mathrm{ori}}$. We aim to replace $(v_1/u_0)^n$ by a suitable approximation where the logarithmic exponents are bounded.  When $\Re(z) > 0$ and $|z| \geq 2\sqrt{15}$, we have:
 \begin{align*}
     \left( \frac{v_1}{u_0} \right)^n &= \frac{z^{3n}}{(t\lambda)^{2n}} \left(w_0(z^{-1}) + z^{-3}w_1(z^{-1})\log(z)  \right)^n \\
     &= \frac{z^{3n}}{(t\lambda)^{2n}} \sum_{\substack{i+j = n \\ i,j \geq 0}} \binom{n}{i,j} w_0(z^{-1})^i \left[ z^{-3}w_1(z^{-1})\log(z) \right]^j.
 \end{align*}
 Thanks to $m$ being chosen small enough in (\ref{m definition}), the error part 
 $$E_{n} = \frac{z^{3n}}{(t\lambda)^{2n}} \sum_{\substack{i+j = n \\ i,j \geq 0 \\ j \geq \frac{3}{2}N_0}} \binom{n}{i,j} w_0(z^{-1})^i \left[ z^{-3}w_1(z^{-1})\log(z) \right]^j  $$
 can be estimated by
 $$
 \sum_{\substack{i+j = n \\ i,j \geq 0 \\ j \geq \frac{3}{2}N_0}} \binom{n}{i,j} \frac{1}{4^n} \left[ z^{-3} \log(z) \right]^j = \mathcal{O}(2^{-n} z^{-3N_0}) = \mathcal{O}(2^{-n}(t\lambda)^{-N_0})
 $$
 on the region $2m(t\lambda)^{\frac{1}{3}} \leq |z| \leq m(t\lambda)^{\frac{2}{3}}, 0 < t \leq t_0$ when $n \geq N_0$. Similarly, on the region $3\sqrt{15}/2 \leq |z| \leq 2m(t\lambda)^{\frac{1}{3}}, 0 < t \leq t_0$,  we have the estimate
  $$
(t\lambda)^{-n} \sum_{\substack{i+j = n \\ i,j \geq 0 \\ j \geq \frac{3}{2}N_0}} \binom{n}{i,j} \frac{1}{4^n} \left[ z^{-3} \log(z) \right]^j = \mathcal{O}(2^{-n}(t\lambda)^{-N_0}), \quad n \geq N_0,
 $$
Using Cauchy's Integral Formula, for any multi-index $(l_1,l_2) \in \mathbb N^2$, we obtain
  \begin{equation}
      (t\partial_t)^{l_1}\partial_z^{l_2} E_n = \mathcal{O}(2^{-n}(t\lambda)^{-N_0}), \quad n \geq N_0, \label{eq: E_n uniform bound}
  \end{equation}
 on the smaller region $2\sqrt{15} \leq |z| \leq m(t\lambda)^{\frac{1}{3}}, 0 < t \leq t_0$.  Hence, we define the ``truncation'' operator:
 \begin{align}
     T\left[ \left( \frac{v_1}{u_0}\right)^n \right] &= 
 \begin{cases}  \left( \frac{v_1}{u_0}\right)^n &\text{ if } 0 \leq n < N_0 \\
     \left( \frac{v_1}{u_0}\right)^n - \chi_{[2\sqrt{15},+\infty)}(|z|) E_{n} &\text{ otherwise}
 \end{cases} \label{truncation}
 \index{truncation@$T$, the truncation operator}
 \end{align}
 On $4\sqrt{15} = R_0 \leq |z| \leq m(t\lambda)^{\frac{2}{3}}$, the error part $E_n$ is completely removed and the loagrithmic powers are capped to $\log(R)^{3N_0}$. In particular, we have:
    \begin{align*}
        t^2 u_0^p \left( \frac{v_1}{u_0} \right)^{n} &\in \dfrac{\lambda^{\frac{3}{2}}}{(t\lambda)^{2(n-1)}} S^{2n}(R^{-7+3n},\log(R)^{n} ), \\
        t^2 u_0^p  T \left[ \left( \frac{v_1}{u_0} \right)^{n} \right] &\in \dfrac{\lambda^{\frac{3}{2}}}{(t\lambda)^{2(n-1)}} S^{2n}(R^{-7+3n},\log(R)^{3N_0} ). 
    \end{align*}
    As the function $t^2u_0^p (v_1/u_0)^n$ is holomorphic around $|z| \leq \sqrt{15}/2$ and $z \in [\sqrt{15}/2,R_0]$, there exists $C(l_1,l_2) > 1$ for which
    \begin{align*}
        \left[ \dfrac{\lambda^{\frac{3}{2}}}{(t\lambda)^{2(n-1)}} \right]^{-1} \cdot \left| \left| (t\partial_t)^{l_1}( z  \partial_z)^{l_2} t^2 u_0^p \left( \frac{v_1}{u_0} \right)^{n} \right| \right|_{S, \mathrm{ori}} \leq C^n 
    \end{align*}
    using the product rule when $l_2 = 0$ and the complex-differentiability when $l_2 > 0$. Combining this together with the estimate (\ref{eq: E_n uniform bound}), we conclude
    \begin{align*}
        \left[ \dfrac{\lambda^{\frac{3}{2}}}{(t\lambda)^{2(n-1)}} \right]^{-1} \cdot \left| \left| (t\partial_t)^{l_1}( z  \partial_z)^{l_2} t^2 u_0^p  T \left[ \left( \frac{v_1}{u_0} \right)^{n} \right] \right| \right|_{S, \mathrm{ori}} \leq C^n 
    \end{align*}
    since differentiating the cutoff $\chi_{[2\sqrt{15},+\infty)}$ introduces no issue. Near the origin, we allow an exponential growth in $n$, because the prefactor $(t\lambda)^{-2(n-1)}$ cancels this growth on a sufficiently small cone. When $3n - 7  > 0$, we also obtain
\begin{align*}
    \left[ \dfrac{\lambda^{\frac{3}{2}}}{(t\lambda)^{2(n-1)}} \right]^{-1} \cdot \left| \left| (t\partial_t)^{l_1}( z \partial_z)^{l_2}   t^2 u_0^p T \left[ \left( \frac{v_1}{u_0} \right)^{n} \right] \right| \right|_{S,-7+3n,3N_0,\infty} \\
    = \left[ \dfrac{\lambda^{\frac{3}{2}}}{(t\lambda)^{2(n-1)}} \right]^{-1} \cdot \left| \left| (t\partial_t)^{l_1}( z  \partial_z)^{l_2}  t^2 u_0^p \left( \frac{v_1}{u_0} \right)^{n} \right| \right|_{S,-7+3n,3N_0,\infty}  \leq c^n
\end{align*}
    for some $0 < c(l_1,l_2) < 1$. For $l_2 = 0$, this is a consequence of how small $m$ was chosen in (\ref{m definition}) and the product rule. For $l_2 > 0$, this follows from the complex-differentiability. These bounds ensure the convergence of the series
 $$
 t^2(u_0 + v_1)^p \approx t^2 u_0^p \sum_{n=0}^{+\infty} \binom{p}{n} T \left[ \left( \frac{v_1}{u_0} \right)^n \right]
 $$
 and its derivatives. The difference between $t^2(u_0+v_1)^p$ and its truncation, i.e.,
    $$
     t^2(u_0 + v_1)^p - t^2 u_0^p \sum_{n=0}^{+\infty} \binom{p}{n} T \left[ \left( \frac{v_1}{u_0} \right)^n \right]
    $$
    is negligible in the following sense, which makes the truncation a suitable approximation for use in the iteration scheme.
    
\begin{definition}[Negligible terms]\label{negligible simeq definition}
    We say that $f(R,a,t) \in \mathcal{E}_{N_0,\nu}$ if the function $f\left(R,\frac{R}{(t\lambda)},t\right)$ is smooth on $C = \{(R,t): 0 < t < t_0, 0 < R < (t\lambda)\}$ and if for any indices $i,j$,
    $$
    \left|(t \partial_t)^i (\langle R \rangle \partial_R)^j f\left(R,\frac{R}{(t\lambda)},t\right) \right| \lesssim \lambda^{\frac{3}{2}}(t\lambda)^{-N_0} \left[1 + \left(1-\frac{R}{(t\lambda)} \right)^{\frac{1}{2}+\frac{1}{2}\nu-i-j-} \right]
    $$
    on the entire cone. In other words, $f$ has the desired smallness.
    \index{E(N0,nu)@$\mathcal{E}_{N_0,\nu}$, a space of negligible terms}
\end{definition}

As noted earlier,
$$
     t^2(u_0 + v_1)^p - t^2 u_0^p \sum_{n=0}^{+\infty} \binom{p}{n} T \left[ \left( \frac{v_1}{u_0} \right)^n \right] = t^2 u_0^p [1-\chi_{[2\sqrt{15},+\infty)}(|z|)] \sum_{n=0}^{+\infty} \cdot E_{n} \in \mathcal{E}_{N_0,\nu}
$$
due to (\ref{eq: E_n uniform bound}), with no singularity at $R = (t\lambda)$. The use of three different binomial expansions for $t^2(u_0+v_1)^p$  across different regions of the cone motivates the following definitions of correction and error spaces.

\begin{definition}[Correction space $V_k$]
\index{Vk@$V_{2k-1}$, $V_{2k}$, the vector spaces of correction terms}
    Let $V_{2k-1}$, $k \geq 1$, be the vector space spanned by the set of smooth functions $v(R,t) = v_{\mathrm{ori}}(R,t) + v_{\mathrm{mid}}(R,t) + v_{\mathrm{tip}}(R,t) + \eta(R,t)$ on the cone $0 \leq R \leq (t\lambda)$, $0 < t \leq t_0$, where each component admits a decomposition as specified below:
    \begin{enumerate}
    \item $0 \leq R \leq m(t\lambda)^{\frac{2}{3}}$: The component $v_{\mathrm{ori}}$ is given by a convergent sum of the form: $$v_{\mathrm{ori}}(R,t) = \left( \sum_{n = 0}^{+\infty} \ \frac{\lambda^{\frac{3}{2}}}{(t\lambda)^{\alpha+2n}} w_{n}(R) \right) \chi_{[1/m,+\infty)} 
 \left(\frac{(t\lambda)^{\frac{2}{3}}}{R} \right)^{1+j},$$
     where $j \in \mathbb N_{\geq 0}$,  $w_{n} \in S^{2(k-1)}(R^{I+3n}, \log(R)^{J})$ for some common $I,J \in \mathbb N_{\geq 0}$ and $(\alpha,I)$ is $k$-admissible on $C_{\mathrm{ori}}$. The convergence is understood as follows: for any fixed derivative, there exists constants $0 < c(l) < 1 < C(l)$ and $n_0(l) > 0$ for which
    \begin{align*}
    \left| \left| (R\partial_R)^l w_{n} \right| \right|_{S, \mathrm{ori}}  \leq C^n, \quad \left| \left| (R\partial_R)^l w_{n} \right| \right|_{S, I+3n, J, \infty }  \leq c^n
    \end{align*}
    for all $n \geq n_0(l)$.
    \item $\frac{m}{2}(t\lambda)^{\frac{2}{3}} \leq R \leq 2(t\lambda)^{\frac{2}{3}+\varepsilon}$: The component $v_{\mathrm{mid}}$ is a single term of the form:
    $$v_{\mathrm{mid}}(R,t) = \frac{\lambda^{\frac{3}{2}}}{(t\lambda)^{\alpha}}R^i \log(R)^{j_1} \log(t\lambda)^{j_2} g \left( \frac{(t\lambda)^{\frac{2}{3}}}{R} \right) (1-\chi_{[1,+\infty)})\left( \frac{R}{(t\lambda)^{\frac{2}{3}+\varepsilon}} \right)^{1+j_3},$$
where $(\alpha,i)$ is $k$-admissible on $C_{\mathrm{mid}}$, $j_l \in \mathbb N_{\geq 0}$, $g(y)$ is any smooth function on $(0,+\infty)$ which is zero when $y \geq 2/m$ and expands as a finite sum of holomorphic functions and logarithms near $y = 0$.\footnote{More precisely, there should exist some $y_0 > 0$ and finitely many holomorphic functions $g_0,g_1,...$ around $|y| \leq y_0$ for which we can write $g(y) = \sum_{j=0}^{J}g_j(y) \log(y)^j$ whenever $|y| \leq y_0$}  

We note that we can always build a basis of our vector space by omitting the powers of $\log(t\lambda)$ in $v_{\mathrm{mid}}$ as they can always be rewritten as $\frac{3}{2}\left( \log(y) - \log(R) \right)$, $y = (t\lambda)^{\frac{2}{3}}/R$, and the $\log(y)$ part can be included in the $g(y)$-type functions.
\item $(t\lambda)^{\frac{2}{3}+\varepsilon} \leq R \leq (t\lambda)$: The component $v_{\mathrm{tip}}$ is a single term of the form:
    $$v_{\mathrm{tip}}(R,t) = \frac{\lambda^{\frac{3}{2}}}{(t\lambda)^{\alpha}}R^i \log(R)^{j_1} \log(t\lambda)^{j_2} h \left( \frac{R}{(t\lambda)^{\frac{2}{3}+\varepsilon}} \right),$$
where $(\alpha,i)$ is $k$-admissible on $C_{\mathrm{tip}}$, $j_l \in \mathbb N_{\geq 0}$ and $h(y)$ is any smooth function which is constant outside $[1,2]$ and zero when $y$ is in a neighbourhood of $1$.

\item $\eta \in \mathcal{E}_{N_0,\nu}$ and $\eta$ has no singularity at the boundary $R = (t\lambda)$. 
\end{enumerate}

Similarly, let $V_{2k}$, $k \geq 1$, be the vector space generated by smooth functions $v(R,t)$ on the cone $0 \leq R < (t\lambda)$, $0 < t \leq t_0$, whose derivatives up to order $\floor{\left(\frac{1}{2}+\frac{1}{2}\nu\right)-}$ are continuous at the boundary $R = (t\lambda)$, and which are given by a finite sum of the form:
    $$v(R,t) = \frac{\lambda^{\frac{3}{2}}}{(t\lambda)^{\alpha}}R^i \log(R)^{j_1} \log(t\lambda)^{j_2} q \left( \frac{R}{(t\lambda)} \right)  \chi_{[1,+\infty)}\left(  \frac{R}{(t\lambda)^{\frac{2}{3}+\varepsilon}} \right)^{1+j_3},$$
where $(\alpha,i)$ is $k$-admissible on $C_{\mathrm{tip}}$, $j_l \in \mathbb N_{\geq 0}$ and $q(a) \in a^2\mathcal{Q}_{k,\frac{1}{2}+\frac{1}{2}\nu}$. We note that for these functions, the $\log(t\lambda)$ powers can always be rewritten so that they do not appear in the finite sum.
\end{definition}

As a consistency check, we verify that the first correction term $v_1(R,t)$ belongs to $V_1$. If $V_1(R)$ is defined as in (\ref{v_1 formula}), then
\begin{equation}
     v_1(R,t):= \frac{\lambda^{\frac{3}{2}}}{(t\lambda)^{2}} V_1(R) \in V_1  \label{eq: v_1 belongs to V_1}
\end{equation}
as a consequence of the following proposition applied for $k = 1$, $(\alpha,I) = (2,0)$.

\begin{proposition}[Examples of $V_{2k-1}$ functions]\label{inclusion S^2n into V_2k-1}
    Let $(\alpha,I)$ be $k$-admissible on $C_{\mathrm{mid}}$ and $C_{\mathrm{tip}}$, $k \geq 1$. For $J \in \mathbb N_{\geq 0}$, 
    $$
    \frac{\lambda^{\frac{3}{2}}}{(t\lambda)^{\alpha}}S^0(R^I, \log(R)^J) \cdot (1- \chi_{[1/m,+\infty)}) 
 \left(\frac{(t\lambda)^{\frac{2}{3}}}{R} \right) \subset V_{2k-1}.
    $$
    Furthermore, if $(\alpha,I)$ is also $k$-admissible on $C_{\mathrm{ori}}$, then
    $$
\frac{\lambda^{\frac{3}{2}}}{(t\lambda)^{\alpha}}S^0(R^I, \log(R)^J) \subset  V_{2k-1}.
    $$
\end{proposition}

\begin{proof}
    The second statement follows from the first one by writing
    \begin{align*}
            \frac{\lambda^{\frac{3}{2}}}{(t\lambda)^{\alpha}}S^0(R^I, \log(R)^J) &= \frac{\lambda^{\frac{3}{2}}}{(t\lambda)^{\alpha}}S^0(R^I, \log(R)^J)  \cdot \chi_{[1/m,+\infty)} \\
            &\phantom{=}+\frac{\lambda^{\frac{3}{2}}}{(t\lambda)^{\alpha}}S^0(R^I, \log(R)^J)  \cdot (1- \chi_{[1/m,+\infty)}). 
    \end{align*}
    Let $V(R) \in \frac{\lambda^{\frac{3}{2}}}{(t\lambda)^{\alpha}} S^0(R^I, \log(R)^J)$. On $R \geq R_0$, we can write
    $$
    V(R) = R^I \sum_{j=0}^{J}w_{j}(R^{-1})\log(R)^j,
    $$
    where $w_{j}(y)$ is holomorphic on $|y| < R_0^{-1} + \delta$ for some $0 < \delta \ll 1$. Now, rewrite $V(R)$ as
    $$
    V(R) = \sum_{i=0}^{|I|+N_0} \sum_{j=0}^J c_{k,j}R^{I-k} \log(R)^j +  \sum_{j=0}^{J}\tilde{w}_{j}(R^{-1})\log(R)^j,
    $$
    where each $\tilde{w}_j(y)$ is holomorphic with a zero of order at least $y^{N_0+1}$ at $y = 0$. Consider a smooth cutoff function $\chi(y)$ such that $\chi(y) = 1$ for $|y| < R_0^{-1} + \delta/2$ and $\chi(y) = 0$ for $|y| \geq R_0^{-1} + \delta$. Then, we have
    $$
    V(R) = \sum_{i=0}^{|I|+N_0} \sum_{j=0}^J c_{k,j} R^{I-k} \log(R)^j +  \chi(R^{-1}) \sum_{j=0}^{J}\tilde{w}_{j}(R^{-1})\log(R)^j, \quad R \gtrsim (t\lambda)^{\frac{2}{3}} \gg R_0.
    $$
    Therefore, on $R \gtrsim (t\lambda)^{\frac{2}{3}}$, we obtain
    $$
    V(R) = \sum_{i=0}^{|I|+N_0} \sum_{j=0}^J c_{k,j} R^{I-k} \log(R)^j +  S^0(R^{-N_0-1}, \log(R)^J) =: \tilde{V}(R) +  S^0(R^{-N_0-1}, \log(R)^J).
    $$
    Consider the expression:
    $$
\frac{\lambda^{\frac{3}{2}}}{(t\lambda)^{\alpha}} S^0(R^{-N_0-1}, \log(R)^J)  \cdot (1- \chi_{[1/m,+\infty)}) 
 \left(\frac{(t\lambda)^{\frac{2}{3}}}{R} \right).
    $$
    This expression is in $\mathcal{E}_{N_0,\nu}$ with no singularity at $a = 1$ since no $\mathcal{Q}$ functions is involved. Indeed, this remainder is sufficiently small in a pointwise sense. Moreover, for all indices $l_1,l_2 \geq 0$, we have:
\begin{align}
    (t\partial_t)^{l_1} (R\partial_R)^{l_2} \frac{\lambda^{\frac{3}{2}}}{(t\lambda)^{\alpha}}R^i \log(R)^{j_1} \log(t\lambda)^{j_2} &= c_{l_1,l_2,\alpha,i,j_2,j_2} \frac{\lambda^{\frac{3}{2}}}{(t\lambda)^{\alpha}}R^i \log(R)^{j_1} \log(t\lambda)^{j_2} \notag\\
    (t\partial_t)^{l_1} (R\partial_R)^{l_2} S^{2n}(R^I,\log(R)^J)&\subset S^{2n}(R^I,\log(R)^J) \notag \\
    (t\partial_t)^{l_1} (R\partial_R)^{l_2} g \left( \frac{(t\lambda)^{\frac{2}{3}}}{R} \right) &= (-1)^{l_1+l_2}\left[\frac{2 \nu}{3} \right]^{l_1} \left[ (y\partial_y)^{l_1+l_2}g \right] \left( \frac{(t\lambda)^{\frac{2}{3}}}{R} \right) \notag \\
     (t\partial_t)^{l_1} (R\partial_R)^{l_2} h \left(  \frac{R}{(t\lambda)^{\frac{2}{3}+\varepsilon}} \right) &= \left[\nu \left(\frac{2}{3}+\varepsilon \right) \right]^{l_1} \left[(y\partial_y)^{l_1+l_2}h \right] \left(  \frac{R}{(t\lambda)^{\frac{2}{3}+\varepsilon}} \right), \label{eq: derivative t R in terms of a, y}
\end{align}
so that the smallness of the expression is preserved under differentiation with $(t\partial_t)^{l_1} (R\partial_R)^{l_2}$. 

Finally, we consider the finite sum $\tilde{V}(R)$. We extract a $v_{\mathrm{tip}}$ and $v_{\mathrm{mid}}$ component by decomposing:
\begin{align*}
   \frac{\lambda^{\frac{3}{2}}}{(t\lambda)^{\alpha}}  \tilde{V}(R)  \cdot (1- \chi_{[1/m,+\infty)}) 
 \left(\frac{(t\lambda)^{\frac{2}{3}}}{R} \right)  =\frac{\lambda^{\frac{3}{2}}}{(t\lambda)^{\alpha}}  \tilde{V}(R)  \cdot \chi_{[1,+\infty)}\left( \frac{R}{(t\lambda)^{\frac{2}{3}+\varepsilon}} \right) \\
+\frac{\lambda^{\frac{3}{2}}}{(t\lambda)^{\alpha}}  \tilde{V}(R)  \cdot (1- \chi_{[1/m,+\infty)}) 
 \left(\frac{(t\lambda)^{\frac{2}{3}}}{R} \right) \cdot (1-\chi_{[1,+\infty)})\left( \frac{R}{(t\lambda)^{\frac{2}{3}+\varepsilon}} \right).
\end{align*}
\end{proof}

We now define the error spaces, whose structure is similar to that of the correction spaces.
\begin{definition}[Error space $E_k$]
\index{Ek@$E_{\mathrm{ori},k}$, $E_{\mathrm{tip},k}$, the vector spaces of error terms}
    Let $E_{\mathrm{ori},k}$, $k \geq 1$, be the vector space spanned by the set of smooth functions $e(R,t) = e_{\mathrm{ori}}(R,t) + e_{\mathrm{mid}}(R,t) + e_{\mathrm{tip}}(R,t) + \eta(R,t)$ on the cone $0 \leq R \leq (t\lambda)$, $0 < t \leq t_0$, where each component admits a decomposition as specified below:
    \begin{enumerate}
    \item $0 \leq R \leq m(t\lambda)^{\frac{2}{3}}$: The component $e_{\mathrm{ori}}$ is given by a convergent sum of the form: $$e_{\mathrm{ori}}(R,t) = \left( \sum_{n = 0}^{+\infty} \ \frac{\lambda^{\frac{3}{2}}}{(t\lambda)^{\alpha+2n}} w_{n}(R) \right) \chi_{[1/m,+\infty)} 
 \left(\frac{(t\lambda)^{\frac{2}{3}}}{R} \right)^{1+j},$$
     where $j \in \mathbb N_{\geq 0}$,  $w_{n} \in S^{2(k-1)}(R^{I+3n}, \log(R)^{J})$ for some common $I,J \in \mathbb N_{\geq 0}$ and $(\alpha,I)$ is $k$-admissible on $C_{\mathrm{ori}}$. Moreover, for any fixed derivative, there exists constants $0 < c(l) < 1 < C(l)$ and $n_0(l) > 0$ for which
    \begin{align*}
    \left| \left| (R\partial_R)^l w_{n} \right| \right|_{S, \mathrm{ori}}  \leq C^n, \quad \left| \left| (R\partial_R)^l w_{n} \right| \right|_{S, I+3n, J, \infty }  \leq c^n
    \end{align*}
    for all $n \geq n_0(l)$. 
    \item $\frac{m}{2}(t\lambda)^{\frac{2}{3}} \leq R \leq 2(t\lambda)^{\frac{2}{3}+\varepsilon}$: The component $e_{\mathrm{mid}}$ is given by a single term of the form: 
    $$e_{\mathrm{mid}}(R,t) = \frac{\lambda^{\frac{3}{2}}}{(t\lambda)^{\alpha}}R^i \log(R)^{j_1} \log(t\lambda)^{j_2} g \left( \frac{(t\lambda)^{\frac{2}{3}}}{R} \right) (1-\chi_{[1,+\infty)})\left( \frac{R}{(t\lambda)^{\frac{2}{3}+\varepsilon}} \right)^{1+j_3},$$
where $(\alpha,i)$ is $k$-admissible on $C_{\mathrm{mid}}$, $j_l \in \mathbb N_{\geq 0}$, $g(y)$ is any smooth function on $(0,+\infty)$ which is zero when $y \geq 2/m$ and expands as a finite sum of holomorphic functions and logarithms near $y = 0$.\footnote{More precisely, there should exist some $y_0 > 0$ and a finite number of holomorphic functions $g_1,g_2,...$ around $|y| \leq y_0$ for which we can write $g(y) = \sum_{j=0}^{J}g_j(y) \log(y)^j$ whenever $|y| \leq y_0$}  

We note that we can always build a basis of our vector space by omitting the powers of $\log(t\lambda)$ in $v_{\mathrm{mid}}$ as they can always be rewritten as $\frac{3}{2}\left( \log(y) - \log(R) \right)$, $y = (t\lambda)^{\frac{2}{3}}/R$, and the $\log(y)$ part can be included in the $g(y)$-type functions.
\item $(t\lambda)^{\frac{2}{3}+\varepsilon} \leq R \leq (t\lambda)$: The component $e_{\mathrm{tip}}$ is given by a single term of the form: 
    $$e_{\mathrm{tip}}(R,t) = \frac{\lambda^{\frac{3}{2}}}{(t\lambda)^{\alpha}}R^i \log(R)^{j_1} \log(t\lambda)^{j_2} h \left( \frac{R}{(t\lambda)^{\frac{2}{3}+\varepsilon}} \right),$$
where $(\alpha,i)$ is $k$-admissible on $C_{\frac{2}{3}+\varepsilon}$, $j_l \in \mathbb N_{\geq 0}$ and $h(y)$ is any smooth function with compact support in $(1,2)$. We note that the $\log(t\lambda)$ powers can always be rewritten so that they do not appear in the finite sum for $e_{\mathrm{tip}}$.

\item $\eta \in \mathcal{E}_{N_0,\nu}$ and $\eta$ has no singularity at the boundary $R = (t\lambda)$. 
\end{enumerate}

The error space $E_{\mathrm{tip},k}$ is defined as the vector space of functions $E_{\mathrm{tip},k} = \frac{1}{a^2} V_{2k} + \mathcal{E}_{N_0,\nu}$. 
\end{definition}

 \begin{notation}
The symbol
$$
\fsum_{\alpha \: \ ...}
$$
indicates summation over a finite set of indices $\alpha$ satisfying a certain relation. If the exact set of indices does not matter, such a notation will be used. It will mostly be used when considering a sum over a finite set of $k$-admissible pairs $(\alpha,i)$, which are not explicitly defined.
\index{finite sum@$\fsum$, any kind of finite sum}
\end{notation}

\begin{remark} We make a few remarks regarding the definitions of the correction and error spaces.
    \begin{enumerate}
        \item To simplify the definitions of $V_{2k-1}$, $V_{2k}$, $E_{\mathrm{ori},k}$ and $E_{\mathrm{tip},k}$, we gave the definition in terms of a basis. That means that the correction terms and error terms on each region are always given by a finite sum of such elements. For example, a term $v_{\mathrm{mid}}$ has the form:
        $$\fsum_{\substack{(\alpha,i) \kadm \\ j_1,j_2,j_3 \geq 0}}\frac{\lambda^{\frac{3}{2}}}{(t\lambda)^{\alpha}}R^i \log(R)^{j_1} \log(t\lambda)^{j_2} g_{\alpha,i,j_1,j_2,j_3} \left( \frac{(t\lambda)^{\frac{2}{3}}}{R} \right) (1-\chi_{[1,+\infty)})\left( \frac{R}{(t\lambda)^{\frac{2}{3}+\varepsilon}} \right)^{1+j_3}.$$
        The cutoff functions are fixed, but the functions $w_n(R) = w_{n}^{\alpha,I,J,j}(R)$, \sloppy $g(y) = g_{\alpha,i,j_1,j_2,j_3}(y)$, $h(y) sloppy= h_{\alpha,i,j_1,j_2}(y)$ and $q(a) = q_{\alpha,i,j_1,j_2}(a)$ can be anything with the desired properties.
        \item The only singularities that can happen are at $a = 1$, when considering functions from $V_{2k}$ and $E_{\mathrm{tip},k}$. In particular, restricting such a function near the origin or the middle region of the cone removes the singularity.
        \item The main difference between $V_{2k-1}$ and $E_{\mathrm{ori},k}$ lies near the tip of the cone $(t\lambda)^{\frac{2}{3}+\varepsilon} \leq R \leq (t\lambda)$. Up to a negligible part, error terms are supported on $(t\lambda)^{\frac{2}{3}+\varepsilon} \leq R \leq 2(t\lambda)^{\frac{2}{3}+\varepsilon}$, while correction terms are supported on the whole cone. 
        \item For the error terms, we write $e_{k} \simeq \tilde{e}_k$ if $e_k - \tilde{e}_k \in \mathcal{E}_{N_0,\nu}$. In other words, the equality $e_k(R,a,t) = \tilde{e}_k(R,a,t)$ holds up to negligible terms, which do not affect the subsequent analysis. This negligible difference is carried over to all subsequent error terms $(e_{k+1},e_{k+2})$, but it does not affect the algorithm, so we will omit them when describing the next error terms. It is important to note that in the renormalization step, we never differentiate an error term. Hence, the singularity of an $E_{\mathrm{tip},k}$ element at $a = 1$ cannot get worse. We do differentiate even correction terms $v_{2k}$, but in that case, the support is always restricted to the middle or the origin part of the cone, which removes the singularity.
    \end{enumerate}
\end{remark}

We conclude this section by proving some useful computation rules concerning these correction and error spaces. 

\begin{proposition}[Stability under differentiation]\label{prop:stability under differentiation}
For any $l_1,l_2,k \in \mathbb N$, 
$$(t \partial_t)^{l_1}(R\partial_t)^{l_2} V_{2k-1} \subset V_{2k-1}, \quad (t \partial_t)^{l_1}(R\partial_t)^{l_2} E_{\mathrm{ori},k} \subset E_{\mathrm{ori},k}.$$
\end{proposition}

\begin{proof}
    We prove the inclusion $(t \partial_t)^{l_1}(R\partial_t)^{l_2} V_{2k-1} \subset V_{2k-1}$. Recall first the identities (\ref{eq: derivative t R in terms of a, y}) which hold for any exponents and any smooth functions $g$ and $h$. 
\item[\textbf{Tip part:}] Suppose 
$$v_{\mathrm{tip}}(R,t) = \frac{\lambda^{\frac{3}{2}}}{(t\lambda)^{\alpha}}R^i \log(R)^{j_1} \log(t\lambda)^{j_2} h \left( \frac{R}{(t\lambda)^{\frac{2}{3}+\varepsilon}} \right).$$
Then, applying the derivatives yields:
$$
(t \partial_t)^{l_1}(R\partial_t)^{l_2} v_{\mathrm{tip}} = \fsum_{k_1,k_2 \geq 0} c_{k_1,k_2}\frac{\lambda^{\frac{3}{2}}}{(t\lambda)^{\alpha}}R^i \log(R)^{j_1} \log(t\lambda)^{j_2} \left[ (y\partial y)^{k_1+k_2} h \right] \left( \frac{R}{(t\lambda)^{\frac{2}{3}+\varepsilon}} \right),
$$
which is a finite sum of elements $\tilde{v}_{\mathrm{tip},k_1,k_2}$ having a suitable form. 

\item[\textbf{Middle part:}] Similarly, suppose 
$$v_{\mathrm{mid}}(R,t) = \frac{\lambda^{\frac{3}{2}}}{(t\lambda)^{\alpha}}R^i \log(R)^{j_1} \log(t\lambda)^{j_2} g \left( \frac{(t\lambda)^{\frac{2}{3}}}{R} \right) (1-\chi_{[1,+\infty)})\left( \frac{R}{(t\lambda)^{\frac{2}{3}+\varepsilon}} \right)^{1+j_3}.$$
Then: 
\begin{align*}
    (t \partial_t)^{l_1}(R\partial_t)^{l_2}v_{\mathrm{mid}}(R,t) = (1-\chi_{[1,+\infty)})^{1+j_3} (t \partial_t)^{l_1}(R\partial_t)^{l_2} \frac{\lambda^{\frac{3}{2}}}{(t\lambda)^{\alpha}}R^i \log(R)^{j_1} \log(t\lambda)^{j_2} g + \\ 
\phantom{=} \sum_{l = 1}^{l_1+l_2} \left[ (y \partial_y)^l (1-\chi_{[1,+\infty)})^{1+j_3} \right] \cdot \fsum_{k_1,k_2 \geq 0} c_{l,k_1,k_2} (t \partial_t)^{k_1}(R\partial_t)^{k_2} \frac{\lambda^{\frac{3}{2}}}{(t\lambda)^{\alpha}}R^i \log(R)^{j_1} \log(t\lambda)^{j_2} g.
\end{align*}
The first part is treated as for $v_{\mathrm{tip}}$. It produces a finite sum of $\tilde{v}_{\mathrm{mid}}$ elements. As for the second part, it yields a finite sum of elements of the form:
$$
\tilde{v}_{\mathrm{mid}} \cdot h \left( \frac{R}{(t\lambda)^{\frac{2}{3}+\varepsilon}} \right) := \frac{\lambda^{\frac{3}{2}}}{(t\lambda)^{\alpha}}R^i \log(R)^{j_1} \log(t\lambda)^{j_2} g \left( \frac{(t\lambda)^{\frac{2}{3}}}{R} \right) h \left( \frac{R}{(t\lambda)^{\frac{2}{3}+\varepsilon}} \right), 
$$
where $h$ is smooth with compact support in $(1,2)$. Then $R \sim (t\lambda)^{\frac{2}{3}+\varepsilon}$ and $(t\lambda)^{\frac{2}{3}}R^{-1} \sim (t\lambda)^{-\varepsilon} \sim 0$. Near $y = 0$, one can expand
$$
g(y) = g_0(y) + \sum_{j=1}^{L}g_{j}(y) \log(y)^j
$$
for some functions $g_j$ holomorphic around zero. Consider the $M$-th order Taylor polynomial $P_j(y)$ of $g_j(y)$ near $y = 0$ with $M = \ceil{N_0\varepsilon^{-1}}$, meaning that the remainder $\eta_j(y) = g_j(y) - P_j(y)$ is a holomorphic function around zero with $(y\partial_y)^l [g_j(y) - P_j(y)] = \mathcal{O}(y^{M})$ for any fixed $l \geq 0$. Then, we obtain
$$
h \cdot \tilde{v}_{\mathrm{mid}}(R,t) = 
   \frac{\lambda^{\frac{3}{2}}}{(t\lambda)^{\alpha}}R^i \log(R)^{j_1} \log(t\lambda)^{j_2} \left(P_0(y) + \sum_{j=1}^{L}P_{j}(y) \log(y)^j \right) h \left(  \frac{R}{(t\lambda)^{\frac{2}{3}+\varepsilon}} \right)+ \eta 
$$
for $y = (t\lambda)^{\frac{2}{3}}R^{-1}$. The remainder
\begin{align}
    \eta =  \frac{\lambda^{\frac{3}{2}}}{(t\lambda)^{\alpha}}R^i \log(R)^{j_1} \log(t\lambda)^{j_2} \left(\eta_0(y) + \sum_{j=1}^{L}\eta_{j}(y) \log(y)^j \right) h \left(  \frac{R}{(t\lambda)^{\frac{2}{3}+\varepsilon}} \right) \label{eq:remainder on mid/tip is negligible}
\end{align}
is supported on $R \sim (t\lambda)^{\frac{2}{3}+\varepsilon}$, so it has no singularity at the tip of the cone.  We verify that $\eta \in \mathcal{E}_{N_0,\nu}$. The $\log(y)$ rewrites as $-\log(R) + \frac{2}{3}\log(t\lambda)$, so assume without loss of generality that we have only one term:
$$
\eta =  \frac{\lambda^{\frac{3}{2}}}{(t\lambda)^{\alpha}}R^i \log(R)^{j_1} \log(t\lambda)^{j_2} \eta_0(y)h \left(  \frac{R}{(t\lambda)^{\frac{2}{3}+\varepsilon}} \right).
$$
Given that  $(\alpha,i)$ is $k$-admissible and $y^M \sim (t\lambda)^{-N_0}$ on $R \sim (t\lambda)^{\frac{2}{3}+\varepsilon}$, the remainder have the desired smallness. Furthermore, the smallness is left unchanged under $(t \partial_t)^{l_1}(R\partial_t)^{l_2}$, as a consequence of the equalities (\ref{eq: derivative t R in terms of a, y}).

\item[\textbf{Origin part:}] Finally, consider a term of the form:
$$
v_{\mathrm{ori}}(R,t) = \left( \sum_{n = 0}^{+\infty} \ \frac{\lambda^{\frac{3}{2}}}{(t\lambda)^{\alpha+2n}} w_{n}(R) \right) f
 \left(\frac{(t\lambda)^{\frac{2}{3}}}{R} \right),
 $$
     where $f = \chi_{[1/m,+\infty)}^{1+j}$, $w_{n} \in S^{2(k-1)}(R^{I+3n}, \log(R)^{J})$ for some common $I,J \in \mathbb N_{\geq 0}$ with $(\alpha,I)$ being $k$-admissible on $C_{\mathrm{ori}}$. One has
         \begin{align*}
     (R \partial_R) v_{\mathrm{ori}}
 &=  f
 \left(\frac{(t\lambda)^{\frac{2}{3}}}{R} \right) \cdot    \sum_{n = 0}^{+\infty}  \frac{\lambda^{\frac{3}{2}}}{(t\lambda)^{\alpha+2n}} (R\partial_R)w_n(R) \\
 &\phantom{=}+  \sum_{n = 0}^{+\infty} \frac{\lambda^{\frac{3}{2}}}{(t\lambda)^{\alpha+2n}} w_{n}(R) \cdot f'
 \left(\frac{(t\lambda)^{\frac{2}{3}}}{R} \right) \cdot \left(\frac{(t\lambda)^{\frac{2}{3}}}{R} \right) \\
 &= f
 \left(\frac{(t\lambda)^{\frac{2}{3}}}{R} \right) \cdot    \sum_{n = 0}^{+\infty}  \frac{\lambda^{\frac{3}{2}}}{(t\lambda)^{\alpha+2n}} (R\partial_R)w_n(R) \\
 &\phantom{=}+  \tilde{f}  \left(\frac{(t\lambda)^{\frac{2}{3}}}{R} \right) \sum_{n = 0}^{+\infty} \frac{\lambda^{\frac{3}{2}}}{(t\lambda)^{\alpha+2n}} w_{n}(R),
    \end{align*}
    where $\tilde{w}_{n} \in S^{2(k-1)}(R^{I+3n}, \log(R)^{J})$  and $(\alpha,I)$ remains $k$-admissible. A similar equality holds for $(t\partial_t) v_{\mathrm{ori}}$ and, by induction, 
   \begin{align*}
       (t \partial_t)^{l_1}(R \partial_R)^{l_2} v_{\mathrm{ori}} &= 
      f  \left(\frac{(t\lambda)^{\frac{2}{3}}}{R} \right) \sum_{n = 0}^{+\infty} (t \partial_t)^{l_1}\frac{\lambda^{\frac{3}{2}}}{(t\lambda)^{\alpha+2n}} (R \partial_R)^{l_2} w_{n}(R) 
       \\ 
&\phantom{=}+\sum_{j=1}^M \tilde{f}_j  \left(\frac{(t\lambda)^{\frac{2}{3}}}{R} \right) \sum_{n = 0}^{+\infty} \frac{\lambda^{\frac{3}{2}}}{(t\lambda)^{\alpha+2n}} w_{n,j}(R),
   \end{align*}
    where $M = M(l_1,l_2) < +\infty$ and each $\tilde{f}_j$ is compactly supported on $(1/m,2/m)$. To conclude, it suffices to show that a term of the form: 
    \begin{align}
\tilde{f} \left(\frac{(t\lambda)^{\frac{2}{3}}}{R} \right) \sum_{n = 0}^{+\infty} \frac{\lambda^{\frac{3}{2}}}{(t\lambda)^{\alpha+2n}} w_{n}(R) \label{eq:approximation of restricted ori term}
    \end{align}
can be approximated by a finite sum if $\tilde{f}$ is compactly supported, so that they yield a finite number of $\tilde{v}_{\mathrm{mid}}$ components. Such a term is supported on $\frac{m}{2}(t\lambda)^{\frac{2}{3}} \leq R \leq m(t\lambda)^{\frac{2}{3}}$. On this region, 
     $$
     w_n(z) =  z^{I+3n} \sum_{j=0}^{J}w_{n,j}(z^{-1})
 \log(z)^j
     $$
     and $w_{n}(z)$ is well-approximated by a Taylor polynomial of degree $3N_0 + 1$,
$$
\tilde{w}_{n,j}(z) =   \sum_{l=0}^{3N_0 + 1} w_{n,j,l}z^{l}.$$
We check that summing all the errors that we do with this approximation is negligible in the sense that it belongs to $\mathcal{E}_{N_0,\nu}$ (and it has no singularity at the tip of the cone as the error is supported on $R \sim (t\lambda)^{\frac{2}{3}}$). Recall that:
$||(z\partial_z)^{l}w_{n,j}||_{A(|y| \leq R_0^{-1})} \leq m^{-3n}c_l^n$
for some $0 < c_l < 1$ and all $n \geq n_l$. As $(z\partial_z)^l$ is a linear combination of $z\partial_z$, $z^2\partial_z^2$, ..., $z^l \partial_z^l$, it follows by induction that $$
R_0^{-l} ||\partial_z^l w_{n,j}||_{A(|y| \leq R_0^{-1})}  = ||z^l\partial_z^{l}w_{n,j}||_{A(|y| \leq R_0^{-1})}  \leq m^{-3n}\tilde{c}_l^n
$$
for some $0 < \tilde{c}_l < 1$ and all $n \geq \tilde{n}_l$. Working with $W_{n,j,k} = (z\partial_z)^k w_{n,j}$, one similarly deduces
$$
||\partial_z^l W_{n,j,k}||_{A(|y| \leq R_0^{-1})} = ||\partial_z^l (z\partial_z)^k w_{n,j}||_{A(|y| \leq R_0^{-1})} \leq R_0^{l}m^{-3n}\tilde{c}_{l,k}^n
$$ 
for some $0 < \tilde{c}_{l,k} < 1$ and for all $n \geq \tilde{n}_{l,k}$. Then $(z\partial_z)^{l}\tilde{w}_{n,j}$ is the Taylor polynomial of degree $3N_0 + 1$ of $(z\partial_z)^{l}w_{n,j}$ and
$$
(z\partial_z)^{l}(w_{n,j} - \tilde{w}_{n,j}) = \int_{[0,z]} \frac{\partial_y^{(3N_0+2)} (y\partial_y)^l w_{n,j}(y)}{(3N_0+2)!}(z-y)^{3N_0+2}dy = \mathcal{O}(m^{-3n}c_l^{n}z^{3N_0+2})
$$
for some $0 < c_l < 1$ and all $n$ large enough. On $\frac{m}{2}(t\lambda)^{\frac{2}{3}} \leq R = z^{-1} \leq m(t\lambda)^{\frac{2}{3}}$,
$$
(z\partial_z)^{l}(w_{n,j} - \tilde{w}_{n,j}) = \mathcal{O}\left( c_l^n (t\lambda)^{-N_0-2/3} \right)
$$
The final error in approximating the expression
    $$
\tilde{f} \left(\frac{(t\lambda)^{\frac{2}{3}}}{R} \right) \sum_{n = 0}^{+\infty} \frac{\lambda^{\frac{3}{2}}}{(t\lambda)^{\alpha+2n}} w_{n}(R) 
$$ 
is given by
    $$
\tilde{f} \left(\frac{(t\lambda)^{\frac{2}{3}}}{R} \right) \sum_{n = 0}^{+\infty} \frac{\lambda^{\frac{3}{2}}}{(t\lambda)^{\alpha+2n}} z^{i+3n} \sum_{j=0}^{J}\left( w_{n,j}(z^{-1}) - \tilde{w}_{n,j}(z^{-1}) \right) \log(z)^j.
$$ 
Given the compact support of $\tilde{f}$, the estimate on $(z\partial_z)^{l}(w_{n,j} - \tilde{w}_{n,j})$ and the admissibility of the pair $(\alpha,I)$, the desired smallness is achieved.
\end{proof}

For $V_{2k}$ and $E_{\mathrm{ori},k}$, such stability under differentiation does not hold in general, because if one applies the operator $(z\partial_z)^l$, then one gets an element of similar form but the $q(a)$ coefficient belongs to $\mathcal{Q}_{\frac{1}{2}+\frac{1}{2}\nu-l}$ instead of $\mathcal{Q}_{\frac{1}{2}+\frac{1}{2}\nu}$. However, smallness is preserved in the following sense:

\begin{proposition}[Smallness is preserved under differentiation]\label{prop:stability of smallness under differenatiation}
Let $v(R,t) \in V_{2k} \cup E_{\mathrm{tip},k}$. For any $l_1,l_2,k \in \mathbb N$, it holds that
$$\left|(t\partial_t)^{l_1}(R\partial_R)^{l_2}v \right| \lesssim \frac{\lambda^{\frac{3}{2}}}{(t\lambda)^{2+\left( \frac{2}{3}-2\varepsilon \right) \cdot (k-1)}}\left[1 + \left(1-\frac{R}{(t\lambda)} \right)^{\frac{1}{2}+\frac{1}{2}\nu-l_1-l_2-} \right].
    $$
\end{proposition}

\begin{proof}
    The proof is similar to that of Proposition \ref{prop:stability under differentiation}. The singularity comes from differentiating the $\mathcal{Q}_{\frac{1}{2}+\frac{1}{2}\nu}$ elements.
\end{proof}

\begin{proposition}[Product Rules] \label{product rules for V, E}
   Let $k,k_1,k_2 \geq 1$, $g(y)$ be any smooth function on $(0,+\infty)$ which is zero when $y \geq 2/m$ and expands as a finite sum of holomorphic functions and logarithms near $y = 0$, and let $h(y)$ be any smooth function with support in $(-\infty,2)$. The following product rules hold:
    \begin{enumerate}
        \item 
        $$V_{2k} \subset E_{\mathrm{tip},k}, \quad V_{2k-1} \subset E_{\mathrm{ori},k} + E_{\mathrm{tip},k}.$$ \label{prop:product rule 1}
        \item 
        $$g \left( \frac{(t\lambda)^{\frac{2}{3}}}{R}  \right) (V_{2k-1} \cup E_{\mathrm{ori},k}) \subset E_{\mathrm{ori},k}, \quad h \left(  \frac{R}{(t\lambda)^{\frac{2}{3}+\varepsilon}} \right) (V_{2k} \cup E_{\mathrm{tip},k}) \subset E_{\mathrm{ori},k}.$$ \label{prop:product rule 2}
        \item
        \begin{align*}
         \left[\frac{\lambda^{\frac{3}{2}}}{(t\lambda)^{2}} \right]^{-1} ( V_{2k_1-1} \cup E_{\mathrm{ori},k_1} ) \cdot  \left[\frac{\lambda^{\frac{3}{2}}}{(t\lambda)^{2}} \right]^{-1} E_{\mathrm{ori},k_2} &\subset \left[\frac{\lambda^{\frac{3}{2}}}{(t\lambda)^{2}} \right]^{-1} E_{\mathrm{ori},k_1+k_2},\\
            \left[\frac{\lambda^{\frac{3}{2}}}{(t\lambda)^{2}} \right]^{-1} ( V_{2k_1} \cup E_{\mathrm{tip},k_1} ) \cdot \left[\frac{\lambda^{\frac{3}{2}}}{(t\lambda)^{2}} \right]^{-1} E_{\mathrm{tip},k_2} &\subset \left[\frac{\lambda^{\frac{3}{2}}}{(t\lambda)^{2}} \right]^{-1} E_{\mathrm{tip},k_1+k_2}, \\
             \left[\frac{\lambda^{\frac{3}{2}}}{(t\lambda)^{2}} \right]^{-1} ( V_{2k_1-1} \cup E_{\mathrm{ori},k_1} ) \cdot \left[\frac{\lambda^{\frac{3}{2}}}{(t\lambda)^{2}} \right]^{-1} E_{\mathrm{tip},k_2} &\subset \left[\frac{\lambda^{\frac{3}{2}}}{(t\lambda)^{2}} \right]^{-1} E_{\mathrm{ori},k_1+k_2}.
        \end{align*} \label{prop:product rule 3}
    \end{enumerate}
\end{proposition}

\begin{remark}
    From (\ref{prop:product rule 1}) and (\ref{prop:product rule 3}), one also deduces the inclusion
    $$
    \left[\frac{\lambda^{\frac{3}{2}}}{(t\lambda)^{2}} \right]^{-1}  V_{2k_1-1}  \cdot  \left[\frac{\lambda^{\frac{3}{2}}}{(t\lambda)^{2}} \right]^{-1} V_{2k_2} \subset \left[\frac{\lambda^{\frac{3}{2}}}{(t\lambda)^{2}} \right]^{-1} (E_{\mathrm{ori},k_1+k_2} +  E_{\mathrm{tip},k_1+k_2})
    $$
    and the same conclusion holds for the other pairs $(V_{2k_1-1}, V_{2k_2-1})$ or $(V_{2k_1}, V_{2k_2})$.
\end{remark}

\begin{proof}
\item[\ref{prop:product rule 1}.] 
    The inclusion $V_{2k} \subset E_{\mathrm{tip},k}$ is straightforward. If $v \in V_{2k-1}$ has parts $v_{\mathrm{ori}}, v_{\mathrm{mid}}, v_{\mathrm{tip}}$ where
     $$v_{\mathrm{tip}}(R,t) = \fsum_{\substack{(\alpha,i) \kadm \\ j_1,j_2 \geq 0}} \frac{\lambda^{\frac{3}{2}}}{(t\lambda)^{\alpha}}R^i \log(R)^{j_1} \log(t\lambda)^{j_2} h_{\alpha,i,j_1,j_2} \left( \frac{R}{(t\lambda)^{\frac{2}{3}+\varepsilon}} \right),$$
     the other inclusion $V_{2k-1} \subset E_{\mathrm{ori},k} + E_{\mathrm{tip},k}$ is obtained by writing
     \begin{align*}
         e_{\mathrm{ori}} = v_{\mathrm{ori}} &+ v_{\mathrm{mid}}   \\
         &+ \chi_{[1,+\infty)}(x)  \fsum_{\substack{(\alpha,i) \kadm \\ j_1,j_2 \geq 0}} \frac{\lambda^{\frac{3}{2}}}{(t\lambda)^{\alpha}}R^i \log(R)^{j_1} \log(t\lambda)^{j_2} \left[ h_{\alpha,i,j_1,j_2} (x) - h_{\alpha,i,j_1,j_2}(2) \right]
     \end{align*}
     for the variable
     $$
x = \frac{R}{(t\lambda)^{\frac{2}{3}+\varepsilon}}
     $$
     and then $e_{\mathrm{tip}} = v - e_{\mathrm{ori}}$. This proves the first assertion.

   \item[\ref{prop:product rule 2}.] To prove the inclusion $$h \left(  \frac{R}{(t\lambda)^{\frac{2}{3}+\varepsilon}} \right) (V_{2k} \cup E_{\mathrm{tip},k}) \subset E_{\mathrm{ori},k},$$
   consider an element
   $$
   v(R,t) = 
   \frac{\lambda^{\frac{3}{2}}}{(t\lambda)^{\alpha}}R^i \log(R)^{j_1} \log(t\lambda)^{j_2} q \left( \frac{R}{(t\lambda)} \right)  \chi_{[1,+\infty)}\left(  \frac{R}{(t\lambda)^{\frac{2}{3}+\varepsilon}} \right)^{1+j_3},
   $$
where $(\alpha,i)$ is $k$-admissible on $C_{\mathrm{tip}}$, $j_l \in \mathbb N_{\geq 0}$ and $\tilde{h}(y)$ is any smooth function which is constant outside $[1,2]$ and zero when $y$ is in a neighbourhood of $y = 1$. The proof is identical for an error term. The product $\tilde{h}(y) = h \cdot  \chi_{[1,+\infty)}^{1+j_3}$ is compactly supported in $(1,2)$. Hence, $R \sim (t\lambda)^{\frac{2}{3}+\varepsilon}$ and $R(t\lambda)^{-1} \sim (t\lambda)^{-\frac{1}{3}+\varepsilon} \sim 0$. One can expand
$$
q(a) = q_0(a) + \sum_{j=1}^{L}q_{j}(a) \log(a)^j
$$
for some functions $q_j$ holomorphic around $a = 0$. Consider the $M$-th order Taylor polynomial $P_j(a)$ of $q_j(a)$ near $a = 0$ with $M = \ceil{N_0(\frac{1}{3}-\varepsilon)^{-1}}$, meaning that the remainder $\eta_j(a) = q_j(a) - P_j(a)$ is a holomorphic function around zero with $(a\partial_a)^l [q_j(a) - P_j(a)] = \mathcal{O}(a^{M})$ for any fixed $l \geq 0$. Therefore,
$$
h \cdot v(R,t) = 
   \frac{\lambda^{\frac{3}{2}}}{(t\lambda)^{\alpha}}R^i \log(R)^{j_1} \log(t\lambda)^{j_2} \left(P_0(a) + \sum_{j=1}^{L}P_{j}(a) \log(a)^j \right) \tilde{h} \left(  \frac{R}{(t\lambda)^{\frac{2}{3}+\varepsilon}} \right)+ \eta 
$$
for $a = R(t\lambda)^{-1}$ and $\eta \in \mathcal{E}_{N_0,\nu}$ element (by a reasoning analogous to the proof for (\ref{eq:remainder on mid/tip is negligible})).

For the other inclusion
   $$g \left(  \frac{(t\lambda)^{\frac{2}{3}}}{R}  \right) (V_{2k-1} \cup E_{\mathrm{ori},k}) \subset E_{\mathrm{ori},k},$$
    the only issue is to show that, for the origin component $v_{\mathrm{ori}}$ of an element in $V_{2k-1}$ (resp. $E_{\mathrm{ori},k}$), then
    $$g \left( \frac{(t\lambda)^{\frac{2}{3}}}{R} \right) v_{\mathrm{ori}}$$
    can be approximated by a finite sum with the desired smallness on $C_{\mathrm{mid}}$. This follows from the proof for (\ref{eq:approximation of restricted ori term}).
   \item[\ref{prop:product rule 3}.]  For the first identity, write
    $$v_1 = v_{1,\mathrm{ori}} + v_{1,\mathrm{mid}} + v_{1,\mathrm{tip}} + \eta_1 \in V_{2k_1-1},$$
    and similarly with $e_2 \in E_{\mathrm{ori},k_2}$. Define the decomposition:
    \begin{align*}
        e_{\mathrm{ori}} &= v_{1,\mathrm{ori}} \cdot e_{2,\mathrm{ori}}, \\
        e_{\mathrm{mid}} &= v_{1,\mathrm{mid}} \cdot e_{2,\mathrm{mid}} + \tilde{v}_{1,\mathrm{ori}} \cdot e_{2,\mathrm{mid}} + v_{1,\mathrm{mid}} \cdot \tilde{e}_{2,\mathrm{ori}}, \\
        e_{\mathrm{tip}} &= v_{1,\mathrm{tip}} \cdot e_{2,\mathrm{tip}} + v_{1,\mathrm{mid}} \cdot e_{2,\mathrm{tip}} + v_{1,\mathrm{tip}} \cdot e_{2,\mathrm{mid}}, \\
        \eta &= v_1 \cdot \eta_2 + \eta_1 \cdot e_2 + (v_{1,\mathrm{ori}} - \tilde{v}_{1,\mathrm{ori}}) \cdot e_{2,\mathrm{mid}} + v_{1,\mathrm{mid}} \cdot (e_{2,\mathrm{ori}} - \tilde{e}_{2,\mathrm{ori}}).  
    \end{align*}
     Then, we claim that:
     \begin{align*}
         \left[\frac{\lambda^{\frac{3}{2}}}{(t\lambda)^{2}} \right]^{-1} ( e_{\mathrm{ori}} + e_{\mathrm{mid}} + e_{\mathrm{tip}} + \eta)  \in E_{\mathrm{ori},k_1+k_2}. 
     \end{align*}
     We only treat the mixed term $v_{1,\mathrm{ori}} \cdot e_{2,\mathrm{mid}}$. The other terms are handled similarly and do not introduce additional difficulties because for each component of $v$ and $e$ on $C_{\mathrm{ori}}, C_{\mathrm{mid}}, C_{\mathrm{tip}}$, there is a natural product structure coming from the definition. 
     
     The mixed term $v_{1,\mathrm{ori}} \cdot e_{2,\mathrm{mid}}$ is supported on $\frac{m}{2}(t\lambda)^{\frac{2}{3}} \leq R \leq m(t\lambda)^{\frac{2}{3}}$. If $g(y)$ is a smooth function coming from $e_{2,\mathrm{mid}}$, then
     $$v_{1,\mathrm{ori}} \cdot g\left( \frac{(t\lambda)^{\frac{2}{3}}}{R} \right)$$
     can be approximated by a finite sum as described in (\ref{eq:approximation of restricted ori term}). Then, it is only a matter of multiplying a finite number of $k_1$-admissible terms on $C_{\mathrm{ori}}$ (hence $k_1$-admissible on $C_{\mathrm{mid}}$) coming from $v_{1,\mathrm{ori}}$ with a finite number of $k_2$-admissible terms on $C_{\mathrm{mid}}$ coming from $e_{2,\mathrm{mid}}$.

     The second identity follows by multiplying both finite sums and using the fact that $\mathcal{Q}_{\frac{1}{2}+\frac{1}{2}\nu}$ is an algebra. The third identity follows by multiplying both finite sums and approximating the $q(a) \in \mathcal{Q}_{\frac{1}{2}+\frac{1}{2}\nu}$ functions by finite sums as in (\ref{prop:product rule 2}).
\end{proof}

\begin{corollary}[Nonlinear rules] \label{nonlinear rules for V,E}
    Let $u_0 = \lambda^{\frac{3}{2}}W(R)$,
    \begin{align*}
        w_1 &= \frac{\lambda^{\frac{3}{2}}}{(t\lambda)^2} C_1(\nu)  + V_3\\
        w_2 &= \frac{\lambda^{\frac{3}{2}}}{(t\lambda)^2} H(a^2) \cdot \chi_{[1,+\infty)} \left( \frac{R}{(t\lambda)^{\frac{2}{3}+\varepsilon}} \right) + V_4, \quad a = R/(t\lambda),
    \end{align*}
    where $H(z)$ is defined in (\ref{H(z) definition}), and $w_{k} \in V_{2k-1} \cup V_{2k}$, $k \geq 2$. Then, it holds that
    $$
    t^2[F(u_0 + w_1 + w_2 + w_k) - F(u_0 + w_1 + w_2)]  \cdot (1-\chi_{[1/m,+\infty)}) \left( \frac{(t\lambda)^{\frac{2}{3}}}{R} \right) \in E_{\mathrm{ori},k+1} + E_{\mathrm{tip},k+1},
    $$
    as well as
\begin{align*}
    t^2F(u_0 + w_1 + w_2) \cdot (1-\chi_{[1/m,+\infty)}) &\in E_{\mathrm{ori},2} + E_{\mathrm{tip},2}, \\ 
    t^2F(u_0 + w_1) \cdot (1-\chi_{[1/m,+\infty)}) &\in E_{\mathrm{ori},2} + E_{\mathrm{tip},2}.
\end{align*}
\end{corollary}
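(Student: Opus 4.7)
I would reduce each of the three assertions to a convergent multinomial expansion of the nonlinearity combined with Proposition \ref{product rules for V, E}. The cutoff $(1 - \chi_{[1/m,+\infty)})((t\lambda)^{2/3}/R)$ is supported in $R \geq \tfrac{m}{2}(t\lambda)^{2/3}$, which decomposes cleanly into $C_{mid} \cup C_{tip}$. For the first claim, the smallness condition \eqref{m definition} together with the $k$-admissibility of $w_k$ and a choice of $t_0$ small enough gives $|u_0 + w_1 + w_2| \geq 2|w_k|$ on this support. Hence the binomial series
\begin{align*}
F(u_0 + w_1 + w_2 + w_k) - F(u_0 + w_1 + w_2) = \sum_{n=1}^{\infty}\binom{p}{n}(u_0 + w_1 + w_2)^{p-n} w_k^n
\end{align*}
converges absolutely, and it suffices to show that the $n$-th summand multiplied by $t^2$ lies in $E_{ori,k+n} + E_{tip,k+n}$, then sum.

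\textbf{Handling the prefactor.} On $C_{mid}$, set $y = (t\lambda)^{2/3}/R \in [0, 2/m]$. The dominant part of $u_0 + w_1 + w_2$ takes the form
$$D_{mid} = \lambda^{3/2}(t\lambda)^{-2}\bigl[15^{3/2} y^{3} + C_1(\nu)\bigr].$$
Since the bracket is strictly positive and analytic in $y$ on $[0, 2/m]$, the power $D_{mid}^{p-n}$ is itself of the form $\lambda^{3(p-n)/2}(t\lambda)^{-2(p-n)} g(y)$ for a smooth $g$ admitting an analytic expansion near $y = 0$---exactly the type of $C_{mid}$-factor allowed in the definition of $V_{2k-1}$. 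Writing $(u_0 + w_1 + w_2)^{p-n} = D_{mid}^{p-n}(1 + \text{perturbation})^{p-n}$ and expanding binomially, the perturbation factor becomes an absolutely convergent series of $C_{mid}$-type correction pieces (the perturbation norm is bounded by $1/2$ once $t_0$ is small). On $C_{tip}$, the analogous role is played by
$$D_{tip} = \lambda^{3/2}(t\lambda)^{-2}\bigl(C_1(\nu) + H(a^2)\bigr), \qquad a = R/(t\lambda),$$
which is non-zero on $U$ by Definition \ref{definition of tilde Q}, so $(C_1(\nu) + H(a^2))^{p-n} \in \tilde{\mathcal{Q}}_{1/2 + \nu/2}$; again expanding $(u_0 + w_1 + w_2)^{p-n}$ around $D_{tip}$ converges absolutely.

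\textbf{Bookkeeping and the other two claims.} Each term of the double expansion is a product of $\mathcal{Q}$-type factors, $g$-type cutoffs, monomials in $R$ and $t\lambda$, logarithms, and the $w_k^n$ factor. Iterating part (3) of Proposition \ref{product rules for V, E} $n$ times places $t^2(u_0 + w_1 + w_2)^{p-n} w_k^n$ into $E_{ori,k+n} + E_{tip,k+n} \subset E_{ori,k+1} + E_{tip,k+1}$; the large-$n$ tail of the series yields a term in $\mathcal{E}_{N_0,\nu}$ once the combinatorial growth is beaten by the geometric contraction. The second and third assertions follow from the identical expansions applied directly to $F(u_0 + w_1 + w_2)$ and $F(u_0 + w_1)$ with no subtraction: the leading $D^p$ piece produces an $E_{ori,2} + E_{tip,2}$ element (a direct $k$-admissibility check on the power $\lambda^{3p/2}(t\lambda)^{-2p}$ multiplied by $t^2$ gives the admissible pair $(8/3, 0)$ on $C_{tip}$), and the perturbation expansion supplies terms of equal or better smallness.

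\textbf{Main obstacle.} The principal technical difficulty is verifying uniform convergence of the double series simultaneously in the Wiener-algebra $S$-seminorms on the $C_{ori}$-side of $C_{mid}$ and in the $\tilde{\mathcal{Q}}$-growth estimates on $C_{tip}$. The geometric estimate $\|w_n\|_{S, I+3n, J, \infty} \leq c^n$ with $c < 1$ built into the definition of $V_{2k-1}$, combined with the $C^i$-growth of $\tilde{\mathcal{Q}}_\beta$-coefficients and the polynomial growth of multinomial coefficients, is what defeats the combinatorial growth of the expansion; but checking it carefully requires confirming that both ratios $(u_0 + w_1 + w_2 - D)/D$ and $w_k/D$ are genuine contractions in every relevant seminorm once $t_0$ is chosen small enough, and that the product rules compose the norms correctly at each step.
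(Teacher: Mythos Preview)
Your proposal is correct and follows essentially the same route as the paper: both arguments split the support of $(1-\chi_{[1/m,+\infty)})$ into $C_{mid}$ and $C_{tip}$, expand the nonlinearity around the explicit dominant profiles $D_{mid} = \lambda^{3/2}(t\lambda)^{-2}(15^{3/2}y^3 + C_1(\nu))$ and $D_{tip} = \lambda^{3/2}(t\lambda)^{-2}(C_1(\nu)+H(a^2))$, truncate the tail into $\mathcal{E}_{N_0,\nu}$, and feed the finitely many surviving terms through Proposition~\ref{product rules for V, E}. The only organizational difference is that the paper performs a single multinomial expansion of $F(u_0+w_1+w_2+w_k)$ in the two small ratios $(u_0+w_1+w_2-D)/D$ and $w_k/D$ simultaneously, whereas you first expand binomially in $w_k/(u_0+w_1+w_2)$ and then re-expand each prefactor $(u_0+w_1+w_2)^{p-n}$ around $D^{p-n}$; the two double sums are algebraically the same, but the paper's version avoids ever writing the fractional power of the non-explicit quantity $u_0+w_1+w_2$ and makes the finite truncation at $i,j \lesssim N_0$ more transparent.
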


\begin{proof}
    We distinguish the two regions $\frac{m}{2}(t\lambda)^{\frac{2}{3}} \leq R \leq 2(t\lambda)^{\frac{2}{3}+\varepsilon}$ and $R \gtrsim 2(t\lambda)^{\frac{2}{3}+\varepsilon}$ using a cutoff $\chi_{[1,+\infty)}$. On the first region, the nonlinearity contributes to the middle part of an $E_{\mathrm{ori},k+1}$ element, and on the second region, we get the tip part of an $E_{\mathrm{ori},k+1}$ element plus some $E_{\mathrm{tip},k+1}$ element. More precisely, we proceed as follows:
     \item[\textbf{1.} $\frac{m}{2}(t\lambda)^{\frac{2}{3}} \leq R \leq 2(t\lambda)^{\frac{2}{3}+\varepsilon}$:] We perform a multinomial expansion around the dominant component 
        $$ \frac{\lambda^{\frac{3}{2}}}{(t\lambda)^2} g\left( \frac{(t\lambda)^{\frac{2}{3}}}{R} \right), \quad g(y) = y^3  (15)^{\frac{3}{2}} + C_1(\nu),$$
        of $u_0 + w_1 + w_2 + w_k$ and $u_0 + w_1 + w_2$. Define the cutoff:
        \begin{align*}
         \chi(R,t) &:=(1-\chi_{[1,+\infty)})\left( \frac{R}{(t\lambda)^{\frac{2}{3}+\varepsilon}} \right) \cdot (1-\chi_{[1/m,+\infty)} ) \left( \frac{(t\lambda)^{\frac{2}{3}}}{R} \right),\\
         \chi(R,t) &= \left[ \chi(R,t)^{\frac{1}{n}} \right]^{n}, \quad n \geq 0.
       \end{align*}
       As $\chi_{[a,+\infty)}(R,t)$ was chosen so that $\chi > 0$ on $\mathrm{int}(\mathrm{supp}(\chi_{[a,+\infty)}))$, the $n$-th root of $\chi$ remains a product of two smooth transition functions.  Observe that:
        \begin{align}
            \left| \left( u_0 + w_1 + w_2 - \frac{\lambda^{\frac{3}{2}}}{(t\lambda)^2} g \right) \right| &\lesssim \left( \frac{\lambda^{\frac{3}{2}}}{R^4} + \frac{\lambda^{\frac{3}{2}}}{(t\lambda)^{2 + \left( \frac{2}{3} - 2\varepsilon \right)}} + a^2 \frac{\lambda^{\frac{3}{2}}}{(t\lambda)^2} \right) \notag \\
            &\lesssim \frac{\lambda^{\frac{3}{2}}}{(t\lambda)^{2 + \left( \frac{2}{3} - 2\varepsilon \right)}}, \notag \\
            |w_k | &\lesssim \frac{\lambda^{\frac{3}{2}}}{(t\lambda)^{2 + \left( \frac{2}{3} - 2\varepsilon \right) \cdot (k-1) }}, \quad \frac{m}{2}(t\lambda)^{\frac{2}{3}} \leq R \leq 2(t\lambda)^{\frac{2}{3}+\varepsilon}, \label{eq:smallness of E_1, E_2}
        \end{align}
        and the smallness is preserved under $(t \partial_t)^{l_1}(R \partial_R)^{l_2}$ as in Proposition \ref{prop:stability under differentiation} and \ref{prop:stability of smallness under differenatiation} (note that $a = 1$ is not included in this region, hence there is no singularity of type $(1-a)^{\beta}$). Define: 
        $$
        E_1:=\left(\frac{ u_0 + w_1 + w_2 - \frac{\lambda^{\frac{3}{2}}}{(t\lambda)^2} g  }{\frac{\lambda^{\frac{3}{2}}}{(t\lambda)^2} g} \right), \quad E_2:=  \left( \frac{w_{k}}{\frac{\lambda^{\frac{3}{2}}}{(t\lambda)^2} g} \right).
        $$
        Using the product rules (\ref{prop:product rule 2}) and (\ref{prop:product rule 3}) from Proposition \ref{product rules for V, E},
        $$
       E_1^i \cdot \chi^{\frac{i}{i+j}} =  \left[ \frac{\lambda^{\frac{3}{2}}}{(t\lambda)^2}  \right]^{-i} \left( u_0 + w_1 + w_2 - \frac{\lambda^{\frac{3}{2}}}{(t\lambda)^2} g \right)^i \cdot \chi^{\frac{i}{i+j}} \in \left[ \frac{\lambda^{\frac{3}{2}}}{(t\lambda)^2}  \right]^{-1} E_{\mathrm{ori},i}, \ i \geq 1,
        $$
        as well as
        $$
        E_2^j \cdot \chi^{\frac{j}{i+j}} = \left[ \frac{\lambda^{\frac{3}{2}}}{(t\lambda)^2}  \right]^{-j} w_k^j \cdot \chi^{\frac{j}{i+j}} \in \left[ \frac{\lambda^{\frac{3}{2}}}{(t\lambda)^2}  \right]^{-1} E_{\mathrm{ori},jk}, \quad j \geq 1,
         $$
         since $w_k \in V_{2k-1} \cup V_{2k}$.
       Write
       \begin{align*}
        N(R,t) &:= t^2[F(u_0 + w_1 + w_2 + w_k) - F(u_0 + w_1 + w_2)].
       \end{align*}
       We find that
       \begin{align*}
           N \cdot \chi &\simeq  \frac{\lambda^{\frac{3}{2}}}{(t\lambda)^\frac{8}{3}} \sum_{\substack{N_0 \gtrsim i \geq 0 \\ N_0 \gtrsim j \geq 1} } \binom{p}{i,j} g^p \left(\frac{ u_0 + w_1 + w_2 - \frac{\lambda^{\frac{3}{2}}}{(t\lambda)^2} g  }{\frac{\lambda^{\frac{3}{2}}}{(t\lambda)^2} g} \right)^i \chi^{\frac{i}{i+j}} \left( \frac{w_{k}}{\frac{\lambda^{\frac{3}{2}}}{(t\lambda)^2} g} \right)^j \chi^{\frac{j}{i+j}}  \\
           &\simeq   \frac{\lambda^{\frac{3}{2}}}{(t\lambda)^\frac{8}{3}}  \sum_{\substack{N_0 \gtrsim i \geq 0 \\ N_0 \gtrsim j \geq 1} } \left[ \frac{\lambda^{\frac{3}{2}}}{(t\lambda)^2}  \right]^{-1} E_{\mathrm{ori},i}  \left[ \frac{\lambda^{\frac{3}{2}}}{(t\lambda)^2}  \right]^{-1} E_{\mathrm{ori},jk}  \in E_{\mathrm{ori},k+1}
       \end{align*}
       using the product rule (\ref{prop:product rule 3}) from Proposition \ref{product rules for V, E}. The remainder term
       $$
        \eta \cdot \chi = \frac{\lambda^{\frac{3}{2}}}{(t\lambda)^\frac{8}{3}} \sum_{\substack{i \gtrsim N_0 \\ j \gtrsim N_0} } \binom{p}{i,j} g^p E_1^i E_2^j \chi
        $$
       belongs to $\mathcal{E}_{N_0,\nu}$ with no singularity since the support is restricted away from the tip of the cone. Indeed, 
      \begin{align*}
          (t\partial_t)^{l_1}(R\partial_R)^{l_2} \left[ E_1^i \right] \lesssim_{l_1,l_2} \frac{i^{l_1+l_2}}{(t\lambda)^{\left( \frac{2}{3} - 2\varepsilon \right) \cdot i}}, \quad \frac{m}{2}(t\lambda)^{\frac{2}{3}} \leq R \leq 2(t\lambda)^{\frac{2}{3}+\varepsilon} 
      \end{align*}
       using Faa di Bruno's formula, as smallness is preserved under differentiation. A similar estimate holds for $E_2^j$. Derivatives falling on $g^p$ or $\chi$ cause no loss of smallness, as was shown in (\ref{eq: derivative t R in terms of a, y}). Hence, 
       \begin{align*}
           (t\partial_t)^{l_1}(R\partial_R)^{l_2}[\eta  \cdot \chi] &\lesssim_{l_1,l_2,p} \frac{\lambda^{\frac{3}{2}}}{(t\lambda)^\frac{8}{3}} \sum_{\substack{i \gtrsim N_0 \\ j \gtrsim N_0} } \binom{p}{i,j} i^{l_1+j_2} j^{l_1+j_2}  \left( \frac{1}{(t\lambda)^{\left( \frac{2}{3} - 2\varepsilon \right) }} \right)^{(i+j)} \\
           &\lesssim_{l_1,l_2,p} \frac{\lambda^{\frac{3}{2}}}{(t\lambda)^\frac{8}{3}} \sum_{\substack{i \gtrsim N_0 \\ j \gtrsim N_0} }\binom{p}{i,j}  \left( \frac{2}{(t\lambda)^{\left( \frac{2}{3} - 2\varepsilon \right) }} \right)^{(i+j)}  \\
           &\lesssim_{l_1,l_2,p} \frac{\lambda^{\frac{3}{2}}}{(t\lambda)^{\frac{8}{3}+N_0}} \sum_{\substack{i \gtrsim N_0 \\ j \gtrsim N_0} }\binom{p}{i,j}  \left( \frac{2}{(t\lambda)^{\left( \frac{2}{3} - 2\varepsilon -\delta \right) }} \right)^{(i+j)} \\
           &\lesssim_{l_1,l_2,p} \frac{\lambda^{\frac{3}{2}}}{(t\lambda)^{\frac{8}{3}+N_0}} \left( 1 + \frac{4}{(t\lambda)^{\left( \frac{2}{3} - 2\varepsilon -\delta \right) }} \right)^{p} 
       \end{align*}
       given $\delta > 0$ small enough so that $\frac{2}{3}-2\varepsilon-\delta > 0$ and $i,j \geq \delta^{-1}N_0$. Similarly, the elements
      \begin{align*}
       t^2 F(u_0 + w_1 + w_2) \cdot \chi &\simeq  \frac{\lambda^{\frac{3}{2}}}{(t\lambda)^\frac{8}{3}} \sum_{N_0 \gtrsim i \geq 0} \binom{p}{i} g^p \left(\frac{ u_0 + w_1 + w_2 - \frac{\lambda^{\frac{3}{2}}}{(t\lambda)^2} g  }{\frac{\lambda^{\frac{3}{2}}}{(t\lambda)^2} g} \right)^i  \cdot \chi, \\
       t^2 F(u_0 + w_1) \cdot \chi &\simeq  \frac{\lambda^{\frac{3}{2}}}{(t\lambda)^\frac{8}{3}} \sum_{N_0 \gtrsim i \geq 0} \binom{p}{i} g^p \left(\frac{ u_0 + w_1 - \frac{\lambda^{\frac{3}{2}}}{(t\lambda)^2} g  }{\frac{\lambda^{\frac{3}{2}}}{(t\lambda)^2} g} \right)^i  \cdot \chi 
      \end{align*}
       are in $E_{\mathrm{ori},2}$ due to the presence of the $\dfrac{\lambda^{\frac{3}{2}}}{(t\lambda)^\frac{8}{3}}$ factor.
       
       \item[\textbf{2.} $2(t\lambda)^{\frac{2}{3}+\varepsilon} \leq R \leq (t\lambda)$:] We perform a multinomial expansion around the dominant component
        $$ \frac{\lambda^{\frac{3}{2}}}{(t\lambda)^2} q \left( \frac{R}{(t\lambda)} \right), \quad q(a) = C_1(\nu) + H(a^2)$$
        of $u_0 + w_1 + w_2 + w_k$ and $u_0 + w_1 + w_2$. Observe that
        \begin{align*}
            \left| \left( u_0 + w_1 + w_2 - \frac{\lambda^{\frac{3}{2}}}{(t\lambda)^2} q \right) \right| &\lesssim \left( \frac{\lambda^{\frac{3}{2}}}{R^3} + \frac{\lambda^{\frac{3}{2}}}{(t\lambda)^{2 + \left( \frac{2}{3} - 2\varepsilon \right)}} + (1-\chi_{[1,+\infty)}) a^2 \frac{\lambda^{\frac{3}{2}}}{(t\lambda)^2} \right) \\
            &\lesssim \frac{\lambda^{\frac{3}{2}}}{(t\lambda)^{2 + \left( \frac{2}{3} - 2\varepsilon \right)}}, \quad 2(t\lambda)^{\frac{2}{3}+\varepsilon} \leq R \leq (t\lambda)
        \end{align*}
        and the smallness is preserved under $(t \partial_t)^{l_1}(R \partial_R)^{l_2}$ as in Proposition \ref{prop:stability under differentiation} and \ref{prop:stability of smallness under differenatiation} (up to increasing the singularity at $a = 1$ as in Proposition \ref{prop:stability of smallness under differenatiation}). We remark that
      \begin{align*}
        \left( w_2 - \dfrac{\lambda^{\frac{3}{2}}}{(t\lambda)^2} q \right) \chi_{[1,+\infty)}^{\frac{1}{i+j}} &= \frac{\lambda^{\frac{3}{2}}}{(t\lambda)^2} H(a^2) \cdot (1- \chi_{[1,+\infty)}) \left( \frac{R}{(t\lambda)^{\frac{2}{3}+\varepsilon}} \right)  \chi_{[1,+\infty)} \left( \frac{R}{(t\lambda)^{\frac{2}{3}+\varepsilon}} \right)^{\frac{1}{i+j}}  \\
        &\phantom{=}+ V_4 \in E_{\mathrm{ori},1} +  E_{\mathrm{tip},1}. 
      \end{align*}
      Similarly, it holds that
      $$
(u_0+w_1) \chi_{[1,+\infty)}^{\frac{1}{i+j}} \in V_1 \subset E_{\mathrm{ori},1} +  E_{\mathrm{tip},1}.
      $$
        Then, applying the product rules from Proposition \ref{product rules for V, E}:
        $$
            \left[ \frac{\lambda^{\frac{3}{2}}}{(t\lambda)^2}  \right]^{-i} \left( u_0 + w_1 + w_2 - \frac{\lambda^{\frac{3}{2}}}{(t\lambda)^2} q \right)^i \cdot \chi_{[1,+\infty)}^{\frac{i}{i+j}} \in \left[ \frac{\lambda^{\frac{3}{2}}}{(t\lambda)^2}  \right]^{-1} (E_{\mathrm{ori},i} + E_{\mathrm{tip},i}), \quad i \geq 1.
        $$
 We conclude as in the first part that
        $$  t^2[F(u_0 + w_1 + w_2 + w_k) - F(u_0 + w_1 + w_2)] \cdot \chi_{[1,+\infty)} \in E_{\mathrm{ori},k+1} + E_{\mathrm{tip},k+1}.$$
        Similarly, it holds that
      \begin{align*}
       t^2 F(u_0 + w_1 + w_2) \cdot \chi_{[1,+\infty)} &\simeq  \frac{\lambda^{\frac{3}{2}}}{(t\lambda)^\frac{8}{3}} \sum_{N_0 \gtrsim i \geq 0} \binom{p}{i} t^2q^p \left(\frac{ u_0 + w_1 + w_2 - \frac{\lambda^{\frac{3}{2}}}{(t\lambda)^2} q  }{\frac{\lambda^{\frac{3}{2}}}{(t\lambda)^2} q} \right)^i  \cdot \chi_{[1,+\infty)} \\
       &\in E_{\mathrm{ori},2} + E_{\mathrm{tip},2}, 
      \end{align*}
      as well as
      \begin{align*}
             t^2 F(u_0 + w_1) \cdot \chi_{[1,+\infty)}  &\simeq  \frac{\lambda^{\frac{3}{2}}}{(t\lambda)^\frac{8}{3}} \sum_{N_0 \gtrsim i \geq 0} \binom{p}{i} t^2C_1(\nu)^p \left(\frac{ u_0 + w_1 - \frac{\lambda^{\frac{3}{2}}}{(t\lambda)^2} C_1(\nu)  }{\frac{\lambda^{\frac{3}{2}}}{(t\lambda)^2} C_1(\nu)} \right)^i  \cdot \chi_{[1,+\infty)} \\
       &\in E_{\mathrm{tip},2}.
       \end{align*}
\end{proof}

In the next section, we prove the following theorem:

\begin{theorem}[Construction of an approximate solution] \label{Thm:Approximate Solution}
        Assume $d = 5$. The successive errors and correction terms satisfy the following properties when $k \geq 1$:
    \begin{enumerate}
    \item  $t^2e_{2k-1} = t^2e_{2k-1}^0 + t^2e_{2k-1}^1\in E_{\mathrm{tip},k} + E_{\mathrm{ori},k}$.
    
    \item $v_{2k} \in V_{2k}$.
    
    Moreover, the function $v_2$ is non-negative everywhere on $0 \leq R  \leq (t\lambda)$, $0 < t \leq t_0 \ll 1$ and takes the form:
    \begin{align*}
        v_2(R,t) &= \frac{\lambda^{\frac{3}{2}}}{(t\lambda)^{2}}H(a^2)\chi_{[1,+\infty)}\left(  \frac{R}{(t\lambda)^{\frac{2}{3}+\varepsilon}} \right) \\
        &\phantom{=}+ \fsum_{\substack{(\alpha,i) \mathrm{2-adm} \\ j \geq 0}}\frac{\lambda^{\frac{3}{2}}}{(t\lambda)^{\alpha}}R^i \log(R)^{j} Q_{\alpha,i,j} \left(a \right)  \chi_{[1,+\infty)}\left(  \frac{R}{(t\lambda)^{\frac{2}{3}+\varepsilon}} \right),
    \end{align*}
    where $a = R/(t\lambda)$, $H(z)$ is defined as in (\ref{H(z) definition}) and is a positive function on $(0,1)$.
        
    \item $t^2e_{2k} = t^2e_{2k}^0 + t^2e_{2k}^1 \in E_{\mathrm{ori},k} + E_{\mathrm{tip},k+1}$.
    
    \item $v_{2k+1} \in V_{2k+1}$.
    \end{enumerate}
    
    In particular, the approximate solution
    $$u_{k} = u_0 + v_1 + v_2 + \sum_{i=3}^k v_i, \quad k \geq 3$$
    is positive everywhere on $0 \leq R < (t\lambda)$, $0 < t \leq t_0$, and has asymptotics
    \begin{align*}
    | (\langle R \rangle^i \partial_R^i)(t^j\partial_t^j) u_k| &\lesssim \begin{dcases}
        \lambda^{\frac{3}{2}} \quad &0 \leq R \lesssim 1 \\
        \frac{\lambda^{\frac{3}{2}}}{1+R^3} \quad &1 \lesssim R \lesssim (t\lambda)^{\frac{2}{3}} \\
        \frac{\lambda^{\frac{3}{2}}}{(t\lambda)^2}\left[1 + \left(1-\frac{R}{(t\lambda)} \right)^{\frac{1}{2}+\frac{1}{2}\nu-i-j-} \right] \quad &(t\lambda)^{\frac{2}{3}} \lesssim R < (t\lambda)
    \end{dcases} \\
    |(R^i \partial_R^i) (t^j\partial_t^j)(u_k-u_0)| &\lesssim \begin{dcases}
        \frac{\lambda^{\frac{3}{2}}}{(t\lambda)^2} R^{i+\max\{2-i,0\}} \quad &0 \leq R \lesssim 1 \\
        \frac{\lambda^{\frac{3}{2}}}{(t\lambda)^2}\left[1 + \left(1-\frac{R}{(t\lambda)} \right)^{\frac{1}{2}+\frac{1}{2}\nu-i-j-} \right] \quad &1 \lesssim R < (t\lambda)
    \end{dcases}
    \end{align*}
    where $\lesssim$ can be replaced by $\asymp$ when $i = j = 0$.
\end{theorem}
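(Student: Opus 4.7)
The plan is to establish (1)--(4) simultaneously by strong induction on $k \geq 1$, alternating an even step (constructing $v_{2k}$ from the tip component $t^2 e^0_{2k-1}$) with an odd step (constructing $v_{2k+1}$ from the origin component $t^2 e^0_{2k}$). The base case starts from the explicit $v_1$ of (\ref{v_1 formula}). To initialize (1) at $k = 1$, I would compute $t^2 e_1 = t^2[F(u_0+v_1) - F(u_0) - F'(u_0)v_1] - t^2\partial_{tt} v_1$ and treat the regions separately: on $R \gtrsim m(t\lambda)^{\frac{2}{3}}$, Corollary \ref{nonlinear rules for V,E} places the nonlinear term in $E_{ori,2} + E_{tip,2} \subset E_{ori,1} + E_{tip,1}$; on $R \leq m(t\lambda)^{\frac{2}{3}}$, I substitute the truncated multinomial expansion (\ref{truncation}), whose surviving summands lie in $S^{2n}(R^{-7+3n},\log(R)^{3N_0})$ with $c^n$-decay in the $\|\cdot\|_{S,\cdot}$ semi-norms by (\ref{Truncation infinity norm}), while the discarded tails of size $\lambda^{3/2}(t\lambda)^{-N_0}$ are absorbed into $\mathcal{E}_{N_0,\nu}$. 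The linear correction $-t^2 \partial_{tt} v_1 \in V_1$ falls into $E_{ori,1} + E_{tip,1}$ by Proposition \ref{product rules for V, E}(1).

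For the even step, given $t^2 e_{2k-1} \in E_{tip,k} + E_{ori,k}$, I set $t^2 e^0_{2k-1}$ to be its tip component and work basis-element by basis-element. Each $E_{tip,k}$ summand has the form $\frac{\lambda^{3/2}}{(t\lambda)^\alpha} R^i (\log R)^{j_1} (\log(t\lambda))^{j_2} q(a)$ with $q \in \mathcal{Q}_{\frac{1}{2}+\frac{1}{2}\nu}$. Making the ansatz $v_{2k} = t^{-\alpha} R^i (\log R)^{j_1} (\log(t\lambda))^{j_2} f(a)$ and substituting $z = a^2$, a direct computation reduces (\ref{even v}) to an inhomogeneous hypergeometric ODE with $\gamma = 5/2$, $\alpha_h + \beta_h + 1 = \alpha + 3/2$, $\alpha_h \beta_h = \alpha(\alpha+1)/4$. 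Theorem \ref{thm:inhomogeneous fuchs ode} applied at the regular singular points $z = 0$ and $z = 1$ inverts this operator against $q \in \mathcal{Q}_{\frac{1}{2}+\frac{1}{2}\nu}$ and returns a solution in $a^2 \mathcal{Q}_{\frac{1}{2}+\frac{1}{2}\nu}$ with vanishing Cauchy data at $a = 0$. Multiplying by $\chi_{[1,+\infty)}(R/(t\lambda)^{\frac{2}{3}+\varepsilon})$ yields $v_{2k} \in V_{2k}$. For $k = 1$ the dominant tip forcing (the $t^2 p u_0^{p-1} \cdot \lambda^{\frac{3}{2}}(t\lambda)^{-2}C_1(\nu)$ contribution) matches the parameters $(\tilde\alpha,\tilde\beta,\tilde\gamma)$ of (\ref{H(z) definition}), so the leading summand of $v_2$ is $\frac{\lambda^{3/2}}{(t\lambda)^2}H(a^2)$; positivity of $H$ on $[0,1)$ is Corollary \ref{monotonicity hypergeom function} (which uses $\tilde\alpha\tilde\beta > 0$, i.e.\ $\nu > 3$), and the remaining $V_2$-terms are smaller by $(t\lambda)^{-(\frac{2}{3} - 2\varepsilon)}$, giving $v_2 \geq 0$ on the cone for $t_0$ small.

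For the error update and odd step, write $t^2 e_{2k} = t^2 e^1_{2k-1} + t^2[F(u_{2k-1}+v_{2k}) - F(u_{2k-1})]$: the first piece is in $E_{ori,k}$ by construction, while the second, supported on $R \geq (t\lambda)^{\frac{2}{3}+\varepsilon}$ because $v_{2k}$ vanishes on $C_{ori}$, lies in $E_{ori,k+1} + E_{tip,k+1}$ by Corollary \ref{nonlinear rules for V,E}, closing (3). Next extract $t^2 e^0_{2k} \in E_{ori,k}$ and solve (\ref{odd v, change of variables}) in $(R,t)$ with $t$ a parameter. For each summand $\frac{\lambda^{3/2}}{(t\lambda)^{\alpha+2n}} w_n(R)$ with $w_n \in S^{2(k-1)}(R^{I+3n},\log(R)^J)$, Theorem \ref{thm:inhomogeneous fuchs ode} at the regular singular points of $\mathcal{L}$ (exponents $\{0,-3\}$ at $R = 0$, and via (\ref{step 1 at infinity}) at $R = \infty$) raises the vanishing order at zero from $2(k-1)$ to $2k$ and absorbs a factor $R^2$ into the index, precisely matching the $(\alpha,I)\mapsto(\alpha-2,I-2)$ shift in the $C_{ori}$ admissibility. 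Convergence of the resulting series follows from the $c^n$-decay of the $S$-semi-norms. Finally
\[t^2 e_{2k+1} = t^2 e^1_{2k} - t^2\partial_{tt}v_{2k+1} + t^2[F(u_{2k}+v_{2k+1}) - F(u_{2k}) - F'(u_0)v_{2k+1}],\]
whose three terms lie in $E_{tip,k+1}$, $V_{2k+1} \subset E_{ori,k+1}+E_{tip,k+1}$ (since $t^2\partial_{tt} = (t\partial_t)^2 - t\partial_t$ preserves $V_{2k+1}$ by Proposition \ref{product rules for V, E}), and (split by the middle/tip cutoff for Corollary \ref{nonlinear rules for V,E} and by the origin truncated multinomial expansion as in the base case) in $E_{ori,k+1}+E_{tip,k+2}$, closing the induction.

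The pointwise asymptotics of $u_k$ are then read off region by region from the admissibility bounds: on $C_{ori}$ each $V_{2k}$ and $V_{2k+1}$ element starts at order $R^{I+3n}$ with $I \geq 2$, which produces the $R^{i+\max\{2-i,0\}}$ behavior; on $C_{tip}$ the $(1-a)^{\frac{1}{2}+\frac{1}{2}\nu-i-j-}$ factor comes directly from the $\mathcal{Q}_{\frac{1}{2}+\frac{1}{2}\nu}$ expansion at $a = 1$ and from Proposition \ref{q algebra calculation rules} for derivatives. Positivity of $u_k$ for $k \geq 3$ follows from positivity of $u_0 + v_1 + v_2$ together with the relative smallness of $v_3, v_4, \dots$ for $t_0$ small. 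I expect the main technical obstacle to be the even step at $k = 1$: explicitly identifying the hypergeometric parameters that yield $H(a^2)$ as the dominant component of $v_2$, and verifying that every lower-order summand produced by inverting (\ref{even v}) preserves the $\mathcal{Q}_{\frac{1}{2}+\frac{1}{2}\nu}$ structure with the uniform geometric bounds of Definition \ref{definition of tilde Q}. Propagating these bounds through every subsequent Fuchs inversion and multinomial expansion, uniformly in $t \in (0,t_0]$ for a single $t_0 = t_0(\nu, N_0)$, is where the bulk of the bookkeeping lies.
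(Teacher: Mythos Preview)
Your overall inductive scheme matches the paper's, but there are three concrete gaps.

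First, you misidentify the dominant tip forcing for $v_2$. You write that it is ``the $t^2 p u_0^{p-1} \cdot \lambda^{3/2}(t\lambda)^{-2}C_1(\nu)$ contribution'', but $t^2 F'(u_0)v_1 \in \lambda^{3/2} S^2(R^{-4},\log R)$ decays like $R^{-4}$ near the tip and is subdominant. The leading constant forcing is $-t^2\partial_{tt}(v_1(r\lambda,t))\big|_{R\to\infty} = \frac{\lambda^{3/2}}{(t\lambda)^2}C_2(\nu)$ with $C_2(\nu)=\tfrac{1}{4}(\nu-3)(\nu-5)C_1(\nu)$; it is this constant that, after the change of variables $z=a^2$, produces the inhomogeneous hypergeometric equation whose solution is $H(z)=4C_1(\nu)(F(\tilde\alpha,\tilde\beta;\tilde\gamma;z)-1)$. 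Your parameters $\tilde\alpha,\tilde\beta,\tilde\gamma$ come out right only because $C_2(\nu)/(\tilde\alpha\tilde\beta)=4C_1(\nu)$, not from the potential term.

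Second, you omit the cut-off commutator errors. After solving the wave equation for $\tilde v_{2k}$ and then multiplying by $\chi_{[1,+\infty)}(R/(t\lambda)^{\frac{2}{3}+\varepsilon})$, the identity $-\square v_{2k}=e^0_{2k-1}$ fails by the terms $E^t(v_{2k})$ where at least one derivative lands on the cut-off; these are supported on $R\sim(t\lambda)^{\frac{2}{3}+\varepsilon}$ and must be shown to lie in $E_{ori,k}$ (the paper does this explicitly in (\ref{Et(v_2k)})). Your formula $t^2 e_{2k}=t^2 e^1_{2k-1}+t^2[F(u_{2k-1}+v_{2k})-F(u_{2k-1})]$ is therefore incomplete, and the same omission recurs for $E^t(v_{2k+1,ori})$ and $E^t(v_{2k+1,mid})$ in the odd step.

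Third, your odd step handles only the $e_{ori}$ component of $E_{ori,k}$. An element of $E_{ori,k}$ also carries middle and tip parts built from the cut-off functions $g((t\lambda)^{2/3}/R)$ and $h(R/(t\lambda)^{\frac{2}{3}+\varepsilon})$; inverting $\mathcal{L}$ against these is not a Fuchs-type argument but an explicit variation-of-parameters computation with truncated expansions of $\phi,\theta$ (the paper's Theorems \ref{second step 1} and \ref{third step 1}), and the admissibility bookkeeping there is different from the $(\alpha,I)\mapsto(\alpha+2,I+2)$ shift you describe. Without this, $v_{2k+1}$ is not actually constructed.
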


The analogous theorem in dimension 4 is stated in Theorem \ref{Thm:Approximate Solution d = 4}.

\begin{remark}
    we observe that 
     $$
     \left(  \int_{|x| < t} (u_k-u_0)^2 dx \right)^{\frac{1}{2}} \lesssim \frac{\lambda^{\frac{3}{2}}}{(t\lambda)^2} \left( \int_{0}^t r^4 dr \right)^{\frac{1}{2}} \lesssim t^{\frac{1}{2}\nu + 1}
     $$
    with a similar estimate for the $R$ derivatives and $t$ derivatives (we lose one power of $t$ when differentiating with respect to $t$). 

    Extending $u^e = u_k-u_0$ to all of $\mathbb R^d \times [0,t_0]$ as a function of the same size and regularity, supported on $0 < |x| < 2t$ (as described in Remark \ref{extension of v_k, e_k}), one obtains the energy decay for $u^e$ claimed in Theorem \ref{thm:blow-up, main thm}.
\end{remark}

\section{Renormalization Step: Next Iterates} \label{section:renormalization step, next iterates}

We now perform the main inductive argument of the renormalization procedure in dimension $d = 5$, explaining how to construct the even correction terms $v_{2k}$ from the error $e_{2k-1}$ by solving a wave-like equation in self-similar coordinates, and the odd correction terms $v_{2k+1}$ from the error $e_{2k}$ using an elliptic-like equation. We prove that at each step, there is a systematic decrease in the error, thereby completing the proof of Theorem \ref{Thm:Approximate Solution}.

\subsection{Proof of Theorem \ref{Thm:Approximate Solution}}
The proof is done by induction. Assuming the claimed decomposition of $t^2e_{2k-1}$ (which will be proven to be true for $k = 1$), we show that $v_{2k}$, $t^2e_{2k}$, $v_{2k+1}$, $t^2e_{2k+1}$ all have the desired form.

\subsubsection{Construction of $e_1$ from $v_1$}

The error $t^2e_1(R,t)$ is given exactly by 
\begin{align*}
    t^2e_1(R,t) &= \underbrace{t^2[F(u_0 + v_1) - F(u_0) - F'(u_0)v_1]}_{=: N(e_1)} - t^2\partial_{tt} (v_1(r\lambda,t)), 
\end{align*}
where $v_1(R,t) = \lambda^{\frac{3}{2}}(t\lambda)^{-2} V_1(R)$, $V_1(R)$ is as in (\ref{v_1 formula}), $R = rt^{-1-\nu}$, and
\begin{align*}
    t^2\partial_{tt} (v_1(r\lambda,t)) &= (t^2\partial_{tt}v_1)(R,t) + \left(-\frac{3}{2}-\frac{3}{2}\nu \right)\left(-\frac{3}{2}-\frac{3}{2}\nu -1 \right) (R\partial_Rv_1)(R,t) \\
    &\phantom{=}+ 2\left(-\frac{3}{2}-\frac{3}{2}\nu \right) (R\partial_R t\partial_t v_1)(R,t) + \left(-\frac{3}{2}-\frac{3}{2}\nu \right)^2 (R^2\partial_{RR} v_1)(R,t) \\
    &\in \frac{ \lambda^{\frac{3}{2}} }{ (t \lambda)^2 } S^2(R^0,\log(R)) \\
    t^2F(u_0) &\in \frac{ \lambda^{\frac{3}{2}} }{ (t \lambda)^{-2} } S^0(R^{-7}), \quad t^2F'(u_0)v_1 \in \lambda^{\frac{3}{2}}  S^2(R^{-4},\log(R)).
\end{align*}
Moreover, we note that $t^2\partial_{tt} (v_1(r\lambda,t))$ has a constant dominant term 
$$
\frac{1}{4}(\nu-3)(\nu-5) C_1(\nu)R^0 =: C_2(\nu)R^0
$$
at $R \to +\infty$ and that $t^2\partial_{tt} (v_1(r\lambda,t))$ is the dominant component of the error near the tip of the cone. Using Corollary \ref{nonlinear rules for V,E}, we conclude that
$$
N(e_1) \cdot (1-\chi_{[1/m,+\infty)}) \left(\frac{(t\lambda)^{\frac{2}{3}}}{R} \right) \in E_{\mathrm{ori},2} + E_{\mathrm{tip},2},
$$
and one checks separately that
\begin{align*}
 t^2\partial_{tt} (v_1(r\lambda,t)) \cdot (1-\chi_{[1/m,+\infty)}) &\in E_{\mathrm{ori},1} + E_{\mathrm{tip},1} \\
 t^2F'(u_0)v_1 \cdot (1-\chi_{[1/m,+\infty)}) &\in E_{\mathrm{ori},2} + E_{\mathrm{tip},2} \\
  t^2F(u_0) \cdot (1-\chi_{[1/m,+\infty)}) &\in E_{\mathrm{ori},2} + E_{\mathrm{tip},2}
\end{align*}
by applying Proposition \ref{inclusion S^2n into V_2k-1} and the inclusion $V_{2k-1} \subset E_{\mathrm{ori},k} + E_{\mathrm{tip},k}$. It remains to analyze the term:
$$
\left( N(e_1) - t^2 \partial_{tt}v_1 \right) \cdot \chi_{[1/m,+\infty)} \left(\frac{(t\lambda)^{\frac{2}{3}}}{R} \right),
$$
which contributes to the origin part of an $E_{\mathrm{ori},1}$ element. To see this, we perform a binomial expansion around $u_0$:
\begin{align*}
   N(e_1) \cdot \chi_{[1/m,+\infty)} \simeq  \chi_{[1/m,+\infty)}  \cdot \sum_{n=2}^{\infty} \binom{p}{n} t^2 u_0^p T \left[ \left( \frac{v_1}{u_0} \right)^{n} \right],
\end{align*}
where we recall that $T$ is the “truncation” operator defined in equation (\ref{truncation}), and
\begin{align*}
     \sum_{n=2}^{\infty} \binom{p}{n} t^2 u_0^p T \left[ \left( \frac{v_1}{u_0} \right)^{n} \right] - t^2 \partial_{tt}v_1  &\in  \sum_{n=2}^{\infty}  \dfrac{\lambda^{\frac{3}{2}}}{(t\lambda)^{2(n-1)}} S^{2n}(R^{-7+3n},\log(R)^{3N_0}) \\
    &\phantom{\in}-  \dfrac{ \lambda^{\frac{3}{2}} }{ (t \lambda)^2 } S^2(R^0,\log(R)).
\end{align*}
Hence, the sum has an appropriate form for a $\tilde{e}_{\mathrm{ori}}$ component.

\subsubsection{Construction of $v_{2k}$ from $t^2e_{2k-1}$}

For each term coming from the finite sum of $t^2e_{2k-1}^0$ on $R \geq 2(t\lambda)^{\frac{2}{3}+\varepsilon}$, i.e. each term of the form:
\begin{equation} \label{t^2 tilde(e)_(2k-1)^0}
    t^2\tilde{e}_{2k-1}^0(R,a,t):=  \frac{\lambda^{\frac{3}{2}}}{(t\lambda)^{\alpha}}q_{\alpha,i,j_1,j_2}(a)R^i \log(R)^{j_1}, \quad R > 0, a \in (0,1), 0 < t \leq t_0,
\end{equation}
where $(\alpha,i)$ is $k$-admissible, $j_1, j_2 \geq 0$ and $q_{\alpha,i,j_1,j_2}(a) \in \mathcal{Q}_{\frac{1}{2}+\frac{1}{2}\nu}$, we solve
$$t^2(-\partial_t^2 + \partial_r^2 + \frac{4}{r}\partial_r) \tilde{v}_{2k} = -t^2\tilde{e}_{2k-1}^0$$
and then apply back the omitted cutoff 
$$\chi_{[1,+\infty)}\left( \frac{R}{(t\lambda)^{\frac{2}{3}+\varepsilon}}\right)^{1+j_2}$$
to the solution $\tilde{v}_{2k}$. Summing all these solutions, we obtain the correction $v_{2k}$. As shown below in Theorem \ref{hypergeometric ode with error forcing term}, the solution takes the form:
$$
v_{2k} = \fsum_{\substack{(\alpha,i) \kadm \\ j_1,j_2 \geq 0}} \frac{\lambda^{\frac{3}{2}}}{(t\lambda)^{\alpha}} R^i \left( \sum_{0 \leq l \leq j_1} Q_{\alpha,i,j_1,j_2,l}(a) \log(R)^{l} \right) \chi_{[1,+\infty)}\left( \frac{R}{(t\lambda)^{\frac{2}{3}+\varepsilon}}\right)^{j_2},$$
where $Q_{\alpha,i,j_1,j_2,l}(a) \in a^2\mathcal{Q}_{\frac{1}{2}+\frac{1}{2}\nu}$. The correction term $v_{2k}$ has comparable size to $v_{2k-1}$ near the tip of the cone $a \sim 1$. However, we will obtain a smaller error near the tip of the cone. Using an appropriate change of variables, we reduce the problem to solving an hypergeometric equation, which we first study in the following lemma.



\begin{lemma}[Hypergeometric ODE with $\tilde{\mathcal{Q}}$ forcing term] \label{Q inhomogeneous hypergeom}
   Write $z = a \in \mathbb C$. Let $\alpha, \beta, \gamma \in \mathbb R$ such that $\gamma \notin \mathbb Z_{\leq 0}$ and $\gamma - \alpha - \beta > 0$. Let $b,r \in \mathbb R$ and $q(z) \in z^r\tilde{\mathcal{Q}}_{b}$. Then the inhomogeneous hypergeometric equation
    \begin{equation}
    z(1-z)w''(z) + (\gamma - (\alpha + \beta + 1)z)w'(z) - \alpha \beta w(z) = q(z), \quad 0 < z < 1 \label{eq:hypergoemetric equation with Q tilde forcing term}
\end{equation} 
has a particular solution $w(z) \in z^{r+1}\tilde{\mathcal{Q}}_{\min\{b+1, \gamma-\alpha-\beta\}}$. 

Moreover, if $-r-1 \notin \mathbb N_{\geq 0}$ and $-\gamma - r \notin \mathbb N_{\geq 0}$ (e.g. if $r = 0$) and the worst logarithmic singularity of $q(z)$ near $z = 0$ is bounded by $\log(z)^J$, $J \in \mathbb N_{\geq 0}$, then so is the worst logarithmic singularity of the solution $w(z)$.
\end{lemma}

\begin{proof}
Around $|z| \leq a_0$, we expand:
$$q(z) = z^r \left( \sum_{j=0}^{L}q_{j}(z) \log(z)^j \right)$$
Equation (\ref{eq:hypergoemetric equation with Q tilde forcing term}) near zero becomes
$$
w''(z) + \frac{(\gamma - (\alpha + \beta + 1)z)}{z(1-z)}w'(z) - \frac{\alpha \beta}{z(1-z)} w(z) = z^{-1}(1-z)^{-1}q(z).
$$
Hence, we must solve a finite number of hypergeometric equations of the form:
$$
w''(z) + \frac{(\gamma - (\alpha + \beta + 1)z)}{z(1-z)}w'(z) - \frac{\alpha \beta}{z(1-z)} w(z) = z^{(r+1)-2} \tilde{q}_j(z) \log(z)^j,
$$
where $\tilde{q}_j(z) = (1-z)^{-1}q_j(z)$ is holomorphic around $|z| \leq a_0$. The indicial roots at zero are $\{0,1-\gamma\}$. Using Theorem \ref{thm:inhomogeneous fuchs ode}, there exists a particular solution of the form 
$$w(z) = z^{r+1}\left( \sum_{j=0}^{L} \sum_{k=0}^{j+2} Q_{j,k}(z) \log(z)^k \right)$$
If $0 - (r+1) = -r-1 \notin \mathbb N_{\geq 0}$ and $(1-\gamma) - (r+1) = -\gamma-r \notin \mathbb N_{\geq 0}$, Theorem \ref{thm:inhomogeneous fuchs ode} also ensures that the sum over the indices $k$ only goes from $k = 0$ to $j$, i.e., $Q_{j,k} = 0$ for $k \in \{j+1,j+2\}$ and the logarithmic singularity cannot increase.

Since $q(z)$ is holomorphic on $(0,1) \subset U \subset B(0,1)$, we can use regular ODE theory to get a holomorphic extension of $w(z)$ on $U$ solving the equation. On $|z-1| < a_0$, $z^r$ is analytic, so we can write
$$q(z) = q_0(1-z) + \sum_{i=1}^{+\infty}(1-z)^{\beta(i)} \sum_{j=0}^{iL}q_{i,j}(1-z)\log(1-z)^j,$$
where either the sum is finite or $\beta(i) \geq c(i-1) + b$ for $c > 0$ small enough and the growth condition
$$||q_{i,j}||_{L^{\infty}(|z-1| < z_0)} \leq C^i$$
holds for some $C > 0$. Using Theorem \ref{thm:inhomogeneous fuchs ode} once again (the indicial roots at $z = 1$ are $\{0,\gamma-\alpha-\beta\}$), a particular solution is given by
\begin{align*}
    Q_{0}(1-z) &+ \sum_{i=1}^{+\infty}(1-z)^{\beta(i)+1} \sum_{j=0}^{L_i} \sum_{k=0}^{j+1}Q_{i,j,k}(1-z)\log(1-z)^k \\
    = Q_{0}(1-z) &+ \sum_{i=1}^{+\infty}(1-z)^{\beta(i)+1} \sum_{j=0}^{L_i+1}\tilde{Q}_{i,j}(1-z) \log(1-z)^j
\end{align*}
meaning that $w(z)$ must match this particular solution modulo some linear combination of the fundamental system (see (\ref{fuchs fundamental system}) from Appendix \ref{section:appendix, ode}) of the ODE. The fundamental system introduces a $(1-z)^{\gamma-\alpha-\beta}\log(1-z)$ singularity in the solution. We note that the hypergeometric equation near $z = 1$ with analytic forcing term $q_0(1-z)$ can always yield a logarithm-free analytic solution $Q_0(1-z)$ thanks to Remark \ref{remark:analytic sol inhomogeneous fuchs equation}.

If the expansion for $q(z)$ is finite, then so is the resulting sum for the solution. If it is infinite, then one must verify that the boundedness condition is still verified by the $\tilde{Q}_{i,j}$ when $i$ is sufficiently large. This holds due to the estimates from Theorem \ref{thm:inhomogeneous fuchs ode}, as because the growth of the $\beta(i)$ exponents is at least linear in $i$, while the logarithmic exponents growth is at most linear. In other words, when $i \in \mathbb N$ is large enough, one has 
\begin{enumerate}
    \item $\beta(i) - \max\{|r_1|,|r_2|\} > (c/2)i > 1$, where $\{r_1, r_2\}$ are the indicial roots of the equation at $z = 1$.
    \item and an exponential upper bound $$\left( \frac{j}{\beta(i)-r_k} \right)^j \lesssim C(c,L)^i, \quad k \in \{1,2\},$$
because $0 \leq j \leq iL$, $\beta(i) - \max\{|r_1|,|r_2|\} \geq (c/2)i$.
\end{enumerate}
\end{proof}

\begin{theorem}[Particular solution to (\ref{even v})]\label{hypergeometric ode with error forcing term}
Let $d \geq 1$. Let $e(R,a,t) = t^{s}q(a)R^i\log(R)^k$ where $s,i \in \mathbb R$, $s - \nu i > -(d-1)/2$, $k \in \mathbb N$, $q \in a^{\delta}\mathcal{Q}_{\beta}$, $\beta, \delta \in \mathbb R$. Then one can find a solution to
$$t^2\left(-\partial_t^2 + \partial_r^2 + \frac{d-1}{r}\partial_r\right) v = e(r\lambda, r/t, t)$$
of the form $$v(R,a,t) = t^sR^i\sum_{0 \leq l \leq k} Q_l(a)\log(R)^l$$ where $Q_l \in a^{\delta+2}\mathcal{Q}_{\min\{\beta+1,s-\nu i + \frac{d-1}{2}\}}$, $R = r \lambda$, $a = r/t$. Moreover, if $\delta - i = 0$ and $q(a)$ has no logarithmic singularity at  $a= 0$, then so does $a^{-2}Q_l(a)$ for any $l$.
\end{theorem}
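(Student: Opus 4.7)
The plan is to substitute the ansatz into the equation, reduce to a triangular ODE system for the profiles $Q_l(a)$, recast each ODE as an inhomogeneous Gauss hypergeometric equation, and apply Lemma~\ref{Q inhomogeneous hypergeom} recursively from $l=k$ downward.

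First I would pass to coordinates $(a,t)$ with $a=r/t$. Using $\partial_r = t^{-1}\partial_a$ and $\partial_t|_r = \partial_t|_a - (a/t)\partial_a$, the operator on the left reads
$$t^2\Bigl(-\partial_t^2 + \partial_r^2 + \tfrac{d-1}{r}\partial_r\Bigr) = -t^2\partial_t^2 + 2at\,\partial_t\partial_a + (1-a^2)\partial_a^2 + \Bigl(\tfrac{d-1}{a} - 2a\Bigr)\partial_a.$$
Since $R = a/t^{\nu}$, the quantity $L := \log R = \log a - \nu\log t$ satisfies $t\partial_t L = -\nu$ and $a\partial_a L = 1$. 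Substituting $v = t^s R^i \sum_{l=0}^{k} Q_l(a)\,L^l$ and collecting powers of $L$ yields, with $\sigma := s-\nu i$, a triangular system
$$A[Q_l] + (l+1)\,B[Q_{l+1}] + (l+1)(l+2)\,C[Q_{l+2}] = q(a)\,\delta_{l,k}, \qquad l=0,\ldots,k,$$
where
\begin{align*}
A[Q] &= (1-a^2)\,Q'' + \Bigl[\tfrac{2i+d-1}{a} + 2(\sigma-i-1)\,a\Bigr]Q' + \Bigl[\tfrac{i(i+d-2)}{a^2} - (\sigma-i)(\sigma-i-1)\Bigr]Q, \\
B[Q] &= \Bigl[\tfrac{2i+d-2}{a^2} + (\nu+1)(2\sigma-2i-1)\Bigr]Q + \Bigl[\tfrac{2}{a} - 2(\nu+1)\,a\Bigr]Q', \\
C[Q] &= \Bigl[\tfrac{1}{a^2} - (\nu+1)^2\Bigr]Q.
\end{align*}

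Next, I would reduce each equation $A[Q]=f$ to hypergeometric form. The indicial equation of $A$ at $a=0$ is $(\mu+i)(\mu+i+d-2)=0$, so the substitution $Q(a)=a^{-i}\hat Q(z)$ with $z=a^2$ kills the $1/a^2$ singularity, and a direct computation produces
$$z(1-z)\,\hat Q'' + \Bigl(\tfrac{d}{2} - \tfrac{3-2\sigma}{2}\,z\Bigr)\hat Q' - \tfrac{\sigma(\sigma-1)}{4}\,\hat Q = \tfrac{1}{4}\,z^{(\delta+i)/2}\,\tilde f(z),$$
a Gauss hypergeometric equation with
$$\gamma=\tfrac{d}{2}, \quad \alpha=-\tfrac{\sigma}{2}, \quad \beta=\tfrac{1-\sigma}{2}, \qquad \gamma-\alpha-\beta = \sigma + \tfrac{d-1}{2}.$$
The hypothesis $s-\nu i > -(d-1)/2$ is precisely $\gamma-\alpha-\beta>0$, the positivity condition required by Lemma~\ref{Q inhomogeneous hypergeom}.

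Finally, I would iterate the lemma downward from $l=k$. For $l=k$, the forcing $q\in a^\delta\mathcal{Q}_\beta$ corresponds (after factoring out $a^{-i}$) to a forcing in $z^{(\delta+i)/2}\tilde{\mathcal{Q}}_\beta$, so the lemma gives $Q_k \in a^{\delta+2}\mathcal{Q}_{\beta'}$ with $\beta' := \min\{\beta+1,\,\sigma+(d-1)/2\}$. For $l<k$, the product and differentiation rules of Proposition~\ref{q algebra calculation rules} (together with the fact that $1/a^2$ and $a^2$ are absorbed cleanly by the $a^{\delta+2}$ prefactor) show that $B[Q_{l+1}], C[Q_{l+2}]$ lie in $a^\delta\mathcal{Q}_{\beta'-1}$, so a further application of the lemma gives $Q_l \in a^{\delta+2}\mathcal{Q}_{\min\{\beta'-1+1,\,\sigma+(d-1)/2\}} = a^{\delta+2}\mathcal{Q}_{\beta'}$; the $\mathcal{Q}$-index stabilises and the conclusion follows. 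The main obstacle I expect is the bookkeeping in Step~3: verifying that the auxiliary operators $B$ and $C$, whose coefficients contain $(\nu+1)$-factors and the singular term $1/a^2$, really do preserve the $\mathcal{Q}_{\beta'}$-index down to a clean one-unit loss compensated by the hypergeometric "$+1$" gain, without accidentally generating additional $\log$ powers beyond $L^k$ in the ansatz.
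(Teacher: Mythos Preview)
Your proposal is correct and follows essentially the same route as the paper: reduce the wave operator in self-similar variables to a triangular system in the $\log R$ coefficients, convert the principal operator to a Gauss hypergeometric equation via $z=a^2$, and iterate Lemma~\ref{Q inhomogeneous hypergeom} downward. The only organizational difference is that the paper first absorbs $R^i=a^i(t\lambda)^i$ to reduce to $i=0$ (so that $A$ has no $i(i+d-2)/a^2$ term), whereas you keep $i$ and remove that term by the substitution $Q=a^{-i}\hat Q(a^2)$; these are equivalent, and your inclusion of the $Q'$ contribution in $B$ is correct (the paper's displayed equation for $Q_{k-1}$ omits it but the general $Q_{l-2}$ equation contains it, so the conclusion is unaffected).
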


\begin{proof}
Writing $R^i = a^i(t\lambda)^{i}$, we can assume without loss of generality that $i = 0$ and $s > -(d-1)/2$. Plugging $v$ in the equation and matching powers of $\log(R)$, we find the following system of recursive equations for $Q_k$, $Q_{k-1}$, ..., $Q_0$:
\begin{align}
    t^2(-\partial_t^2 + \partial_r^2 + \frac{d-1}{r}\partial_r) \left[ t^{s} Q_k \right]  &= t^{s} q(a),  \notag \\
    t^2(-\partial_t^2 + \partial_r^2 + \frac{d-1}{r}\partial_r)\left[ t^{s} Q_{k-1} \right]  &= (d-2)kt^{s}Q_k(a)a^{-2} +(2s-1)(\nu+1)kt^{s}Q_k(a), \notag  \\
    t^2(-\partial_t^2 + \partial_r^2 + \frac{d-1}{r}\partial_r)\left[ t^{s} Q_{l-2} \right] &= (d-2)(l-1)t^{s}Q_{l-1}(a)a^{-2}  \notag \\
    &\phantom{=}+(2s-1)(\nu+1)(l-1)t^{s}Q_{l-1}(a)  \notag \\
    &\phantom{=}- l(l-1)t^{s}Q_{l}(a)\left[(\nu + 1)^2 - a^{-2}\right] \notag \\
    &\phantom{=}-2(l-1) t^{s} Q_{l-1}'(a)\left[ (\nu + 1) a - a^{-1} \right], \label{eq: system of equation near tip of the cone}
\end{align}
where $a = r/t$, $0 < r < t < t_0$. Hence, we must solve equations of the form:
$$t^2\left(-\partial_t^2 + \partial_r^2 + \frac{d-1}{r}\partial_r\right) \left[ t^{s} w \right]  = t^{s} f \in t^{s} a^{\delta} \mathcal{Q}_{\beta},$$
which is equivalent to
$$t^2 \left( - \left( \partial_t + \frac{s}{t} \right)^2 + \partial_r^2 + \frac{d-1}{r}\partial_r \right)w(a)  = f(a) \in a^{\delta} \mathcal{Q}_{\beta},$$
or $L_{s} w(a) = f(a)$, $0 < a < 1$, where
$$L_{s} = (1-a^2)\partial_{aa} + ((d-1)a^{-1} + 2as - 2a)\partial_a + (s - s^2).$$
Finally, writing $f(a) = a^{\delta}F(a^2)$ and looking for a solution of the form $w(a) = W(a^2)$, we reduce to an hypergeometric equation for $W(z)$:
$$z(1-z)W''(z) + \left( \frac{d}{2} + z \left(s - \frac{3}{2} \right) \right) W'(z) + \frac{s - s^2}{4} = z^{\frac{\delta}{2}}F(z), \quad 0 < z < 1,$$
\index{z@$z$, a complex variable which replaces $R$ or $a$ depending on the context}
whose parameters are
$$
\tilde{\alpha} = -\frac{s}{2}, \tilde{\beta} = -\frac{s}{2} + \frac{1}{2}, \tilde{\gamma} = \frac{d}{2}.
$$
Since $d \geq 1$ and $s > -(d-1)/2$, we can use Lemma \ref{Q inhomogeneous hypergeom} to obtain a solution $W(z) \in a^{\frac{\delta}{2}+1}\tilde{\mathcal{Q}}_{\min\{\beta+1,s+\frac{d-1}{2}\}}$. This yields a solution $w(a) \in a^{\delta+2}\mathcal{Q}_{\min\{\beta+1,s+\frac{d-1}{2}\}}$. If $\delta = 0$ and $f(z)$ has no logarithmic singularity at $z = 0$, then the Lemma also implies that the $\mathcal{Q}$ part of $w(a)$ has no logarithmic singularity at $z = 0$ either.

 Since $q(a) \in a^{\delta}\mathcal{Q}_{\beta}$, we find $Q_k \in a^{\delta+2}\mathcal{Q}_{\min\{\beta+1,s + \frac{d-1}{2}\}}$. Solving for $Q_{k-1}$ using $Q_k$ leads to $$Q_{k-1} \in a^{\delta+2}\mathcal{Q}_{\min\{ \min\{\beta+1,s + \frac{d-1}{2}\} + 1, s+ \frac{d-1}{2}\}} = a^{\delta+2}\mathcal{Q}_{\min\{\beta+2,s + \frac{d-1}{2}\}} $$
Furthermore, observe that
$$
Q_{l-1}(a), a^{-2}Q_{l-1}(a), aQ'_{l-1}(a), a^{-1}Q'_{l-1}(a) = a^{-2} a \partial_a Q_{l-1}(a) \in a^{\delta} \mathcal{Q}_{\min\{\beta+2,s + \frac{d-1}{2}\}-1},
$$
and solving for the other $Q_{l-2}$'s leads to 
$$
Q_{l-2} \in a^{\delta+2}\mathcal{Q}_{\min\{ \min\{\beta+2,s+ \frac{d-1}{2}\}, s + \frac{d-1}{2}\}} = a^{\delta+2}\mathcal{Q}_{\min\{\beta+2,s+ \frac{d-1}{2}\}}.
$$
Finally, if $\delta = 0$ and $q(a)$ has no logarithmic singularity at $a = 0$, then so does $a^{-2}Q_k(a)$ and this property propagates to $Q_{k-1}$ and all $Q_{l-2}$ by induction.
\end{proof}

\begin{corollary}  \label{hypergeometric ode with error forcing term, dim 5}
In dimension $d = 5$, for a forcing term of the form: $$\frac{\lambda^{\frac{3}{2}}}{(t\lambda)^{\alpha}}q(a)R^i \log(R)^j,$$ 
with $\alpha \geq i + 2$ and $q(a) \in \mathcal{Q}_{\frac{1}{2}+\frac{1}{2}\nu}$, one can apply the theorem and obtain coefficients $Q_l \in a^2\mathcal{Q}_{\frac{1}{2} + \frac{1}{2}\nu}$ in the system (\ref{eq: system of equation near tip of the cone}).
\end{corollary}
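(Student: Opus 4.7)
The plan is to verify that every hypothesis of Theorem \ref{hypergeometric ode with error forcing term} is met with $d=5$, $\delta=0$, $\beta=\tfrac{1}{2}+\tfrac{1}{2}\nu$, and to then show the output regularity $\mathcal{Q}_{\min\{\beta+1,\,s-\nu i+(d-1)/2\}}$ that the theorem delivers actually sits inside $\mathcal{Q}_{\frac{1}{2}+\frac{1}{2}\nu}$.

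First I would rewrite the prefactor as a pure power of $t$. Using $\lambda(t)=t^{-1-\nu}$, one computes
\begin{equation*}
\frac{\lambda^{\frac{3}{2}}}{(t\lambda)^{\alpha}} \;=\; \lambda^{\frac{3}{2}-\alpha}\,t^{-\alpha} \;=\; t^{(-1-\nu)(\frac{3}{2}-\alpha)-\alpha} \;=\; t^{s}, \qquad s \;:=\; \nu\alpha - \tfrac{3}{2}(1+\nu).
\end{equation*}
Hence the forcing term is exactly of the form $t^{s}q(a)R^{i}\log(R)^{j}$ required by the theorem, with $q \in \mathcal{Q}_{\frac{1}{2}+\frac{1}{2}\nu} = a^{0}\mathcal{Q}_{\beta}$.

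Next I would check the applicability condition $s-\nu i > -(d-1)/2 = -2$. Using $\alpha \geq i+2$,
\begin{equation*}
s-\nu i \;=\; \nu(\alpha-i) - \tfrac{3}{2}(1+\nu) \;\geq\; 2\nu - \tfrac{3}{2}(1+\nu) \;=\; \tfrac{\nu}{2} - \tfrac{3}{2},
\end{equation*}
which, because $\nu > 3$, is strictly larger than $-2$ (indeed it is already positive). So Theorem \ref{hypergeometric ode with error forcing term} produces a solution of the claimed form with coefficients
\begin{equation*}
Q_{l} \;\in\; a^{0+2}\,\mathcal{Q}_{\min\{\beta+1,\;s-\nu i + (d-1)/2\}} \;=\; a^{2}\,\mathcal{Q}_{\min\{\frac{3}{2}+\frac{\nu}{2},\;s-\nu i + 2\}}.
\end{equation*}

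Finally I would compare the two arguments of the $\min$ with the target index $\tfrac{1}{2}+\tfrac{1}{2}\nu$. Clearly $\beta+1 = \tfrac{3}{2}+\tfrac{\nu}{2} \geq \tfrac{1}{2}+\tfrac{\nu}{2}$, and the same computation as above shows
\begin{equation*}
s-\nu i + 2 \;\geq\; \tfrac{\nu}{2} - \tfrac{3}{2} + 2 \;=\; \tfrac{1}{2} + \tfrac{\nu}{2}.
\end{equation*}
By the inclusion $\mathcal{Q}_{\beta'} \subset \mathcal{Q}_{\beta}$ whenever $\beta' \geq \beta$ (immediate from clause (3) in Definition \ref{definition of tilde Q}, since a larger threshold on the exponents $\beta(i)$ is a more restrictive condition), it follows that $Q_{l} \in a^{2}\mathcal{Q}_{\frac{1}{2}+\frac{1}{2}\nu}$, which is the stated conclusion. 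There is no real obstacle here: the content of the corollary is entirely bookkeeping, and the only thing worth emphasizing is that the threshold $\alpha \geq i+2$ is precisely calibrated so that the second entry of the $\min$ matches the target regularity $\beta$, so no loss is suffered by repeated application in the iteration scheme.
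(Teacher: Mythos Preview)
Your proof is correct and is exactly the computation the paper leaves implicit: rewrite the prefactor as $t^{s}$ with $s=\nu\alpha-\tfrac{3}{2}(1+\nu)$, verify $s-\nu i>-2$ from $\alpha\geq i+2$, and check that both entries of the $\min$ in Theorem \ref{hypergeometric ode with error forcing term} are at least $\tfrac{1}{2}+\tfrac{1}{2}\nu$. One minor remark: the applicability condition $s-\nu i>-2$ actually holds for any $\nu>-1$ (not just $\nu>3$), while the key point is that $s-\nu i+2\geq \tfrac{1}{2}+\tfrac{1}{2}\nu$ holds with \emph{equality} when $\alpha=i+2$, confirming your observation that the threshold $\alpha\geq i+2$ is exactly calibrated to preserve the $\mathcal{Q}_{\frac{1}{2}+\frac{1}{2}\nu}$ regularity.
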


\begin{proof}
    This is a direct application of Theorem \ref{eq:hypergoemetric equation with Q tilde forcing term} with 
\begin{align*}
        \beta &= \frac{1}{2}\nu + \frac{1}{2} \\
        s - \nu i &= \frac{3}{2}(-1-\nu) + \alpha \nu - \nu i = \nu \left( \alpha - i - \frac{3}{2} \right) - \frac{3}{2} \geq \frac{1}{2}\nu - \frac{3}{2} \\
        s-\nu i + \frac{(d-1)}{2} &\geq \frac{1}{2}\nu + \frac{1}{2}.
\end{align*}
\end{proof}

\begin{remark}[Loss of regularity in higher dimensions]\label{rmk:loss of regularity high dimension}
One of the main difficulties in generalizing this method of constructing blow-ups in higher dimensions is that solving the ODE with a forcing term $$\frac{\lambda^{\frac{d-2}{2}}}{(t\lambda)^2}$$
introduces a $(1-a)^{\frac{1}{2}+ \nu \left( \frac{6-d}{2} \right)}$ singularity, meaning that for $d > 6$, the obtained correction term is not even continuous at $a = 1$ unless $\nu > 0$ is small. 
\end{remark}

When $k = 1$, the dominant component of $-t^2\tilde{e}_1^0$ on the interval $2(t\lambda)^{\frac{2}{3}+\varepsilon} \leq R \leq (t\lambda)$ is of the form:
$$
\frac{\lambda^{\frac{3}{2}}}{(t\lambda)^{2}}C_2(\nu),
$$ 
and arises from $-t^2\partial_{tt}(v_1(r\lambda,t)) \in  \frac{\lambda^{\frac{3}{2}}}{(t\lambda)^{2}}S^{2}(R^0,\log(R))$. The remainder of the error is negligible compared to this term.

One gets a correction term $v_2$ whose dominant component is of the form:
\begin{align}
    \frac{\lambda^{\frac{3}{2}}}{(t\lambda)^{2}}H(a^2) \cdot \chi_{[1,+\infty)}\left( \frac{R}{(t\lambda)^{\frac{2}{3}+\varepsilon}}\right), \label{v_2 dominant component}
\end{align}
 where $H(z)$, $H(0) = 0$, solves
$$
z(1-z)H''(z) + \left( \frac{5}{2} + z \left(s - \frac{3}{2} \right) \right) H'(z) + \frac{s - s^2}{4} = C_2(\nu), \quad 0 < z < 1,
$$
with $s = \frac{3}{2}(-1-\nu) + 2\nu$ and $\nu > 3$. Explicitly, $H(z)$ is given by
\begin{equation*} 
    H(z) = \frac{C_2(\nu)}{\tilde{\alpha}\tilde{\beta}}\left( F(\tilde{\alpha},\tilde{\beta};\tilde{\gamma},z)-1 \right) = 4C_1(\nu)\left( F(\tilde{\alpha},\tilde{\beta};\tilde{\gamma},z)-1 \right), \quad 0 \leq z < 1,
\end{equation*}
where
$$\tilde{\alpha} = -\frac{s}{2}, \tilde{\beta} = -\frac{s}{2} + \frac{1}{2}, \tilde{\gamma} = \frac{5}{2}.$$
This is exactly the function from $\tilde{\mathcal{Q}}_{\frac{1}{2} + \frac{1}{2}\nu}$ defined earlier in (\ref{H(z) definition}) and for which $(H(z^2) + C_1(\nu))^e \in \tilde{\mathcal{Q}}_{\frac{1}{2} + \frac{1}{2}\nu}$ for any exponent $e \in \mathbb R$. 

Finally, observe that $u_0 + v_1 + v_2$, which is equal to $u_0+v_1$ on $0 \leq R \leq (t\lambda)^{\frac{2}{3}+\varepsilon}$ because of the cutoff, is also positive on $(t\lambda)^{\frac{2}{3}+\varepsilon} \leq R \leq (t\lambda)$, $0 < t \leq t_0 \ll 1$, by positivity of $H(z)$.

\subsubsection{Computation of $t^2e_{2k}$ from $v_{2k}$}

The error $t^2e_{2k}$ is given by
$$t^2e_{2k} \simeq E^t(v_{2k}) + t^2e_{2k-1}^1 + t^2[F(v_{2k} + u_{2k-1}) - F(u_{2k-1})],$$
where $E^t(v_{2k})$ denotes the components of $t^2 \square v_{2k}$ where at least one derivative falls in a cutoff $\chi_{[1,+\infty)}$. By construction, $t^2e_{2k-1}^1 \in E_{\mathrm{ori},k}$. Moreover, we prove that $E^t(v_{2k}) \in E_{\mathrm{ori},k}$ as well and 
$$
N(R,t) := t^2[F(v_{2k} + u_{2k-1}) - F(u_{2k-1})] \in E_{\mathrm{ori},k+1} + E_{\mathrm{tip},k+1}.
$$
Assume for simplicity and by linearity that we only have one term:
$$v_{2k} = \frac{\lambda^{\frac{3}{2}}}{(t\lambda)^{\alpha}} R^i \left( \sum_{0 \leq l \leq j} Q_{\alpha,i,j,l}(a) \log(R)^{l} \right) \chi_{[1,+\infty)}\left( \frac{R}{(t\lambda)^{\frac{2}{3}+\varepsilon}}\right) = \tilde{v}_{2k} \cdot \chi_{[1,+\infty)} \in V_{2k} \subset E_{\mathrm{tip},k}.$$
To prove that $E^t(v_{2k}) \in E_{\mathrm{ori},k}$, observe that
\begin{align} \label{Et(v_2k)}
    E^t(v_{2k}) &= \nu\left(\frac{2}{3}+\varepsilon\right) \chi_{[1,+\infty)}'\left( \frac{R}{(t\lambda)^{\frac{2}{3}+\varepsilon}}\right) \frac{R}{(t\lambda)^{\frac{2}{3}+\varepsilon}} t\partial_t(\tilde{v}_{2k}(r\lambda,r/t,t)) \\
    &\phantom{=}+ \nu\left(\frac{2}{3}+\varepsilon\right)\left[\nu \left(\frac{2}{3}+\varepsilon\right)-1\right] \chi_{[1,+\infty)}''\left( \frac{R}{(t\lambda)^{\frac{2}{3}+\varepsilon}}\right) \left(\frac{R}{(t\lambda)^{\frac{2}{3}+\varepsilon}} \right)^2 \tilde{v}_{2k}(r\lambda,r/t,t) \notag \\
    &\phantom{=}- a^{-2} \frac{R} {(t\lambda)^{\frac{2}{3}+\varepsilon}} \chi_{[1,+\infty)}'\left( \frac{R}{(t\lambda)^{\frac{2}{3}+\varepsilon}}\right) \tilde{v}_{2k}(r\lambda,r/t,t) \notag \\
    &\phantom{=}- a^{-2} \frac{R} {(t\lambda)^{\frac{2}{3}+\varepsilon}} \chi_{[1,+\infty)}'\left( \frac{R}{(t\lambda)^{\frac{2}{3}+\varepsilon}}\right) r\partial_r (\tilde{v}_{2k}(r\lambda,r/t,t)) \notag \\
    &\phantom{=}- a^{-2} \left( \frac{R} {(t\lambda)^{\frac{2}{3}+\varepsilon}} \right)^2 \chi_{[1,+\infty)}''\left( \frac{R}{(t\lambda)^{\frac{2}{3}+\varepsilon}}\right) \tilde{v}_{2k}(r\lambda,r/t,t). \notag
\end{align}
is an element of $E_{\mathrm{ori},k}$ supported on the region $(t\lambda)^{\frac{2}{3}+\varepsilon} \leq R \leq 2(t\lambda)^{\frac{2}{3}+\varepsilon}$. Approximating  each $Q_{\alpha,i,j}(a) \in a^2\mathcal{Q}_{\frac{1}{2}+\frac{1}{2}\nu}$ by a finite sum as in the proof of (\ref{prop:product rule 2}) from Proposition \ref{product rules for V, E}, we obtain a finite sum of $\tilde{e}_{\mathrm{mid}}$ components, together with some approximation error belonging to $\mathcal{E}_{N_0,\nu}$. For this term $E^t(v_{2k})$, there is no apparent gain of smallness compared to $t^2e_{2k-1}$ or $v_{2k}$, but the support is now near the origin.

Finally, we deal with the nonlinear part $N(R,t)$ of $t^2e_{2k}$. This part is supported on $(t\lambda)^{\frac{2}{3}+\varepsilon} \leq R \leq 2(t\lambda)^{\frac{2}{3}+\varepsilon}$. Therefore, we can introduce a harmless cutoff $(1-\chi_{[1/m,+\infty)})$, i.e.,
$$
N(R,t) = N(R,t) \cdot (1-\chi_{[1/m,+\infty)}) \left( \frac{(t\lambda)^{\frac{2}{3}}}{R}\right),
$$
and apply Corollary \ref{nonlinear rules for V,E} with $$w_1 = v_1 + \sum_{i=2}^{k}v_{2i-1}, \quad w_2 = v_2 + \sum_{i=2}^{k-1}v_{2i}, \quad w_k = v_{2k}$$ if $k > 1$ to conclude that $N(R,t) \in E_{\mathrm{ori},k+1} + E_{\mathrm{tip},k+1}$. When $k = 1$, we apply the second part of Corollary \ref{nonlinear rules for V,E} with $w_1 = v_1, w_2 = v_2$ to obtain separately $t^2 F(u_0 + v_1 + v_2)$, $t^2F(u_0+v_1) \in E_{\mathrm{ori},2} + E_{\mathrm{tip},2}$ by choosing $w_1 = v_1, w_2 = v_2$.

\subsubsection{Construction of $v_{2k+1}$ from $t^2e_{2k}$}\label{subsubsec: construction v_2k+1}

We solve (\ref{odd v}) again. As in Secion \ref{section:renormalization step, first iterate}, we are led to solve
\begin{equation}
    (t\lambda)^2\mathcal{L}v_{2k+1}(R,t) = t^2e_{2k}^0(R,t), \quad R \geq 0, \quad \mathcal{L} = -\partial_R^2 - \frac{4}{R} \partial_R - pW(R)^{p-1}, \label{eq:v_2k+1 L operator def}
\end{equation}
where $t$ is treated as a parameter and $t^2e_{2k}^0(R,t) \in E_{\mathrm{ori},k}$ is supported on $0 \leq R \leq 2(t\lambda)^{\frac{2}{3}+\varepsilon}$. We solve the equation separately for each part $t^2e_{2k,\mathrm{ori}}^0$, $t^2e_{2k,\mathrm{mid}}^0$ and $t^2e_{2k,\mathrm{tip}}^0$ and, as stated earlier, the negligible part $\eta$ can be ignored and absorbed into $t^2e_{2k}^1$ and subsequent error terms. On the domain $0 \leq R \leq m(t\lambda)^{\frac{2}{3}}$, we can solve the equation for each element 
$$\frac{\lambda^{\frac{3}{2}}}{(t\lambda)^{\alpha+2n}} w_{n}^{\alpha,I,j}(R), n \in \mathbb N,$$
where $w_{n}^{\alpha,I,j} \in S^{2(k-1)}(R^{I+3n}, \log(R)^{J})$ for some common $J \in \mathbb N_{\geq 0}$, as given in the decomposition of $t^2e_{2k}^0(R,t)$. After applying back the cutoffs $\chi_{[1/m,+\infty)}^{1+j}$ to the solution, the resulting solution is
$$v_{2k+1,\mathrm{ori}}(R,t) = \fsum_{ \substack{(\alpha,I) \kadm \\ j \geq 0} }  \sum_{n = 0}^{+\infty} \ \frac{\lambda^{\frac{3}{2}}}{(t\lambda)^{\alpha+2+2n}} W_{n}^{\alpha,I,j}(R)\chi_{[1/m,+\infty)}\left(\frac{(t\lambda)^{\frac{2}{3}}}{R} \right)^{1+j},$$
where $W_n^{\alpha,I,j} \in S^{2k}(R^{I+2+3n}, \log(R)^{J+2})$. In particular, if $(\alpha,I)$ was $k$-admissible on $C_{\mathrm{ori}}$, then the new pair $(\alpha + 2, i + 2)$ is $(k+1)$-admissible on $C_{\mathrm{ori}}$.

\begin{theorem}  \label{usual step 1}
Let $w(R,t) \in \dfrac{\lambda^{\frac{3}{2}}}{(t\lambda)^{\alpha}}S^{2n}(R^{I}, \log(R)^J)$ for some $\alpha \in \mathbb R$, $I,J,n \in \mathbb N_{\geq 0}$. Let $\mathcal{L}$ be as in (\ref{eq:v_2k+1 L operator def}). Then the correction term $v(R,t)$ defined by the equation
$$(t\lambda)^2 \mathcal{L}v(R,t) = w(R,t), \quad v(0,t) = v'(0,t) = 0,$$
 belongs to $\dfrac{\lambda^{\frac{3}{2}}}{(t\lambda)^{\alpha+2}}S^{2n+2}(R^{I+2} \log(R)^{J+2})$, and satisfies the estimates
\begin{align*}
    ||v||_{S, \mathrm{ori}} &\leq C(\mathcal{L},R_0)||w||_{S, \mathrm{ori}}, \\
    ||v||_{S,\infty} &\leq C(\mathcal{L},R_0,J,m)||w||_{S,\infty},
\end{align*}
for some constant $C$ which is independent of $n$, $I$ and $\alpha$.
\end{theorem}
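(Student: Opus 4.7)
The plan is to view the equation $(t\lambda)^2 \mathcal{L} v = w$ as an inhomogeneous ODE in $R$ with $t$ a parameter: the division by $(t\lambda)^2$ only shifts the weight from $\alpha$ to $\alpha+2$, so the essential content is to solve $\mathcal{L} v = f$ with $f \in S^{2n}(R^I,\log(R)^J)$, show $v \in S^{2n+2}(R^{I+2},\log(R)^{J+2})$, and track the semi-norms uniformly. The operator $\mathcal{L}=-\partial_R^2-(4/R)\partial_R-pW^{p-1}$ has a regular singular point at $R=0$ with indicial polynomial $r(r+3)$, hence indicial roots $\{0,-3\}$. Since $f$ has an even Taylor expansion starting at order $R^{2n}$ and the zero Cauchy data at $R=0$ exclude the $R^{-3}$ branch of the homogeneous system, Theorem \ref{thm:inhomogeneous fuchs ode} (with shift $\beta=2$) furnishes a particular solution of the form $R^{2n+2}\sum_{k\geq 0}a_k R^{2k}$, convergent on $|z|\leq \sqrt{15}/2$ and bounded in Wiener norm by $C(\mathcal{L})$ times that of $f$, uniformly in $n$ for $n$ large.

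Standard complex-analytic ODE theory then extends this solution holomorphically across the annulus $\sqrt{15}/2 \leq |z| \leq R_0$, giving control of the $L^\infty$ norm on $[\sqrt{15}/2, R_0]$ by a constant depending only on $\mathcal{L}$ and $R_0$; combined with the previous step this delivers the bound on $||v||_{S,0}$. To characterize $v$ at infinity, set $\tilde{z}=1/R$ and $V(\tilde{z})=v(1/\tilde{z})$. A direct calculation analogous to (\ref{step 1 at infinity}) turns the equation into
\begin{equation*}
V''(\tilde{z})-\frac{2}{\tilde{z}}V'(\tilde{z})+\tilde{z}^{-4}p W(\tilde{z}^{-1})^{p-1}V(\tilde{z}) = -\tilde{z}^{-I-4}\sum_{j=0}^{J}(-1)^j w_j(\tilde{z})\log(\tilde{z})^j,
\end{equation*}
which is regular singular at $\tilde{z}=0$ with indicial polynomial $r(r-3)$, hence roots $\{0,3\}$. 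A second application of Theorem \ref{thm:inhomogeneous fuchs ode} (again with $\beta=2$) produces a particular solution whose leading behavior is $\tilde{z}^{-I-2}$, with at most $J+2$ powers of $\log(\tilde{z})$: the two potential new logarithms appear exactly when the Frobenius recursion hits the resonant indices $k=I+2$ and $k=I+5$, corresponding to the indicial roots $0$ and $3$.

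Uniqueness of holomorphic extensions forces the analytic continuation of the origin-side solution to coincide with this particular solution on $\Re(\tilde{z})>0$, $|\tilde{z}|\leq R_0^{-1}$, up to a linear combination of the homogeneous fundamental system whose dominant terms in $R$ are $R^0$ and $R^{-3}$; since $I\geq 0$, these homogeneous contributions are absorbed into the $R^{I+2}\log(R)^{J+2}$ expansion of $v$, completing the identification $v\in S^{2n+2}(R^{I+2},\log(R)^{J+2})$. The large-$R$ bound $||v||_{S,\infty}\leq C(\mathcal{L},R_0,J,m)||f||_{S,\infty}$ then reads off from the quantitative coefficient estimates in Theorem \ref{thm:inhomogeneous fuchs ode} applied at $\tilde{z}=0$. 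The main technical subtlety is arguing that every constant above is genuinely independent of $n$, $I$ and $\alpha$: at $R=0$ this is because for $n$ large the indicial shift $2n+2$ stays uniformly far from the roots $\{0,-3\}$, so the Frobenius recursion becomes essentially geometric with an $n$-independent ratio; at $\tilde{z}=0$ the analogous uniformity in $I$ requires verifying that the resonant indices $I+2$ and $I+5$ produce only bounded amplification of coefficients as $I$ grows, which again follows from the quantitative version of Fuchs.
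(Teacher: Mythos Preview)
Your approach is essentially the same as the paper's: reduce to an ODE in $R$, solve near the origin via the Frobenius/Fuchs mechanism with indicial roots $\{0,-3\}$, continue across $[\sqrt{15}/2,R_0]$ by regular ODE theory, and analyze the equation at $\tilde z=1/R$ via Theorem~\ref{thm:inhomogeneous fuchs ode} with indicial roots $\{0,3\}$. Two points need correction.

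First, a minor slip: at infinity the forcing is $\tilde z^{-I-4}\sum_j w_j(\tilde z)(-\log\tilde z)^j$, so in the notation of Theorem~\ref{thm:inhomogeneous fuchs ode} one has $\beta=-I-2$, not $\beta=2$. The paper records exactly this ($r_1=3$, $r_2=0$, $\beta=-I-2$).

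Second, and more substantively, you have a gap in the $\|v\|_{S,\infty}$ estimate. You correctly observe that the true solution $v$ and the particular solution $\tilde v$ furnished at infinity by Theorem~\ref{thm:inhomogeneous fuchs ode} differ by a linear combination $c_1U_1(z^{-1})+c_2U_2(z^{-1})$ of the fundamental system, and that qualitatively these are absorbed into the $R^{I+2}\log(R)^{J+2}$ expansion since $I\geq 0$. But for the \emph{quantitative} bound $\|v\|_{S,\infty}\leq C(\mathcal L,R_0,J,m)\|w\|_{S,\infty}$ you must control $|c_1|+|c_2|$ uniformly in $I$, and Theorem~\ref{thm:inhomogeneous fuchs ode} alone says nothing about these constants---they are determined globally, not by the local Fuchs analysis at $\tilde z=0$. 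The paper closes this gap by a matching argument at the fixed point $R=R_0$: one evaluates $v(R_0),v'(R_0)$ from the origin-side integral representation (controlled by $\|w\|_{S,0}$) and $\tilde v(R_0),\tilde v'(R_0)$ from the infinity-side expansion (controlled by $R_0^I\|w\|_{S,\infty}$), then inverts the $2\times 2$ system
\[
c_1U_1(R_0^{-1})+c_2U_2(R_0^{-1})=v(R_0)-\tilde v(R_0),\qquad c_1U_1'(R_0^{-1})+c_2U_2'(R_0^{-1})=v'(R_0)-\tilde v'(R_0)
\]
to obtain $|c_1|+|c_2|\leq R_0^I\cdot C(\mathcal L,R_0,J,m)(\|w\|_{S,0}+\|w\|_{S,\infty})$. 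The factor $R_0^I$ is then compensated by the $m^{I+2}$ weight in the definition of $\|\cdot\|_{S,I+2,J+2,\infty}$ together with $\|y^{I}U_i(y)\|_{A(|y|\leq R_0^{-1})}\lesssim R_0^{-I}$. Without this step your uniformity-in-$I$ claim for $\|v\|_{S,\infty}$ is unsupported.
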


\begin{proof}
Let $w(z,t) = \lambda^{\frac{3}{2}} (t\lambda)^{-\alpha} w(z)$ and $v(z,t) =  \lambda^{\frac{3}{2}} (t\lambda)^{-\alpha-2} v(z)$. On a neighbourhood of $|z| \leq \sqrt{15}/2$, $w(z)$ is holomorphic with a zero of order $2n$. Using a logarithmic-free fundamental system $\{u_1, u_2\}$ found as in (\ref{fuchs fundamental system}) ($r_1 = 0, r_2 = -3$, $u_1(z) = o(1)$, $u_2(z) = o(z^{-3})$, $W(u_1,u_2)(z)^{-1} = o(z^4)$) on this neighbourhood, extended smoothly to $\mathbb R$ using regular ODE theory, one obtains the solution
\begin{align*}
    v(z) &= \int_{[0,z]} [u_2(z)u_1(y)-u_1(z)u_2(y)]W(u_1,u_2)(y)^{-1} w(y)dy, \\
     v'(z) &= \int_{[0,z]} [u_2'(z)u_1(y)-u_1'(z)u_2(y)]W(u_1,u_2)(y)^{-1} w(y)dy,
\end{align*}
which has the desired regularity and a zero of order $2n+2$ at $z = 0$. Moreover, 
\begin{align*}
    ||v(z)||_{L^{\infty}(F)}  + ||v'(z)||_{L^{\infty}(F)} \leq C(\mathcal{L},R_0) \cdot ||w(z)||_{L^{\infty}(F)} 
\end{align*}
where $F = \{z \in \mathbb C: |z| \leq \sqrt{15}/2 \} \cup \{z \in \mathbb R:  \sqrt{15}/2 \leq z \leq R_0 \}$, because the singularity of the integrand at the origin (which comes from $u_1,u_2$) is removable.

On a neighbourhood of $\Re(z) > 0, |z| \geq R_0$, there is an expansion
$$w(z) = z^I \sum_{j=0}^J w_{j} \log(z)^j$$ 
and we can use Theorem \ref{thm:inhomogeneous fuchs ode} ($r_1 = 3, r_2 = 0, \beta = -I-2$, see also (\ref{step 1 at infinity}) for the ODE at infinity) to find a particular solution of the form 
$$\tilde{v}(z) = z^{I+2}\sum_{j=0}^{J}\sum_{k=0}^{j+2} v_{j,k}(z^{-1})  \log(z)^k = z^{I+2} \sum_{k=0}^{J+2} \underbrace{\left( \sum_{j = \max\{0,k-2\}}^{J} v_{j,k}(z^{-1}) \right)}_{=: v_k} \log(z)^k$$
with the coefficient estimate
$$||v_{j,k}||_{A(|y| \leq R_0^{-1})} \leq C(\mathcal{L},R_0,J) ||w_{j}||_{L^{\infty}(|y| \leq R_0^{-1})}.$$
This implies that
\begin{align*}
    ||\tilde{v}||_{S,\infty} &= m^{I+2} \max_{0 \leq k \leq J+2} ||v_{k}||_{L^{\infty}(|y| \leq R_0^{-1})} \\ 
    &\leq m^{-2} J C(\mathcal{L},R_0,J) m^I \max_{0 \leq j \leq J} ||w_{j}||_{L^{\infty}(|y| \leq R_0^{-1})} \\
&\leq m^{-2} J C(\mathcal{L},J) ||w||_{S,\infty}. 
\end{align*}
Hence, $v(z)$ must match this particular solution modulo some linear combination 
$$c_1 U_1(z^{-1}) + c_2U_2(z^{-1}) = z^{I} \left( c_1 z^{-I}U_1(z^{-1}) + c_2z^{-I}U_2(z^{-1}) \right) $$
of the fundamental system $\{U_1(y), U_2(y) = \tilde{U}_2(y) + c \cdot U_1(y) \log(y) \}$ at infinity. It remains to check that 
$$||c_1 y^{I} U_1(y)||_{A^{\infty}(|y| \leq R_0^{-1})} +||c_2y^{I} \tilde{U}_2(y)||_{A^{\infty}(|y| \leq R_0^{-1})}  \leq C(\mathcal{L},R_0,J,m)||w||_{S,\infty}$$
to conclude the proof. It is sufficient to prove that
$$
|c_1| + |c_2| \leq R_0^I \cdot C(\mathcal{L},R_0,J,m) ||w||_{S,\infty}.
$$
To this end, we first observe that
$$|v(R_0)| + |v'(R_0)| \leq C(\mathcal{L},R_0) ||w||_{S, \mathrm{ori}}$$
and
$$|\tilde{v}(R_0)| + |\tilde{v}'(R_0)| \leq R_0^{I} \cdot C(\mathcal{L},R_0,J) ||w||_{S,\infty}$$
using our estimates on $v$, $v'$,$\tilde{v}$, $\tilde{v}'$. Since $c_1$, $c_2$ solves the linear system:
\begin{align*}
    c_1U_1(R_0^{-1}) + c_2U_2(R_0^{-1}) + \tilde{v}(R_0) &= v(R_0), \\
    c_1U_1'(R_0^{-1}) + c_2U_2'(R_0^{-1}) + \tilde{v}'(R_0) &= v'(R_0), 
\end{align*}
one finds that
$$
|c_1| + |c_2| \leq R_0^{I} \cdot C(\mathcal{L},R_0,J,m).
$$
This finishes the proof.
\end{proof}

 Applying back the cutoffs $$\chi_{[1/m,+\infty)}\left( \frac{(t\lambda)^{\frac{2}{3}}}{R} \right)^{1+j}$$ creates an additional error $E^t(v_{2k+1,\mathrm{ori}}) \in E_{\mathrm{ori},k+1}$, supported on $\frac{m}{2} (t\lambda)^{\frac{2}{3}} \leq R \leq m(t\lambda)^{\frac{2}{3}}$, made of those terms in $\mathcal{L}v_{2k+1,\mathrm{ori}}$ where at least one derivative hits the cutoff. Indeed, assuming for simplicity and by linearity that we have only one sum
$$v_{2k+1,\mathrm{ori}}(R,t) = \chi_{[1/m,+\infty)} \left(\frac{(t\lambda)^{\frac{2}{3}}}{R} \right) \underbrace{\sum_{n = 0}^{+\infty} \ \frac{\lambda^{\frac{3}{2}}}{(t\lambda)^{\alpha+2+2n}} W_{n}^{\alpha,I}(R)}_{=: \tilde{v}_{2k+1,\mathrm{ori}}},$$
this leads to an error term:
\begin{align*}
    E^t(v_{2k+1,\mathrm{ori}}) &=  +\frac{1}{R^2} \left( \frac{(t\lambda)^{\frac{2}{3}}}{R} \right)^2 \chi''_{[1/m,2/m]}\left( \frac{(t\lambda)^{\frac{2}{3}}}{R} \right) \tilde{v}_{2k+1,\mathrm{ori}} \\
    &-  \frac{1}{R^2} \left( \frac{(t\lambda)^{\frac{2}{3}}}{R} \right) \chi'_{[1/m,2/m]}\left( \frac{(t\lambda)^{\frac{2}{3}}}{R} \right) R\partial_R \tilde{v}_{2k+1,\mathrm{ori}} \\
    &- \frac{2}{R^2}  \left( \frac{(t\lambda)^{\frac{2}{3}}}{R} \right) \chi'_{[1/m,2/m]}\left( \frac{(t\lambda)^{\frac{2}{3}}}{R} \right) \tilde{v}_{2k+1,\mathrm{ori}}.
\end{align*}
It follows from Proposition \ref{prop:stability under differentiation} and point (\ref{prop:product rule 2}) from Proposition \ref{product rules for V, E}, as well as the gain of smallness of order $R^{-2} \sim (t\lambda)^{-\frac{4}{3}}$, that $E^t(v_{2k+1,\mathrm{ori}}) \in E_{\mathrm{ori},k+1}$. Hence, this error is absorbed into the next error $t^2e_{2k+1}$.

Next, we solve (\ref{odd v}) with each term 
 $$w(R,t) = \frac{\lambda^{\frac{3}{2}}}{(t\lambda)^{\alpha}}R^i \log(R)^{j_1} g_{\alpha,i,j_1,j_2} \left( \frac{(t\lambda)^\frac{2}{3}}{R} \right) (1-\chi_{[1,+\infty)}) \left( \frac{R}{(t\lambda)^{\frac{2}{3}+\varepsilon}} \right)^{1+j_2}$$
 coming from $t^2e_{2k,\mathrm{mid}}^0$. As before, we first solve the ODE without the cutoff, then apply the cutoff afterward. The error $E^t(v_{2k+1,\mathrm{mid}})$ caused by this simplification, which is supported on $(t\lambda)^{\frac{2}{3}+\varepsilon}\leq R \leq 2(t\lambda)^{\frac{2}{3}+\varepsilon}$, can be included in $t^2e_{2k+1}$ similarly to $E^t(v_{2k+1,\mathrm{ori}})$. In order to show this, we start with a lemma concerning primitives for our $g_{\alpha,i,j_1,j_2}(y)$ functions.

\begin{lemma} \label{lemma:primitive of z^ig(z)log(z)^j}
    Assume that $g(z)$ is smooth on $(0,+\infty)$, zero on $(2/m,+\infty)$ and expands as a finite sum of holomorphic functions and logarithms on $|z| < z_0$,
    $$g(z) = \sum_{j=0}^J g_j(z) \log(z)^j, \quad |z| < z_0, z \notin \mathbb R_{\leq 0}.$$
    
    If $I \in \mathbb Z$, there exists a primitive of $z^I g(z)$ of the form $z^{\min\{I+1,0\}}G(z)$, where $G(z)$ is smooth on $(0,+\infty)$, zero on $(2/m,+\infty)$ and expands as a finite sum of holomorphic functions and logarithms on $|z| < z_0$. Explicitly, 
    $$z^{\min\{I+1,0\}}G(z) = \int_{z}^{2/m} y^I g(y) dy$$
    when $z \in (0,+\infty)$.
\end{lemma}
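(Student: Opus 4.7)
I take $F(z) := \int_z^{2/m} y^I g(y)\, dy$ as the candidate primitive. It is smooth on $(0,+\infty)$ and vanishes on $[2/m,+\infty)$ because $g$ does. Since $F'(z) = -z^I g(z)$, the whole content of the lemma reduces to showing that $F$ has the required expansion as $z \to 0^+$.

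Near the origin, split $F(z) = \int_z^{z_0/2} y^I g(y)\, dy + C$ with $C$ a constant, substitute the given expansion $g(y) = \sum_{j=0}^{J} g_j(y)\log(y)^j$, and Taylor-expand each $g_j(y) = \sum_n a_{j,n} y^n$. The task reduces to computing, for each $n$ and $j$, the elementary antiderivative $\int y^{n+I}\log(y)^j\, dy$. Iterated integration by parts handles this: for $m \neq -1$ it produces $y^{m+1}$ times a polynomial of degree $j$ in $\log y$ whose coefficients are bounded by $j!/|m+1|^{j+1}$, while for $m = -1$ it gives $\log(y)^{j+1}/(j+1)$. Summing in $n$, the inverse-polynomial denominators ensure absolute convergence of the power-series part on $|z| < z_0$, and regrouping by powers of $\log z$ yields
\begin{equation*}
F(z) = C' + \sum_{k=0}^{J+1} H_k(z)\, \log(z)^k, \qquad 0 < |z| < z_0,
\end{equation*}
where each $H_k$ consists of a power series in $z$ convergent at $0$, at most finitely many monomials $z^{n+I+1}$ with $n + I + 1 < 0$ (present only when $I \leq -2$), and at most one constant (arising when some $n + I + 1 = 0$, which requires $I \leq -1$).

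The final step is to factor $z^{\min\{I+1,0\}}$. When $I \geq -1$ no negative exponents occur, each $H_k$ is already holomorphic at $0$, and one takes $G := F$. When $I \leq -2$, the lowest exponent arising in any $H_k$ is exactly $z^{I+1}$ (from $n = 0$), so $z^{-(I+1)} H_k(z)$ is holomorphic at $0$: each negative-power monomial $z^{n+I+1}$ becomes $z^{n}$ with $0 \leq n \leq -I-2$ (polynomial), the possible constant piece becomes the monomial $z^{-(I+1)}$, and the power-series part becomes another power series starting at a positive power. Hence $F(z) = z^{I+1} G(z)$ with $G$ of the claimed form; smoothness of $G$ on $(0,+\infty)$ and its vanishing on $(2/m,+\infty)$ are inherited from $F$. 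I do not expect any genuine obstacle; the argument is pure bookkeeping, and the only care is to verify absolute convergence of the termwise-integrated series, which follows from the inverse-polynomial factors produced by integration by parts.
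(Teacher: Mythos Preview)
Your proof is correct and follows essentially the same route as the paper: define the primitive by $\int_z^{2/m}$, expand each $g_j$ as a power series near $0$, use the explicit antiderivative of $y^{m}\log(y)^j$ (the paper states the same formula you obtain by iterated integration by parts), regroup by powers of $\log z$, and then factor out $z^{\min\{I+1,0\}}$. The only cosmetic difference is that the paper reduces WLOG to a single logarithmic exponent $j=J$ and packages the holomorphic coefficients as explicit series $A_{J,k}(z)$, whereas you keep the full sum over $j$ and describe the pieces of $H_k$ qualitatively; the content is identical.
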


\begin{proof}
Without loss of generality, we assume that $g(z) = g_{J}(z)\log(z)^J$ near $z = 0$,
$$g_J(z) = \sum_{n=0}^{\infty}g_nz^n, \quad |z| < z_0$$
instead of having a sum with different logarithmic exponents. The primitive
    $$\tilde{G}(z) = \int_{z}^{2/m} y^I g(y) dy, \quad z \in (0,+\infty)$$
     of $z^I g(z)$ is smooth on $(0,+\infty)$, zero on $(2/m,+\infty)$ and extends holomorphically on $|z| < z_0$, $z \notin \mathbb R_{\leq 0}$, via:
     $$G(z) = \int_{[z_0/2,z]} y^I g_J(y) \log(y)^J dy + \underbrace{\int_{z_0/2}^{2/m} y^I g(y) dy}_{\text{cst}}, \quad |z| < z_0, z \notin \mathbb R_{\leq 0}.$$
     We compute
     \begin{align*}
\int_{[z_0/2,z]} y^I g_J(y)\log(y)^J dy &= \int_{[z_0/2,z]} \sum_{n=0}^{\infty} g_n y^{n+I} \log(y)^J dy \\
    &= \sum_{n=0}^{\infty} \int_{[z_0/2,z]} g_n y^{n+I} \log(y)^J dy, \quad |z| < z_0, z \notin \mathbb R_{\leq 0}.
\end{align*}
If $\delta \in \mathbb R \setminus \{-1\}$, $J \in \mathbb N_{\geq 0}$, then a primitive of $y^{\delta} \log(y)^J$ is given by 
$$z^{\delta+1} \sum_{k=0}^J \frac{(-1)^k J!}{(J-k)!} \cdot \frac{\log(z)^{J-k}}{(\delta+1)^{k+1}},$$
and if $\delta = -1$, a primitive is given by $(J+1)^{-1}\log(y)^{J+1}$. For $k \in \{0,...,J\}$, let
\begin{align*}
    A_{J,k}(z) =  \frac{(-1)^k J!}{(J-k)!} \sum_{\substack{n=0 \\ n + I \neq -1}}^{+\infty} \frac{g_{n}}{(n+I+1)^{k+1} }z^n,
\end{align*}
which is holomorphic on $|z| < |z_0|$. Then, for $|z| < z_0, z \notin \mathbb R_{\leq 0}$, we obtain
$$\tilde{G}(z) = z^{I+1} \left( \sum_{k=0}^J A_{J,k}(z) \log(z)^{J-k} + z^{-I-1}g_{-1-I}(J+1)^{-1}\log(y)^{J+1}\right) + C,$$
     with the convention that $g_{-1-I} = 0$ if $I \notin \mathbb Z_{\leq -1}$. If $I + 1 \in \mathbb N_{\geq 0}$, we set $G(z) = \tilde{G}(z)$. Otherwise, we set $G(z) = z^{-I-1}\tilde{G}(z)$.
\end{proof}

\begin{theorem}  \label{second step 1}
Let $w(R,t) = \dfrac{\lambda^{\frac{3}{2}}}{(t\lambda)^{\alpha}}R^I \log(R)^J g\left(\dfrac{(t\lambda)^{\frac{2}{3}}}{R}\right)$ for some $k$-admissible pair $(\alpha, I)$ on $C_{\mathrm{mid}}$, $J \in \mathbb N_{\geq 0}$ and a smooth $g(y)$ on $(0,+\infty)$ which is zero on $(2/m,+\infty)$ and extends as a finite sum of holomorphic functions and logarithms near $0$. Let $\mathcal{L}$ be as in (\ref{eq:v_2k+1 L operator def}). Then the correction term $v(R,t)$ obtained by solving
$$(t\lambda)^2 \mathcal{L}v(R,t) = w(R,t), \quad v(0,t) = v'(0,t) = 0$$
is of the form:
$$v(R,t) = \fsum_{\substack{(\tilde{\alpha},i) \ \mathrm{(k+1)}\text{-adm} \\ 0 \leq j \leq J+2}} \frac{\lambda^{\frac{3}{2}}}{(t\lambda)^{\tilde{\alpha}}}R^{i} \log(R)^j G_{\tilde{\alpha},i,j} \left( \frac{(t\lambda)^{\frac{2}{3}} }{R}\right) + \eta,$$
where $(\tilde{\alpha},i)$ is $(k+1)$-admissible on $C_{\mathrm{mid}}$, $\eta \in \mathcal{E}_{N_0,\nu}$, $G_{\tilde{\alpha},i,j}(y)$ is smooth on $(0,+\infty)$, zero on $(2/m,+\infty)$ and extends as a finite sum of holomorphic functions and logarithms near $y = 0$.
\end{theorem}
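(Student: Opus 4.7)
Since $g(y)$ vanishes for $y > 2/m$, the forcing $w(R,t)$ is supported in the half-line $R \geq m(t\lambda)^{\frac{2}{3}}/2$, which (for $t_0$ small) lies entirely in the infinity-regime $R \geq R_0$ of $\mathcal{L}$. My plan is to build $v$ by variation of parameters using a fundamental system $\{u_1,u_2\}$ of $\mathcal{L}$ adapted to $R=\infty$. By equation (\ref{step 1 at infinity}) the indicial roots of $\mathcal{L}$ at infinity are $r_1=3$, $r_2=0$, so Theorem \ref{thm:inhomogeneous fuchs ode} (together with (\ref{fuchs fundamental system})) gives
$$u_1(R)=R^3\tilde u_1(R^{-1}), \qquad u_2(R)=\tilde u_2(R^{-1})+c\,u_1(R)\log(R),$$
with $\tilde u_1,\tilde u_2$ holomorphic near $R^{-1}=0$, extended analytically to $(0,\infty)$ by regular ODE theory, and with Wronskian $W(u_1,u_2)(y)^{-1}=Cy^4$ by Abel's formula. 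Since $w\equiv 0$ for $R\leq m(t\lambda)^{\frac{2}{3}}/2$, the particular solution
$$v(R,t)=\frac{1}{(t\lambda)^{2}}\int_{0}^{R}\bigl[u_2(R)u_1(y)-u_1(R)u_2(y)\bigr]W(u_1,u_2)(y)^{-1}w(y)\,dy$$
automatically verifies $v(0,t)=v'(0,t)=0$ and reduces, for the non-trivial range of $R$, to integration over $y\in\bigl[m(t\lambda)^{\frac{2}{3}}/2,R\bigr]$.

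\textbf{Reduction to the primitive lemma.} Expanding $u_1(y),u_2(y)$ in their Fuchs series at infinity and multiplying by $y^4\cdot y^I\log(y)^J g((t\lambda)^{\frac{2}{3}}/y)$, each integrand becomes a finite sum of elementary pieces
$$y^{n}\log(y)^{j'}\,\phi(y^{-1})\,g\!\left(\tfrac{(t\lambda)^{\frac{2}{3}}}{y}\right),\qquad n\in\mathbb Z,\ 0\leq j'\leq J,$$
with $\phi$ holomorphic near $0$. The crucial step is the substitution $z=(t\lambda)^{\frac{2}{3}}/y$, which turns $\log(y)=\tfrac{2}{3}\log(t\lambda)-\log(z)$, $y^{-1}=z/(t\lambda)^{\frac{2}{3}}$ and $dy=-(t\lambda)^{\frac{2}{3}}z^{-2}dz$. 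Expanding $\bigl(\tfrac{2}{3}\log(t\lambda)-\log z\bigr)^{j'}$ binomially extracts the $\log(t\lambda)^{j_2}$ factor, leaving primitives of the type $z^{-n-2}\log(z)^{j''}\tilde g(z)$ with $\tilde g$ still smooth, supported in $z\leq 2/m$, and holomorphic-plus-logarithms near $z=0$. Lemma \ref{lemma:primitive of z^ig(z)log(z)^j} then produces a primitive $z^{\min\{-n-1,0\}}G(z)$ of the same structural type, so translating back via $z=(t\lambda)^{\frac{2}{3}}/R$ yields, for each piece, an expression
$$\frac{\lambda^{\frac{3}{2}}}{(t\lambda)^{\alpha+2}}R^{i}\log(R)^{j}\log(t\lambda)^{j_2}\,\widehat G_{\ldots}\!\left(\tfrac{(t\lambda)^{\frac{2}{3}}}{R}\right),$$
after multiplication by $u_i(R)$ (whose Fuchs factors $R^{r_i}\times\text{analytic in }R^{-1}$ merge with the monomial and the cut-off-type function $\widehat G$) and the overall $(t\lambda)^{-2}$. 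The $\log(t\lambda)^{j_2}$ powers can be re-absorbed using the same $\log(t\lambda)=\tfrac{3}{2}\log((t\lambda)^{\frac{2}{3}}/R)+\tfrac{3}{2}\log(R)$ trick as in the definition of $V_{2k-1}$.

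\textbf{Admissibility and the remainder.} The only algebraic check remaining is that the new indices $(\tilde\alpha,i)$ inherit $(k+1)$-admissibility on $C_{mid}$. Starting from $(\alpha,I)$ $k$-admissible the worst case produces $\tilde\alpha=\alpha+2$, $i=I+2$, and a direct computation gives
$$\frac{R^{I+2}}{(t\lambda)^{\alpha+2}}=\frac{R^{2}}{(t\lambda)^{2}}\cdot\frac{R^{I}}{(t\lambda)^{\alpha}}\lesssim (t\lambda)^{2(\frac{2}{3}+\varepsilon)-2-2-(\frac{2}{3}-2\varepsilon)(k-1)}=(t\lambda)^{-2-(\frac{2}{3}-2\varepsilon)k}$$
on $C_{mid}$, which is exactly $(k+1)$-admissibility; all other terms arising from higher-order coefficients of $\tilde u_1,\tilde u_2$ trade a power of $R$ for a power of $(t\lambda)^{\frac{2}{3}}/R\leq 2/m$ and are therefore only smaller. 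Finally, the Fuchs series for $\tilde u_1,\tilde u_2$ are genuine analytic series in $R^{-1}$; truncating them at a sufficiently high order to obtain the finite sum in the statement produces a remainder which, by the growth estimates in Theorem \ref{thm:inhomogeneous fuchs ode} together with the $R\gtrsim(t\lambda)^{\frac{2}{3}}$ confinement of the support of $g$, is bounded by $\lambda^{\frac{3}{2}}(t\lambda)^{-N_0}$ with the required smoothness at the boundary $a=1$ (the singularity is only at $R=t\lambda$, which lies outside the support). Hence this remainder is an element of $\mathcal{E}_{N_0,\nu}$ and forms the $\eta$ in the statement. The main bookkeeping obstacle is keeping track of which powers of $R$, $(t\lambda)^{\frac{2}{3}}/R$, and $\log$ are generated at each step; once the pattern is isolated, the estimates reduce to repeated application of Lemma \ref{lemma:primitive of z^ig(z)log(z)^j} and Fuchs-type growth bounds.
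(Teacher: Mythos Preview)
Your approach is essentially the paper's: variation of parameters with a fundamental system expanded at $R=\infty$, truncation of the Fuchs series (remainder into $\eta\in\mathcal E_{N_0,\nu}$), the substitution $z=(t\lambda)^{2/3}/y$, application of Lemma~\ref{lemma:primitive of z^ig(z)log(z)^j}, and finally the admissibility bookkeeping. The paper implements this using the conjugated pair $\{\phi,\theta\}$ from Section~\ref{section:spectral theory of the linearized operator} together with the $R^{-2},s^2$ weights, which is equivalent to your direct use of a fundamental system for the unconjugated $\mathcal L$.

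Two small points. First, your normalization of the fundamental system is inverted: the indicial roots $3,0$ refer to the variable $z=R^{-1}$, so the solutions behave like $R^{-3}$ and $R^{0}$ at $R\to\infty$, not $R^{3}$ and $R^{0}$; this is cosmetic and does not affect the argument. Second, your admissibility paragraph treats only the single ``worst case'' $(\tilde\alpha,i)=(\alpha+2,I+2)$ and waves off the rest. The paper is more explicit here: after the primitive lemma one must distinguish whether the exponent $-i-1$ in $z^{\min\{-i-1,0\}}G(z)$ is nonnegative or not, which produces two families of monomials $(t\lambda)^{-2-\alpha+\frac{2}{3}(i+1)}R^{l}$ (with $l\leq 0$) and $(t\lambda)^{-2-\alpha}R^{l+i+1}$, each checked separately against the $(k+1)$-admissibility threshold via $i+l+1\leq I+2$. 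Your heuristic that the lower-order Fuchs coefficients ``trade a power of $R$ for a bounded factor'' is morally right but should be replaced by this two-case check to be complete.
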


\begin{remark}
    The specific initial condition for the ODE is not critical, as we will multiply the solution by the cutoffs we ignored, which will make it zero at $R = 0$.
\end{remark}

\begin{proof} 
 The solution is explicitly given by
\begin{align*}
    v(R,t) = \frac{1}{(t\lambda)^2} R^{-2}\int_{\frac{m}{2}(t\lambda)^{\frac{2}{3}}}^{R}\left[ \theta(R)\phi(s) - \theta(s)\phi(R) \right]s^2w(s,t)ds,
\end{align*}
where $\{\phi, \theta\}$ is the fundamental system from Section 4 (see (\ref{phi}) and (\ref{theta})). When $\Re(z) > 0$, $|z| \geq R_0$, we have:
\begin{align*}
    \phi(z) &= \underbrace{z^{-1}\sum_{n=0}^{3N_0+1}a_iz^{-i}}_{\tilde{\phi}(z)} + z^{-1}\sum_{n=3N_0+2}^{+\infty}a_iz^{-i} \\
        \theta(z) &= \underbrace{c \log(z)\tilde{\phi}(z) + z^{2}\sum_{n=0}^{3N_0+3}b_iz^{-i}}_{\tilde{\theta}(z)} + z^{-1}\sum_{n=3N_0+4}^{+\infty}b_iz^{-i} + c \log(z)(\phi(z)-\tilde{\phi}(z)).
\end{align*}
Plugging into the formula for $v$, we obtain
\begin{align*}
    v(R,t) &= \frac{1}{(t\lambda)^2} R^{-2}\int_{\frac{m}{2}(t\lambda)^{\frac{2}{3}}}^{R}\left[ \tilde{\theta}(R)\tilde{\phi}(s) - \tilde{\theta}(s)\tilde{\phi}(R) \right]s^2w(s,t)ds + \eta,
\end{align*}
where $\eta \in \mathcal{E}_{N_0,\nu}$. Expanding the integrand, we are led to analyze a finite number of terms of the form:
$$\frac{1}{(t\lambda)^{2+\alpha}} \log(R)^{\delta} R^{l} \int_{\frac{m}{2}(t\lambda)^{\frac{2}{3}}}^{R}s^{i} \log(s)^{j} g \left( \frac{(t\lambda)^{\frac{2}{3}}}{s} \right) ds,$$
where $\delta \in \{0,1\}$, $(\alpha,I)$ is $k$-admissible on $C_{\mathrm{mid}}$ by hypothesis, $0 \leq j \leq J + 1$ and either ($i \leq  I + 4$, $l \leq -3$) or ($i \leq  I + 1$, $l \leq 0$) depending on whether we deal with $\tilde{\theta}(R)\tilde{\phi}(s)$ or $\tilde{\theta}(s)\tilde{\phi}(R)$. We do a change of variables $s = (t\lambda)^{\frac{2}{3}}y^{-1}$ in the integral and obtain
$$
(t\lambda)^{-2-\alpha+\frac{2}{3}(i+1)} \log(R)^{\delta} R^{l} \int_{\frac{(t\lambda)^{\frac{2}{3}}}{R}}^{m/2}y^{-i-2} \left( - \log(y) + \log((t\lambda)^{\frac{2}{3}}) \right)^{j} g\left( y \right) dy.
$$
We are thus left to analyze a finite number of integrals of the form:
$$
(I) = (t\lambda)^{-2-\alpha+\frac{2}{3}(i+1)} \log((t\lambda)^{\frac{2}{3}})^{j_1}  \log(R)^{\delta} R^{l} \int_{\frac{(t\lambda)^{\frac{2}{3}}}{R}}^{m/2} y^{-i-2}  \log(y)^{j_2} g \left( y \right) dy
$$
with $j = j_1 + j_2$.
Let
$$y^{\min\{-i-1,0\}}G_{i,j_2}(y) = \int_{y}^{m/2} s^{-i-2}  \log(s)^{j_2} g \left( s \right) ds$$
be the primitive from Lemma \ref{lemma:primitive of z^ig(z)log(z)^j}, where $G_{i,j_2}(y)$ a smooth function on $(0,+\infty)$ being zero when $y > 2/m$ and which expands as a finite sum of holomorphic functions and logarithms near $y = 0$. Plugging the primitive into (I), we finally get
$$
(I) = (t\lambda)^{-2-\alpha+\frac{2}{3}(i+1)} \left[ 
 \log \left( \frac{(t\lambda)^{\frac{2}{3}}}{R} \right) + \log(R) \right]^{j_1} \log(R)^{\delta} R^{l} \left( \frac{(t\lambda)^{\frac{2}{3}}}{R} \right)^{\min\{-i-1,0\}} G_{i,j_2} \left( \frac{(t\lambda)^{\frac{2}{3}}}{R} \right).
 $$
Finally, we check the smallness. If $-i-1 \geq 0$, then the smallness is determined by
$$(t\lambda)^{-2-\alpha+\frac{2}{3}(i+1)} R^l$$
 where $l \leq 0$ and $i+l+1 \leq I+2$. In the middle region $m(t\lambda)^{\frac{2}{3}} \leq R \leq (t\lambda)^{\frac{2}{3}+\varepsilon}$, this is bounded by
\begin{align*}
    (t\lambda)^{-2-\alpha+\frac{2}{3}(i+1)+\frac{2}{3}l} &= \frac{(t\lambda)^{\frac{2}{3}(i+1+l)}}{(t\lambda)^{\alpha+2}} =  \frac{(t\lambda)^{\frac{2}{3}(I+2)}}{(t\lambda)^{\alpha+2}} \cdot (t\lambda)^{\frac{2}{3}(i+l+1-I-2)} \\
    &\lesssim \frac{(t\lambda)^{\frac{2}{3}(I+2)}}{(t\lambda)^{\alpha+2}} \lesssim \frac{|R|^I}{(t\lambda)^{\alpha}} \cdot \frac{1}{(t\lambda)^{\frac{2}{3}}}
\end{align*}
 meaning that $(\alpha + 2 - \frac{2}{3}(i+1), l)$ is $(k+1)$-admissible on $C_{\mathrm{mid}}$ if $(\alpha,I)$ is $k$-admissible. Otherwise, $-i-1<0$ and the smallness is determined by
$$(t\lambda)^{-2-\alpha} R^{l+i+1} = \frac{R^{I+2}}{(t\lambda)^{\alpha+2}} \cdot R^{l+ i-I-1}$$
 where $l+i-I-1 \leq 0$. In that case, there is also a gain of smoothness in the middle region and  $(\alpha + 2, l+i+1)$ is $(k+1)$-admissible.
\end{proof}

Finally, we handle the contribution from  $t^2e_{2k,\mathrm{tip}}^0$, with a result structurally similar to the one for $t^2e_{2k,\mathrm{mid}}^0$. However, there is a subtle difference regarding the admissibility of the resulting exponent pairs, which are admissible on $C_{\mathrm{tip}}$ while the initial pairs were only admissible on $C_{\frac{2}{3}+\varepsilon}$. Specifically, the new $R^i$-factors for the solution all have negative exponents $i \leq 0$. For such a term, admissibility on $C_{\frac{2}{3}+\varepsilon}$ and $C_{\mathrm{tip}}$ are equivalent.

 \begin{theorem} \label{third step 1}
Let $w(R,t) = \dfrac{\lambda^{\frac{3}{2}}}{(t\lambda)^{\alpha}}R^I \log(R)^J h\left(\dfrac{R}{(t\lambda)^{\frac{2}{3}+\varepsilon}}\right)$ for some $k$-admissible pair $(\alpha,I)$ on $C_{\frac{2}{3}+\varepsilon}$ and a smooth $h(y)$ which has compact support in $(1,2)$. Let $\mathcal{L}$ be as in (\ref{eq:v_2k+1 L operator def}). Then the correction term $v(R,t)$ obtained by solving
$$(t\lambda)^2 \mathcal{L}v(R,t) = w(R,t), \quad v(0,t) = v'(0,t) = 0$$
is of the form:
$$v(R,t) = \fsum_{\substack{(\tilde{\alpha},i) \ \mathrm{(k+1)}\text{-adm} \\ 0 \leq j_1 \leq 1 \\
0 \leq j_2 \leq J}} \frac{\lambda^{\frac{3}{2}}}{(t\lambda)^{\tilde{\alpha}}}R^{i} \log(R)^{j_1} \log(t\lambda)^{j_2} H_{\tilde{\alpha},i,j} \left( \frac{R}{(t\lambda)^{\frac{2}{3}+\varepsilon}} \right) + \eta,$$
where $i \leq 0$, $(\tilde{\alpha},i)$ is $(k+1)$-admissible on $C_{\mathrm{tip}}$, $\eta \in \mathcal{E}_{N_0,\nu}$ has no singularity at the boundary $R = (t\lambda)$ and $H_{\tilde{\alpha},i,j}(y)$ is smooth, constant outside $[1,2]$ and vanishes in a neighbourhood of $y = 1$.
 \end{theorem}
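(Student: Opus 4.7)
The plan is to mimic closely the proof of Theorem \ref{second step 1}, since the only structural difference is that the forcing is localized on the outer region $(t\lambda)^{\frac{2}{3}+\varepsilon} \le R \le 2(t\lambda)^{\frac{2}{3}+\varepsilon}$ via a smooth cut-off $h$ compactly supported in $(1,2)$ rather than via a function of $(t\lambda)^{\frac{2}{3}}/R$. First I would write down the explicit solution using the fundamental system $\{\phi,\theta\}$ from Section \ref{section:spectral theory of the linearized operator},
\[
v(R,t) = \frac{1}{(t\lambda)^{2}} R^{-2} \int_{\frac{m}{2}(t\lambda)^{\frac{2}{3}}}^{R} \bigl[\theta(R)\phi(s) - \theta(s)\phi(R)\bigr]\, s^{2}\, w(s,t)\, ds,
\]
and note that, thanks to the support of $h$, only values $s \sim (t\lambda)^{\frac{2}{3}+\varepsilon} \gg R_{0}$ contribute. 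Since $R \ge (t\lambda)^{\frac{2}{3}+\varepsilon}$ as well, both $\phi$ and $\theta$ can be replaced (up to a negligible $\mathcal{E}_{N_{0},\nu}$ remainder) by the truncated expansions $\tilde\phi$, $\tilde\theta$ at infinity, exactly as in the proof of Theorem \ref{second step 1}.

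Next, expanding the product $\tilde\theta(R)\tilde\phi(s)-\tilde\theta(s)\tilde\phi(R)$ reduces $v(R,t)$ to a finite linear combination of terms of the form
\[
\frac{1}{(t\lambda)^{2+\alpha}}\log(R)^{\delta} R^{l} \int_{\frac{m}{2}(t\lambda)^{\frac{2}{3}}}^{R} s^{i}\log(s)^{j}\, h\!\left(\frac{s}{(t\lambda)^{\frac{2}{3}+\varepsilon}}\right) ds,
\]
with $\delta \in \{0,1\}$, $0 \le j \le J+1$, and where $(i,l)$ satisfy either $i \le I+4,\, l\le -3$ or $i \le I+1,\, l\le 0$, according to which product of $\tilde\phi,\tilde\theta$ occurred. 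Because $h$ is supported in $(1,2)$, the integrand is zero once $s \ge 2(t\lambda)^{\frac{2}{3}+\varepsilon}$; in particular, the lower limit is effectively moved up to $(t\lambda)^{\frac{2}{3}+\varepsilon}$ once $R$ exceeds $2(t\lambda)^{\frac{2}{3}+\varepsilon}$, so the integral becomes a fixed constant (depending only on $t$) on that region.

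Then I substitute $s = (t\lambda)^{\frac{2}{3}+\varepsilon} y$, which produces factors $(t\lambda)^{(\frac{2}{3}+\varepsilon)(i+1)}$ and turns $\log(s)^{j}$ into $(\log y + (\frac{2}{3}+\varepsilon)\log(t\lambda))^{j}$. Binomially expanding this log-power produces the explicit $\log(t\lambda)^{j_{2}}$ factors allowed in $V_{2k+1}$. What remains is
\[
\int_{\frac{R}{(t\lambda)^{\frac{2}{3}+\varepsilon}}}^{2} y^{i}\log(y)^{j_{2}} h(y)\, dy,
\]
which, since $h$ is smooth with compact support in $(1,2)$, is a smooth function of $R/(t\lambda)^{\frac{2}{3}+\varepsilon}$ that is constant on $(-\infty,1]$ (equal to the full integral) and vanishes on $[2,+\infty)$; subtracting a constant cut-off then gives a smooth $H_{\tilde\alpha,i,j}$ vanishing in a neighborhood of $1$ and constant outside $[1,2]$, as required.

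The main (and only nontrivial) point is the admissibility bookkeeping. The new exponent of $R$ is $l + \min\{i+1,0\}$ after factoring out $(R/(t\lambda)^{\frac{2}{3}+\varepsilon})^{\min\{i+1,0\}}$, and by the constraints above this is always $\le 0$. Since $(\alpha,I)$ is $k$-admissible on $C_{\frac{2}{3}+\varepsilon}$, replacing $R^{i+l+1}$ by its bound on $C_{\frac{2}{3}+\varepsilon}$ (as in the final computation of the proof of Theorem \ref{second step 1}) shows that the corresponding new pair $(\tilde\alpha,\tilde\imath)$ is $(k+1)$-admissible on $C_{\frac{2}{3}+\varepsilon}$. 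The crucial observation, and the point I expect will need explicit justification, is that for $\tilde\imath \le 0$ the maximum of $R^{\tilde\imath}/(t\lambda)^{\tilde\alpha}$ over $C_{tip}$ is attained at $R \sim (t\lambda)^{\frac{2}{3}+\varepsilon}$ (the small end), hence the two $k$-admissibility conditions on $C_{\frac{2}{3}+\varepsilon}$ and $C_{tip}$ coincide for non-positive exponents. Thus $(\tilde\alpha,\tilde\imath)$ is in fact $(k+1)$-admissible on $C_{tip}$, which is what the statement claims. The collected $\mathcal{E}_{N_{0},\nu}$ error $\eta$ originates from the tail of the expansions of $\phi,\theta$ truncated at order $3N_{0}$, and has no singularity at $R=(t\lambda)$ because the forcing $w$ is supported strictly away from the tip.
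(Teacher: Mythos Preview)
Your proposal is correct and follows exactly the approach the paper intends: the paper's own proof is simply ``See Theorem~\ref{second step 1}'', together with the remark preceding the statement that the only subtlety is the admissibility bookkeeping, namely that the new $R$-exponents are all $\le 0$ so that $(k+1)$-admissibility on $C_{\frac{2}{3}+\varepsilon}$ and on $C_{tip}$ coincide. You have identified and justified precisely this point.

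One small slip: after the substitution $s=(t\lambda)^{\frac{2}{3}+\varepsilon}y$ the integral should run from (effectively) $0$ up to $R/(t\lambda)^{\frac{2}{3}+\varepsilon}$, not from $R/(t\lambda)^{\frac{2}{3}+\varepsilon}$ up to $2$. Consequently the resulting function of $R/(t\lambda)^{\frac{2}{3}+\varepsilon}$ is already zero in a neighborhood of $1$ and equal to the full integral for arguments $\ge 2$---no constant subtraction is needed. This does not affect the argument.
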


\begin{proof}
    As in the proof Theorem \ref{second step 1}, the solution $v(R,t)$ is composed of a negligible term and a finite sum of elements of the form:
    $$\frac{1}{(t\lambda)^{2+\alpha}} \log(R)^{\delta} R^{l} \int_{(t\lambda)^{\frac{2}{3}+\varepsilon}}^{R}s^{i} \log(s)^{j} h \left( \frac{s}{(t\lambda)^{\frac{2}{3}+\varepsilon}} \right) ds,$$
where $\delta \in \{0,1\}$, $(\alpha,I)$ is $k$-admissible on $C_{\mathrm{mid}}$ by hypothesis, $0 \leq j \leq J + 1$ and either ($i \leq  I + 4$, $l \leq -3$) or ($i \leq  I + 1$, $l \leq 0$) depending on whether we deal with $\tilde{\theta}(R)\tilde{\phi}(s)$ or $\tilde{\theta}(s)\tilde{\phi}(R)$.  We do a change of variables $s = (t\lambda)^{\frac{2}{3}+\varepsilon}y$ in the integral, which leads to a finite number of integrals of the form:
$$
(I) = (t\lambda)^{-2-\alpha+\left( \frac{2}{3} + \varepsilon \right) (i+1)} \log((t\lambda)^{\frac{2}{3}+\varepsilon})^{j_1}  \log(R)^{\delta} R^{l} \int_{1}^{\frac{R}{(t\lambda)^{\frac{2}{3}+\varepsilon}}} y^{i}  \log(y)^{j_2} h \left( y \right) dy
$$
with $j = j_1 + j_2$.
Let
$$H_{i,j_2}(y) = \int_{1}^{y} s^{i}  \log(s)^{j_2} h \left( s \right) ds.$$
Since $y^j \log(y)^{j_2} h(y)$ is smooth with compact support in $(1,2)$, the primitive $H_{i,j_2}(y)$ is a smooth function on $(0,+\infty)$, vanishing when $y < 1$ and constant when $y > 2$. We finally get the desired form:
$$
(I) =(t\lambda)^{-2-\alpha+\left( \frac{2}{3} + \varepsilon \right) (i+1)}  \log((t\lambda)^{\frac{2}{3}+\varepsilon})^{j_1}   \log(R)^{\delta} R^{l} H_{i,j_2} \left( \frac{R}{(t\lambda)^{\frac{2}{3}+\varepsilon}} \right).
 $$
Finally, we check the smallness, which is determined by
$$(t\lambda)^{-2-\alpha+\left( \frac{2}{3} + \varepsilon \right) (i+1)}R^l,$$
 where $l \leq 0$ and $i+l+1 \leq I+2$. On $R \sim (t\lambda)^{\frac{2}{3}+\varepsilon}$ or on the tip of the cone $(t\lambda)^{\frac{2}{3}+\varepsilon} \leq R \leq (t\lambda)$, this is bounded by
\begin{align*}
    (t\lambda)^{-2-\alpha+\left( \frac{2}{3} + \varepsilon \right) (i+1+l)} &= \frac{(t\lambda)^{\left( \frac{2}{3} + \varepsilon \right) (i+1+l)}}{(t\lambda)^{\alpha+2}} =  \frac{(t\lambda)^{\left( \frac{2}{3} + \varepsilon \right) (I+2)}}{(t\lambda)^{\alpha+2}} \cdot (t\lambda)^{\left( \frac{2}{3} + \varepsilon \right) (i+l+1-I-2)} \\
    &\lesssim \frac{(t\lambda)^{\left( \frac{2}{3} + \varepsilon \right) (I+2)}}{(t\lambda)^{\alpha+2}} \lesssim \frac{|R|^I}{(t\lambda)^{\alpha}} \cdot \frac{1}{(t\lambda)^{\frac{2}{3}-2\varepsilon}},
\end{align*}
 meaning that $\left[ \alpha + 2 - \left(\frac{2}{3}+\varepsilon \right) (i+1), l\right]$ is $(k+1)$-admissible on $C_{\frac{2}{3}+\varepsilon}$ if $(\alpha,I)$ is $k$-admissible. Since $l \leq 0$, admissibility on $C_{\frac{2}{3}+\varepsilon}$ and on $C_{\mathrm{tip}}$ are equivalent.
\end{proof}

All of this leads to the desired correction term $v_{2k+1} \in V_{2k+1}$.

\subsubsection{Computation of $t^2e_{2k+1}$ from $v_{2k+1}$}

The error term $t^2e_{2k+1} = t^2e_{2k+1}^0 + t^2e_{2k+1}^1\in E_{\mathrm{tip},k+1} + E_{\mathrm{ori},k+1}$ is given by
\begin{align*}
     t^2e_{2k+1}^0 &\simeq
     t^2[F(v_{2k+1} + u_{2k}) - F(u_{2k}) - F'(u_0)v_{2k+1}] \\
     &+ t^2\partial_{tt} v_{2k+1} + t^2e_{2k}^1 + E^t(v_{2k+1}),
\end{align*}
    where $E^t(v_{2k+1})$ consists of those components in $\mathcal{L} v_{2k+1,\mathrm{ori}}$ and $\mathcal{L} v_{2k+1,\mathrm{mid}}$ where at least one derivative hits the cutoff. It has already been established in Section \ref{subsubsec: construction v_2k+1} that $E^t(v_{2k+1}) \in E_{\mathrm{ori},k+1}$. A straightforward computation also shows that $t^2\partial_{tt} v_{2k+1} \in V_{2k+1}$ and we know, by Proposition \ref{product rules for V, E}, that
    $$V_{2k+1} \subset E_{\mathrm{ori},k+1} + E_{\mathrm{tip},k+1}.$$ 
    Hence, it remains to verify that
\begin{align*}
    N(e_{2k+1}) &= t^2[F(v_{2k+1} + u_{2k}) - F(u_{2k}) -F'(u_0)v_{2k+1}  ] \\
&\in E_{\mathrm{ori},k+1} + E_{\mathrm{tip},k+1}.
\end{align*}
Using Corollary \ref{nonlinear rules for V,E} with 
 $$w_1 = v_1 + \sum_{i=2}^{k}v_{2i-1}, \quad w_2 = v_2 + \sum_{i=2}^{k}v_{2i}, \quad w_k = v_{2k+1},$$ 
 we only need to handle
 $$
 N(e_{2k+1})_{\mathrm{ori}}:=  \chi_{[1/m,+\infty)} \left(\frac{(t\lambda)^{\frac{2}{3}}}{R} \right)\cdot N(e_{2k+1}) \in E_{\mathrm{ori},k+1}.
 $$
On the region $0 \leq R \leq m(t\lambda)^{\frac{2}{3}}$, write
$$N(e_{2k+1}) = t^2[F(v_{2k+1} + u_{2k}) - F(u_{2k}) - F'(u_{2k})v_{2k+1} + (F'(u_{2k})-F'(u_0))v_{2k+1}  ], $$
so that
\begin{align}
N(e_{2k+1})_{\mathrm{ori}} &\simeq  \chi_{[1/m,+\infty)} \cdot \sum_{ \substack{ i \geq 0 \\ 3N_0+1 \geq j \geq 0 \\
3N_0+1 \geq l \geq 2}} \binom{p}{i,j,l} t^2 u_0^p T \left[ \left( \frac{v_1}{u_0} \right)^{i} \right] \left( \frac{u_{2k}-u_0-v_1}{u_0} \right)^{j} \left( \frac{v_{2k+1}}{u_0} \right)^{l} \notag \\
&+ \chi_{[1/m,+\infty)}  \cdot \frac{v_{2k+1}}{u_0} \sum_{ \substack{i,j \geq 0 \\ i+j \geq 1 \\ 3N_0+1 \geq j \geq 0 }} \binom{p-1}{i,j} t^2 u_0^{p}  T \left[ \left( \frac{v_1}{u_0} \right)^{i} \right]  \left( \frac{u_{2k}-u_0-v_1}{u_0} \right)^{j}, \label{N(e_2k+1)^1_ori}
\end{align}
where $T$ is the truncation operator from (\ref{truncation}). Observe the following additional facts:
\begin{enumerate}
    \item \begin{align*}
    \chi_{[1/m,+\infty)} \cdot \sum_{i \geq 2} c_i  t^2 u_0^p T \left[ \left( \frac{v_1}{u_0} \right)^{i} \right] &\in V_1
\end{align*}
for any $(c_i)_{i\geq 0} \in \ell^{\infty}$.
\item For any $k \geq 1$, 
$$\chi_{[1/m,+\infty)}  u_0^{-1} V_{2k-1}  \cdot \subset \chi_{[1/m,+\infty)} \cdot \left[ \frac{ \lambda^{\frac{3}{2}} } {(t\lambda)^2} \right]^{-1} V_{2k-1} \subset V_{2k-1},$$
and
\begin{align*}
    t^2 u_0^p V_{2k-1} \subset \lambda^{\frac{3}{2}} (t\lambda)^2 V_{2k-1}, \quad
    t^2 u_0^p \left( \frac{v_1}{u_0} \right) V_{2k-1} \subset \lambda^{\frac{3}{2}} V_{2k-1}.
\end{align*}
\end{enumerate}

Examining first sum in (\ref{N(e_2k+1)^1_ori}), we must distinguish three cases : $i = 0, i = 1$ and $i \geq 2$ cases. Using the product rules from Proposition \ref{product rules for V, E}, we find that we have a finite sum of the form:
\begin{align*}
     \sum_{ \substack{ 3N_0+1 \geq j \geq 0 \\
3N_0+1 \geq l \geq 2}}  \left[ t^2u_0^p + t^2 u_0^p \left( \frac{v_1}{u_0} \right) + V_{1} \right] \cdot V_{2 \cdot 2 - 1}^j \cdot V_{2(k+1)-1}^l \cdot  \left[ \frac{ \lambda^{\frac{3}{2}} } {(t\lambda)^2} \right]^{-j-l}\\
\subset \left( 1 + (t\lambda)^2 + (t\lambda)^4 \right) E_{\mathrm{ori},k+3} \subset  E_{\mathrm{ori},k+1}.
\end{align*}
The other sum is treated analogously.

\section{The spectral theory of the linearized operator} \label{section:spectral theory of the linearized operator}
At this point, we pivot from constructing the approximate solution to developing the analytical tools required to find the final correction term needed to obtain an exact solution of (NLW) on the light cone. When solving equation (\ref{space-variable equation epsilon}) for $\varepsilon$ in Section \ref{section:exact solution fourier method}, the following Sturm-Liouville operator arises
\begin{align*}
    \mathcal{L} &= -\partial_{RR} - pW(R)^{p-1} + \frac{1}{R^2} \cdot \left( \frac{(d-3)(d-1)}{4} \right) \\
    &= -\partial_{RR} - \frac{d^2(d+2)(d-2)}{(R^2 + d(d-2))^2} + \frac{1}{R^2} \cdot \left( \frac{(d-3)(d-1)}{4} \right) \\
    &= -\partial_{RR} + V(R)
\end{align*}
\index{L@$\mathcal{L}, \mathcal{L}_R$, the perturbed Schrödinger operator}
on $(0,+\infty)$, which is self-adjoint on 
$$\dom(\mathcal{L}) = \{ f \in L^2((0,+\infty)): f,f' \in AC_{loc}((0,+\infty)), \ \mathcal{L}f \in L^2((0,+\infty))\}.$$
This section is dedicated to studying this perturbed Schrödinger operator. We will characterize its spectrum, construct its generalized eigenfunctions $\phi(R,\xi)$ as well as a spectral measure $d\rho(\xi)$ which will be key tools to construct a generalized Fourier transform. When expressing the equation for $\varepsilon$ in the generalized Fourier space, $\mathcal{L}$ is transformed into a multiplication by $\xi$, which will make the equation easier to solve using a fixed point argument.

We will study this operator for general $d \in \mathbb N_{\geq 4}$ and will later restrict to $d \in \{4,5\}$. We aim to obtain precise asymptotic estimates on the spectral measure, the eigenfunctions and the Jost solution. A fundamental system $\{\phi(R), \theta(R)\}$ of $\mathcal{L}f = 0$ with $W(\theta,\phi) = 1$ is given by
\begin{equation}
    \phi(R) = R^{\frac{d-1}{2}} \frac{d}{d \lambda}\Bigr|_{\lambda = 1} \left( \lambda^{\frac{d-2}{2}}W(\lambda R) \right) =  \frac{R^{\frac{d-1}{2}}(R^2-d(d-2))}{(R^2+d(d-2))^{\frac{d}{2}}}
    \label{phi}
\end{equation}
\begin{equation}
    \theta(R) = \frac{R^{\frac{d-1}{2}}(R^2-d(d-2))}{(R^2+d(d-2))^{\frac{d}{2}}} \cdot \int^R \frac{1}{\phi(s)^2}ds, 
    \label{theta}
\end{equation}
\index{phi@$\phi(R)$, an explicit element of the fundamental system for $\mathcal{L}u = 0$}
\index{theta@$\theta(R)$, an explicit element of the fundamental system for $\mathcal{L}u = 0$}
where 
$$
\int^R \frac{1}{\phi(s)^2}ds = \begin{dcases}
a \log(R) + \frac{b}{R^2-d(d-2)} + \sum_{\subalign{i&=-(d-2) \\ i &= 0 \mod 2}}^{d-2} c_i R^i &\text{ if } d = 0 \mod 2 \\
\phantom{a \log(R) +} \frac{bR}{R^2-d(d-2)} + \sum_{\subalign{i&=-(d-2) \\ i &= 1 \mod 2}}^{d-2} c_i R^i &\text{ if } d = 1 \mod 2,
\end{dcases}
$$
and the following asymptotics hold for $\phi$ and $\theta$:
\begin{equation}
    \phi(R) \asymp \begin{cases}
    R^{\frac{d-1}{2}}, \quad R \to 0 \\
    R^{\frac{3-d}{2}}, \quad R \to +\infty \\
    \end{cases} \quad 
    \theta(R) \asymp \begin{cases}
    R^{\frac{3-d}{2}}, \quad R \to 0 \\
    R^{\frac{d-1}{2}}, \quad R \to +\infty \\
    \end{cases} 
    \label{phi theta asympotics}
\end{equation}
with symbol-type behaviour of the derivatives.

From the behaviour of $\mathcal{L}$ at the endpoints and the number of zeros of $\phi(R)$ on $(0,+\infty)$, one deduces the following spectral properties for $\mathcal{L}$:

\begin{proposition}[Properties of the Sturm-Liouville operator]
For $d \in \mathbb N_{\geq 4}$, $\mathcal{L}$ is limit point at zero and limit point at infinity. Moreover, the spectrum of $\mathcal{L}$ decomposes as follows:
\begin{enumerate}
    \item Essential spectrum: $\spec_{ess}(\mathcal L)= [0,+\infty)$.
    \item Absolutely continuous spectrum: $\spec_{ac}(\mathcal L) = [0,+\infty)$.
    \item Singularly continuous spectrum: $\spec_{sc}(\mathcal L) = \emptyset$.
    \item Pure point spectrum: $\spec_{pp}(\mathcal L) = \{\xi_4\}$ if $d = 4$ and $\spec_{pp}(\mathcal L) = \{\xi_d,0\}$ otherwise for some $\xi_d < 0$.
\end{enumerate}
\end{proposition}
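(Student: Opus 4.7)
The plan is to treat each of the four spectral claims separately, using classical Sturm-Liouville theory and short-range scattering theory.

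\textbf{Limit-point behaviour.} Near $R = 0$, the centrifugal term $\frac{(d-3)(d-1)}{4R^2}$ dominates the bounded smooth part $-pW^{p-1}$, and the indicial roots of the model equation at the origin are $\frac{d-1}{2}$ and $\frac{3-d}{2}$; the second root produces a solution of size $R^{(3-d)/2}$, which is not in $L^2$ near $0$ for any $d \geq 4$, so a standard perturbative version of Weyl's endpoint criterion yields limit-point at $0$. At infinity $V(R)$ is bounded and tends to $0$, so the equation $\mathcal{L}u = zu$ with $\Im z > 0$ has one exponentially decaying and one exponentially growing solution, giving limit-point at $+\infty$.

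\textbf{Essential and absolutely continuous spectrum.} I would decompose $\mathcal{L} = \mathcal{L}_0 + W_0$ with
$$\mathcal{L}_0 = -\partial_{RR} + \frac{(d-3)(d-1)}{4R^2}, \qquad W_0(R) = -pW(R)^{p-1}.$$
Conjugation by $R^{(d-1)/2}$ identifies $\mathcal{L}_0$ with the radial part of $-\Delta$ on $\mathbb R^d$, whose spectrum is purely absolutely continuous equal to $[0,+\infty)$. Since $W(R) \sim c_d R^{-(d-2)}$ at infinity, $W_0$ is bounded with $W_0(R) = O(R^{-4})$, so $W_0(\mathcal{L}_0+i)^{-1}$ is compact and Weyl's theorem gives $\spec_{ess}(\mathcal{L}) = [0,+\infty)$. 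Moreover $W_0$ is short-range ($\int_1^{+\infty} R |W_0(R)| dR < +\infty$), so Kato-Birman theory yields existence and completeness of the wave operators $\Omega_{\pm}(\mathcal{L},\mathcal{L}_0)$ and hence unitary equivalence of the a.c.\ parts, giving $\spec_{ac}(\mathcal{L}) = [0,+\infty)$.

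\textbf{Absence of singular continuous spectrum.} A Levinson-type analysis at infinity using $W_0 = O(R^{-4})$ shows that for every $\lambda > 0$ the two solutions of $\mathcal{L}u = \lambda u$ behave asymptotically like $e^{\pm i\sqrt{\lambda}R}$ with equal modulus, so neither is subordinate to the other. Gilbert-Pearson subordinacy theory then forces the spectrum on $(0,+\infty)$ to be purely absolutely continuous, and together with $\spec_{ess}(\mathcal{L}) = [0,+\infty)$ this leaves only discrete eigenvalues in $(-\infty,0]$, so $\spec_{sc}(\mathcal{L}) = \emptyset$.

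\textbf{Point spectrum.} The function $\phi$ from (\ref{phi}), arising from differentiating the scaling family $\lambda^{(d-2)/2}W(\lambda R)$ at $\lambda = 1$, satisfies $\mathcal{L}\phi = 0$ and has exactly one zero in $(0,+\infty)$, at $R_0 = \sqrt{d(d-2)}$. From $\phi(R) \sim R^{(3-d)/2}$ at infinity one sees $\phi \in L^2(0,+\infty)$ iff $d \geq 5$, so $0 \in \spec_{pp}(\mathcal{L})$ exactly in dimensions $d \geq 5$. Existence of at least one negative eigenvalue is immediate from testing the quadratic form on the Sobolev-extremal $W$, and Sturm oscillation (as $\lambda$ decreases from $0$, the number of zeros in $(0,+\infty)$ of the regular-at-$0$ solution drops by one at each eigenvalue) combined with the single zero of $\phi$ pins down exactly one eigenvalue $\xi_d < 0$. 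The main obstacle I foresee is the careful Sturm-oscillation count in the limit-point setting at both endpoints, in particular cleanly separating the case $d = 4$ (where $\phi \notin L^2$ and its single zero directly marks the negative eigenvalue $\xi_4$) from $d \geq 5$ (where $\phi \in L^2$ is the first excited state at energy $0$, and a zero-node ground state $\xi_d < 0$ must exist below it).
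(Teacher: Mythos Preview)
Your proposal is correct and aligns with the paper's approach, which simply cites standard references (Fulton for the limit-point/limit-circle dichotomy; Teschl, Weidmann, and Dunford--Schwartz for the spectral decomposition) and remarks that everything follows from the behaviour of $\phi(R)$ and of the potential $V(R)$. You have essentially spelled out what those references provide: Weyl endpoint analysis, relatively compact perturbation for $\spec_{ess}$, short-range scattering / subordinacy for the absence of singular continuous spectrum, and Sturm oscillation using the single zero of $\phi$ for the point spectrum. One small caveat: your variational test of the quadratic form on $W$ does not literally work in $d=4$ since $R^{(d-1)/2}W(R)\sim R^{-1/2}\notin L^2$ at infinity; the oscillation argument you give afterwards is the robust one and already suffices on its own.
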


\begin{proof}
All of these properties follow only from the behaviour of $\phi(R)$ and the potential $V(R)$. One can look at \cite{Fulton1} for the limit-point, limit-circle dichotomy and at \cite[Lemma 9.35]{Teschl}, \cite[Theorem 15.3]{Weidmann}, \cite[Chapter XIII.7, Theorem 40, Theorem 55 and Corollary 56]
{Dunford} for the remainder of the statement.
\end{proof}

More generally, one can find a fundamental system $\{\phi(R,z), \theta(R,z)\}$ for $\mathcal{L}u = zu$ of the following form:
\index{phi z@$\phi(R,z)$, part the fundamental system for $\mathcal{L}u = zu$ obtained in Proposition \ref{fundamental system Lu = zu}}
\index{theta z@$\theta(R,z)$, part the fundamental system for $\mathcal{L}u = zu$ obtained in Proposition \ref{fundamental system Lu = zu}}

\begin{proposition}[Expansion for $\phi(R,z)$]\label{fundamental system Lu = zu}
For $z \in \mathbb C$, there exists a fundamental system $\{\phi(R,z), \theta(R,z)\}$, $W(\theta,\phi) = 1$, for $\mathcal{L}u = zu$, real-valued whenever $z \in \mathbb R$ and such that $\phi(R,z)$ is given by the following absolutely convergent series:
\begin{equation}
    \phi(R,z) = \phi(R) + R^{\frac{d-1}{2}} \sum_{j=1}^{\infty}(R^2z)^j \phi_j(R^2),
    \label{phi(R,z)}
\end{equation}
where $\phi_j$ is holomorphic on $U = \{z \in \mathbb C: \Re(z) > -\frac{1}{2}d(d-2)\}$,
\begin{align*}
|\phi_j(u)| &\leq \frac{C^j}{(j-1)!} (1 + |u|)^{-1}  \cdot \log(1 + |u|)^{\delta_{d = 4}}, \quad \forall u \in U, \\
|\phi_1(u)| &\gtrsim (1 + |u|)^{-1}  \cdot \log(1 + |u|)^{\delta_{d = 4}}, \quad \forall u \in U, |u| \gtrsim 1,
\end{align*}
for some constant $C > 0$ independent of $j$ and $u$ (the logarithm appears only in dimension $d = 4$). Moreover, $\theta(R,z)$ is entire with respect to $z$, $R^{\frac{d-3}{2}}\theta(R,z) \in C^0([0,+\infty) \times \mathbb C)$ and it is a Frobenius type solution in the following sense:
$$\lim \limits_{R \to 0} R^{-(l+1)} \frac{d^{n_l + 1}}{dz^{n_l + 1}}\theta(R,z) = 0, \quad l = \frac{d-3}{2}, n_l = \floor{l + 1/2}.$$
\end{proposition}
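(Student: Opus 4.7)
The strategy is to build $\phi(R,z)$ as a Neumann-type power series in the spectral parameter and then produce $\theta(R,z)$ by an analogous construction enforcing a Frobenius normalization. Writing $\phi(R,z) = \sum_{j \geq 0} z^j \psi_j(R)$ with $\psi_0 = \phi(R)$, the requirement $\mathcal{L}\phi(R,z) = z\phi(R,z)$ is equivalent to $\mathcal{L}\psi_j = \psi_{j-1}$ for $j \geq 1$. Using the unperturbed fundamental system $\{\phi, \theta\}$ with $W(\theta, \phi) = 1$ and variation of parameters, I would define
\begin{equation*}
\psi_j(R) = \int_0^R \bigl[\phi(R)\theta(s) - \theta(R)\phi(s)\bigr]\,\psi_{j-1}(s)\,ds,
\end{equation*}
which solves $\mathcal{L}\psi_j = \psi_{j-1}$ with leading behavior $\psi_j(R) = O(R^{(d-1)/2 + 2j})$ at the origin. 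Integrability at $s = 0$ follows inductively from this vanishing combined with $\theta(s) = O(s^{(3-d)/2})$ from (\ref{phi theta asympotics}). A constant Wronskian argument at $R \to 0$ will ensure $W(\theta(\cdot, z), \phi(\cdot, z)) = 1$ throughout.

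\textbf{Change of variables and analyticity.} To obtain the stated form $\psi_j(R) = R^{(d-1)/2}(R^2)^j \phi_j(R^2)$ with $\phi_j$ holomorphic on $U = \{\Re(u) > -d(d-2)/2\}$, I would rewrite the ODE in $u = R^2$. Setting $\psi_j(R) = R^{(d-1)/2} h_j(u)$, a direct computation gives
\begin{equation*}
\tilde{\mathcal{L}}\,h_j = h_{j-1}, \qquad \tilde{\mathcal{L}} := -4u\,\partial_u^2 - 2d\,\partial_u - \frac{d^2(d+2)(d-2)}{(u + d(d-2))^2},
\end{equation*}
an ODE whose singularities are regular and located only at $u = 0$ (indicial roots $\{0,\, 1-d/2\}$) and at $u = -d(d-2)$. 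The base $h_0(u) = (u - d(d-2))/(u+d(d-2))^{d/2}$ is holomorphic on $U$, and Frobenius analysis at $u = 0$ combined with ODE continuation along paths in $U$ avoiding $u = -d(d-2)$ propagates holomorphy inductively to yield $h_j(u) = u^j \phi_j(u)$ with $\phi_j \in \mathcal{O}(U)$. Observe that the indicial root difference $|1 - d/2|$ is non-integer for odd $d$ (so no logarithms appear for $d = 5$) and integer for even $d$ (giving the log for $d = 4$).

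\textbf{Quantitative bounds.} To obtain $|\phi_j(u)| \lesssim C^j/[(j-1)!(1+|u|)]$ (with an extra $\log(1+|u|)$ for $d = 4$), I would rewrite the iteration as a Volterra-type integral equation $h_j(u) = \int_0^u G(u,v)\, h_{j-1}(v)\, dv$, where $G$ is the Green's function of $\tilde{\mathcal{L}}$ built from $\{h_0, \tilde\theta_0\}$, with $\tilde\theta_0$ the companion Frobenius solution at $u = 0$ carrying a $\log u$ factor exactly when $d$ is even. Uniform bounds on $G(u,v)\langle v \rangle$ over $U$ follow from the large-$u$ asymptotics of $h_0$ and $\tilde\theta_0$, and a Volterra iteration closes under the ansatz $|h_j(u)| \leq AC^j/(j-1)! \cdot |u|^j/(1+|u|)$. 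The factorial gain $1/(j-1)!$ arises from the iterated integration, and the decay $(1+|u|)^{-1}$ at infinity is preserved by the oscillatory cancellations in $G$. The lower bound on $\phi_1$ follows from explicit asymptotics: at large $R$, $\psi_1(R) \sim -\theta(R)\int_0^{\infty}\phi(s)^2\,ds$ for $d \geq 5$ (the integral converges and is strictly positive), while $\psi_1(R) \sim -\theta(R)\int_0^R \phi^2\,ds \asymp R^{(d-1)/2}\log R$ for $d = 4$, giving the stated lower bound.

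\textbf{Construction of $\theta(R,z)$ and main obstacle.} For $\theta(R,z)$ I would proceed analogously with $\chi_0 = \theta(R)$ and $\mathcal{L}\chi_j = \chi_{j-1}$, solving by variation of parameters and at each order adding the multiple of $\phi(R)$ that enforces the Frobenius normalization $\lim_{R\to 0}R^{-(l+1)}\chi_{n_l+1}(R) = 0$; with $n_l = \floor{l + 1/2}$, this is precisely the index at which the integer indicial shift $d-2$ would otherwise feed a $R^{(d-1)/2}$ component back into $\chi_{n_l+1}$, so a unique such adjustment exists. Entireness of $\theta(R,z)$ in $z$ and continuity of $R^{(d-3)/2}\theta(R,z)$ on $[0,+\infty)\times\mathbb{C}$ follow from the same factorial estimates as for $\phi(R,z)$. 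The hardest step will be the sharp uniform control of $G(u,v)$ on the unbounded half-plane $U$: simultaneously tracking the factorial gain, the $(1+|u|)^{-1}$ decay, and, in the resonant case $d = 4$, the logarithmic contribution arising from the $\log u$ factor in $\tilde\theta_0$ is what makes the argument delicate.
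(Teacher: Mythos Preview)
Your approach is essentially the same as the paper's: a power series ansatz in $z$, variation of parameters for each coefficient, and factorial bounds from the iterated integration. The lower bound on $\phi_1$ via $\int_0^\infty \phi^2$ (and its logarithmically divergent analogue for $d=4$) is also exactly what the paper does.

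Two comments on where the paper's execution differs from yours. First, rather than bounding a Green's function $G(u,v)$ directly, the paper introduces the auxiliary quantity $F_{j-1}(R) = \int_0^R s^{-(d-1)/2}\phi(s)f_{j-1}(s)\,ds$ and integrates by parts to write $f_j(R) = R^{(d-1)/2}\phi(R)\int_0^R F_{j-1}(s)\,c'(s)\,ds$ (plus a regular piece), where $c(R) = \int^R \phi^{-2}$. This double induction on $(|f_j|, |F_j|)$ is what makes the $(1+|u|)^{-1}$ decay and the $(j-1)!^{-1}$ gain drop out simultaneously; it also transparently handles the holomorphic continuation across the simple zero of $\phi$ at $R_0 = \sqrt{d(d-2)}$ via an explicit split $c = c_1 + c_2$. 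Your abstract Volterra closure would need the same care at that zero, and your phrase ``oscillatory cancellations in $G$'' is a red herring: there are no oscillations here, the decay is purely algebraic and comes from the $\phi$-factor inside $F_{j-1}$.

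Second, for $\theta(R,z)$ the paper does not carry out the construction you sketch but simply cites an existing reference (\cite{Kostenko2}) for the existence of a Frobenius-normalized entire companion solution. Your proposed construction is reasonable, but the paper treats this as known.
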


\begin{proof}
The exact form of $\theta(R,z)$ does not matter. We only need its existent, which is proved in \cite{Kostenko2}. As for $\phi(R,z)$, we try to look for a solution of the form: $$\phi(R,z) = R^{-\frac{d-1}{2}}\sum_{j=0}^{\infty}z^jf_j(R), \quad \phi(0,z) = \phi'(0,z) = 0,$$
where $f_0(R) = R^{\frac{d-1}{2}}\phi(R)$. Then, $f_j$ must solve
$$\mathcal{L}(R^{-\frac{d-1}{2}}f_j) = R^{-\frac{d-1}{2}}f_{j-1}, \quad f_j(0) = f_j'(0) = 0.$$
By induction, we find such solutions $f_j$ and prove that $f_j$ has a zero of order $R^{(d-1)+2j}$ at $R = 0$, $R^{-(d-1)}f_j(R) = g_j(R^2)$ for some $g_j$ analytic on $U$ and for $z \in U$, $j \geq 1$:
$$|f_j(z)| \leq \frac{C^j}{(j-1)!} (1+|z|)^{(d-1)+2(j-1)}(\cdot \log(1 + |z|) \text{ if } d = 4).$$
Using the variation of parameters, one gets
\begin{align*}
    f_j(R) &= -\int_0^R \frac{ R^{\frac{d-1}{2}} }{ s^{\frac{d-1}{2}} } \left[\phi(R)\theta(s) - \phi(s)\theta(R) \right] f_{j-1}(s)ds.
\end{align*}
Let $c(R) = \int^s \frac{1}{\phi(s)^2} ds$ as in (\ref{theta}) and
$$F_{j-1}(R) = \int_{0}^R s^{-\frac{d-1}{2}}\phi(s)f_{j-1}(s)ds = \int_{0}^R \frac{s^2-d(d-2)}{(s^2+d(d-2))^{\frac{d}{2}}}f_{j-1}(s)ds.$$
Then $F_{j-1}(R)$ has a zero of order $R^{d + 2(j-1)}$ at $R = 0$ and $R^{-d}F_{j-1}(R) = G_{j-1}(R^2)$ where $G_{j-1}$ is analytic on $U$. Hence, for $R < d(d-2)$, we can use integration by parts and write 
\begin{align}
    f_j(R) &= -R^{\frac{d-1}{2}} \phi(R) \int_0^R s^{-\frac{d-1}{2}}\phi(s) \left[c(s)-c(R) \right] f_{j-1}(s)ds \notag \\
    &= R^{\frac{d-1}{2}} \phi(R) \int_0^R F_{j-1}(s)c'(s)ds \notag \\
    &= \frac{R^{d-1}(R^2-d(d-2))}{(R^2+d(d-2))^{\frac{d}{2}}} \int_0^R G_{j-1}(s^2)\frac{s(s^2 + d(d-2))^d}{(s^2-d(d-2))^2}ds.
    \label{f_j at zero}
\end{align}
From this last formula (\ref{f_j at zero}), we deduce that $f_j$ has a zero of order $R^{(d-1)+2j}$ at $R = 0$, $f_j$ extends as an holomorphic function on $U \cap B_{d(d-2)}(0)$ and $R^{-(d-1)}f_j$ has an even expansion around $R = 0$. In fact, $f_j$ extends holomorphically on $U$. If we let $c_1(R) = c(R)$ modulo the part which is singular at $R = d(d-2)$ and $c_2(R) = c(R) - c_1(R)$, then we can write
\begin{align}
    f_j(R) &= -R^{\frac{d-1}{2}} \phi(R) \int_0^R s^{-\frac{d-1}{2}}\phi(s) \left[c_2(s)-c_2(R) \right] f_{j-1}(s)ds \notag \\
    &\phantom{=}+ R^{\frac{d-1}{2}} \phi(R) \int_0^R F_{j-1}(s)c_1'(s)ds, \label{f_j}
\end{align}
and this extends holomorphically on $U$ since multiplication by $\phi(R)$ and $\phi(s)$ removes the singularity. Then one can bound $f_j(z)$ as follows:
\begin{align*}
    |f_j(z)| \leq C \left( (1+|z|) \int_0^{|z|} (1+|s|)^{1-d} |f_{j-1}(s)|ds +  \int_0^{|z|} (1+|s|)^{2-d} |f_{j-1}(s)|ds \right. \\
    \phantom{=} \left. + (1+|z|) \int_0^{|z|} |s|^{2d-3} |s^{-d}F_{j-1}(s)| ds\right) \quad \forall z \in U
\end{align*}
using (\ref{f_j}). Moreover, one has
\begin{align*}
    |f_0(z)| \leq C(1+|z|), \quad |z^{-d} F_0(z)| \leq C (1+|z|)^{-d}\cdot \log(1 + |z|)^{\delta_{d=4}},
\end{align*}
so we deduce
\begin{align*}
    |f_1(z)| &\leq 3C^2 (1+|z|)^{d-1}\cdot \log(1 + |z|)^{\delta_{d=4}}, \\
    |z^{-d}F_1(z)| &\leq 3C^2 (1+|z|)^{2-d}\cdot \log(1 + |z|)^{\delta_{d=4}}.
\end{align*}
It follows by induction that
\begin{align*}
    |f_j(z)| &\leq \frac{3^jC^{j+1}}{(j-1)!} (1+|z|)^{(d-1)+2(j-1)}\cdot \log(1 + |z|)^{\delta_{d=4}}, \\
    |z^{-d}F_j(z)| &\leq \frac{3^jC^{j+1}}{j!} (1+|z|)^{2j-d}\cdot \log(1 + |z|)^{\delta_{d=4}}.
\end{align*}
Finally, set $\phi_j(R^2) = R^{-(d-1)}R^{-2j}f_j(R)$. Note that in dimension $d = 4$, one explicitly has
$$F_0(z) = -\frac{\splitfrac{ z^6(7+\log(512))+24z^4(2+\log(512))+192z^2(1+\log(512))}{-3\left(z^2+8\right)^3\log\left(z^2+8\right)+1536\log(8)}}{6\left(z^2+8\right)^3},$$
so that we get the lower bound
\begin{align*}
|f_1(z)| &\gtrsim \left| z^{\frac{d-1}{2}} \phi(z) \int_1^z F_{j-1}(y)c_1'(y)dy \right| \gtrsim |z| \int_1^{|z|} |F_j(y)| \cdot y^{d-3} dy \\
&\gtrsim |z| \int_1^z \log(y)y^{d-3} \gtrsim |z|^{d-1} \log(|z|)  \quad \forall z \in U, |z| \gtrsim 1.
\end{align*}
In dimension $d > 4$,
$$F_0(z) = \int_0^z \phi(s)^2ds \geq \min_{y \in B_1(0)} \int_0^y \phi(s)^2ds > 0 \quad \forall z \in U, |z| \geq 1$$
and we deduce the lower bound
\begin{align*}
|f_1(z)| &\gtrsim \left| z^{\frac{d-1}{2}} \phi(z) \int_1^z F_{j-1}(y)c_1'(y)dy \right| \gtrsim |z| \int_1^{|z|} |F_j(y)| \cdot y^{d-3} dy \\
&\gtrsim |z|^{d-1}  \quad \forall z \in U, |z| \gtrsim 1.
\end{align*}
\end{proof}

\begin{remark}
    In particular, $\phi(R,z) \in C^{\infty}((0,+\infty) \times U)$, $U =  \{z \in \mathbb C:  \Re(z) > -\frac{1}{2}d(d-2)\}$. Indeed, fix $R_0 > 0, z_0 \in U$. Around $z = R_0^2$,
    $$
    |\phi_j(z)| \leq \frac{C^j}{(j-1)!}, \quad
 |\phi_j^{(n)}(z)| \lesssim_{n,R_0} \frac{C^j}{(j-1)!}
    $$
    using Cauchy's Integral Formula. Next, observe that
    \begin{align*}
        \partial_{R}^{k_1}\partial_z^{k_2} \left[(R^2z)^j \phi_j(R^2) \right] &= \frac{j!}{(j-k_2)!}z^{j-k_2} \sum_{l_1+l_2=k_1} \binom{k_1}{l_1} \frac{(2j)!}{(2j-l_1)!}R^{2j-l_1} \partial_R^{l_2} \left[ \phi_j(R^2)\right] \\
        &= \fsum_{\substack{l_1 \in \mathbb Z \\ 0 \leq l_2 \leq k_1}} c_{l_1,l_2,k_1,k_2} z^{-k_2} R^{-l_1} (R^2z)^{j-k_2} \phi_j^{(l_2)}(R^2),
    \end{align*}
    where the sum is zero if $k_2 > j$. Hence, the sum
    $$
\fsum_{\substack{l_1 \in \mathbb Z \\ 0 \leq l_2 \leq k_1}} \sum_{j=k_2+1}^{\infty} c_{l_1,l_2,k_1,k_2} z^{-k_2} R^{-l_1} (R^2z)^{j-k_2} \phi_j^{(l_2)}(R^2)
    $$
    converges uniformly around $(R,z) = (R_0,z_0)$ since
$$
|c_{l_1,l_2,k_1,k_2} z^{-k_2} R^{-l_1} (R^2z)^{j-k_2} \phi_j^{(l_2)}(R^2)| \lesssim_{l_1,l_2,k_1,k_2,R_0,z_0} \frac{C(R_0,z_0)^j}{(j-1)!}.
$$
Therefore, the sum of derivatives
    $$
    \sum_{j=k_2+1}^{\infty}  \partial_{R}^{k_1}\partial_z^{k_2} \left[(R^2z)^j \phi_j(R^2) \right]
    $$
    converges uniformly for any $k_1, k_2 \in \mathbb N$, which is sufficient to prove the smoothness of $\phi(R,z)$.
\end{remark}
\begin{remark}
There exists $0 < \xi_0 < \delta_0 < \delta_1 \ll 1$ (depending on the absolute constants from the Proposition \ref{fundamental system Lu = zu}) such that for all $0 < \xi < \xi_0$, for all $\delta \in [\delta_0, \delta_1]$, one has
\begin{equation}
    \phi(R,\xi) \gtrsim R^{\frac{d-5}{2}} \cdot \log(1+R^2)^{\delta_{d=4}}  \label{phi lower bound}
\end{equation}
when $R = \delta \xi^{-\frac{1}{2}}$.
\end{remark}

\begin{corollary} \label{phi upper bound R^2z < 1}
When $|R^2z| \lesssim 1$, we have the following pointwise estimate on $\phi(R,z)$ for any $k \geq 0$ and $l \geq 1$:
\begin{align*}
|(R^k\partial_R^k)\phi(R,z)| &\lesssim R^{\frac{d-1}{2}}\langle R \rangle^{-(d-2)} + (R^2z) R^{\frac{d-1}{2}} \langle R^2 \rangle^{-1}  \cdot \log(1+R^2)^{\delta_{d=4}}   \\
|(R^k\partial_R^k)(z^l\partial_z^l)\phi(R,z)| &\lesssim (R^2z)^l R^{\frac{d-1}{2}} \langle R^2 \rangle^{-1}\log(1+R^2)^{\delta_{d=4}}.
\end{align*}
In particular, 
$$|(R^k\partial_R^k)\phi(R,z)| + |(R^k\partial_R^k)(z^l\partial_z^l)\phi(R,z)| \lesssim R^{\frac{d-1}{2}} \langle R \rangle^{-2} \cdot \log(1+R^2)^{\delta_{d=4}}  \lesssim 1$$
is uniformly bounded on $|R^2z| \lesssim 1$ if $d \in \{4,5\}$.
\end{corollary}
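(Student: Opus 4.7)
The plan is to start from the series representation $\phi(R,z) = \phi(R) + R^{(d-1)/2}\sum_{j=1}^{\infty} (R^2 z)^j \phi_j(R^2)$ from Proposition \ref{fundamental system Lu = zu} and bound each piece separately, replacing the vector field $R^k\partial_R^k$ by the equivalent family $(R\partial_R)^k$ which respects the natural scaling of the series. For the $z$-independent term $\phi(R)$, the bound $|(R\partial_R)^k \phi(R)| \lesssim R^{(d-1)/2}\langle R\rangle^{-(d-2)}$ is just a restatement of the asymptotics in \eqref{phi theta asympotics} together with the symbol-type derivative behaviour noted there, since $\phi(R)$ is explicit.

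For the series part, I track how the differential operators act on each summand. The operator $R\partial_R$ multiplies $R^{(d-1)/2}$ by a constant, multiplies $(R^2 z)^j$ by $2j$, and turns $\phi_j(R^2)$ into $2R^2\phi_j'(R^2)$; iterating $k$ times gives, via Leibniz, a finite linear combination of terms of the form $R^{(d-1)/2}(R^2 z)^j R^{2m}\phi_j^{(m)}(R^2)$ with $m \leq k$ and coefficients polynomial in $j$ of degree at most $k$. The operator $z^l\partial_z^l$ acts only on $(R^2 z)^j$, producing $j(j-1)\cdots(j-l+1)(R^2 z)^j$ and killing the terms with $j < l$. The one nontrivial analytic input is an estimate on $\phi_j^{(m)}(R^2)$: since $\phi_j$ is holomorphic on $U = \{\Re u > -d(d-2)/2\}$, Cauchy's formula applied on a disk of radius $\min(d(d-2)/4, R^2/2)$ centred at $u = R^2$ gives, after using the stated bound on $\phi_j$,
\[
R^{2m}\bigl|\phi_j^{(m)}(R^2)\bigr| \lesssim \frac{C^j}{(j-1)!}\,\langle R^2\rangle^{-1}\cdot\bigl(\log(1+R^2)\text{ if }d=4\bigr),
\]
uniformly in $R \geq 0$ (for $R^2 \lesssim 1$ a fixed-radius disk works; for $R^2 \gg 1$ the $R^2/2$-disk keeps $(1+|u|)^{-1}$ comparable to $(1+R^2)^{-1}$ throughout).

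Combining these ingredients, each summand contributes at most
\[
(j+1)^{k+l}\, R^{(d-1)/2}\,|R^2 z|^j\, \frac{C^j}{(j-1)!}\,\langle R^2\rangle^{-1}\cdot(\log\text{ factor if }d=4).
\]
For estimate (b) I factor $(R^2 z)^l$ out of the sum (which starts at $j=l$); for (a) with $l=0$ I factor $(R^2 z)$ out of the $j\geq 1$ sum, accounting for the second term of the stated bound, while the first term comes from $\phi(R)$. In both cases the remaining series $\sum_j (j+1)^{k+l}(C|R^2 z|)^{j-l}/(j-1)!$ converges uniformly to $O(1)$ on $|R^2 z| \lesssim 1$, since the polynomial prefactor in $j$ is absorbed by $(j-1)!$ and $C|R^2 z|$ is bounded. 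This gives exactly the two displayed inequalities.

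The \emph{In particular} clause reduces to checking that $R^{(d-1)/2}\langle R\rangle^{-(d-2)}$ and $R^{(d-1)/2}\langle R^2\rangle^{-1}$ (times a $\log(1+R^2)$ for $d=4$) are both uniformly bounded on $[0,\infty)$ when $d\in\{4,5\}$, which follows from $\langle R^2\rangle \asymp \langle R\rangle^2$ and a direct computation at $R \to 0$ and $R\to\infty$. The main obstacle I expect is keeping the Cauchy-estimate step clean in the large-$R$ regime: one must choose a disk whose radius grows like $R^2$ to prevent the $R^{2m}$ factor from producing a power worse than $\langle R^2\rangle^{-1}$, and the $d=4$ case requires carrying along the logarithmic factor throughout. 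Everything else is bookkeeping on the absolutely convergent series.
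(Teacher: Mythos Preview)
Your proposal is correct and follows exactly the approach the paper indicates: differentiate the series \eqref{phi(R,z)} term by term and split into the regimes $R \lesssim 1$ and $R \gtrsim 1$, using the bounds on $\phi_j$ from Proposition~\ref{fundamental system Lu = zu}. The only addition you make is spelling out the Cauchy-estimate step to control $R^{2m}\phi_j^{(m)}(R^2)$, which the paper leaves implicit; this is a legitimate and clean way to fill in that detail.
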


\begin{proof}
It suffices to differentiate the series (\ref{phi(R,z)}) and then distinguish the cases $R \leq 1$, $R \geq 1$.
\end{proof}

    


Now, we define the singular $m$-function and state the spectral theorem for the self-adjoint operator $\mathcal{L}$:

\begin{definition}[Singular $m$-function]
Let $m(z): \mathbb C \setminus \mathbb R \rightarrow \mathbb C$, $m(\overline{z}) = \overline{m(z)}$, be the singular Weyl-Titchmarsh $m$-function. It is the unique function for which $\theta(R,z) + m(z)\phi(R,z)$ belongs to $L^2([1,+\infty))$ and solves $\mathcal{L}u = zu$ on $(0,+\infty)$. 
\end{definition}

\begin{theorem}[Spectral theorem] \label{spectral theorem}
The singular $m$-function is a generalized Nevanlinna function which defines a non-negative spectral density $d\rho$ on $\mathbb R$ via
$$\frac{1}{2}(d \rho((a,b)) + d \rho([a,b]))= \lim \limits_{\varepsilon \to 0} \frac{1}{\pi} \int_a^b \Im m(t + i \varepsilon) dt$$
such that the generalized Fourier transform
\begin{align*}
\mathcal{F}: L^2((0,+\infty)) &\rightarrow L^2(\mathbb R, d \rho) \\
f &\mapsto \hat{f}(\xi):= \lim \limits_{r \to +\infty} \int_0^r \phi(s,\xi)f(s)ds 
\end{align*}
\index{F@$\mathcal{F}$, the generalized Fourier transform}
is a unitary operator with inverse
\begin{align*}
\mathcal{F}^{-1}: L^2(\mathbb R, d \rho) &\rightarrow  L^2((0,+\infty)) \\
F &\mapsto \check{F}(R):= \lim \limits_{r \to +\infty} \int_{-r}^r \phi(R,\xi)F(\xi)d\rho(\xi) .
\end{align*}
Here, the limits must be understood as limits of functions in their respective $L^2$-space.

Moreover, if $E$ is the unique spectral family associated to the self-adjoint operator $\mathcal{L}$ on $\dom(\mathcal{L})$ and, for $f \in L^2((0,+\infty))$, $d \mu_{f}$ is its spectral measure, then
$$d \mu_f = |\hat{f}|^2 d \rho$$
\end{theorem}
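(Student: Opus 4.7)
The plan is to follow the singular Weyl--Titchmarsh framework (as developed by Kostenko--Sakhnovich--Teschl and Gesztesy--Zinchenko), adapted to our Frobenius-type base solution $\phi(R,z)$. Since $\mathcal{L}$ is limit point at both endpoints, for each $z \in \mathbb{C} \setminus \mathbb{R}$ there exists (up to scalar) a unique solution $\psi(R,z) \in L^2([1,+\infty))$ of $\mathcal{L}u = zu$; we normalise it so that $\psi(R,z) = \theta(R,z) + m(z)\phi(R,z)$, which is the definition of $m(z)$. That $m$ is a generalised Nevanlinna function will follow from Proposition \ref{fundamental system Lu = zu}: the growth bounds on $\phi_j$ give $\phi(R,z)$ entire in $z$ of finite order, and $\theta(R,z)$ is also entire, so $m(z)$ is meromorphic on $\mathbb{C} \setminus \mathbb{R}$; self-adjointness of $\mathcal{L}$ forces $\Im m(z) \cdot \Im z \geq 0$, and the polynomial growth of $\phi$ in $z$ places $m$ in a generalised Nevanlinna class $N_\kappa$. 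The non-negative measure $d\rho$ is then obtained by Stieltjes inversion of $\Im m(\cdot + i0)$.

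Next, I would write the Green's function of $\mathcal{L} - z$ explicitly as
\begin{equation*}
    G(R,S,z) = \begin{cases} \phi(R,z)\psi(S,z), & R \leq S, \\ \phi(S,z)\psi(R,z), & R \geq S, \end{cases}
\end{equation*}
which defines the resolvent $(\mathcal{L}-z)^{-1}$ on $L^2((0,+\infty))$ (the Wronskian $W(\psi,\phi) = 1$ by construction). To pass from the resolvent to the spectral family I apply Stone's formula
\begin{equation*}
    \tfrac{1}{2}(E((a,b)) + E([a,b]))f = \lim_{\varepsilon \to 0^+} \frac{1}{2\pi i}\int_a^b \left[ (\mathcal{L}-\xi-i\varepsilon)^{-1} - (\mathcal{L}-\xi+i\varepsilon)^{-1} \right]f \, d\xi
\end{equation*}
and expand the difference of resolvents using $\overline{\psi(R,\xi+i\varepsilon)} = \psi(R,\xi-i\varepsilon)$ and the reality of $\phi(\cdot,\xi)$ for $\xi \in \mathbb{R}$. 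The terms containing only $\theta$ cancel pairwise, and the remaining piece extracts $\Im m(\xi + i\varepsilon)$, producing the kernel $\phi(R,\xi)\phi(S,\xi)$ weighted by $d\rho$.

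For a fixed $f \in C_c^\infty((0,+\infty))$, the above identity combined with the definition of $d\rho$ yields
\begin{equation*}
    \langle E((a,b))f,f\rangle_{L^2} = \int_a^b |\hat{f}(\xi)|^2 \, d\rho(\xi), \qquad \hat{f}(\xi) = \int_0^{+\infty} \phi(R,\xi)f(R)\,dR,
\end{equation*}
where the $R$-integral converges absolutely because $f$ has compact support in $(0,+\infty)$. This is precisely the identity $d\mu_f = |\hat{f}|^2 d\rho$. Polarisation and density of $C_c^\infty$ in $L^2$ then extend $\mathcal{F}$ to a linear isometry $L^2((0,+\infty)) \to L^2(\mathbb{R}, d\rho)$, with the limit in the theorem statement being the usual $L^2$-limit interpretation of the integral for general $f$.

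The final task is surjectivity of $\mathcal{F}$. The range is closed (being the image of an isometry), so it suffices to show it is dense. I would argue by contradiction: if $F \in L^2(\mathbb{R}, d\rho)$ is orthogonal to the range, then $\check{F} = 0$ as an $L^2$-function, and applying the adjoint Stone's formula identity to functions of the form $\mathbf{1}_{[a,b]}(\mathcal{L})g$ forces $F = 0$ in $L^2(d\rho)$. The hardest technical step is justifying that the limit in Stone's formula can be moved inside the integrals defining the resolvent, which requires the precise estimates on $\phi(R,z)$ of Proposition \ref{fundamental system Lu = zu} together with a careful treatment of the behaviour of $\psi(R,z)$ as $\Im z \to 0$; the delicate point is the singular endpoint at $R = 0$, where we rely on $\phi(R,\xi) \lesssim R^{(d-1)/2}$ being uniformly integrable against $L^2$ functions on any compact subset of $(0,+\infty)$.
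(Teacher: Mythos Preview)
Your sketch follows the standard singular Weyl--Titchmarsh argument, which is precisely what the paper appeals to: the paper's proof consists entirely of citations to \cite{Gesztesy} (Lemma 3.4), \cite{Kostenko1} (Theorem 3.4, Corollary 3.5) and \cite{Kostenko2} (Theorem 4.5), with no argument written out. So your proposal is not merely consistent with the paper's approach --- it is an outline of the very results the paper invokes.

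One small correction: the generalised Nevanlinna property of $m$ does not come from $\phi(R,z)$ being entire of finite order per se, but from the Frobenius normalisation of $\theta(R,z)$ recorded in Proposition~\ref{fundamental system Lu = zu} (the vanishing condition $\lim_{R \to 0} R^{-(l+1)} \frac{d^{n_l+1}}{dz^{n_l+1}} \theta(R,z) = 0$); this is what forces $m$ into some $N_\kappa$ class rather than the classical Nevanlinna class, and is the content of \cite{Kostenko2}. Otherwise your Green's function / Stone's formula / density argument is the standard route and matches the cited literature.
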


\begin{proof}
See \cite[Lemma 3.4]{Gesztesy}, \cite[Theorem 3.4, Corollary 3.5]{Kostenko1} and \cite[Theorem 4.5]{Kostenko2}. 
\end{proof}

\begin{remark}
     If $\xi^*$ is an eigenvalue, the inverse Fourier transform of $\delta_{\xi^*}$ is a multiple of the eigenfunction $\phi(R,\xi^*)$. In other words, the $L^2(\mathbb R, d\rho)$-limit
    $$\lim \limits_{r \to +\infty} \int_{0}^r \phi(R,\xi)\phi(R,\xi^*)dR$$
    is $0$ d$\rho$-almost everywhere on $\{\xi \in \mathbb R: \xi \neq \xi^*\}$.
\end{remark}

\begin{notation}\label{not: notation of inner product for L^2 limit}
    We will write
    $$\innerp[\Big]{ f(s) }{ \phi(s,\xi)}_{L^2((0,+\infty))}:= \lim\limits_{r \to +\infty} \int_0^r \phi(s,\xi)f(s)ds$$
   as a limit of functions in $L^2(\mathbb R, d\rho)$,  even though
    $$\int_0^{+\infty} |\phi(s,\xi)f(s)|ds$$
    need not to be finite.
\end{notation}

\begin{remark}[On the decomposition of $d\rho$]
Since $\spec(\mathcal L) = \{\xi_d, 0\} \cup [0,+\infty) = \spec_{pp}(\mathcal{L}) \cup \spec_{ac}(\mathcal{L})$, we can write
$$
d\rho(\xi) = \frac{1}{||\phi(R,\xi_d)||_{L^2}} \delta_{\xi_d}(\xi) + \frac{1}{||\phi(R)||_{L^2}} \delta_{0}(\xi) + \rho(\xi) \chi_{(0,+\infty)}(\xi) d\xi
$$ for some $\rho(\xi) \in L^1_{loc}([0,+\infty))$.
\index{rho@$\rho(\xi)d\xi$, the absolutely continuous part of the spectral measure $d\rho(\xi)$}
\end{remark}

Next, we introduce the Jost solution which will be useful in the computation of asymptotics for $\rho(\xi)$. Note that in the following definition, we use the principal branch of the complex logarithm in order to define roots.

\begin{definition}[Jost solution]
For $z \in \mathbb C \setminus \mathbb R_{\leq 0}$, $\Im z \geq 0$, let $\psi^{+}(R,z)$ denote the Jost solution to $\mathcal{L}u = zu$ at $R = +\infty$ normalized so that 
$$\psi^+(R,z) \sim z^{-\frac{1}{4}}e^{iR\sqrt{z}}, \quad R|\sqrt{z}| \to +\infty.$$
It is given by $z^{-\frac{1}{4}}f_+(R,z)$, where $f_+(R,z)$ is the unique fixed point of 
\begin{equation}
    f_+(R,z) = e^{iR\sqrt{z}} - \int_R^{+\infty}\frac{\sin(\sqrt{z}(R-R'))}{\sqrt{z}}V(R')f_+(R',z)dR', \quad R > 0, \Im z \geq 0, z \neq 0, \label{f_+ definition}
\end{equation}
where $\mathcal{L} = -\partial_{RR} + V(R)$. 
For its construction, see \cite[Section 12.1.1]{Newton}.
\index{psi plus@$\psi^+(R,\xi)$, Jost solution for $\mathcal{L}u=zu$}
\end{definition}


Next, we give an approximation for $\psi^+(R,\xi)$ which is useful when $R^2 \xi \gtrsim 1$:

\begin{proposition}\label{prop:psi^+ in terms of sigma}
For $\xi > 0$, $R^2 \xi \gtrsim 1$, $\psi^+(R,\xi)$ is of the form
$$\psi^+(R, \xi) = \xi^{-\frac{1}{4}}e^{iR\xi^{\frac{1}{2}}} \sigma(R\xi^{\frac{1}{2}},R)$$
where $\sigma(q,r)$ is well-approximated by the series
$$\sigma(q,r) \approx \sum_{j=0}^{+\infty}q^{-j}\psi_j^+(r)$$
for some zeroth-order symbol $\psi_j^+$ being analytic on $(0,+\infty]$, i.e.,
$$\sup_{r > 0}|(r\partial_r)^k \psi_j^+(r)| < +\infty \ \forall k \in \mathbb N_{\geq 0},$$
in the following sense:
$$\sup_{r > 0} \left| (r \partial_r)^{\alpha} (q \partial_q)^{\beta} \left[ \sigma(q,r) - \sum_{j=0}^{j_0}q^{-j}\psi_j^+(r) \right] \right| \leq c_{\alpha, \beta, j_0} q^{-j_0-1}, \quad \forall q \geq 1,$$
for any $\alpha, \beta \in \mathbb N_{\geq 0}$, for any $j_0$ large enough.
\end{proposition}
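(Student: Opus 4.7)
The plan is to recast the definition of $\psi^+$ as a Volterra-type integral equation for $\sigma(q,r)$, to build the $\psi_j^+$ by formal integration by parts in $s$, and to close the asymptotic bound by a Picard iteration. Substituting $\psi^+(R,\xi) = \xi^{-1/4}e^{iR\sqrt\xi}\sigma(R\sqrt\xi, R)$ into \eqref{f_+ definition}, using $\sin(\sqrt\xi(R-R')) = -\sin(\sqrt\xi(R'-R))$, and changing variables $R' = R + s/\sqrt\xi$ yields, after dividing by $e^{iR\sqrt\xi}$,
$$\sigma(q,r) = 1 + \frac{r^2}{q^2}\int_0^{+\infty}\sin(s)\,e^{is}\,V(r(1+s/q))\,\sigma(q+s,\,r(1+s/q))\,ds$$
at $(q,r) = (R\sqrt\xi, R)$. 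I would view this as an equation for $\sigma$ with $(q,r)\in(0,\infty)^2$ regarded as independent variables. Since the substitution $(q+s,\,r(1+s/q))$ preserves the ratio $r/q$, the equation is really a Volterra equation in $q$ along each ray $r = cq$; using $V(R) = O(R^{-2})$ at both endpoints, the kernel has $L^1_s$-norm bounded by $(r/q)\int_r^\infty|V|\lesssim 1/q$ uniformly in $r$, guaranteeing uniqueness of the solution in any reasonable function class.

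Next I would construct the coefficients $\psi_j^+$ formally. Splitting $\sin(s)e^{is}=(e^{2is}-1)/(2i)$ separates the integral into a non-oscillatory piece, which after $R'=r(1+s/q)$ becomes $-\frac{r}{2iq}\int_r^\infty V(R')\sigma(\cdots)\,dR' = O(1/q)$, and an oscillatory piece with $e^{2is}$ that gains an extra factor of $1/q$ at each integration by parts via $\partial_sV(r(1+s/q)) = (r/q)V'(r(1+s/q))$ and the analogous derivative on $\sigma$. Inserting the ansatz $\sigma(q,r)=\sum_j q^{-j}\psi_j^+(r)$ and matching powers of $q^{-j}$ produces explicit recursive formulas: $\psi_0^+\equiv 1$, and each $\psi_j^+(r)$ is a polynomial combination of $r^{2k}V^{(k)}(r)$ and iterated tail integrals $\int_r^\infty(\cdots)\,dR'$ of the lower $\psi_k^+$. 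Using the asymptotics $V(R)\sim(d-3)(d-1)/(4R^2)$ at $0$ and $V(R)=O(R^{-2})$ at infinity, both $r^{2k}V^{(k)}(r)$ and $r\int_r^\infty V(R')\,dR'$ are bounded with all $r\partial_r$-derivatives bounded on $(0,+\infty]$, so a straightforward induction on $j$ gives the zeroth-order symbol property for every $\psi_j^+$.

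Finally I would close the argument by a remainder estimate. With $\sigma_{j_0}(q,r) := \sum_{j=0}^{j_0}q^{-j}\psi_j^+(r)$, the residual
$$\mathrm{Res}_{j_0}(q,r) := \sigma_{j_0}(q,r) - 1 - \frac{r^2}{q^2}\int_0^{+\infty}\sin(s)\,e^{is}\,V(r(1+s/q))\,\sigma_{j_0}(q+s,\,r(1+s/q))\,ds$$
is $O(q^{-j_0-1})$ uniformly in $r$ by construction, with the analogous bound on $(r\partial_r)^\alpha(q\partial_q)^\beta\mathrm{Res}_{j_0}$ provided $j_0\geq j_0(\alpha,\beta)$ is large enough to absorb the derivative loss. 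Since $\sigma-\sigma_{j_0}$ satisfies the same Volterra equation with forcing $-\mathrm{Res}_{j_0}$ and the integral operator still has $L^1_s$-norm $\lesssim 1/q$, a Picard iteration in a weighted symbol norm yields
$$\sup_{r>0}\bigl|(r\partial_r)^\alpha(q\partial_q)^\beta(\sigma-\sigma_{j_0})(q,r)\bigr|\lesssim q^{-j_0-1}$$
for all $q\geq 1$, which is precisely the required estimate.

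The hard part will be commuting the scaling derivatives $(r\partial_r)^\alpha(q\partial_q)^\beta$ through the integral equation uniformly in $r$: each such derivative generates commutator terms involving $V^{(k)}(r(1+s/q))\cdot r^\ell$ and derivatives of $\sigma$ at the shifted point, and these must be carefully balanced against the $r^2/q^2$ prefactor and the decay of $V$ so that the Picard iteration at order $(\alpha,\beta)$ still closes. The hypothesis ``$j_0$ large enough depending on $\alpha,\beta$'' in the statement reflects exactly this derivative loss, since higher-order estimates need extra smallness of the residual to dominate the commutator terms.
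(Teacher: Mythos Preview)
Your proposal is correct and follows essentially the standard route: the paper itself gives no argument here, only the citation ``See \cite{Krieger_2007} (Proposition 4.6)'', and the proof in that reference proceeds exactly as you outline --- rewrite the Jost integral equation as a Volterra equation for $\sigma(q,r)$, build the $\psi_j^+$ recursively by splitting $\sin(s)e^{is}$ and integrating by parts, and close the remainder by Picard iteration using the $O(1/q)$ bound on the kernel. Your identification of the derivative-commutation step as the technical heart, and of the ``$j_0$ large enough'' clause as the mechanism absorbing the resulting loss, matches the structure of that argument.
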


\begin{proof}
See \cite[Proposition 4.6]{Krieger_2007}.
\end{proof}

\begin{remark}\label{rmk: boundedness of sigma}
    In particular,
    $$\sup_{r > 0, q > 1}| (r \partial_r)^{\alpha} (q \partial_q)^{\beta}\sigma(q,r)| < +\infty \ \forall \alpha,\beta \in \mathbb N_{\geq 0}.$$
\end{remark}

\begin{corollary}\label{cor: psi^+ estimate}
For all $R, \xi > 0$, $R^2 \xi \gtrsim 1$, the following pointwise estimates hold for $\psi^+(R,\xi)$:
\begin{align}
    |(\xi^l\partial_{\xi}^l) (R^k \partial_R^k) \psi^+(R,\xi)| &\lesssim \xi^{-\frac{1}{4}} (R\xi^{\frac{1}{2}})^{l+k} \quad \forall k, l \geq 0. \label{psi_+ upper bound}
\end{align}
\end{corollary}

\begin{proof}
Observe that
\begin{align*}
     (\xi\partial_{\xi})F(R\xi^{\frac{1}{2}},R) &= \frac{1}{2} R\xi^{\frac{1}{2}}\partial_q F(R\xi^{\frac{1}{2}},R) = G(R\xi^{\frac{1}{2}},R), \quad G(q,r) = \frac{1}{2} q \partial_q F, \\
  (R\partial_{R})F(R\xi^{\frac{1}{2}},R) &= R\xi^{\frac{1}{2}}\partial_q F(R\xi^{\frac{1}{2}},R) + R \partial_R F(R\xi^{\frac{1}{2}},R) = H(R\xi^{\frac{1}{2}},R), \quad H = (q \partial_q  + r \partial_r) F,
\end{align*}
so that, by induction,
$$
 |(\xi\partial_{\xi})^l (R \partial_R)^k \sigma(R\xi^{\frac{1}{2}},R)| \lesssim  \sup_{\alpha \leq l, \beta \leq k} \sup_{r > 0, q > 1}| (r \partial_r)^{\alpha} (q \partial_q)^{\beta}\sigma(q,r)| < +\infty
$$
The same inequality holds with $(\xi^l \partial_{\xi}^l) (R^k \partial_R^k)$, as it is a linear combination of the differential operators $(\xi\partial_{\xi})^i (R \partial_R)^j$ for $i \leq l, j \leq k$. One also checks that
\begin{align*}
    (R^k\partial_R^k)(\xi^{-\frac{1}{4}}e^{iR\xi^{\frac{1}{2}}}) &= i^ke^{iR\xi^{\frac{1}{2}}} \xi^{-\frac{1}{4}}(R\xi^{\frac{1}{2}})^k, \\
    \left| (\xi^l \partial_{\xi}^l) (\xi^{\alpha} e^{iR\xi^{\frac{1}{2}}}) \right| &\lesssim \xi^{\alpha}(R\xi^{\frac{1}{2}})^l, \\
    \left|(\xi^l \partial_{\xi}^l)  [(R^k\partial_R^k)(\xi^{-\frac{1}{4}}e^{iR\xi^{\frac{1}{2}}})] \right| &\lesssim \xi^{-\frac{1}{4}}(R\xi^{\frac{1}{2}})^{k+l},
\end{align*}
which yields the result $\psi^+(R,\xi) = \xi^{-\frac{1}{4}}e^{iR\xi^{\frac{1}{2}}} \sigma(R\xi^{\frac{1}{2}},R)$ using the product rule.
\end{proof}

Now, we are ready to give growth estimates on the spectral density $\rho(\xi)$.

\begin{proposition}\label{prop:spectral density properties}
For $R > 0$ and $\xi > 0$, we have
\begin{equation*}
    \phi(R,\xi) = a(\xi)\psi^{+}(R,\xi) + \overline{a(\xi)\psi^{+}(R,\xi)},
\end{equation*}
where $a(\xi)$ is smooth, non-zero, has asymptotics
$$|a(\xi)| \asymp \begin{cases} \xi^{\frac{6-d}{4}} \cdot |\log(\xi)|^{\delta_{d=4}}, \ &\xi \ll 1 \\
\xi^{\frac{2-d}{4}}, \ &\xi \gg 1
\end{cases}$$
and symbol-type upper bounds
$$|(\xi \partial_{\xi})^k a(\xi)| \leq c_k|a(\xi)| \ \forall \xi > 0.$$
Moreover,
$$\rho(\xi) = \frac{1}{\pi |a(\xi)|^2}$$
and the corresponding asymptotics are
\begin{equation}
    |\rho(\xi)| \asymp \begin{cases} \xi^{\frac{d}{2} - 3} \cdot |\log(\xi)|^{-2 \delta_{d = 4}}, \ &\xi \ll 1 \\
 \xi^{\frac{d}{2} - 1}, \ &\xi \gg 1
\end{cases}
\end{equation}
with symbol-type upper bounds 
$$
|(\xi \partial_{\xi})^k \rho(\xi)| \leq \tilde{c}_k|\rho(\xi)| \ \forall \xi > 0.
$$
\index{rho@$\rho(\xi)d\xi$, the absolutely continuous part of the spectral measure $d\rho(\xi)$}
\end{proposition}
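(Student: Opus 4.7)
First I would establish the decomposition $\phi = a\psi^+ + \overline{a\psi^+}$. For $\xi > 0$, the normalization $\psi^+(R,\xi) \sim \xi^{-1/4}e^{iR\sqrt{\xi}}$ at $R = +\infty$ gives the constant Wronskian $W(\psi^+,\overline{\psi^+}) = -2i$, so $\{\psi^+,\overline{\psi^+}\}$ is a fundamental system for $\mathcal{L}u = \xi u$. Since $\phi(R,\xi)$ is real on the real axis, there exists $a(\xi) \in \mathbb C$ with $\phi = a\psi^+ + \bar a\, \overline{\psi^+}$, and taking Wronskian with $\overline{\psi^+}$ yields the explicit formula
$$
a(\xi) = \tfrac{i}{2}\, W\bigl(\phi(\cdot,\xi),\overline{\psi^+(\cdot,\xi)}\bigr).
$$
Smoothness of $a$ in $\xi > 0$ is inherited from that of $\phi$ (entire in $z$ by Proposition \ref{fundamental system Lu = zu}) and of $\psi^+$ (from its defining fixed-point equation (\ref{f_+ definition})). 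Non-vanishing of $a$ follows because $a(\xi_0) = 0$ would force $\phi(\cdot,\xi_0) \equiv 0$, contradicting $\phi(R,\xi_0) = \phi(R) + O(R^2\xi_0)$ near $R = 0$ with $\phi(R) \not\equiv 0$.

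Next I would read off the asymptotics of $|a(\xi)|$ by computing the Wronskian at the matching point $R_\xi := \delta\, \xi^{-1/2}$, with $\delta$ fixed so that $R_\xi^2 \xi = \delta^2$ is a moderate constant. At such $R_\xi$ the asymptotic expansion of $\psi^+$ gives $|\psi^+(R_\xi,\xi)| \asymp \xi^{-1/4}$ and $|\partial_R\psi^+(R_\xi,\xi)| \asymp \xi^{1/4}$, with an explicit oscillatory factor $e^{i\delta}$. For $\xi \gg 1$, $R_\xi \to 0$ and the series (\ref{phi(R,z)}) yields $\phi(R_\xi,\xi) = \phi(R_\xi)\bigl(1+O(\delta^2)\bigr) \asymp R_\xi^{(d-1)/2}$; the Wronskian then gives $|a(\xi)| \asymp \xi^{(2-d)/4}$. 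For $\xi \ll 1$, $R_\xi \to +\infty$ and I would use the lower bound (\ref{phi lower bound}) together with the upper bound obtained by term-by-term summation of (\ref{phi(R,z)}) (the factorial decay $|\phi_j| \lesssim C^j/(j-1)!$ keeps the series uniformly convergent even in this regime) to get $|\phi(R_\xi,\xi)| \asymp R_\xi^{(d-5)/2}$ (with an additional $\log(1+R_\xi^2)$ factor in $d = 4$ coming from the logarithmic correction to $\phi_1$). Matching with the $\psi^+$ asymptotic produces $|a(\xi)| \asymp \xi^{(6-d)/4}$, times $|\log\xi|$ when $d = 4$. Non-cancellation of the two Wronskian terms at $R_\xi$ is ensured by selecting $\delta$ generically so that the phase $e^{i\delta}$ avoids the finitely many degenerate values.

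The symbol-type estimates $|(\xi\partial_\xi)^k a(\xi)| \lesssim |a(\xi)|$ follow by applying $(\xi\partial_\xi)^k$ to the Wronskian formula and invoking the symbol bounds on $\phi$ (Corollary \ref{phi upper bound R^2z < 1} when $R^2\xi \lesssim 1$, and term-wise differentiation of (\ref{phi(R,z)}) otherwise) and on $\psi^+$ from (\ref{psi_+ upper bound}). Finally, the identity $\rho(\xi) = 1/(\pi|a(\xi)|^2)$ is the classical Weyl--Titchmarsh formula applied to our normalization: writing $\theta + m(z)\phi = c(z)\psi^+$ in the upper half plane and taking Wronskians yields $m(\xi + i0) = -W(\theta,\psi^+)/(2i\bar a(\xi))$, and the Stieltjes inversion $\rho(\xi) = \pi^{-1}\Im m(\xi+i0)$ combined with $W(\theta,\phi) = 1$ gives the stated density. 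The asymptotics and symbol bounds on $\rho$ then follow directly from those on $a$. The main obstacle is the bookkeeping of the logarithmic correction in $d=4$ and the verification of non-cancellation at $R_\xi$, both of which require tracking the precise leading terms of $\phi$ and $\psi^+$ rather than just their moduli.
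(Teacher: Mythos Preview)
Your approach is essentially the same as the paper's: both express $a(\xi)$ via the Wronskian of $\phi$ and $\psi^+$, then evaluate at the matching scale $R_\xi = \delta\xi^{-1/2}$ using the series expansion (\ref{phi(R,z)}) for $\phi$ and the oscillatory asymptotic for $\psi^+$. Your derivation of the formula, the smoothness and non-vanishing of $a$, and the identity $\rho = (\pi|a|^2)^{-1}$ are all fine.

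The one point where your argument falls short is the \emph{lower} bound on $|a(\xi)|$ for $\xi\ll 1$. Your claim that ``non-cancellation of the two Wronskian terms is ensured by selecting $\delta$ generically so that the phase $e^{i\delta}$ avoids finitely many degenerate values'' is not quite right: the phase $e^{i\delta}$ is an overall factor in $\psi^+(R_\xi,\xi)$ and drops out of $|W(\phi,\overline{\psi^+})|$, so varying it cannot prevent cancellation. What actually happens is that the two Wronskian terms $\phi\,\partial_R\overline{\psi^+}$ and $(\partial_R\phi)\,\overline{\psi^+}$ are of comparable size at $R_\xi$, and ruling out cancellation for some fixed $\delta$ uniformly in small $\xi$ would require a more careful analysis of the leading coefficients.

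The paper avoids this entirely with a one-line trick: since $\phi = 2\,\mathrm{Re}(a\psi^+)$ is real, the triangle inequality gives
\[
|\partial_R\phi(R,\xi)| \le 2|a(\xi)|\,|\partial_R\psi^+(R,\xi)|,
\]
hence $|a(\xi)| \ge |\partial_R\phi(R_\xi,\xi)|/(2|\partial_R\psi^+(R_\xi,\xi)|)$. This reduces the lower bound on $|a|$ to a lower bound on $|\partial_R\phi|$ (supplied by (\ref{phi lower bound}) and the series) together with an \emph{upper} bound on $|\partial_R\psi^+|$ (supplied by (\ref{psi_+ upper bound})), with no cancellation issue at all. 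The upper bound on $|a|$ comes directly from the Wronskian formula (\ref{a(xi) upper bound}) as you indicate. For $\xi\gg 1$ the paper simply cites the literature (Kostenko), though your matching-point argument also works there.
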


\begin{proof}
Following \cite[Proposition 4.7]{Krieger_2007}, we find that
\begin{align}
    a(\xi) &= -\frac{i}{2}W(\phi(\cdot, \xi), \psi^{+}(\cdot, \xi)), \label{a(xi) upper bound} \\
    |a(\xi)| &\geq \frac{|\partial_R \phi(R,\xi)|}{2|\partial_R \psi^{+}(R,\xi)|}, \label{a(xi) lower bound} \\
    \rho(\xi) &= \frac{1}{\pi |a(\xi)|^2}. \notag
\end{align}
The behaviour of $\rho(\xi)$ for large $\xi$ is well-known: see \cite[Theorem 2.1]{Kostenko_2013}.  

For small $\xi$, we proceed as in \cite[Proposition 4.7]{Krieger_2007}. We take $R = \delta \xi^{-\frac{1}{2}}$ as in (\ref{phi lower bound}) ($\delta$ is fixed and $\xi \to 0^+$) so that one gets
\begin{align*}
    |(R \partial_R)^i \phi(R, \xi)| &\sim \xi^{\frac{5-d}{4}} \cdot \log(1 + \xi^{-1})^{\delta_{d = 4}}, \\
    |(R \partial_R)^i \psi^+(R,\xi)| &\lesssim \xi^{-\frac{1}{4}},
\end{align*}
for $i = 0, 1$ using (\ref{phi(R,z)}), (\ref{phi lower bound}) and (\ref{psi_+ upper bound}). We conclude by applying these estimates with (\ref{a(xi) upper bound}) and (\ref{a(xi) lower bound}).
\end{proof}

\begin{corollary} \label{phi upper bound R^2z > 1}
When $R^2 \xi \gtrsim 1$, the following pointwise estimates hold for $\phi(R,\xi)$ for any $k, l \geq 0$:
\begin{align*}
    |\xi^l\partial_{\xi}^l R^k \partial_R^k \phi(R,\xi)| &\lesssim \xi^{\frac{5-d}{4}} (R\xi^{\frac{1}{2}})^{l+k} \langle \xi \rangle^{-1} \cdot \left( 1_{0 < \xi < 1/2}(\xi) |\log(\xi)| \right)^{\delta_{d = 4}}. 
\end{align*}
The logarithm appears only in dimension $d = 4$ for small $\xi > 0$. In particular, in dimension $d \in \{4,5\}$, one has
$$|\phi(R,\xi)| \lesssim \langle \xi \rangle^{\frac{1-d}{4}}, \quad R^2 \xi \gtrsim 1.$$
\end{corollary}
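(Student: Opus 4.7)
The plan is to exploit the decomposition
\[
\phi(R,\xi) = a(\xi)\psi^+(R,\xi) + \overline{a(\xi)\psi^+(R,\xi)}
\]
established in the previous proposition and combine the symbol-type estimates on $a(\xi)$ with the pointwise bounds on $\psi^+$ from (\ref{psi_+ upper bound}). Since $R^k\partial_R^k$ acts only on $\psi^+$ and $\phi(R,\xi)$ is real-valued for $\xi > 0$, it suffices to bound $|\xi^l\partial_\xi^l R^k\partial_R^k(a(\xi)\psi^+(R,\xi))|$ in the regime $R^2\xi \gtrsim 1$.

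First, I will expand via Leibniz:
\begin{equation*}
\xi^l\partial_\xi^l R^k\partial_R^k[a(\xi)\psi^+(R,\xi)] = \sum_{j=0}^l \binom{l}{j}\bigl(\xi^j\partial_\xi^j a(\xi)\bigr)\bigl(\xi^{l-j}\partial_\xi^{l-j}R^k\partial_R^k\psi^+(R,\xi)\bigr).
\end{equation*}
The stated symbol estimate $|(\xi\partial_\xi)^j a(\xi)| \leq c_j|a(\xi)|$ yields $|\xi^j\partial_\xi^j a(\xi)| \lesssim |a(\xi)|$ after rewriting $\xi^j\partial_\xi^j$ as a finite linear combination of iterates of $\xi\partial_\xi$ (Stirling-type identity). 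Combining with the estimate $|\xi^{l-j}\partial_\xi^{l-j}R^k\partial_R^k\psi^+(R,\xi)| \lesssim \xi^{-1/4}(R\xi^{1/2})^{l-j+k}$ from (\ref{psi_+ upper bound}) and using the hypothesis $R\xi^{1/2}\gtrsim 1$ (so that $(R\xi^{1/2})^{l-j+k}\leq (R\xi^{1/2})^{l+k}$), one obtains the unified bound
\begin{equation*}
|\xi^l\partial_\xi^l R^k\partial_R^k\phi(R,\xi)| \lesssim |a(\xi)|\,\xi^{-1/4}\,(R\xi^{1/2})^{l+k}.
\end{equation*}

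Plugging in the asymptotics of $|a(\xi)|$, I will treat small and large $\xi$ separately. For $\xi\lesssim 1$, one gets $|a(\xi)|\,\xi^{-1/4}\lesssim \xi^{(5-d)/4}$ (with an extra $|\log\xi|$ factor when $d=4$), which coincides with $\xi^{(5-d)/4}\langle\xi\rangle^{-1}$ since $\langle\xi\rangle\sim 1$ in this range. For $\xi\gtrsim 1$, $|a(\xi)|\,\xi^{-1/4}\lesssim \xi^{(1-d)/4} = \xi^{(5-d)/4}\cdot\xi^{-1}\sim\xi^{(5-d)/4}\langle\xi\rangle^{-1}$. Gluing the two regimes gives the claimed main estimate.

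For the particular consequence in dimensions $d\in\{4,5\}$ with $k=l=0$: when $\xi\gtrsim 1$ the bound $\xi^{(1-d)/4}\asymp\langle\xi\rangle^{(1-d)/4}$ is immediate, and when $\xi\lesssim 1$ one has $\xi^{(5-d)/4}\lesssim 1\asymp\langle\xi\rangle^{(1-d)/4}$, with the $|\log\xi|$ correction in $d=4$ absorbed via the trivial fact that $\xi^{1/4}|\log\xi|$ is bounded on $(0,1/2)$. No substantial obstacle is anticipated; the proof is a mechanical synthesis of the two preceding ingredients, and the only point requiring a bit of attention is the bookkeeping of the $|\log\xi|$ factor in dimension four.
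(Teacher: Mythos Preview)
Your proof is correct and is precisely the argument the paper intends: the corollary is stated without proof because it follows immediately from the decomposition $\phi = a\psi^+ + \overline{a\psi^+}$, the symbol bounds on $a(\xi)$, and the estimate (\ref{psi_+ upper bound}) on $\psi^+$, combined via Leibniz exactly as you wrote.
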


\begin{proof}
    Write $\phi = 2 \Re( a(\xi) \psi^+(R,\xi))$ and use the estimates from Corollary \ref{cor: psi^+ estimate}, as well as 
    $$
    |(\xi \partial_{\xi})^k a(\xi)| \leq c_k|a(\xi)| \lesssim \xi^{\frac{6-d}{2}} \cdot \left( 1_{0 < \xi < 1/2}(\xi) |\log(\xi)| \right)^{\delta_{d = 4}}, \quad \xi > 0
    $$
    from Proposition \ref{prop:spectral density properties}.
\end{proof}



\begin{corollary} \label{phi upper bound}
Assume $d \in \{4,5\}$. Fix $0 < \xi_0 < 1$. For all $R, \xi > 0$, the following pointwise estimates hold for $\phi(R,\xi)$ when $l \geq 1$:
\begin{align*}
|\phi(R,\xi)| &\lesssim \langle \xi \rangle^{\frac{1-d}{4}}, \\
|R\partial_R \phi(R,\xi)| &\lesssim \min\{R\xi^{\frac{3-d}{2}},R^{\frac{d-1}{2}}\} \text{ if } \xi > 1,\\
|\partial_{\xi}^l \phi(R,\xi)| &\lesssim \min\{R^{\frac{d-1}{2}+2l},\xi^{\frac{1-d}{4}}(R\xi^{-\frac{1}{2}})^l\} \text{ if } \xi > \xi_0,  \\
    |\partial_{\xi}^l \phi(R,\xi)| &\lesssim \min\{ R^{\frac{d-1}{2}+2(l-1) }\log(1+R^2)^{\delta_{d = 4}},\xi^{\frac{5-d}{4}}(R\xi^{-\frac{1}{2}})^l |\log(\xi)|^{\delta_{d = 4}} \} \text{ if } \xi < \xi_0.
\end{align*}
\end{corollary}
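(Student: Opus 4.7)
The proof follows the standard pattern for Sturm--Liouville problems with a turning point at $R^2\xi \sim 1$: in the bulk $R^2\xi \lesssim 1$ one uses the power series (\ref{phi(R,z)}) from Proposition \ref{fundamental system Lu = zu} together with Corollary \ref{phi upper bound R^2z < 1}, while in the oscillatory region $R^2\xi \gtrsim 1$ one uses the Jost decomposition $\phi(R,\xi) = a(\xi)\psi^+(R,\xi) + \overline{a(\xi)\psi^+(R,\xi)}$ together with Corollary \ref{phi upper bound R^2z > 1}. Each claimed $\min$ is the combination of what the two regimes produce, with the first entry typically controlled by the bulk regime and the second by the oscillatory one. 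The restriction $d \in \{4,5\}$ is exactly what makes the factor $R^{(d-1)/2}\langle R\rangle^{-(d-2)}$ uniformly bounded, with an additional $\log(1+R^2)$ factor surviving only in dimension $d=4$.

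In the region $R^2\xi \lesssim 1$ I would differentiate (\ref{phi(R,z)}) term by term. Since $\partial_\xi$ acts only on the factor $(R^2\xi)^j$, every $\xi$-derivative brings out $R^2$ and a numerical factor bounded by $j$; thanks to the factorial $(j-1)!^{-1}$ decay of $\phi_j$ provided by Proposition \ref{fundamental system Lu = zu}, the resulting series still converges absolutely and yields $|\partial_\xi^l \phi(R,\xi)| \lesssim R^{(d-1)/2+2l}$ (with the usual $\log(1+R^2)$ factor in $d=4$). The undifferentiated estimate $|\phi| \lesssim \langle\xi\rangle^{(1-d)/4}$ then follows by splitting on $\xi \leq 1$ versus $\xi > 1$: for $\xi \leq 1$ we have $\langle\xi\rangle \asymp 1$ and Corollary \ref{phi upper bound R^2z < 1} already gives the uniform bound $|\phi| \lesssim 1$; for $\xi > 1$, the constraint $R^2 \xi \lesssim 1$ forces $R \lesssim \xi^{-1/2} < 1$, which upgrades $R^{(d-1)/2}$ to $\xi^{(1-d)/4}$.

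In the region $R^2\xi \gtrsim 1$ I would use the Jost decomposition together with the symbol-type asymptotics of $a(\xi)$. Each $\xi$-derivative applied to $a\psi^+$ pulls out either $\xi^{-1}$ (acting on $a$) or $R\xi^{-1/2}$ (acting on $\psi^+$ via (\ref{psi_+ upper bound})); the latter dominates whenever $R\xi^{1/2} \gtrsim 1$, so after $l$ differentiations one arrives at the product $|a(\xi)| \cdot \xi^{-1/4} \cdot (R\xi^{1/2})^l \cdot \xi^{-l}$. Plugging in the two asymptotics of $|a(\xi)|$ --- the large-$\xi$ one for $\xi > \xi_0$ and the small-$\xi$ one for $\xi < \xi_0$, with the $|\log\xi|$ factor only in $d = 4$ --- reproduces the second entries of the claimed minima.

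The principal bookkeeping challenge is matching the two sides at the boundary $R^2\xi \sim 1$. Verifying that the two entries in each $\min$ are comparable there is a direct power count: at $R = \xi^{-1/2}$ one has $R^{(d-1)/2+2l} \asymp \xi^{-(d-1)/4 - l}$ and $\xi^{(1-d)/4}(R\xi^{-1/2})^l \asymp \xi^{(1-d)/4 - l}$, so the two regimes glue continuously. An additional subtlety arises in the small-$\xi$ regime $\xi < \xi_0$: here $|a(\xi)|^{-1}$ carries an extra $|\log\xi|$ in dimension $d=4$, inherited from the sharp lower bound $|\phi_1(u)| \gtrsim (1+|u|)^{-1}\log(1+|u|)$ established in Proposition \ref{fundamental system Lu = zu}, and this is what produces the logarithmic factor in the last line of the statement.
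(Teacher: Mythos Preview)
Your proposal is correct and follows exactly the approach the paper intends: the corollary is stated without proof in the paper precisely because it is the direct combination of Corollary \ref{phi upper bound R^2z < 1} (bulk region $R^2\xi \lesssim 1$, via the power series \eqref{phi(R,z)}) and Corollary \ref{phi upper bound R^2z > 1} (oscillatory region $R^2\xi \gtrsim 1$, via the Jost decomposition and the asymptotics of $a(\xi)$). Your bookkeeping, including the matching at $R^2\xi \sim 1$ and the absorption of the bounded $|\log\xi|$ factor on $\xi_0 < \xi < 1/2$ into the implicit constant, is accurate.
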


\begin{proof}
    This is a combination of Corollary \ref{phi upper bound R^2z < 1} and Corollary \ref{phi upper bound R^2z > 1}.
\end{proof}

\begin{corollary} \label{F(xi,eta) estimates}
Assume $d \in \{4,5\}$. Let 
\begin{align*}
    W_0(R) &= [\mathcal{L},R\partial_R]-2\mathcal{L} = [V,R\partial_R]-2V(R) = -2V(R) - R\partial_RV(R) \\
    &= \frac{2d^2\left(d^2-4\right)\left((d-2)d-R^2\right)}{\left((d-2)d+R^2\right)^3},
\end{align*}
where $\mathcal{L} = -\partial_{RR} + V(R)$ (not to be confused with the ground state $W(x)$). The symmetric function
    $$|F(\xi,\eta)| = \innerp[\Big]{ W_0(R) \phi(R,\xi) }{ \phi(R,\eta) }_{L^2((0,+\infty))}$$
    is of class $C^1([0,+\infty)\times[0,+\infty)) \cap C^2((0,+\infty)\times(0,+\infty))$ and satisfies:
    \begin{align*}
        |F(\xi,\eta)| &\lesssim \begin{cases}
            \xi + \eta \quad &\text{ if } \xi + \eta \leq 1 \\
             (\xi+\eta)^{\frac{1-d}{2}}(1+|\xi^{\frac{1}{2}}-\eta^{\frac{1}{2}}|)^{-N} \quad &\text{ if } \xi + \eta \geq 1 
        \end{cases} \\
        |\partial_{\xi}F(\xi,\eta)| + |\partial_{\eta}F(\xi,\eta)|&\lesssim \begin{cases}
            1 \quad &\text{ if } \xi + \eta \leq 1 \\
             (\xi+\eta)^{\frac{-d}{2}}(1+|\xi^{\frac{1}{2}}-\eta^{\frac{1}{2}}|)^{-N} \quad &\text{ if } \xi + \eta \geq 1 
        \end{cases} \\
        |\partial_{\xi}\partial_{\eta}F(\xi,\eta)| &\lesssim \begin{cases}
            |\log(\xi + \eta)|^3 \quad &\text{ if } \xi + \eta \leq 1, \ d = 4 \\
           |\xi + \eta|^{-\frac{1}{2}}(1 + |\log(\xi/\eta)|) \quad &\text{ if } \xi + \eta \leq 1, \ d = 5 \\
             (\xi+\eta)^{\frac{-1-d}{2}}(1+|\xi^{\frac{1}{2}}-\eta^{\frac{1}{2}}|)^{-N} \quad &\text{ if } \xi + \eta \geq 1 
        \end{cases} 
    \end{align*} 
    for any fixed $N \in \mathbb N$.
    \index{F(xi,eta)@$F(\xi,\eta)$, a symmetric function which contributes to $\mathcal{K}$}
\end{corollary}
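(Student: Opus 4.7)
The proof splits along the diagonal $\xi + \eta \sim 1$; the symmetry $F(\xi,\eta) = F(\eta,\xi)$ is immediate from the definition.

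\textbf{Small regime $\xi + \eta \leq 1$.} The starting point is the power series
\[
\phi(R,\xi) = \phi(R) + R^{\frac{d-1}{2}}\sum_{j \geq 1} (R^2\xi)^j \phi_j(R^2)
\]
from Proposition~\ref{fundamental system Lu = zu}. The crucial cancellation is
\[
\int_0^\infty W_0(R)\,\phi(R)^2\,dR = 0.
\]
This follows from the identity $[\mathcal{L}, R\partial_R] = W_0 + 2\mathcal{L}$ (equivalent to $W_0 = [V,R\partial_R] - 2V$): applying the commutator to $\phi$ and using $\mathcal{L}\phi = 0$ yields $\mathcal{L}(R\partial_R\phi) = W_0 \phi$. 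Testing against $\phi$, two integrations by parts move $\mathcal{L}$ back onto $\phi$ and kill the integral by self-adjointness, while the boundary terms $[R(\partial_R\phi)^2 - \partial_R(R\partial_R\phi)\,\phi]_0^\infty$ vanish because both are $O(R^{d-2})$ at $0$ and $O(R^{2-d})$ at $\infty$ for every $d \geq 4$. Inserting the series into $F$ then produces $F(\xi,\eta) = C(\xi+\eta) + O((\xi+\eta)^2)$ for a finite constant $C$, convergence of the resulting integrals being guaranteed by $|W_0(R)| \lesssim \langle R\rangle^{-4}$ together with $|\phi_j(u)| \lesssim (1+|u|)^{-1}$ (with an extra logarithm in $d=4$). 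Differentiating the series under the integral gives $|\partial_\xi F| + |\partial_\eta F| \lesssim 1$.

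For the mixed derivative $\partial_\xi\partial_\eta F$ I split the $R$-integral at the turning-point scales $R \sim \xi^{-1/2}$ and $R \sim \eta^{-1/2}$. In the inner ranges the series gives $|\partial_\xi\phi(R,\xi)| \lesssim R^{(d-5)/2}$ (with an extra $\log(1+R^2)$ factor in $d=4$); in the outer range the Jost-side bounds from Corollary~\ref{phi upper bound} give $|\partial_\xi\phi(R,\xi)| \lesssim R\,\xi^{(3-d)/4}$ (with an extra $|\log\xi|$ factor in $d=4$). Integrating against $|W_0(R)| \lesssim \langle R\rangle^{-4}$ and summing the three pieces produces the claimed $|\log(\xi+\eta)|^3$ bound in $d=4$ (the three logarithms arising from the two $\phi$-derivatives and the $R^{-1}\,dR$ integration in the middle range) and the $(\xi+\eta)^{-1/2}(1+|\log(\xi/\eta)|)$ form in $d=5$.

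\textbf{Large regime $\xi + \eta \geq 1$.} Here I substitute $\phi(R,\xi) = a(\xi)\psi^+(R,\xi) + \overline{a(\xi)\psi^+(R,\xi)}$ in both slots and split at $R = 1$. For $R \leq 1$ the uniform bound $|\phi(R,\xi)| \lesssim \langle\xi\rangle^{(1-d)/4}$ from Corollary~\ref{phi upper bound R^2z > 1} gives a contribution $\lesssim (\xi\eta)^{(1-d)/4}$ after integration against the bounded $W_0$. For $R \geq 1$ the integrand becomes a sum of four oscillatory terms
\[
W_0(R)\,A_{\pm,\pm}(R;\xi,\eta)\,e^{iR(\pm\sqrt{\xi}\,\pm\,\sqrt{\eta})},
\]
with smooth amplitudes $A_{\pm,\pm}$ built from $a(\xi), a(\eta)$ and the symbol $\sigma$ of the Jost expansion. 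Since $W_0$ and every $R$-derivative decay like $R^{-4}$, $N$ successive integrations by parts in $R$ against the exponential yield $N$ factors of $(|\pm\sqrt{\xi}\pm\sqrt{\eta}|)^{-1}$; the worst phase $\sqrt{\xi} - \sqrt{\eta}$ produces the claimed $(1+|\sqrt{\xi}-\sqrt{\eta}|)^{-N}$ decay, and $|a(\xi)a(\eta)| \lesssim (\xi\eta)^{(2-d)/4}$ supplies the polynomial prefactor. Derivatives in $\xi$ or $\eta$ are handled by differentiating the Jost representation first and repeating the IBP; the symbol-type estimates on $\sigma$ and on $a$ established earlier in the section show that each spectral derivative brings either an extra factor of $R$, absorbed into the amplitude, or a factor $\xi^{-1}$ or $\eta^{-1}$, absorbed into $a$.

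\textbf{Main obstacle.} The subtlest step is producing the precise form of $\partial_\xi\partial_\eta F$ in the small regime, especially in $d=5$: obtaining the exact bound $(\xi+\eta)^{-1/2}(1+|\log(\xi/\eta)|)$ rather than a cruder one requires splitting at the two distinct turning-point scales and carefully tracking the logarithmic range between them. In $d = 4$, one must similarly iterate the $\log(1+R^2)$ factors inherited from the $\phi_j$ to obtain exactly $|\log(\xi+\eta)|^3$ and no higher power of logarithms.
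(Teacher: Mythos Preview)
Your small-regime argument is essentially the paper's: the cancellation $F(0,0)=0$ via $W_0\phi=\mathcal L(R\partial_R\phi)$ and self-adjointness, differentiability at the origin for $F$ and $\nabla F$, and the turning-point splitting at $R\sim\xi^{-1/2},\eta^{-1/2}$ for $\partial_\xi\partial_\eta F$ all match.

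The large-regime argument, however, has a genuine gap in the off-diagonal decay $(1+|\sqrt\xi-\sqrt\eta|)^{-N}$. You split at $R=1$, bound the $R\le 1$ piece by $(\xi\eta)^{(1-d)/4}$, and run oscillatory integration by parts on $R\ge 1$ using the Jost representation in \emph{both} slots. This fails when one frequency is large and the other is not: if $\xi\gg 1\gtrsim\eta$, the Jost form for $\phi(R,\eta)$ is not even available on $[1,\eta^{-1/2}]$, and the direct $R\le 1$ bound $\langle\xi\rangle^{(1-d)/4}\langle\eta\rangle^{(1-d)/4}\sim\xi^{(1-d)/4}$ is far weaker than the required $\xi^{(1-d)/2}(1+\sqrt\xi)^{-N}$. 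No oscillatory gain on the remaining piece can repair the loss already incurred on $R\le 1$. (This bound is genuinely needed downstream: the Hilbert--Schmidt estimate for $\mathcal K_{cc}^0$ off the diagonal uses it.)

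The paper avoids this by never splitting the $R$-integral. It uses the algebraic identity
\[
(\eta-\xi)\,F(\xi,\eta)=\bigl\langle [\mathcal L,W_0]\,\phi(\cdot,\xi),\phi(\cdot,\eta)\bigr\rangle
= -\bigl\langle (2W_{0,R}\partial_R+W_{0,RR})\phi(\cdot,\xi),\phi(\cdot,\eta)\bigr\rangle,
\]
obtained by moving $\mathcal L$ across and using $\mathcal L\phi(\cdot,\xi)=\xi\phi(\cdot,\xi)$. Iterating $2k$ times produces a representation of $(\eta-\xi)^{2k}F(\xi,\eta)$ with weights decaying like $\langle R\rangle^{4-2k}$ and polynomial prefactors in $\xi$; combined with the uniform bound $|\phi(R,\cdot)|\lesssim\langle\cdot\rangle^{(1-d)/4}$, this gives the arbitrary off-diagonal decay over the whole line at once, with no need for the Jost asymptotics of $\phi(\cdot,\eta)$. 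Your oscillatory-IBP route can be repaired (e.g.\ by using the Jost form only in the large-frequency slot and treating $\phi(R,\eta)$ as part of a smooth, decaying amplitude), but as written the $R\le 1$ contribution is not controlled.
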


\begin{proof}
    The bounds from Corollary \ref{phi upper bound} imply
    \begin{align*}
        |F(\xi,\eta)| &\lesssim \langle \xi \rangle^{\frac{1-d}{4}} \langle \eta \rangle^{\frac{1-d}{4}}, \\
        |\partial_{\xi}F(\xi,\eta)| &\lesssim \langle \xi \rangle^{\frac{-1-d}{4}} \langle \eta \rangle^{\frac{1-d}{4}}, \\
        |\partial^2_{\xi \eta}F(\xi,\eta)| &\lesssim \xi^{\frac{-1-d}{4}}\eta^{\frac{-1-d}{4}} \text{ if } \xi > \frac{1}{2}, \eta > \frac{1}{2},
    \end{align*}
    which yield the desired bounds when $\xi \sim \eta$, $\xi + \eta \geq 1$. The same bounds from Corollary \ref{phi upper bound} combined with Dominated Convergence proves the $C^2$-regularity of $F(\xi,\eta)$, as well as the right-continuity at zero of $F(\xi,\eta)$, $\partial_{\xi}F$ and $\partial_{\eta}F$.

If $\xi + \eta \geq 1$ and $\xi, \eta$ are separated, then we proceed as in \cite[Theorem 5.1]{Krieger_2007}: Writing the integrals as limits, an integration by parts argument shows that 
\begin{equation*}
   \eta F(\xi,\eta)  = \innerp[\Big]{ W_0(R)\phi(R,\xi)}{\mathcal{L}\phi(R,\eta) }_{L^2_R} = \innerp[\Big]{ [\mathcal{L},W_0(R)]\phi(R,\xi)}{\phi(R,\eta) }_{L^2_R} + \xi F(\xi,\eta),
\end{equation*}
i.e.,
\begin{equation*}\label{integration by parts F(xi,eta)}
   (\eta-\xi) F(\xi,\eta)  = - \innerp[\Big]{ (2W_{0,R}(R)\partial_R - W_{0,RR}(R))\phi(R,\xi)}{\phi(R,\eta) }_{L^2_R}.
\end{equation*}
For fixed $\xi, \eta \geq 0$, the integration by parts is justified because $\phi(R,\eta)$, $\partial_R\phi(R,\eta)$ are bounded and $W_0(R)\phi(R,\xi)$, $\partial_{R}(W_0(R)\phi(R,\xi))$ vanish at zero and infinity.

By iteration, for arbitrary $k \in \mathbb N$, there exists rational functions $W_j^{\mathrm{odd}}(R)$, $W_j^{\mathrm{even}}(R) \in C^{\omega}([0,+\infty))$, $0 \leq j \leq k$, decaying as
$$\langle R \rangle |W_j^{\mathrm{odd}}(R)| + |W_j^{\mathrm{even}}(R)| \lesssim \langle R \rangle^{4-2k},$$
and respectively having odd/even expansions at $R = 0$, for which
\begin{equation}\label{iteration of integration by parts F(xi,eta)}
    (\eta-\xi)^{2k}F(\xi,\eta) = \innerp[\Big]{ \Bigl(\sum_{j=0}^{k-1}\xi^j W_j^{\mathrm{odd}}(R)\partial_R + \sum_{j=0}^{k}\xi^j W_j^{\mathrm{even}}(R) \Bigr) \phi(R,\xi) }{ \phi(R,\eta) }_{L^2_R}.
\end{equation}
This implies the desired bound for $F(\xi,\eta)$ and then differentiating with respect to $\xi$ and/or $\eta$ implies the other ones. If $\xi + \eta \lesssim 1$, one observes that $F(0,0) = 0$ because $$W_0(R) \phi(R,0) = \left([\mathcal{L},R\partial_R]-2\mathcal{L}\right)\phi(R,0) = -\mathcal{L}R\partial_R \phi(R,0),$$
    thus integration by parts yields
    $$F(0,0) = \innerp[\Big]{ -R\partial_R\phi(R,0) }{ \mathcal{L}\phi(R,0) }_{L^2_R} = 0.$$
    Combining this with the differentiability at zero, we obtain the bounds for $F(\xi,\eta)$, $\partial_{\xi}F$ and $\partial_{\eta}F$. It remains to prove the estimates for the second derivative. Since the case $d = 4$ has been treated in \cite[Theorem 5.1]{Krieger_2007}, we assume $d = 5$ but the strategy is the same. Using Corollary \ref{phi upper bound}, for $0 < \eta \leq \xi < 1/2$, one gets 
    \begin{align*}
        |\partial_{\xi}\partial_{\eta}F(\xi,\eta)| &\lesssim \int_{0}^{+\infty}\langle R \rangle^{-4} |\partial_{\xi} \phi(R,\xi)\partial_{\eta} \phi(R,\eta)| dR \\
        &\lesssim \int_{0}^{\xi^{-\frac{1}{2}}}\langle R \rangle^{-4} R^{4}dR +  \int_{\xi^{-\frac{1}{2}}}^{\eta^{-\frac{1}{2}}}\langle R \rangle^{-4} R^2 (R\xi^{-\frac{1}{2}}) dR  \\
        &\phantom{\lesssim}+ \int_{\eta^{-\frac{1}{2}}}^{+\infty} \langle R \rangle^{-4}(R\xi^{-\frac{1}{2}})  (R\eta^{-\frac{1}{2}})dR \\
        &\lesssim \xi^{-\frac{1}{2}} \cdot (1 +|\log(\xi/\eta)|) \lesssim (\xi+\eta)^{-\frac{1}{2}} \cdot (1 + |\log(\xi/\eta)|) 
    \end{align*}
    and we conclude by symmetry. Similarly, we can show that the derivatives $\partial_{\xi}^2F$ and $\partial_{\eta}^2F$ exist if $0 < \eta, \xi \leq 1/2$, but we do not need to estimate them.
\end{proof}

\section{The Transference Identity}\label{section:transferrance identity}

Assume $d \in \{4,5\}$ for the remainder of this paper. We are interested in studying the error one makes when passing from $\mathcal{F}(R \partial_R u)$ to $-2\xi\partial_{\xi}\mathcal{F}(u)$, as the operator $R \partial_R$ appears naturally when solving for $\varepsilon$ in Section \ref{section:exact solution fourier method}. This will allow us to translate the wave equation from physical to (generalized) Fourier space where the operator $\mathcal{L}$ is replaced by a multiplication by $\xi$ and one can ``interchange'' the operators $R \partial_R$ and $\mathcal{F}$ up to a controllable error. 
 
Let $L^2(\mathbb R, d\rho)$ denote the set of $d\rho$-measurable functions $f(\xi)$ that are square-integrable. These functions admit the following representation $d\rho$ almost-everywhere:
$$
f(\xi) = f(\xi_d) \delta_{\xi_d} + f(0) \delta_{0, d = 5} + f(\xi)1_{\xi > 0}(\xi),
$$
where $f_c(\xi):= f(\xi)1_{\xi > 0}(\xi) \in L^2((0,+\infty),\rho(\xi)d\xi)$. Let $C_c^{\infty}(\spec_{pp}(\mathcal{L}) \cup (0,+\infty))$ denote the subset of $L^2(\mathbb R, d\rho)$ for which $f_c(\xi) = f(\xi)1_{\xi > 0}(\xi) \in C_c^{\infty}((0,+\infty))$.  Our goal is to study the difference operator $\mathcal{K}$:
\begin{align*}
    \mathcal{K}:   C_c^{\infty}(\spec_{pp}(\mathcal{L}) \cup (0,+\infty)) &\mapsto L^2(\mathbb R,d\rho) \\
    f(\xi)  &\mapsto \mathcal{K}(f):=  \mathcal{F}(R \partial_R \mathcal{F}^{-1}f) + \mathcal{F}(\mathcal{F}^{-1} 2\xi \partial_{\xi} f),
\end{align*}
\index{K@$\mathcal{K}$, the transference operator}
where $\xi\partial_{\xi}$ acts as zero on the discrete component, and show that this is a well-defined bounded operator inbetween some weighted $L^2$-spaces.  As a first step, we are going to show that $\mathcal{K}$ is well-defined. 

\begin{proposition}
    The operator $\mathcal{K}$ is well-defined.
\end{proposition}

\begin{proof}
Denoting by $\phi_d(R)$ and $\phi_0(R)$ the normalized eigenfunctions, observe that
\begin{align*}
    \mathcal{F}^{-1} f &= f(\xi_d)  \mathcal{F}^{-1} \delta_{\xi_d} + f(0) \mathcal{F}^{-1} \delta_{0, d = 5} + \mathcal{F}^{-1} f_c  \\
    &= f(\xi_d) \phi_d(R) + f(0) \phi_0(R) \delta_{d = 5} + \mathcal{F}^{-1} f_c, \\
     \mathcal{F} (R \partial_R \mathcal{F}^{-1} f) &= f(\xi_d) \mathcal{F} (R \partial_R \phi_d) + f(0) \mathcal{F} (R \partial_R \phi_0)\delta_{d = 5} + \mathcal{F} (R \partial_R \mathcal{F}^{-1} f_c), \\
     \mathcal{F}(\mathcal{F}^{-1} 2\xi \partial_{\xi} f) &=  2\xi \partial_{\xi} f_c,
\end{align*}
where we recall that we write
    $$
    (\mathcal{F}f)(\xi) = \innerp[\Big]{ f(s) }{ \phi(s,\xi)}_{L^2((0,+\infty))}:= \lim\limits_{r \to +\infty} \int_0^r \phi(s,\xi)f(s)ds
    $$
   as a limit of functions in $L^2(\mathbb R, d\rho)$,  even though
    $$\int_0^{+\infty} |\phi(s,\xi)f(s)|ds$$
    need not to be finite.  In order to show that $\mathcal{K}$ is well-defined, it is necessary to show that $\mathcal{F}(R\partial_R \phi_d(R))$, $\mathcal{F}(R\partial_R \phi_0(R))$ and $\mathcal{F}(R\partial_R \mathcal{F}^{-1})$ are well-defined. To this end, it suffices to show that $R\partial_R \phi_d(R)$, $R\partial_R \phi_0(R)$ and $R\partial_R \mathcal{F}^{-1} f$ are $L^2((0,+\infty))$-functions in Lebesgue sense (Theorem \ref{spectral theorem}). Jost solution theory shows that $\phi_d(R)$ decays exponentially as $R \to +\infty$ and, by (\ref{phi theta asympotics}), $\phi_0(R)$ decays as $R^{-1}$ in dimension $d = 5$ where it appears with symbol-type behaviour of the derivatives. Hence, $R\partial_R \phi_d(R)$ and  $R\partial_R \phi_0(R)$  are $L^2$-functions. If $f \in C_c^{\infty}((0,+\infty))$, then our estimates on $R\partial_R\phi(R,\xi)$ from Corollary \ref{phi upper bound} shows that $R\partial_R\mathcal{F}^{-1}f$ is, a priori, only bounded. Yet, $\mathcal{F}^{-1}f$ decays like a Schwartz function: one can get rid of powers of $R$ using successive integration by parts as we show in Lemma \ref{lemma:F^-1 f is schwartz if f is smooth with compact support}.
\end{proof}

\begin{lemma}\label{lemma:F^-1 f is schwartz if f is smooth with compact support}
    Let $f \in C_c^{\infty}((0,+\infty))$. Then $\mathcal{F}^{-1}f(R)$ has an arbitrary fast polynomial decay at infinity. 
\end{lemma}

\begin{proof}
    Let $K \subset (0,+\infty)$ be the compact support of $f$ and
    $$
\mathcal{F}^{-1} f(R) = \int_{0}^{+\infty} \phi(R,\xi)f(\xi)d\rho(\xi) = \int_{K} \phi(R,\xi)f(\xi)\rho(\xi)d\xi,
    $$
    where $\rho(\xi)$ is smooth on $(0,+\infty)$ (Corollary \ref{prop:spectral density properties}). As $\phi(R,\xi)$ is smooth on $(0,+\infty) \times (0,+\infty)$, $K \subset (0,+\infty)$ is compact, one can interchange derivative and integral thanks to Dominated Convergence, i.e.,
    \begin{align*}
            (R\partial_R)^n\mathcal{F}^{-1} f(R) &=  \int_{K} (R\partial_R)^n\phi(R,\xi)f(\xi)\rho(\xi)d\xi \\
            &= 2 \Re \left(  \int_{K} a(\xi) (R\partial_R)^n\left[ \psi^+(R,\xi) \right] f(\xi)\rho(\xi)d\xi \right). 
    \end{align*}
    Observe that
 \begin{align*}
           (R\partial_{R})e^{iR\xi^{\frac{1}{2}}} F_0(R\xi^{\frac{1}{2}},R) &= e^{iR\xi^{\frac{1}{2}}} \left[ iR \xi^{\frac{1}{2}}F(R\xi^{\frac{1}{2}},R) + R\xi^{\frac{1}{2}}\partial_q F(R\xi^{\frac{1}{2}},R) + R \partial_R F(R\xi^{\frac{1}{2}},R) \right] \\
      &= e^{iR\xi^{\frac{1}{2}}} H(R\xi^{\frac{1}{2}},R), \quad H = (iq + q \partial_q  + r \partial_r) F_0.
 \end{align*}
If $F_0(q,r) = \sigma(q,r)$ is the function coming from $\psi^+$, then $(q \partial_q)^i(r \partial_r)^{j} \sigma$ is bounded on $(q,r) \in [1,+\infty) \times (0,+\infty)$ for all fixed $i,j \geq 0$ (Remark \ref{rmk: boundedness of sigma}). Hence,
 \begin{align*}
           (R\partial_{R})e^{iR\xi^{\frac{1}{2}}} \sigma(R\xi^{\frac{1}{2}},R) &= e^{iR\xi^{\frac{1}{2}}} H(R\xi^{\frac{1}{2}},R), \quad H = iqF_0 + F_1, \quad F_1 = (q \partial_q  + r \partial_r) F_0,
 \end{align*}
where $(q \partial_q)^i(r \partial_r)^{j} F_1$ is bounded on $(q,r) \in [1,+\infty) \times (0,+\infty)$. By induction, it holds that 
\begin{align*}
           (R\partial_{R})^ne^{iR\xi^{\frac{1}{2}}} \sigma(R\xi^{\frac{1}{2}},R) &= e^{iR\xi^{\frac{1}{2}}} H(R\xi^{\frac{1}{2}},R), \\
           H &= (iq)^nF_0 + (iq)^{n-1}F_1 + ... + (iq)^{n-1}F_{n-1} + F_n,
 \end{align*}
 where $(q \partial_q)^i(r \partial_r)^{j} F_k$ is bounded on $(q,r) \in [1,+\infty) \times (0,+\infty)$ for all $i,j \geq 0$ and all $0 \leq k \leq n$.  It is now sufficient to prove that for any smooth $F$ as above, there exists $C = C(F,f,a,\rho,K,n) > 0$ for which
 $$
\sup_{R > 0} \left| \int_{K} e^{iR\xi^{\frac{1}{2}}} (iR\xi^{\frac{1}{2}})^n F(R\xi^{\frac{1}{2}},R) a(\xi)f(\xi)\rho(\xi)d\xi \right| \leq C.
 $$
 Write $a(\xi) f(\xi) \rho(\xi) = \tilde{f}(\xi)$. First, observe that
 $$
(\xi \partial_{\xi})^n \left[ G(R\xi^{\frac{1}{2}}) \right] = \frac{1}{2^n} [(q\partial_q)^n G](R\xi^{\frac{1}{2}}).
 $$
 Applying this identity with $G(q) = e^{iq}$, one obtains
 $$
(\xi \partial_{\xi})^n \left[  e^{iR\xi^{\frac{1}{2}}} \right] = \frac{i^n}{2^n} (R\xi^{\frac{1}{2}})^n e^{iR\xi^{\frac{1}{2}}}. 
 $$
 Hence,
\begin{align*}
  \int_{K} e^{iR\xi^{\frac{1}{2}}} (iR\xi^{\frac{1}{2}})^{n+1} F(R\xi^{\frac{1}{2}},R) \tilde{f}(\xi) d\xi \\
  = c_n \xi \int_{0}^{+\infty} (\xi \partial_{\xi})^{n+1} \left[  e^{iR\xi^{\frac{1}{2}}} \right] F(R\xi^{\frac{1}{2}},R)  \tilde{f}(\xi) d\xi \\
  =  c_n \xi \int_{0}^{+\infty} e^{iR\xi^{\frac{1}{2}}} (-1-\xi\partial_{\xi})^{n+1} \left[ F(R\xi^{\frac{1}{2}},R)  \tilde{f}(\xi) \right] d\xi.
\end{align*}
As 
$$
(\xi\partial_{\xi})^i \left[ F(R\xi^{\frac{1}{2}},R)  \tilde{f}(\xi) \right] = \sum_{i_1 + i_2 = i} \binom{i}{i_1,i_2} (\xi\partial_{\xi})^{i_1} \tilde{f}(\xi) \frac{1}{2^{i_2}} [(q\partial_q)^{i_2} F](R\xi^{\frac{1}{2}},R)
$$
is bounded for $\xi \in K, R  >0$, this finishes the proof.
\end{proof}

  Our next goal is to prove boundedness of $\mathcal{K}$ on some appropriate weighted $L^2$-spaces by finding and analysing its kernel. Representing $L^2(\mathbb R, d\rho)$ as $\mathbb R^{d-3} \times L^2(\mathbb R, \rho(\xi)d\xi)$ using the natural map
$$
f(\xi) = f(\xi_d) \delta_{\xi_d} + f(0) \delta_{0, d = 5} + f_c(\xi) \mapsto (f(\xi_d),f(\xi_0),f_c(\xi)),
$$
observe that
\begin{align*}
     \mathcal{K}f &= f(\xi_d) \mathcal{F} (R \partial_R \phi_d) + f(0) \mathcal{F} (R \partial_R \phi_0)\delta_{d = 5} + \mathcal{F} (R \partial_R \mathcal{F}^{-1} f_c)  + 2\xi\partial_{\xi}f_c \\
     &= f(\xi_d) \mathcal{K}_d + f(0) \mathcal{K}_0 \delta_{d = 5} + \mathcal{K}_c(f_c) \\
     &= (\mathcal{K}_d, \mathcal{K}_0, \mathcal{K}_c) \cdot (f(\xi_d), f(0), f_c),
\end{align*}  
where ``d'' represents the negative discrete eigenvalue $\xi_d$, ``0'' stands for the 0-eigenvalue when $d = 5$ and ``c'' represents the continuous part of the spectrum. Extracting the discrete and continuous components from each Fourier transform (the discrete component is obtained by evaluating the Fourier transform at $\xi = \xi_d$ and $\xi = 0$), the operator $\mathcal{K}$ admits, respectively in $d = 4$ and $d = 5$, the following matrix representation
$$\begin{pmatrix} \mathcal{K}_{dd} & \mathcal{K}_{cd} \\
\mathcal{K}_{dc} & \mathcal{K}_{cc} \\
\end{pmatrix}, \quad \begin{pmatrix} \mathcal{K}_{dd} & \mathcal{K}_{0d} & \mathcal{K}_{cd} \\
\mathcal{K}_{d0} & \mathcal{K}_{00} & \mathcal{K}_{c0} \\
\mathcal{K}_{dc} & \mathcal{K}_{0c} & \mathcal{K}_{cc}
\end{pmatrix},$$
where for $x \in \{ ``d``,  ``c`` ,  ``0``\}$, $K_{xd}(\cdot)$ represents the evaluation of $\mathcal{K}_x(\cdot)(\xi)$ at $\xi = \xi_d$, $K_{x0}$ represents the value at $\xi = 0$ and $K_{xc}$ represents the continuous component of the transform. More precisely, 
\begin{alignat*}{2}
    \mathcal{K}_{dd} &= \Bigl< R\partial_R \phi_d(R),\phi_d(R) \Bigr>_{L^2_R}, \quad \mathcal{K}_{0d} = \Bigl< R\partial_R \phi_0(R),\phi_d(R) \Bigr>_{L^2_R}, \\
    \mathcal{K}_{cd} &= \Bigl< \int_0^{+\infty}f(\xi)R\partial_R\phi(R,\xi)\rho(\xi)d\xi ,\phi_d(R) \Bigr>_{L^2_R}, \\
     \mathcal{K}_{dc} &= \Bigl< R\partial_R \phi_d(R),\phi(R,\eta) \Bigr>_{L^2_R}, \quad \mathcal{K}_{0c} = \Bigl< R\partial_R \phi_0(R),\phi(R,\eta) \Bigr>_{L^2_R}, \\
    \mathcal{K}_{cc} &= \Bigl< \int_0^{+\infty}\left[ f(\xi)R\partial_R\phi(R,\xi) + 2 \xi\partial_\xi f(\xi) \phi(R,\xi)\right] \rho(\xi)d\xi ,\phi(R,\eta) \Bigr>_{L^2_R},
\end{alignat*}
and similarly for $\mathcal{K}_{d0}, \mathcal{K}_{00}, \mathcal{K}_{c0}$. We remark once again that some of these inner products only make sense as a limit of $L^2(\mathbb R, d\rho)$-functions (see Theorem \ref{spectral theorem} and Notation \ref{not: notation of inner product for L^2 limit}). We start by making $\mathcal{K}_{d}$ and $\mathcal{K}_0$ more explicit.

\begin{proposition}
    One has
    $$\mathcal{K}_{dd} = \mathcal{K}_{00} = -\frac{1}{2}, \quad \mathcal{K}_{0d} = \mathcal{K}_{d0} \in \mathbb R,$$
    as well as,
    $$
\mathcal{K}_{0c}(\eta) = \frac{F(0,\eta)}{\eta}, \quad \mathcal{K}_{dc}(\eta) = ||\phi(R,\xi_d)||_{L^2_R} \cdot \frac{F(\xi_d,\eta)}{\eta-\xi_d}, \quad \eta \geq 0.
    $$
    where $F$ is as in Corollary \ref{F(xi,eta) estimates}. Moreover, $\mathcal{K}_{0c}$, $\eta\partial_{\eta}\mathcal{K}_{0c}$, $\mathcal{K}_{dc}$ and $\eta\partial_{\eta}\mathcal{K}_{dc}$ are continuous on $[0,+\infty)$ and have an arbitrary fast polynomial decay at infinity.
\end{proposition}

    \begin{proof}

Jost solution theory shows that $\phi_d(R)$ decays exponentially as $R \to +\infty$ and $\phi_0(R)$ decays as $R^{-1}$ in dimension $d = 5$ by (\ref{phi theta asympotics}) . Hence,
$$\mathcal{K}_{dd} = \mathcal{K}_{00} = -\frac{1}{2}, \quad \mathcal{K}_{0d} = \mathcal{K}_{d0} \in \mathbb R.$$
 An integration by parts shows that
    $$\mathcal{K}_{0c}(\eta) = \frac{F(0,\eta)}{\eta}, \quad \eta \geq 0,$$
    since
     $$
     W(R) \phi_0(R) = \left([\mathcal{L},R\partial_R]-2\mathcal{L} \right)\phi_0(R) = \mathcal{L}R\partial_R \phi_0(R).
     $$
For $\eta \geq 0$ fixed, the integration by parts is justified because $\phi(R,\eta)$, $\partial_R\phi(R,\eta)$ are bounded and $R\partial_R \phi_0(R)$, $\partial_{R}(R\partial_R \phi_0(R))$ vanish at zero and infinity.  Similarly, $\mathcal{K}_{dc}(\eta)$ is given by
     $$\mathcal{K}_{dc}(\eta) = \frac{\innerp[\big]{ W(R) \phi_d(R) }{ \phi(R,\eta) }_{L^2_R}}{\eta-\xi_d} = ||\phi(R,\xi_d)||_{L^2_R} \cdot \frac{F(\xi_d,\eta)}{\eta-\xi_d}, \quad \eta \geq 0,$$
    because for $\eta \geq 0$,
    $$(\eta-\xi_d)\mathcal{K}_{dc}(\eta) = \Bigl< [\mathcal{L},R\partial_R] \phi_d(R), \phi(R,\eta) \Bigr>_{L^2_R}, \quad \Bigl< \phi_d(R), \phi(R,\eta) \Bigr>_{L^2_R} \propto \delta_{\xi_d}(\eta) = 0.$$
    Moreover, since formula (\ref{iteration of integration by parts F(xi,eta)}) also holds for $\xi = 0$ and $\xi = \xi_d$,  it follows that $\mathcal{K}_{0c}$, $\eta\partial_{\eta}\mathcal{K}_{0c}$, $\mathcal{K}_{dc}$ and $\eta\partial_{\eta}\mathcal{K}_{dc}$ are continuous on $[0,+\infty)$ and have an arbitrary fast polynomial decay at infinity.
\end{proof}
    
    It remains to study the components of $\mathcal{K}_c$, i.e.  $\mathcal{K}_{c0}$, $\mathcal{K}_{cd}$ and $\mathcal{K}_{cc}$. 
    
    \begin{proposition}
        If $f \in C_c^{\infty}((0,+\infty))$, then 
        \begin{align*}
            \mathcal{K}_{c0}f &= - \int_0^{+\infty}f(\xi) \rho(\xi) K_{0c}(\xi)d\xi, \\
            \mathcal{K}_{cd}f &= - \int_0^{+\infty}f(\xi) \rho(\xi) K_{dc}(\xi)d\xi.
        \end{align*}
    \end{proposition}
    \begin{proof}
    One has
    $$\mathcal{K}_{c0} = \lim \limits_{r \to +\infty}\int_0^r \int_0^{+\infty} f(\xi)R\partial_R\phi(R,\xi)\rho(\xi) \phi_0(R)dR d\xi.$$
    Since $f(\xi)$ has a compact support, we can interchange the order of integration and obtain
    $$\mathcal{K}_{c0} = \lim \limits_{r \to +\infty}  \int_0^{+\infty}f(\xi) \rho(\xi) \left( \int_0^r R\partial_R\phi(R,\xi) \phi_0(R) dR \right) d\xi. $$
    Integrating by parts in the $R$-integral, we get
    \begin{align*}
        \int_0^r R\partial_R\phi(R,\xi) \phi_0(R) dR &= r\phi(r,\xi)\phi_0(r) - \int_0^r \phi(R,\xi) R\partial_R\phi_0(R) dR \\
        &\phantom{=}- \int_0^r \phi(R,\xi)\phi_0(R) dR.
    \end{align*}
    The second and third term converges in $L^2(\mathbb R, d\rho)$. Hence,
    \begin{align*}
        \mathcal{K}_{c0}f &= \lim \limits_{r \to +\infty} r\phi_0(r) \int_0^{+\infty}f(\xi) \phi(r,\xi)\rho(\xi)d\xi - \int_0^{+\infty}f(\xi) \rho(\xi) K_{0c}(\xi)d\xi \\
        &\phantom{=}- \int_0^{+\infty}f(\xi) \rho(\xi) \delta_0(\xi) d\xi \\
        &= \left( \lim \limits_{r \to +\infty} r\phi_0(r) \right)  \left(\lim \limits_{r \to +\infty}  \mathcal{F}^{-1}(f)(r) \right) - \int_0^{+\infty}f(\xi) \rho(\xi) K_{0c}(\xi)d\xi \\
        &= - \int_0^{+\infty}f(\xi) \rho(\xi) K_{0c}(\xi)d\xi,
    \end{align*}
    because $\mathcal{F}^{-1}f(R)$ decays like a Schwartz function. Similarly, one computes
    $$\mathcal{K}_{cd}f = - \int_0^{+\infty}f(\xi) \rho(\xi) K_{dc}(\xi)d\xi.$$
    \end{proof}
        
    Finally, for $\mathcal{K}_{cc}$, we integrate by parts with respect to $\xi$ in the component
\begin{align*}
    \int_0^{+\infty} 2 \xi\partial_\xi f(\xi) \phi(R,\xi) \rho(\xi)d\xi &= - \int_0^{+\infty} 2 f(\xi) \phi(R,\xi) \xi\partial_\xi \rho(\xi)d\xi \\
    &\phantom{=}- \int_0^{+\infty} 2 f(\xi) \phi(R,\xi) \rho(\xi)d\xi \\
    &\phantom{=} - \int_0^{+\infty} 2 f(\xi)  \xi\partial_\xi\phi(R,\xi) \rho(\xi)d\xi \\
    &= -\int_0^{+\infty} 2 f(\xi) \left(1 + \frac{\xi\rho'(\xi)}{\rho(\xi)} \right) \phi(R,\xi)  \rho(\xi) d\xi \\
    &\phantom{=} - \int_0^{+\infty} 2 f(\xi)  \xi\partial_\xi\phi(R,\xi) \rho(\xi)d\xi,
\end{align*}
    yielding
    $$\mathcal{K}_{cc}f(\eta) = \Bigl< \int_0^{+\infty}f(\xi) \left[ R\partial_R - 2 \xi\partial_\xi \right]\phi(R,\xi) \rho(\xi)d\xi ,\phi(R,\eta) \Bigr>_{L^2_R} - 2 \left(1 + \frac{\eta\rho'(\eta)}{\rho(\eta)} \right)f(\eta)$$
    when $\eta \geq 0$. Again, the function
    $$u(R) = \int_0^{+\infty}f(\xi) \left[ R\partial_R - 2 \xi\partial_\xi \right]\phi(R,\xi) \rho(\xi)d\xi $$
    decays like a Schwartz function when $f \in C^{\infty}_c((0,+\infty))$ and the bounds one can get on $\sup_{R\geq 1}|R^nu^{(m)}(R)|$ will depend only on $n$, $m$, $||f||_{\infty}$ and $\text{supp}(f)$. Since $\phi(R,\eta)$ is uniformly bounded, it follows that $\mathcal{K}_{cc}$ is continuous if $C^{\infty}_c$ and $C^{\infty}$ are respectively endowed with the test function topology and the $L^{\infty}$ topology. Hence, the Schwartz kernel theorem (\cite[Chapter V]{Hormander}) shows that one can write
    $$\mathcal{K}_{cc}f(\eta) = \int_0^{+\infty}K(\eta,\xi)f(\xi)d\xi$$
    for some distribution-valued kernel $\eta \mapsto K(\eta,\xi)$ which is made more explicit in the following theorem.

    \begin{theorem}
        The operator $\mathcal{K}_{cc}$ admits the following representation
        $$\mathcal{K}_{cc} = -\left( \frac{3}{2} + \frac{\eta \rho'(\eta)}{\rho(\eta)} \right) \delta(\xi-\eta) + \mathcal{K}_{cc}^0$$
        where $\mathcal{K}_{cc}^0$ has kernel 
        $$K_{cc}^0(\eta,\xi) = \frac{\rho(\xi)}{\eta-\xi}F(\xi,\eta)$$
        and $F(\xi,\eta)$ is as in Corollary \ref{F(xi,eta) estimates}.
        \index{F(xi,eta)@$F(\xi,\eta)$, a symmetric function which contributes to $\mathcal{K}$}
    \end{theorem}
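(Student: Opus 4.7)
The plan is to build on the expression already derived, namely
\[
\mathcal{K}_{cc}f(\eta) = -2\Big(1 + \tfrac{\eta\rho'(\eta)}{\rho(\eta)}\Big)f(\eta) + \Bigl\langle\int_0^{+\infty} f(\xi)[R\partial_R-2\xi\partial_\xi]\phi(R,\xi)\rho(\xi)\,d\xi,\,\phi(R,\eta)\Bigr\rangle_{L^2_R},
\]
and to identify the associated Schwartz kernel $K(\eta,\xi):=\rho(\xi)\langle[R\partial_R-2\xi\partial_\xi]\phi(R,\xi),\phi(R,\eta)\rangle_{L^2_R}$ as a distribution on $(0,+\infty)^2$. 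The algebraic key is a commutator identity: a direct computation gives $[\mathcal{L},R\partial_R]=2\mathcal{L}+W_0(R)$, and combining this with $\mathcal{L}\phi(R,\xi)=\xi\phi(R,\xi)$ and with $(\mathcal{L}-\xi)\partial_\xi\phi=\phi$ (obtained by differentiating the eigenvalue relation in $\xi$) yields
\[
(\mathcal{L}-\xi)\big[R\partial_R-2\xi\partial_\xi\big]\phi(R,\xi)=W_0(R)\phi(R,\xi).
\]
Testing against $f\in C_c^\infty((0,+\infty))$, applying $\mathcal{F}$, and using that $\mathcal{F}$ diagonalises $\mathcal{L}$, I read off $(\eta-\xi)K(\eta,\xi)=\rho(\xi)F(\xi,\eta)$ in $\mathcal{D}'((0,+\infty)^2)$. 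Since the right-hand side is $C^1$ across the diagonal by Corollary \ref{F(xi,eta) estimates}, the general solution of this first-order distributional equation is
\[
K(\eta,\xi)=\mathrm{p.v.}\,\frac{\rho(\xi)F(\xi,\eta)}{\eta-\xi}+C(\eta)\,\delta(\xi-\eta),
\]
with some function $C(\eta)$ to be pinned down in the next step.

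To determine $C(\eta)$ I would run a skew-adjointness argument rather than a delicate asymptotic one. Integration by parts gives $(R\partial_R)^*=-1-R\partial_R$ on $L^2((0,+\infty),dR)$, so by unitarity of $\mathcal{F}$ the operator $T:=\mathcal{F}\circ R\partial_R\circ \mathcal{F}^{-1}$ satisfies $T+T^*=-\mathrm{Id}$ on $L^2(\mathbb R,d\rho)$. A similar integration by parts on $L^2((0,+\infty),\rho\,d\xi)$ gives $(2\xi\partial_\xi)^*=-(2+2\xi\rho'/\rho)-2\xi\partial_\xi$. Projecting both onto the continuous block and adding yields
\[
\mathcal{K}_{cc}+\mathcal{K}_{cc}^* = -3 - 2\,\tfrac{\eta\rho'(\eta)}{\rho(\eta)}.
\]
On the other hand, the symmetry $F(\xi,\eta)=F(\eta,\xi)$ makes the principal-value kernel above skew-adjoint on $L^2(\rho\,d\xi)$, so $\mathcal{K}_{cc}+\mathcal{K}_{cc}^*$ collapses to twice the real multiplier $-2(1+\eta\rho'/\rho)+C(\eta)$. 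Matching the two expressions forces $C(\eta)=\tfrac12+\eta\rho'(\eta)/\rho(\eta)$, and adding the pre-existing $-2(1+\eta\rho'/\rho)$ reproduces exactly the announced Dirac coefficient $-(\tfrac32+\eta\rho'/\rho)$.

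The main technical obstacle is to make the distributional identity of the first step fully rigorous, which amounts to verifying that $u_f(R):=\int f(\xi)[R\partial_R-2\xi\partial_\xi]\phi(R,\xi)\rho(\xi)d\xi$ decays fast enough in $R$ to commute with $\mathcal{L}$ and $\mathcal{F}$ without generating boundary terms (the Schwartz-type decay already sketched for $\mathcal{F}^{-1}f$ in the text extends to $u_f$ by the same integration-by-parts device applied to the oscillatory factor in $\psi^+$), and that Fubini is legitimate in the nested inner products. The quantitative control supplied by Corollaries \ref{phi upper bound R^2z < 1}, \ref{phi upper bound R^2z > 1}, \ref{phi upper bound} on $\phi(R,\xi)$ together with the regularity and decay of $F(\xi,\eta)$ from Corollary \ref{F(xi,eta) estimates} — in particular the linear vanishing of $F$ near $(0,0)$ and its rapid decay in $|\sqrt\xi-\sqrt\eta|$ — is exactly what is needed both to interpret $\mathrm{p.v.}\,\rho(\xi)F(\xi,\eta)/(\eta-\xi)$ as a genuine tempered distribution and to secure the skew-adjoint cancellation that fixes $C(\eta)$.
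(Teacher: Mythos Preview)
Your argument is correct and takes a genuinely different route from the paper's. The paper simply cites \cite{Krieger_2007}, Theorem~5.1; the method there uses the same commutator identity $(\mathcal{L}-\xi)[R\partial_R-2\xi\partial_\xi]\phi=W_0\phi$ that you use to produce the off-diagonal principal-value part, but determines the diagonal Dirac coefficient \emph{directly}: one inserts the Jost representation $\phi(R,\xi)=a(\xi)\psi^+(R,\xi)+\overline{a(\xi)\psi^+(R,\xi)}$, integrates by parts in $R$ over $[0,r]$, and tracks the oscillatory boundary terms at $R=r\to+\infty$ as distributions in $(\xi,\eta)$ --- the resonant pieces $e^{\pm ir(\sqrt{\xi}-\sqrt{\eta})}$ produce the delta on the diagonal and its coefficient is read off from the WKB amplitudes and the relation $\rho(\xi)=(\pi|a(\xi)|^2)^{-1}$. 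Your skew-adjointness computation, based on $(R\partial_R)+(R\partial_R)^*=-1$ on $L^2(dR)$ and $(2\xi\partial_\xi)+(2\xi\partial_\xi)^*=-2-2\xi\rho'/\rho$ on $L^2(\rho\,d\xi)$, bypasses this asymptotic analysis entirely and is arguably cleaner; it exploits the symmetry $F(\xi,\eta)=F(\eta,\xi)$ to kill the principal-value contribution to $\mathcal{K}_{cc}+\mathcal{K}_{cc}^*$. The price is that your route is less constructive --- it fixes $C(\eta)$ by a global identity rather than exhibiting where each piece comes from --- and you must still justify (as you note) that the only distributional solutions of $(\eta-\xi)K=\rho(\xi)F(\xi,\eta)$ differ by $C(\eta)\delta(\xi-\eta)$, which follows from the elementary fact that $x\delta^{(k)}(x)=-k\delta^{(k-1)}(x)$ forces $k=0$. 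Both approaches ultimately rest on the same Schwartz-type decay of $u_f(R)$ to legitimise the integrations by parts.
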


    \begin{proof}
        This follows from another integration by parts and a change of integration order. This is proved exactly as in \cite[Theorem 5.1]{Krieger_2007}.
    \end{proof}

    \begin{definition} \label{def:l^2,alpha_rho}
        Let $L^{2,\alpha}_{\rho}$, $\alpha \in \mathbb R$, be the set of d$\rho$-measurable functions $f(\xi)$ for which the following norm is finite:
        $$||f||_{L^{2,\alpha}_{\rho}}^2 = |f(\xi_d)|^2 + |f(0)|^2 \cdot \delta_{d = 5} + \int_0^{+\infty}|f(\xi)|^2 \langle \xi \rangle^{2\alpha}\rho(\xi)d\xi.$$
        This space can be represented as $\mathbb R^{d-3} \times L^2((0,+\infty), \langle \xi \rangle^{2\alpha}\rho(\xi)d\xi)$.
        \index{L 2 alpha rho@$L^{2,\alpha}_{\rho}$, a normed space on Fourier side}
    \end{definition}

    \begin{theorem} \label{thm:boundedness of K operator}
        For any $\alpha \in \mathbb R$, the operators $\mathcal{K}_{cc}^0$, $\mathcal{K}$, $[\mathcal{K},\xi\partial_{\xi}]$ maps
        $$\mathcal{K}_{cc}^0: L^{2,\alpha}_{\rho} \rightarrow L^{2,\alpha + 1/2}_{\rho}, \quad \mathcal{K}: L^{2,\alpha}_{\rho} \rightarrow L^{2,\alpha}_{\rho}, \quad [\mathcal{K},\xi\partial_{\xi}]: L^{2,\alpha}_{\rho} \rightarrow L^{2,\alpha}_{\rho}$$
        continuously, where $\xi \partial_{\xi}$ acts as zero on the discrete component.
    \end{theorem}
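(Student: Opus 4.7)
The plan is to reduce the statement to a boundedness result for the singular integral operator $\mathcal{K}^0_{cc}$. The full operator $\mathcal{K}$ admits the block matrix decomposition recalled in Section~\ref{section:transferrance identity}. The blocks $\mathcal{K}_{dd}$ and $\mathcal{K}_{00}$ are scalar multiples of the identity (both equal to $-1/2$), and $\mathcal{K}_{d0} = \mathcal{K}_{0d}$ is also a scalar, so these contribute a bounded $(d-3)\times(d-3)$ matrix on the discrete component. The off-diagonal blocks $\mathcal{K}_{0c}, \mathcal{K}_{dc}$ were computed to be $F(0,\eta)/\eta$ and $\Vert\phi(\cdot,\xi_d)\Vert_{L^2} \cdot F(\xi_d,\eta)/(\eta-\xi_d)$ respectively; by Corollary~\ref{F(xi,eta) estimates} together with the polynomial iteration that produced (\ref{iteration of integration by parts F(xi,eta)}), these are continuous functions on $[0,+\infty)$ with arbitrarily fast polynomial decay, so they lie in $L^{2,\alpha+1/2}_\rho$ for every $\alpha$. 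Their transposes $\mathcal{K}_{c0}, \mathcal{K}_{cd}$ are the same functions weighted by $\rho$, and define bounded rank-one maps $L^{2,\alpha}_\rho \to \mathbb{R}^{d-3}$ by Cauchy--Schwarz. Since $\xi\partial_\xi$ acts as zero on the discrete components, every one of these off-diagonal pieces commutes trivially with $\xi\partial_\xi$ when viewed as an operator between the discrete and continuous parts, up to a bounded contribution from differentiating $\eta\mapsto F(\cdot,\eta)/(\eta-\cdot)$, whose derivatives also decay polynomially by Corollary~\ref{F(xi,eta) estimates}.

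The main content is therefore the boundedness of $\mathcal{K}^0_{cc}$ with kernel $K^0_{cc}(\eta,\xi) = \rho(\xi)F(\xi,\eta)/(\eta-\xi)$ from $L^{2,\alpha}_\rho$ to $L^{2,\alpha+1/2}_\rho$. I would follow the strategy of \cite{Krieger_2007} (Proposition~5.2). Perform the change of variable $\xi = u^2$, $\eta = v^2$; then $(\eta-\xi)^{-1} = (v-u)^{-1}(v+u)^{-1}$ and the spectral density becomes $2u\,\rho(u^2)\,du$, which is a symbol of order $d-3$ in $u$ by the asymptotics for $\rho$. Decompose the kernel with a smooth cutoff into a near-diagonal piece supported on $|u-v| \leq (1+u+v)/8$ and an off-diagonal piece. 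On the off-diagonal piece, the estimates $|F(\xi,\eta)| \lesssim (\xi+\eta)^{(1-d)/2}(1+|\sqrt{\xi}-\sqrt{\eta}|)^{-N}$ for $\xi+\eta \geq 1$ together with $|F(\xi,\eta)| \lesssim \xi+\eta$ at low frequency give pointwise decay strong enough to apply Schur's test, even after absorbing the output weight $\langle\eta\rangle^{2\alpha+1}$. On the near-diagonal piece, we write the kernel as a smoothly-weighted principal-value Hilbert-type kernel $1/(v-u)$ times $F(u^2,v^2)/(v+u)$, and the derivative estimates on $F$ from Corollary~\ref{F(xi,eta) estimates} show that this multiplier is bounded and has a logarithmic modulus of continuity off the diagonal, which suffices for $L^2_u$-boundedness of the truncated Hilbert transform. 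The gain of the weight $\langle\eta\rangle^{1/2}$ in the output is exactly the $(\xi+\eta)^{-1/2}$ built into the second derivative estimate for $F$.

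Boundedness of $\mathcal{K}$ on $L^{2,\alpha}_\rho$ then follows immediately: the diagonal contribution $-(3/2 + \eta\rho'(\eta)/\rho(\eta))\delta(\xi-\eta)$ to $\mathcal{K}_{cc}$ is a multiplication operator bounded on every $L^{2,\alpha}_\rho$ thanks to the symbol bound $|\eta\rho'(\eta)| \lesssim \rho(\eta)$, while $\mathcal{K}^0_{cc}$ even maps into a smaller space. For the commutator, note that $[\mathcal{K}_{cc}^0, \xi\partial_\xi]$ has integral kernel $(\xi\partial_\xi + \eta\partial_\eta + 1)K^0_{cc}(\eta,\xi)$, and the extra derivatives fall either on $\rho(\xi)$ (producing a bounded symbol), on $1/(\eta-\xi)$ (which is absorbed into the singular integral framework using the identity $(\xi\partial_\xi+\eta\partial_\eta)(\eta-\xi)^{-1} = -(\eta-\xi)^{-1}$), or on $F(\xi,\eta)$ (where the bounds in Corollary~\ref{F(xi,eta) estimates} for $\partial_\xi F, \partial_\eta F$ are stronger by a power of $(\xi+\eta)^{-1/2}$ compared to $F$ itself at high frequency, which is exactly what is needed to land in $L^{2,\alpha}_\rho$ rather than $L^{2,\alpha+1/2}_\rho$). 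The commutator with the delta multiplier is $-(\eta\partial_\eta)(3/2+\eta\rho'/\rho)$, which is again a bounded multiplication operator by the symbol bound.

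The main obstacle is the near-diagonal analysis of $\mathcal{K}^0_{cc}$: the kernel is genuinely singular along $\xi=\eta$, and the only reason the operator is bounded at all is the vanishing of $F(\xi,\eta)/(\eta-\xi)$ as a principal-value distribution encoded in the differentiability bounds for $F$. Making this precise requires either a Calder\'on--Zygmund style argument after the $\xi = u^2$ substitution, or a careful separation of the logarithmic case $d=4$ near $\xi+\eta=0$ where $\partial_\xi\partial_\eta F$ is only log-bounded; both are established in \cite{Krieger_2007} and carry over verbatim given the estimates of Corollary~\ref{F(xi,eta) estimates}.
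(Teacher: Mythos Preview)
Your proposal is correct and follows essentially the same approach as the paper: reduce to $\mathcal{K}^0_{cc}$ via the block decomposition, split the kernel into diagonal and off-diagonal regions, handle the off-diagonal piece by a Hilbert--Schmidt or Schur argument, and treat the near-diagonal singularity as a Hilbert-transform term plus a difference quotient controlled by the derivative bounds on $F$. The only minor difference is that you propose the substitution $\xi = u^2$ from \cite{Krieger_2007}, whereas the paper works directly in $(\xi,\eta)$ by conjugating the kernel with $\rho(\cdot)^{1/2}\langle\cdot\rangle^{\alpha}$ and splitting $\tilde{F}(\xi,\eta)/(\eta-\xi) = \tilde{F}(\xi,\xi)/(\eta-\xi) + (\tilde{F}(\xi,\eta)-\tilde{F}(\xi,\xi))/(\eta-\xi)$; both routes lead to the same estimates.
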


    \begin{proof}
    Recall that if $f \in \mathbb R^{2} \times C^{\infty}_c((0,+\infty))$ (in dimension 5, but the same can be said in dimension 4), then
    $$\mathcal{K}f(\eta) = \begin{pmatrix} k_{1,1}f(\xi_d) + k_{1,2}f(0) -\langle f \cdot \rho, K_{dc} \rangle_{L^2((0,+\infty))} \\
k_{2,1}f(\xi_d) + k_{2,2}f(0) -\langle f \cdot \rho, K_{0c} \rangle_{L^2((0,+\infty))}  \\
\mathcal{K}_{dc}(\eta)f(\xi_d) + \mathcal{K}_{0c}(\eta)f(0) + \mathcal{K}_{cc}f (\eta)
\end{pmatrix}.
$$
Similarly, we have that
\begin{align*}
    [\mathcal{K},\xi\partial_{\xi}]f(\eta) &= \begin{pmatrix} -\langle \xi\partial_{\xi} f, \rho \cdot K_{dc} \rangle_{L^2((0,+\infty))} \\
-\langle \xi\partial_{\xi}  f, \rho \cdot K_{0c} \rangle_{L^2((0,+\infty))}  \\
 \mathcal{K}_{cc}(\xi\partial_{\xi}f) (\eta) - \eta \partial_{\eta} \mathcal{K}_{cc}f (\eta) - \eta \partial_{\eta} \mathcal{K}_{dc}(\eta)f(\xi_d) - \eta \partial_{\eta} \mathcal{K}_{0c}(\eta)f(0)
\end{pmatrix} \\
&= \begin{pmatrix} \langle f , (\rho + \xi\partial_{\xi}\rho) \cdot K_{dc} + \rho \cdot \xi\partial_{\xi} K_{dc} \rangle_{L^2((0,+\infty))} \\
\langle f, (\rho + \xi\partial_{\xi} \rho) \cdot K_{0c} + \rho \cdot \xi\partial_{\xi} K_{0c} \rangle_{L^2((0,+\infty))}  \\
 \mathcal{K}_{cc}(\xi\partial_{\xi}f) (\eta) - \eta \partial_{\eta} \mathcal{K}_{cc}f (\eta) - \eta \partial_{\eta} \mathcal{K}_{dc}(\eta)f(\xi_d) - \eta \partial_{\eta} \mathcal{K}_{0c}(\eta)f(0)
\end{pmatrix},
\end{align*}

where $k_{i,j} \in \mathbb R$ and $\mathcal{K}_{dc}$, $\xi\partial_{\xi} \mathcal{K}_{dc}$, $\mathcal{K}_{0c}$, $\xi\partial_{\xi} \mathcal{K}_{0c}$ are continuous on $[0,+\infty)$ and fast-decaying functions. Hence, it suffices to study the mapping properties of $$\mathcal{K}_{cc} = -\left( \frac{3}{2} + \frac{\eta \rho'(\eta)}{\rho(\eta)} \right) \delta(\xi-\eta) + \mathcal{K}_{cc}^0, \quad [\mathcal{K}_{cc}, \xi\partial_{\xi}] = \eta\partial_{\eta}\left( \frac{\eta \rho'(\eta)}{\rho(\eta)} \right) \delta(\xi-\eta) +  [\mathcal{K}_{cc}^0, \xi\partial_{\xi}]$$
for which the dirac-delta contribution causes no issue.

\item[\textbf{Boundedness of $\mathcal{K}_{cc}^0$}:]

The boundedness of $\mathcal{K}_{cc}^0$ is equivalent to proving that the kernel 
$$\tilde{K}^0_{cc}(\eta,\xi) = \rho(\eta)^{\frac{1}{2}} \langle \rho \rangle^{\alpha + 1/2}K_{cc}^0(\eta,\xi)  \langle \eta \rangle^{-\alpha} \rho(\xi)^{-\frac{1}{2}}: L^2((0,+\infty)) \rightarrow L^2((0,+\infty))$$
acts, as a principal value integral, continuously. Write
\begin{align*}
\tilde{K}^0_{cc}(\eta,\xi) &= \frac{\sqrt{\rho(\eta)\rho(\xi)} \langle \eta \rangle^{\alpha + 1/2} \langle \xi \rangle^{-\alpha} F(\xi,\eta)}{\eta-\xi} = \frac{\tilde{F}(\xi,\eta)}{\eta-\xi}.
\end{align*}
First, we split the kernel into two regions: the diagonal
$$D =\{(\eta,\xi) \in (0,+\infty)^2: \frac{1}{4}\xi \leq \eta \leq 4\xi\},$$
where $\xi \sim \eta$, and its complementary $(0,+\infty)^2 \setminus D$, where one always has $|\xi-\eta| \geq \frac{5}{3}(\xi + \eta)$. Our estimates on $F(\xi,\eta)$ from Corollary \ref{F(xi,eta) estimates} show that
$$\left| \frac{\tilde{F}(\xi,\eta)}{\xi-\eta}  \right| \lesssim \begin{cases}
    \eta^{\frac{d-6}{4}} \xi^{\frac{d-6}{4}} (\cdot |\log(\xi)\log(\eta) |^{-1} \text{ if } d = 4) &\text{ if } (\xi,\eta) \notin D, \xi + \eta \lesssim 1 \\
    (1+\xi)^{-N}(1+\eta)^{-N} &\text{ if } (\xi,\eta) \notin D, \xi + \eta \gtrsim 1,
\end{cases}$$
meaning that $1_{(0,+\infty)^2 \setminus D} \cdot \tilde{K}^{0}_{cc}$ is a Hilbert-Schmidt kernel on $(0,+\infty)^2$. It remains to treat the diagonal part. In that region, one has the following estimates
    \begin{align*}
        |\tilde{F}(\xi,\eta)| &\lesssim \begin{cases}
           \eta^{\frac{d-4}{2}} (\cdot |\log(\eta)|^{-2} \text{ if } d = 4) \quad &\text{ if } \xi + \eta \leq 1 \\
            (1+|\xi^{\frac{1}{2}}-\eta^{\frac{1}{2}}|)^{-N} \quad &\text{ if } \xi + \eta \geq 1 
        \end{cases} \\
        |\partial_{\xi}\tilde{F}(\xi,\eta)| + |\partial_{\eta}\tilde{F}(\xi,\eta)|&\lesssim \begin{cases}
            \eta^{\frac{d-6}{2}} (\cdot |\log(\eta)|^{-2} \text{ if } d = 4) \quad &\text{ if } \xi + \eta \leq 1 \\
             \eta^{-\frac{1}{2}}(1+|\xi^{\frac{1}{2}}-\eta^{\frac{1}{2}}|)^{-N} \quad &\text{ if } \xi + \eta \geq 1 .
        \end{cases} 
    \end{align*} 
We write
\begin{align*}
1_{D} \cdot \tilde{K}^0_{cc}(\eta,\xi) &= 1_{D} \cdot \left( \frac{\tilde{F}(\xi,\xi)}{\eta-\xi} + \frac{\tilde{F}(\xi,\eta)-\tilde{F}(\xi,\xi)}{\eta-\xi} \right) = (A) + (B)
\end{align*}
and the $L^2$-boundedness of $(A)$ follows from the boundedness of the Hilbert transform. 

If we further split the diagonal $D$ into $D \cap [0,1]^2$ and $D \setminus [0,1]^2$, then $(B) \cdot 1_{D \cap [0,1]^2}$ is Hilbert-Schmidt on $(0,+\infty)^2$. As for $(B) \cdot 1_{D \setminus [0,1]^2}$, we use Schur test: it suffices to prove that
$$\sup_{\xi \geq 0} \int_{\eta \geq 0} 1_{D \setminus [0,1]^2} \cdot \left| \frac{\tilde{F}(\xi,\eta)-\tilde{F}(\xi,\xi)}{\eta-\xi} \right| d\eta + \sup_{\eta \geq 0} \int_{\xi \geq 0} 1_{D \setminus [0,1]^2} \cdot \left| \frac{\tilde{F}(\xi,\eta)-\tilde{F}(\xi,\xi)}{\eta-\xi} \right| d\xi < +\infty$$
to get the $L^2((0,+\infty))$-boundedness. Given $\eta \sim \xi$ on $D$ and our bounds on $\nabla F$, it is enough to prove
$$\sup_{\xi \geq 1} \int_{\eta \geq 1} \eta^{-\frac{1}{2}}(1 + |\xi^{\frac{1}{2}}-\eta^{\frac{1}{2}}|)^{-N} d\eta < +\infty,$$
which is the case because
\begin{align*}
    \int_{\eta \geq 1} \eta^{-\frac{1}{2}}(1 + |\xi^{\frac{1}{2}}-\eta^{\frac{1}{2}}|)^{-N} d\eta &= \int_{1}^{\xi} \eta^{-\frac{1}{2}}(1 + \xi^{\frac{1}{2}}-\eta^{\frac{1}{2}})^{-N} d\eta + \int_{\xi}^{+\infty} \eta^{-\frac{1}{2}}(1 - \xi^{\frac{1}{2}}+\eta^{\frac{1}{2}})^{-N} d\eta \\
    &\lesssim \left| \left[ (1 + \xi^{\frac{1}{2}}-\eta^{\frac{1}{2}})^{-N+1} \right]^{\eta = \xi}_{1} \right| + \left| \left[ (1 - \xi^{\frac{1}{2}}+\eta^{\frac{1}{2}})^{-N+1} \right]^{\eta = +\infty}_{\xi} \right| \\
    &\lesssim  \left( \xi^{\frac{1}{2}} \right)^{-N+1} + 1 \\
    &\lesssim 1 \quad \forall \xi \geq 1.
\end{align*}
\begin{remark}
    In the above proof, we remark that one can also choose to write 
\begin{align*}
1_{D} \cdot \tilde{K}^0_{cc}(\eta,\xi) &= 1_{D} \cdot \left( \frac{\tilde{F}(\eta,\eta)}{\eta-\xi} + \frac{\tilde{F}(\xi,\eta)-\tilde{F}(\eta,\eta)}{\eta-\xi} \right)
\end{align*}
and use the same analysis. This is useful for the commutator estimate, because we can choose if we wish to differentiate $\tilde{F}$ with respect to $\xi$ or $\eta$. We do not need to have an estimate on the full gradient of $\tilde{F}$.
\end{remark}


\item[\textbf{Boundedness of $[\mathcal{K}_{cc}^0,\xi\partial_{\xi}]$}:]

An integration by parts shows that the commutator $[\xi\partial_{\xi},\mathcal{K}_{cc}^0]$ has kernel
\begin{align*}
    K^{0,com}_{cc} &= (\eta\partial_{\eta} + \xi\partial_{\xi})K^0_{cc}(\eta,\xi) + K_{cc}^0(\eta,\xi) \\
    &= \frac{\rho(\xi)}{\eta-\xi}\left( \frac{\xi\rho'(\xi)}{\rho(\xi)}F(\xi,\eta) + (\eta\partial_{\eta} + \xi\partial_{\xi})F(\xi,\eta) \right).
\end{align*}
Then $\eta\partial_{\eta} F$, $\xi \partial_{\xi}F$ and  $\xi\rho'(\xi)\rho(\xi)^{-1}F(\xi,\eta)$ all satisfy the same estimates as $F(\xi,\eta)$, the only difference being on the diagonal, away from zero, where we lose a factor $\eta^{-\frac{1}{2}}$, but we gain it back by considering the $L^{2,\alpha}_{\rho} - L^{2,\alpha}_{\rho}$ boundedness instead of $L^{2,\alpha}_{\rho} - L^{2,\alpha+1/2}_{\rho}$. 

We remark that for $\eta \partial_{\eta} F$ (resp. $\xi \partial_{\xi} F$), we only need the estimate for the derivative with respect to $\xi$ (resp. $\eta$) to be the same as the one for $\nabla F$.
\end{proof}

\section{Exact solutions by means of Fourier method} \label{section:exact solution fourier method}
In this section, we construct the final piece of the solution $\varepsilon$, which corrects the approximate solution $u_k$ from Theorem \ref{Thm:Approximate Solution} and Theorem \ref{Thm:Approximate Solution d = 4} to an exact solution within the cone. We first transform the evolution equation for $\varepsilon$ into the generalized Fourier space and we formulate the equation as a fixed-point problem given by (\ref{contraction operator}). The core of the section is to prove, by using the properties of the transference operator, that this map is a contraction on a carefully chosen Banach space. 

Let $\varepsilon(r,t)$ be a solution of (\ref{space-variable equation epsilon}). Substituting
$$
\tilde{\varepsilon}(\tau,R) = R^{\frac{d-1}{2}}\varepsilon(t(\tau),r(\tau,R)), \quad \tau = \nu^{-1}t^{-\nu}, \ R = \lambda(t)r, \ \lambda(t) = t^{-1-\nu},
$$
\index{tau@$\tau = \nu^{-1}t^{-\nu}$}
we get
\begin{equation} \label{new space-variable equation epsilon}
    \mathcal{D}^2 \tilde{\varepsilon} - (d-2) \beta(\tau) \mathcal{D} \tilde{\varepsilon} - \frac{(d-1)(d-3+d\nu - \nu)}{4\nu} \dot{\beta}(\tau) \tilde{\varepsilon} + \mathcal{L} \tilde{\varepsilon} = \mathcal{N}\tilde{\varepsilon},
\end{equation}
where the following notations were used:
\begin{align*}
\lambda(\tau) &= \lambda(t(\tau)) = (\nu \tau)^{\frac{1+\nu}{\nu}}, \ \dot{\lambda}(\tau) = \partial_{\tau} \lambda(t(\tau)), \\
    \beta(\tau) &= \dot{\lambda}(\tau)\lambda(\tau)^{-1} = \frac{1+\nu}{\tau \nu}, \\
    \dot{\beta}(\tau) &= \partial_{\tau}\beta = -\frac{1+\nu}{\tau^2 \nu}, \\
    \mathcal{D} &= \partial_{\tau} + \beta(\tau)R \partial_R,  \\
    \mathcal{L} &= -\partial_{RR} - pW(R)^{p-1} + \frac{1}{R^2} \cdot \left( \frac{(d-3)(d-1)}{4} \right),  \\
    \mathcal{N} &= \lambda(\tau)^{-2}R^{\frac{d-1}{2}}\left[e_{k-1} + F(u_{k-1}+\chi\left(R \tau^{-1}\right)R^{-\frac{d-1}{2}}\tilde{\varepsilon}) \right.  \\
     &\phantom{=} \left.-F(u_{k-1}) - F'(u_0)\chi\left(R \tau^{-1}\right) R^{-\frac{d-1}{2}}\tilde{\varepsilon}\right].
\end{align*}
We assume until the end of the paper that $u_0, u_{k-1}$, $e_{k-1}$ are always extended on $0 \leq R < +\infty$, $\tau \geq \tau_0$, with the same size and regularity as well as being supported in $0 \leq R < 2\tau$. We remark that we have added a term $\chi(R\tau^{-1})$ in front of $R^{-\frac{d-1}{2}}\tilde{\varepsilon}$, where $0 \leq \chi = 1-\chi_{[1,+\infty)} \leq 1$ is a smooth transition function which is $1$ on $|x| \leq 1$ and $0$ on $|x| \geq 2$, in the definition of $\mathcal{N}$. All of this does not change the equation on the cone $0 \leq R < \tau$, $\tau \geq \tau_0$, of interest.
\index{chi2@$\chi(R\tau^{-1})$, another transition function}

\begin{remark}[On extending $u_k, e_k$ outside the cone] \label{extension of v_k, e_k}
In the following, we briefly describe how one can, for fixed $t$, extend $u_k, e_k$ on the whole $\mathbb R^d$. Multiplying this extension by $\chi(R\tau^{-1})$ restricts its support to the desired region $0 \leq R < 2\tau$.

Note that $u_0, v_1, e_1$ are already defined on the whole $0 \leq R < +\infty$, $0 < t \leq t_0$. For these terms, we only need to apply the cutoff. Otherwise, write $v(R,t)$ or $e(R,t)$ as a function $f(a,t)$, $a = R/(t\lambda) \in [0,1]$, and for fixed $t$, extend $f(a,t)$ on $[0,+\infty)$ while keeping the same Hölder regularity and a comparable Hölder constant, which gives the $(t\lambda)$ smallness. This can be done via Whitney's Extension Theorem (\cite[Chapter VI, Theorem 4]{Stein_Whitney_Extension}). However, since we are working on an interval, we can use the following simpler construction:
$$
\tilde{f}(a,t) =  \int_1^{+\infty} \phi(y)\psi(1-a(1-y))f(1-a(1-y),t)dy, \quad a > 1,
$$
where $\psi \in C^{\infty}(\mathbb R)$ is a smooth transition function which is $1$ on $(-\infty,1/2]$ and $0$ on $[3/4,+\infty)$ and $\phi \in C^{0}([1,+\infty))$ is a continuous function satisfying
   $$\int_{1}^{+\infty}\phi(s)ds = 1, \quad \int_{1}^{+\infty}s^n\phi(s)ds = 0 \ \forall n \geq 1, \quad \lim \limits_{s \to +\infty}s^n\phi(s) = 0, \quad \forall n \geq 0.$$
\end{remark}

Now, we translate equation (\ref{new space-variable equation epsilon}) to the Fourier side. Observe that
$$\mathcal{F} \left( \partial_{\tau} + \beta(\tau)R \partial_R \right) = \left(\partial_{\tau}-2\beta(\tau)\xi\partial_{\xi} \right)\mathcal{F} + \beta(\tau)\mathcal{K}\mathcal{F}$$
and
\begin{align*}
  \mathcal{F} \left( \partial_{\tau} + \beta(\tau)R \partial_R \right)^2 &= \left[\left(\partial_{\tau}-2\beta(\tau)\xi\partial_{\xi}\right)\mathcal{F} + \beta(\tau)\mathcal{K}\mathcal{F}\right] \left( \partial_{\tau} + \beta(\tau)R \partial_R \right) \\
  &= \left(\partial_{\tau}-2\beta(\tau)\xi\partial_{\xi} \right)^2\mathcal{F} + \left(\partial_{\tau}-2\beta(\tau)\xi\partial_{\xi} \right) \beta(\tau)\mathcal{K}\mathcal{F} \\
  &\phantom{=}+ \beta(\tau)\mathcal{K}\left(\partial_{\tau}-2\beta(\tau)\xi\partial_{\xi} \right)\mathcal{F} + \beta(\tau)^2\mathcal{K}^2\mathcal{F},
\end{align*}
i.e.,
\begin{align*}
\mathcal{D}_{\tau} &= \partial_{\tau}-2\beta(\tau)\xi\partial_{\xi} \\
\mathcal{F} \mathcal{D} &= \mathcal{D}_{\tau}\mathcal{F} + \beta(\tau)\mathcal{K}\mathcal{F} \\
\mathcal{F} \mathcal{D}^2 &= \mathcal{D}_{\tau}^2\mathcal{F} + \beta(\tau) \mathcal{K} D_{\tau}\mathcal{F} + D_{\tau} \mathcal{K}\beta(\tau)\mathcal{F} + \beta(\tau)^2\mathcal{K}^2\mathcal{F} \\
&= \mathcal{D}_{\tau}^2\mathcal{F} + \beta(\tau) \mathcal{K} D_{\tau}\mathcal{F} + \beta(\tau) D_{\tau} \mathcal{K} \mathcal{F} + \dot{\beta}(\tau)\mathcal{K} \mathcal{F} + \beta(\tau)^2\mathcal{K}^2\mathcal{F} \\
&= \mathcal{D}_{\tau}^2\mathcal{F} + 2\beta(\tau) \mathcal{K} D_{\tau}\mathcal{F} + \beta(\tau) [D_{\tau},\mathcal{K}]  \mathcal{F} + \dot{\beta}(\tau)\mathcal{K} \mathcal{F}+ \beta(\tau)^2\mathcal{K}^2\mathcal{F} \\
&= \mathcal{D}_{\tau}^2\mathcal{F} + 2\beta(\tau) \mathcal{K} D_{\tau}\mathcal{F} - 2 \beta(\tau)^2 [\xi\partial_{\xi},\mathcal{K}]  \mathcal{F} + \dot{\beta}(\tau)\mathcal{K} \mathcal{F}+ \beta(\tau)^2\mathcal{K}^2\mathcal{F}.
\end{align*}
Therefore, (\ref{new space-variable equation epsilon}) rewrites as
\begin{equation}\label{fourier-variable equation epsilon}
\begin{aligned}
    (\mathcal{D}^2_{\tau} + \beta(\tau) \mathcal{D}_{\tau} + \xi ) \mathcal{F}\tilde{\varepsilon} &= (d-1) \beta(\tau) \mathcal{D}_{\tau} \mathcal{F} \tilde{\varepsilon} + (d-2) \beta(\tau)^2\mathcal{K}\mathcal{F} \tilde{\varepsilon}\\
    &\phantom{=}-2\beta(\tau) \mathcal{K} D_{\tau}\mathcal{F} \tilde{\varepsilon} + 2\beta(\tau)^2[\xi\partial_{\xi},\mathcal{K}] \mathcal{F} \tilde{\varepsilon} - \dot{\beta}(\tau)\mathcal{K}\mathcal{F}\tilde{\varepsilon} \\
    &\phantom{=}- \beta(\tau)^2\mathcal{K}\mathcal{F}\tilde{\varepsilon} + \frac{(d-1)(d-3+d\nu - \nu)}{4\nu} \dot{\beta}(\tau) \mathcal{F}\tilde{\varepsilon}  +  \mathcal{F}\mathcal{N}\tilde{\varepsilon}.
\end{aligned}
\end{equation}
Since
$$S^{-1}\partial_{\tau}S = \mathcal{D}_{\tau}, \quad S^{-1}\lambda(\tau)^{-2}\xi S = \xi, \quad (Sg)(\tau,\xi):= g(\tau, \lambda(\tau)^{-2}\xi),$$
the operator on the left-hand side of (\ref{fourier-variable equation epsilon}) can be inverted as was shown in \cite[Section 3]{Krieger_2012}. If
$$\underline{\mathbf{x}}(\tau,\xi) = \begin{pmatrix}
    x_d(\tau) \\
    x_0(\tau) \\
    x(\tau,\xi)
\end{pmatrix} \quad \underline{\mathbf{f}}(\tau,\xi) = \begin{pmatrix}
    f_d(\tau) \\
    f_0(\tau) \\
    f(\tau,\xi)
\end{pmatrix} \quad \underline{\mathbf{\xi}} = \xi \cdot \begin{pmatrix}
    1_{\xi = \xi_d} & 0 & 0 \\
    0 & 1_{\xi = 0} & 0 \\
    0 & 0 & 1_{\xi > 0}
\end{pmatrix}$$
in dimension 5 (and similarly in dimension 4), then the inhomogeneous problem
$$(\mathcal{D}^2_{\tau} + \beta(\tau) \mathcal{D}_{\tau} + \underline{\mathbf{\xi}} ) \underline{\mathbf{x}}(\tau,\xi) = \underline{\mathbf{f}}(\tau,\xi), \quad  \tau > 0, \xi \in \{\xi_d\} \cup [0,+\infty) \label{eq:inhomogeneous problem with D_tau}$$ 
is solved as
\begin{equation} \label{fourier-variable equation inversion}
\begin{aligned}
    x(\tau, \xi) &= \int_{\tau}^{+\infty} H(\sigma,\tau, \lambda(\tau)^2 \xi), f\left(\sigma,\frac{\lambda(\tau)^2}{\lambda(\sigma)^2} \xi\right)d\sigma, \\
    H(\sigma,\tau,\xi) &= \xi^{-\frac{1}{2}}  \sin \left[ \xi^{\frac{1}{2}} \int_{\tau}^{\sigma} \lambda(u)^{-1}du \right], \\
    \mathcal{D}_{\tau} x(\tau, \xi) &= \int_{\tau}^{+\infty} (\partial_{\tau}H)(\sigma,\tau, \lambda(\tau)^2 \xi) f\left(\sigma,\frac{\lambda(\tau)^2}{\lambda(\sigma)^2} \xi\right)d\sigma, \\
    x_0(\tau) &= \int_{\tau}^{+\infty}H_0(\tau,\sigma) f_0(\sigma)d\sigma, \quad H_0(\tau,\sigma) = \nu \sigma^{\frac{1+\nu}{\nu}} \left(\tau^{-\frac{1}{\nu}} - \sigma^{-\frac{1}{\nu}} \right), \\
    x_d(\tau) &= \int_{\tau}^{+\infty}H_d(\tau,\sigma) \tilde{f}_d(\sigma)d\sigma, \quad H_{d}(\tau,\sigma) = -\frac{1}{2}|\xi_d|^{-\frac{1}{2}} \exp \left( -\frac{1}{2}|\xi_d|^{\frac{1}{2}} |\tau-\sigma| \right), \\
    \tilde{f}_d(\tau) &= f_d(\tau)-\beta(\tau)x_d(\tau).
\end{aligned}
\end{equation}

\begin{definition}\label{definition banach space}
        For $\alpha \in \mathbb R$, $N \in \mathbb N$, $\tau_0 \geq 1$, let $L^{\infty,N} L^{2,\alpha}_{\rho}$ be the set of measurable functions $f(\tau, \xi)$ for which the following norm is finite
        $$||f||_{L^{\infty,N} L^{2,\alpha}_{\rho}} = \sup_{\tau \geq \tau_0} \tau^{N} ||f(\tau, \cdot)||_{L^{2,\alpha}_{\rho}} < +\infty$$
        and $L^{2,\alpha}_{\rho} = \mathbb R^{d-3} \times L^2((0,+\infty), \langle \xi \rangle^{2\alpha}\rho(\xi)d\xi)$ as in Definition \ref{def:l^2,alpha_rho}.
        \index{L infty N, 2 alpha rho@$L^{\infty,N} L^{2,\alpha}_{\rho}$, a mixed-normed space on Fourier side}
\end{definition}

Our last goal in this section is to prove that the fixed-point iteration $(\underline{\mathbf{x}}_{n}, \mathcal{D}_{\tau}\underline{\mathbf{x}}_{n})$ defined via 
\begin{align} \label{contraction operator}
    \underline{\mathbf{x}}_{n+1} &= (\mathcal{D}^2_{\tau} + \beta(\tau) \mathcal{D}_{\tau} + \xi )^{-1} \left[ (d-1) \beta(\tau) \mathcal{D}_{\tau} \underline{\mathbf{x}}_{n} + (d-2) \beta(\tau)^2\mathcal{K}\underline{\mathbf{x}}_{n} -2\beta(\tau) \mathcal{K} D_{\tau}\underline{\mathbf{x}}_{n} \right. \notag \\
    &\phantom{=}\left. + 2\beta(\tau)^2[\xi\partial_{\xi},\mathcal{K}]\underline{\mathbf{x}}_{n} - \dot{\beta}(\tau)\mathcal{K}\underline{\mathbf{x}}_{n} - \beta(\tau)^2\mathcal{K}\underline{\mathbf{x}}_{n} \right. \notag \\
    &\phantom{=}\left.+ \frac{(d-1)(d-3+d\nu - \nu)}{4\nu} \dot{\beta}(\tau) \mathcal{F}\tilde{\varepsilon}  +  \mathcal{F}\mathcal{N} (\mathcal{F}^{-1}\underline{\mathbf{x}}_{n}) \right] \\
    &= (\mathcal{D}^2_{\tau} + \beta(\tau) \mathcal{D}_{\tau} + \xi )^{-1} \left[ T (\underline{\mathbf{x}}_{n}, \mathcal{D}_{\tau}\underline{\mathbf{x}}_{n}) + \tilde{\mathcal{N}} \underline{\mathbf{x}}_{n} + \lambda(\tau)^{-2}\mathcal{F}R^{\frac{d-1}{2}}e_{k-1} \right], \notag \\
    \underline{\mathbf{x}}_0 &= (\mathcal{D}^2_{\tau} + \beta(\tau) \mathcal{D}_{\tau} + \xi )^{-1} \left[ \lambda(\tau)^{-2}\mathcal{F}R^{\frac{d-1}{2}}e_{k-1}\right], \notag
\end{align}
with $T$ linear and $\tilde{\mathcal{N}}$ nonlinear, is a Cauchy sequence in the Banach space
$$
\underline{\mathbf{x}} = (x_d, x_0, x) \in  L^{\infty,N-2}L^{2,\alpha+\frac{1}{2}}_{\rho}
$$
for an appropriate choice of $\alpha, N, \tau_0$. To this end, we proceed as follows. First, we have a look at the boundedness of the inverse operator
$$
\begin{pmatrix}
    (\mathcal{D}^2_{\tau} + \beta(\tau) \mathcal{D}_{\tau} + \xi )^{-1} \\
    D_{\tau} (\mathcal{D}^2_{\tau} + \beta(\tau) \mathcal{D}_{\tau} + \xi )^{-1}
\end{pmatrix} :  L^{\infty,N}L^{2,\alpha}_{\rho} \rightarrow  L^{\infty,N-2}L^{2,\alpha+\frac{1}{2}}_{\rho} \times  L^{\infty,N-1}L^{2,\alpha}_{\rho}.
$$
We prove that for arbitrarily small $\kappa > 0$, this map is bounded with norm $\leq \kappa$ if $N$ is large enough (depending on $\kappa$, $\nu$, $\alpha$) and $\tau_0$ is small enough (depending on $\kappa, \nu, \alpha, N$) in Definition \ref{definition banach space}. This is the content of Theorem \ref{thm:D_tau boundedness}. Then, we observe that the linear part $T$ of (\ref{contraction operator}) is a bounded operator inbetween
$$
T : L^{\infty,N-2}L^{2,\alpha+\frac{1}{2}}_{\rho} \rightarrow L^{\infty,N}L^{2,\alpha}_{\rho},
$$
thanks to Theorem \ref{thm:boundedness of K operator} and the gain of smallness coming from $\beta(\tau)$. Moreover, the operator norm of $T$ depends only on $\nu$, $\alpha$ and is independent of $N$ or how $\tau_0$ is chosen in Definition \ref{def:l^2,alpha_rho}. Similarly, we prove in Proposition \ref{prop:local lipschitz} that the nonlinear part $\mathcal{N}$ is locally Lipschitz (with local constant independent of $N$ and $\tau_0$) from $L^{\infty,N-2}L^{2,\alpha+\frac{1}{2}}_{\rho}$ to $L^{\infty,N}L^{2,\alpha}_{\rho}$, and that the (non-inverted) forcing term $\lambda(\tau)^{-2}\mathcal{F}R^{\frac{d-1}{2}}e_{k-1}$ belongs  $L^{\infty,N}L^{2,\alpha}_{\rho}$. This is enough to prove that the fixed-point iteration (\ref{contraction operator}) converges for an appropriate choice of $N$ and $\tau_0$.

\begin{theorem}\label{thm:D_tau boundedness}
    Let $\underline{\mathbf{f}} = (f,f_0,f_d)  \in L^{\infty,N} L^{2,\alpha}_{\rho}$. If
    $$
    (x,x_0,x_d) =  (\mathcal{D}^2_{\tau} + \beta(\tau) \mathcal{D}_{\tau} + \xi )^{-1}(f,f_0,f_d)
    $$
    is inverted as in (\ref{fourier-variable equation inversion}), then
    \begin{align*}
        ||x||_{L^{\infty,N-2} L^{2,\alpha+\frac{1}{2}}_{\rho}} + ||\mathcal{D}_{\tau} x||_{L^{\infty,N-1} L^{2,\alpha}_{\rho}} &\leq C(\nu,\alpha) \frac{1}{N} ||f||_{L^{\infty,N} L^{2,\alpha}_{\rho}}, \\
        ||x_0||_{L^{\infty,N-2}} + ||\partial_{\tau} x_0||_{L^{\infty,N-1}} &\leq C(\nu) \frac{1}{N} ||f_0||_{L^{\infty,N}}, \\ 
        ||x_d||_{L^{\infty,N}} + ||\partial_{\tau}x_d||_{L^{\infty,N}} &\leq C(\nu,N)||f_d||_{L^{\infty,N}}
    \end{align*}
    for all $N \geq N_0(\nu,\alpha)$, independently of how $\tau_0 \geq 1$ is chosen in Definition \ref{def:l^2,alpha_rho}.
\end{theorem}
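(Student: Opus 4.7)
Since the three components $x,x_0,x_d$ come from the spectral decomposition and are inverted by independent formulas, I treat each piece separately. In every case the scheme is the same: insert a sharp pointwise bound on the kernel, apply Minkowski's inequality in $\sigma$ to push the $L^2_\rho$-norm inside the integral, then use $||f(\sigma,\cdot)||_{L^{2,\alpha}_\rho} \leq \sigma^{-N}||f||_{L^{\infty,N}L^{2,\alpha}_\rho}$ and integrate against $\sigma^{-N}$ to produce $\tau^{-N+1}/(N-O_\nu(1))$. The $1/N$ prefactor in the target estimates emerges precisely from this last integration, and the independence from $\tau_0$ follows because all implicit constants depend only on ratios $\sigma/\tau$, never on absolute sizes.

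For the continuous part, starting from $H(\sigma,\tau,\zeta) = \zeta^{-1/2}\sin(\zeta^{1/2}\int_\tau^\sigma\lambda(u)^{-1}du)$, the two uniform bounds
\begin{align*}
|H(\sigma,\tau,\lambda(\tau)^2\xi)| &\leq \min \left( C_\nu \tau^{-1/\nu},\ \lambda(\tau)^{-1}\xi^{-1/2} \right), \\
|\partial_\tau H(\sigma,\tau,\lambda(\tau)^2\xi)| &\leq \lambda(\tau)^{-1},
\end{align*}
hold, the first because $\int_\tau^{+\infty}\lambda(u)^{-1}du = \nu^{-1/\nu}\tau^{-1/\nu}$ since $\nu>1$. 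Writing $||\cdot||_{L^{2,\alpha+1/2}_\rho}^2 \lesssim ||\cdot||_{L^{2,\alpha}_\rho}^2 + ||\xi^{1/2}\cdot||_{L^{2,\alpha}_\rho}^2$, I control $||x||_{L^{2,\alpha}_\rho}$ with the $\tau^{-1/\nu}$ branch and $||\xi^{1/2}x||_{L^{2,\alpha}_\rho}$ with the $\lambda(\tau)^{-1}$ branch; the bound on $\mathcal{D}_\tau x$ is handled in the same way. After Minkowski, the change of variables $\eta = \lambda(\tau)^2\lambda(\sigma)^{-2}\xi$ produces a density ratio
\begin{align*}
\frac{\langle A\eta\rangle^{2\alpha}\rho(A\eta)\,A}{\langle\eta\rangle^{2\alpha}\rho(\eta)} \leq C_{\alpha,d}\,A^{c(\alpha,d)}, \quad A = \frac{\lambda(\sigma)^2}{\lambda(\tau)^2} \geq 1,
\end{align*}
by a case analysis on whether $\eta$ and $A\eta$ are small or large (the logarithmic factors of $\rho$ in $d=4$ only contribute an $A^\epsilon$ loss). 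Writing $A^c = (\sigma/\tau)^{2c(1+\nu)/\nu}$, the integral $\int_\tau^{+\infty}(\sigma/\tau)^{2c(1+\nu)/\nu}\sigma^{-N}d\sigma$ converges for $N > 2c(1+\nu)/\nu + 1$ and equals $\tau^{-N+1}/(N-O_\nu(1))$; combined with the prefactors $\lambda(\tau)^{-1}\sim\tau^{-(1+\nu)/\nu}$ or $\tau^{-1/\nu}$, this yields exactly the stated decays with constant $\sim 1/N$.

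For $x_0$ one has $|H_0(\tau,\sigma)| \leq \nu\sigma^{(1+\nu)/\nu}\tau^{-1/\nu}$ and $|\partial_\tau H_0(\tau,\sigma)| = \sigma^{(1+\nu)/\nu}\tau^{-(1+\nu)/\nu}$; integrating against $\sigma^{-N}$ produces $\tau^{-N+(1+\nu)/\nu+1}/(N-O_\nu(1))$, and multiplication by the respective prefactors gives both $\tau^{-(N-2)}$ and $\tau^{-(N-1)}$ decays. For $x_d$ the only subtlety is the self-referential term $\tilde{f}_d = f_d - \beta x_d$, which recasts the inversion as the Volterra problem $x_d = g + L x_d$ with $g(\tau) = \int_\tau^{+\infty}H_d(\tau,\sigma)f_d(\sigma)d\sigma$ and $Lx_d(\tau) = -\int_\tau^{+\infty}H_d(\tau,\sigma)\beta(\sigma)x_d(\sigma)d\sigma$. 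Using $|H_d| \lesssim e^{-c|\tau-\sigma|}$ and $|\beta(\sigma)| \lesssim \sigma^{-1}$, one checks directly that $||L||_{L^{\infty,N}\to L^{\infty,N}} \leq C_\nu/N$, so for $N \geq N_0(\nu)$ the Neumann series $(I-L)^{-1}g$ converges with constants independent of $\tau_0$, yielding $||x_d||_{L^{\infty,N}} \leq C(\nu,N)||f_d||_{L^{\infty,N}}$; the $\partial_\tau x_d$ bound follows by differentiating under the integral, where the boundary term $-H_d(\tau,\tau)\tilde{f}_d(\tau)$ is already of size $\tau^{-N}$. The principal technical obstacle is the weight-ratio estimate for the continuous part, since it is the exponent $c(\alpha,d)$ that dictates the precise threshold $N_0(\nu,\alpha)$ in the statement; the rest of the argument is bookkeeping.
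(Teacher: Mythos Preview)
Your proposal is correct and follows essentially the paper's approach: pointwise kernel bounds, Minkowski in $\sigma$, a dilation/weight-ratio estimate for $\rho$, and then integration in $\sigma$ to extract the $1/N$. The paper packages the continuous-spectrum kernel bound as the single inequality $|H(\sigma,\tau,\lambda(\tau)^2\xi)|\le C(\nu)\,\tau\,\langle\xi\rangle^{-1/2}$ and states the dilation estimate as $\|f(\sigma,\tfrac{\lambda(\tau)^2}{\lambda(\sigma)^2}\,\cdot\,)\langle\cdot\rangle^{\beta}\|_{L^{2,\alpha}_\rho}\le(\sigma/\tau)^{C(\nu,\alpha,\beta)}\|f(\sigma,\cdot)\|_{L^{2,\alpha+\beta}_\rho}$, whereas you split $\|\cdot\|_{L^{2,\alpha+1/2}_\rho}$ into low/high frequencies and use the two branches of your $\min$; both routes give the same result.

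Two minor points. First, the remark ``since $\nu>1$'' is unnecessary: $\int_\tau^{+\infty}\lambda(u)^{-1}\,du$ converges for every $\nu>0$ because $(1+\nu)/\nu>1$. Second, for $x_d$ your claim $\|L\|_{L^{\infty,N}\to L^{\infty,N}}\le C_\nu/N$ is not what the computation gives. With $|H_d(\tau,\sigma)|\lesssim e^{-c(\sigma-\tau)}$ and $|\beta(\sigma)|\lesssim\sigma^{-1}$ one finds
\[
\tau^N|Lx(\tau)|\ \lesssim_\nu\ \tau^N\!\int_\tau^{+\infty}\! e^{-c(\sigma-\tau)}\sigma^{-N-1}\,d\sigma\ \le\ \tau^{-1}c^{-1}\,\|x\|_{L^{\infty,N}},
\]
so $\|L\|\le C_\nu\tau_0^{-1}$, independent of $N$. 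Thus the Neumann series converges for $\tau_0$ large (which is exactly how the theorem is applied downstream), not for $N$ large. If you want the bound literally for every $\tau_0\ge1$, avoid the perturbative Volterra step and use directly the Green's function of the variable-coefficient ODE $\partial_\tau^2+\beta(\tau)\partial_\tau+\xi_d$: since $\xi_d<0$ and $\beta(\tau)\to0$, its fundamental solutions behave like $e^{\pm|\xi_d|^{1/2}\tau}$ at infinity, the Green's function built from them has exponential decay in $\sigma-\tau$, and the $L^{\infty,N}$ estimate follows in one line with a constant depending on $\nu$ (and possibly $N$) but not on $\tau_0$.
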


\begin{proof}
    This is a consequence of the following bounds
    \begin{align*}
        |H(\sigma,\tau,\xi)| &\leq C(\nu) \cdot \tau \langle \xi \rangle^{-\frac{1}{2}}, \\
        |\partial_{\tau}H(\sigma,\tau,\xi)| &\leq C(\nu) \cdot 1, \\ 
        \Bigl| \Bigl| f \left( \sigma, \frac{\lambda(\tau)^2}{\lambda(\sigma)^2} \cdot \right) \langle \cdot \rangle^{\beta}  \Bigr| \Bigr|_{L^{2,\alpha}_{\rho}} &\leq \left( \frac{\sigma}{\tau} \right)^{C(\nu,\alpha,\beta)} ||f(\sigma, \cdot)||_{L^{2,\alpha+\beta}_{\rho}}, \\
        |H_0(\tau,\sigma)| &\leq C(\nu) \cdot \tau  \left( \frac{\sigma}{\tau} \right)^{2}, \\
        |\partial_{\tau}H_0(\tau,\sigma)|  &\leq C(\nu) \cdot \left( \frac{\sigma}{\tau} \right)^{2},
    \end{align*}
    for some large constants $C > 0$ and the exponential decay of $H_d$, $\partial_{\tau}H_d$. For example, one deduces
    \begin{align*}
        ||x(\tau,\cdot)||_{L^{2,\alpha+1/2}_{\rho}} &\leq \int_{\tau}^{+\infty} \tau \Bigl| \Bigl| f \left( \sigma, \frac{\lambda(\tau)^2}{\lambda(\sigma)^2} \cdot \right) \langle \cdot \rangle^{-\frac{1}{2}}  \Bigr| \Bigr|_{L^{2,\alpha+1/2}_{\rho}}d\sigma \\
        &\leq \int_{\tau}^{+\infty} \tau \left( \frac{\sigma}{\tau} \right)^{C(\nu,\alpha)} ||f(\sigma, \cdot)||_{L^{2,\alpha+1/2}_{\rho}} d\sigma \\
        &\leq \int_{\tau}^{+\infty} \tau^{1-C(\nu,\alpha)} \left( \frac{\sigma^{C(\nu,\alpha)}}{\sigma^{N}} \right) \sigma^N ||f(\sigma, \cdot)||_{L^{2,\alpha+1/2}_{\rho}} d\sigma \\
        &\leq \tau^{1-C(\nu,\alpha)} \left( \int_{\tau}^{+\infty} \sigma^{C(\nu,\alpha)-N} d\sigma \right) ||f||_{L^{\infty,N} L^{2,\alpha}_{\rho}} \\
        &\leq \frac{1}{N-C(\nu,\alpha)-1}\tau^{2-N} ||f||_{L^{\infty,N} L^{2,\alpha}_{\rho}} 
    \end{align*}
    if $N > C(\nu,\alpha) + 1$.
\end{proof}

As a corollary, when $\nu$ and $\alpha$ are fixed, for any arbitrarily small constant $\kappa > 0$, one can fix $N$ large enough (depending on $\kappa,\nu,\alpha$) and then $\tau_0 \geq 1$ large enough (depending on $\kappa,\nu, N$) in the definition of $L^{\infty,N} L^{2,\alpha}_{\rho}$ so that 
    \begin{align}\label{contraction constant kappa}
        ||x||_{L^{\infty,N-2} L^{2,\alpha+\frac{1}{2}}_{\rho}} + ||\mathcal{D}_{\tau} x||_{L^{\infty,N-1} L^{2,\alpha}_{\rho}} &\leq \kappa ||f||_{L^{\infty,N} L^{2,\alpha}_{\rho}}, \\
        ||x_0||_{L^{\infty,N-2}} + ||\partial_{\tau} x_0||_{L^{\infty,N-1}} &\leq \kappa ||f_0||_{L^{\infty,N}}, \notag \\ 
        ||x_d||_{L^{\infty,N-2}} + ||\partial_{\tau}x_d||_{L^{\infty,N-1}} &\leq  \kappa ||f_d||_{L^{\infty,N}}. \notag
    \end{align}

\begin{lemma} \label{lemma:isometry L, H}
    For $\alpha \geq 0$ fixed, we have the following equivalence of norms 
    $$
    ||\underline{\mathbf{x}}||_{L^{2,\alpha}_{\rho}} \asymp ||R^{-\frac{d-1}{2}}\mathcal{F}^{-1}\underline{\mathbf{x}}||_{H^{2\alpha}_{\mathrm{rad}}(\mathbb R^d)}.
    $$
\end{lemma}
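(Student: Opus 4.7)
The plan is to leverage the unitarity of $\mathcal{F}$ together with the functional calculus of $\mathcal{L}$ and the spherical-coordinate identification, then appeal to a standard relatively-bounded perturbation argument to account for the potential $-pW^{p-1}$. For the base case $\alpha = 0$, Theorem~\ref{spectral theorem} says $\mathcal{F}$ is unitary from $L^2((0,+\infty))$ onto $L^2(\mathbb{R}, d\rho)$, so setting $v := \mathcal{F}^{-1}\underline{\mathbf{x}}$ gives $\|\underline{\mathbf{x}}\|_{L^{2,0}_\rho} = \|v\|_{L^2((0,+\infty))}$. Writing $u(R) := R^{-(d-1)/2} v(R)$ and passing to spherical coordinates on $\mathbb{R}^d$ (using that $u$ is radial) yields $\|v\|_{L^2(dR)}^2 = \omega_{d-1}^{-1}\|u\|_{L^2(\mathbb{R}^d)}^2$, which is the claimed equivalence at $\alpha = 0$.

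For general $\alpha \geq 0$, I would use the functional calculus of $\mathcal{L}$ to rewrite
\[
\|\underline{\mathbf{x}}\|_{L^{2,\alpha}_\rho}^2 \asymp \|(1 + |\mathcal{L}|)^\alpha v\|_{L^2(dR)}^2,
\]
where the discrete contributions at $\xi_d$ and at $\xi = 0$ (when $d=5$) amount to bounded multiplicative constants that cause no harm. On the physical side, the standard conjugation turning the radial Laplacian of $\mathbb{R}^d$ into $\mathcal{L}_0 := -\partial_{RR} + \tfrac{(d-1)(d-3)}{4R^2}$ (namely, multiplication by $R^{(d-1)/2}$) immediately gives
\[
\|u\|_{H^{2\alpha}_{\mathrm{rad}}(\mathbb{R}^d)} \asymp \|(1 + \mathcal{L}_0)^\alpha v\|_{L^2(dR)}.
\]
The whole statement therefore reduces to the $R$-side norm equivalence
\[
\|(1 + |\mathcal{L}|)^\alpha v\|_{L^2(dR)} \asymp \|(1 + \mathcal{L}_0)^\alpha v\|_{L^2(dR)}, \quad v \in L^2(dR).
\]

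This last equivalence is a standard relative-perturbation fact: $\mathcal{L} - \mathcal{L}_0 = -pW^{p-1}$ is a smooth bounded potential with polynomial decay at infinity, hence a compact (in particular, relatively bounded with arbitrarily small relative bound) perturbation of $\mathcal{L}_0$. For integer $\alpha = k$ one expands $(1+\mathcal{L})^k$ by the binomial formula, commutes the factors of $pW^{p-1}$ past $\mathcal{L}_0$ using smoothness and decay of $W$, and closes using Hardy/Sobolev bounds for $\mathcal{L}_0$. For arbitrary real $\alpha \geq 0$ one then interpolates between integer values, e.g.\ via the Heinz--Kato inequality for fractional powers of non-negative self-adjoint operators. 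The chief technical obstacle I expect is the negative discrete eigenvalue $\xi_d$ of $\mathcal{L}$: $1 + \mathcal{L}$ is not positive, which is why $|\mathcal{L}|$ appears on the Fourier side. On the orthogonal complement of the (one-dimensional) $\xi_d$-eigenspace one has $\mathcal{L} \geq 0$, so $(1+|\mathcal{L}|)^\alpha$ and $(1+\mathcal{L})^\alpha$ agree there, while on the eigenspace itself the operators act as bounded scalars; after this finite-rank correction, the perturbation argument applies verbatim and yields the desired equivalence.
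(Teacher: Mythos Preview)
The paper does not give its own proof here; it simply cites \cite{Krieger_2007_waveEq}, Lemma 6.6. Your sketch is precisely the standard argument one finds in that reference and its relatives: unitarity of $\mathcal{F}$ for $\alpha=0$, the conjugation $R^{(d-1)/2}(-\Delta_{\mathrm{rad}})R^{-(d-1)/2}=\mathcal{L}_0$ to reduce to the half-line picture, then a relatively-bounded-perturbation argument (the difference $-pW^{p-1}$ is smooth, bounded and decaying) for integer $\alpha$ followed by interpolation to non-integer $\alpha$. Your treatment of the negative eigenvalue via a finite-rank correction is also the correct move, since $\phi_d$ decays exponentially and therefore lies in every $H^s$, so the projection onto it is bounded in all the norms in play; the zero eigenvalue in $d=5$ is handled the same way since $\phi_0=\phi(R,0)$ has polynomial decay sufficient for membership in the relevant Sobolev spaces.
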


\begin{proof}
    See \cite[Lemma 6.6]{Krieger_2007_waveEq}.
\end{proof}

    

\begin{theorem}[Strauss estimates] \label{Strauss Estimates Thm}
    Let $u(x) \in H_{\mathrm{rad}}^s(\mathbb R^d)$, $d \geq 2$, $1 < 2s < d$. Then $u(x)$ is continuous a.e. on $x \neq 0$. Moreover, there exists some universal constant $C(d,s) > 0$ for which
    \begin{align*}
        |u(x)| &\leq C |x|^{\frac{1}{2}-\frac{d}{2}}||u||_{H^s(\mathbb R^d)} \quad |x| \geq 1, \\
        |u(x)| &\leq C |x|^{s-\frac{d}{2}}||u||_{H^s(\mathbb R^d)} \quad |x| \leq 1.
    \end{align*}
\end{theorem}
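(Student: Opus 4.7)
My plan is to use the spherical Fourier representation of radial functions. Writing $u(x) = v(|x|)$ and $\hat{u}(\xi) = V(|\xi|)$, standard Hankel-transform formulas give
$$v(r) = c_d\, r^{-(d-2)/2} \int_0^\infty J_{(d-2)/2}(r\rho)\, V(\rho)\, \rho^{d/2}\, d\rho,$$
and, by Plancherel,
$$\|u\|_{H^s(\mathbb R^d)}^2 \asymp \int_0^\infty |V(\rho)|^2 (1+\rho^2)^s\, \rho^{d-1}\, d\rho.$$
Applying the Cauchy--Schwarz inequality after extracting the weight $(1+\rho^2)^{-s/2}$ from the integrand of $v(r)$, and then changing variables $t = r\rho$, I obtain the bound
$$|v(r)|^2 \leq C\, r^{2s-d}\, \|u\|_{H^s}^2 \cdot I(r), \qquad I(r) := \int_0^\infty |J_{(d-2)/2}(t)|^2\, \frac{t}{(r^2+t^2)^s}\, dt,$$
so the entire problem reduces to estimating the explicit kernel integral $I(r)$.

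For $I(r)$ I will use the classical asymptotics $|J_{(d-2)/2}(t)| \lesssim t^{(d-2)/2}$ for $t \lesssim 1$ and $|J_{(d-2)/2}(t)| \lesssim t^{-1/2}$ for $t \gtrsim 1$. For $r \geq 1$, splitting the integration into $(0,1)$, $(1,r)$, $(r,\infty)$ and inserting the bounds produces contributions of size $r^{-2s}$, $r^{1-2s}$, and $C(2s-1)^{-1}r^{1-2s}$ respectively, where the last one crucially requires $s > 1/2$. This yields $I(r) \lesssim r^{1-2s}$, hence $|v(r)|^2 \lesssim r^{1-d}\|u\|_{H^s}^2$, which is the claimed large-$|x|$ bound. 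For $r \leq 1$, splitting into $(0,r)$, $(r,1)$, $(1,\infty)$ and using the same asymptotics, each block is bounded by a constant (on $(r,1)$ I use $\int_r^1 t^{d-1-2s}\,dt \lesssim 1$, which needs $d > 2s$); thus $I(r) = O(1)$ and $|v(r)|^2 \lesssim r^{2s-d}\|u\|_{H^s}^2$, giving the small-$|x|$ bound. The statements $s > 1/2$ and $s < d/2$ are therefore exactly the thresholds at which the two halves of the argument become sharp.

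Finally, continuity on $\{x \neq 0\}$ will follow from the integral representation: on any compact subinterval $r \in [r_0/2, 2r_0]$ with $r_0 > 0$, the integrand $r^{-(d-2)/2} J_{(d-2)/2}(r\rho) V(\rho)\rho^{d/2}$ is pointwise continuous in $r$ and is dominated, uniformly in $r$ on that interval, by an $L^1$-function of $\rho$ (arguing as above with Cauchy--Schwarz). Dominated convergence then yields continuity of $v$ on $(0,\infty)$, and hence of $u$ on $\mathbb R^d \setminus \{0\}$. The only mild obstacle is bookkeeping the Bessel asymptotics carefully across the regimes $t \sim 1$ and $t \sim r$, but these estimates are standard and cause no essential difficulty once the hypotheses $1 < 2s < d$ are in force.
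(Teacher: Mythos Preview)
Your argument is correct. The paper itself does not give a proof of this theorem: it simply cites \cite{strauss_estimates}, Theorems 10 and 13, and moves on. So there is nothing to compare your approach against except the external reference.

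What you have written is precisely the standard self-contained route to Strauss-type radial estimates: represent $v(r)$ via the Hankel transform, pull out the $H^s$ weight with Cauchy--Schwarz, and reduce everything to the scalar kernel integral $I(r)=\int_0^\infty |J_{(d-2)/2}(t)|^2\, t\,(r^2+t^2)^{-s}\,dt$. Your case splits are clean and the thresholds $2s>1$ (for the tail $(r,\infty)$ when $r\geq 1$ and the tail $(1,\infty)$ when $r\leq 1$) and $2s<d$ (for the block $(r,1)$ when $r\leq 1$) are exactly where they should be. One small remark: to turn this into a statement about an arbitrary $u\in H^s_{\mathrm{rad}}$ (rather than a Schwartz function) you should note that the Cauchy--Schwarz step shows the linear functional $u\mapsto v(r)$ is bounded on $H^s_{\mathrm{rad}}$ for each fixed $r>0$, so the pointwise bound and the continuity pass from a dense subclass to the whole space; this is implicit in your dominated-convergence paragraph but worth making explicit.
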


\begin{proof}
    See \cite[Theorem 10 and Theorem 13]{strauss_estimates}.
\end{proof}

\begin{corollary} \label{Strauss Estimates Corollary}
     Let $u(x) \in H_{\mathrm{rad}}^s(\mathbb R^d)$, $d \geq 2$, $s > d/2$. Consider
     $$v = \prod_{l=1}^n  \partial_r^{i_l}u$$
     and assume that $i_l \in \mathbb N_{\geq 0}$, $s-i_l > 1/2$ and $s-i_1 - ... - i_n \geq 0$. Then $v(x) \in L^2(\mathbb R^d)$ with
     $$||v||_{L^2(\mathbb R^d)} \lesssim ||u||_{H^s(\mathbb R^d)}^n.$$
     Similarly, it holds that
     $$w = \partial_r^i u \cdot u^{n-1} \in L^2(\mathbb R^d), \quad ||w||_{L^2(\mathbb R^d)} \lesssim ||u||_{H^s(\mathbb R^d)}^n,$$
     for any $n \geq 1$ and $s \geq i \geq 0$.
\end{corollary}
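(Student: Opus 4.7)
The plan is to establish the two estimates separately. In each case I will split the integration region into $\{|x|\leq 1\}$ and $\{|x|\geq 1\}$ and use the pointwise bounds supplied by Theorem \ref{Strauss Estimates Thm} (applied to each factor $\partial_r^{i_l}u \in H^{s-i_l}_{rad}$, which is legitimate since $s - i_l > 1/2$).

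For the first estimate on $v = \prod_{l=1}^{n} \partial_r^{i_l} u$, the case $n=1$ is immediate from $\partial_r^{i_1} u \in H^{s-i_1} \subset L^2$, so assume $n\geq 2$. On $\{|x|\geq 1\}$, Strauss gives $\|\partial_r^{i_l} u\|_{L^\infty(|x|\geq 1)} \lesssim \|u\|_{H^{s-i_l}} \leq \|u\|_{H^s}$ for each $l$; placing one factor in $L^2$ and the others in $L^\infty$ yields
$$\|v\|_{L^2(|x|\geq 1)} \leq \|\partial_r^{i_1}u\|_{L^2}\prod_{l=2}^{n}\|\partial_r^{i_l}u\|_{L^\infty(|x|\geq 1)} \lesssim \|u\|_{H^s}^{n}.$$
On $\{|x|\leq 1\}$ I will use the Strauss small-$|x|$ bound $|\partial_r^{i_l} u(x)| \lesssim \|u\|_{H^s}\,|x|^{\beta_l}$ with $\beta_l = s-i_l-d/2$ if $s-i_l < d/2$, $\beta_l = 0$ if $s-i_l > d/2$, and $\beta_l = -\varepsilon$ (obtained via the $H^{s-i_l-\varepsilon}$ embedding) for arbitrarily small $\varepsilon>0$ if $s-i_l=d/2$. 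Then $|v|^2 \lesssim \|u\|_{H^s}^{2n}|x|^{2\sum_l \beta_l}$, and integrating in polar coordinates on $B_1$ in $\mathbb{R}^d$ reduces matters to verifying $2\sum_l \beta_l + d > 0$.

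The key step — and in my view the only real obstacle — is this inequality. Let $S = \{l : s-i_l \leq d/2\}$. If $|S|=0$ the claim is trivial. If $|S|=1$ with singular index $l^\star$, then using only the hypothesis $s - i_{l^\star} > 1/2$ I get $2\beta_{l^\star} + d > 1-d+d = 1$, without invoking $\sum i_l \leq s$. If $|S|\geq 2$ the constraint $\sum_l i_l \leq s$ becomes essential: writing
$$\sum_{l\in S}\beta_l = |S|\bigl(s-\tfrac{d}{2}\bigr) - \sum_{l\in S}i_l \geq |S|\bigl(s-\tfrac{d}{2}\bigr) - s,$$
I obtain $2\sum_l \beta_l + d \geq (|S|-1)(2s-d) - O(\varepsilon) > 0$, since $s > d/2$ and $|S|\geq 2$. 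Choosing $\varepsilon$ small enough to absorb the critical-index contributions closes the argument.

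For the second estimate on $w = \partial_r^i u \cdot u^{n-1}$ the proof is considerably shorter: since $s > d/2$ the (non-radial) Sobolev embedding gives $u\in L^\infty$ with $\|u\|_{L^\infty}\lesssim \|u\|_{H^s}$, while $\|\partial_r^i u\|_{L^2} \leq \|u\|_{H^i}\leq \|u\|_{H^s}$ because $i\leq s$. Then
$$\|w\|_{L^2} \leq \|\partial_r^i u\|_{L^2}\,\|u\|_{L^\infty}^{n-1} \lesssim \|u\|_{H^s}^n,$$
which completes the corollary.
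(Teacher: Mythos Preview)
Your proof is correct and follows essentially the same route as the paper: for $w$ you use $\partial_r^i u\in L^2$ and $u\in L^\infty$ exactly as the paper does, and for $v$ you split into $\{|x|\leq 1\}$ and $\{|x|\geq 1\}$, apply the Strauss pointwise bounds to each factor, and verify that the product of the near-origin singularities lies in $L^2$ by summing exponents. The paper carries out the same argument with a single small uniform parameter $\delta>0$ chosen so that $n\delta<(n-1)(s-d/2)$, which collapses your case analysis on $|S|$ into one line; your version is just a slightly more explicit accounting of the same inequality.
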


\begin{proof}
We start with $w$ which holds true because $\partial_r^i u \in L^2$ and $u^{n-1} \in L^{\infty}$. As for $v$, the case $n = 1$ is trivial. Assume $n \geq 2$. Using Strauss Estimates, each term in the product is $L^{\infty} \cap L^2$ away from the origin and at the origin, the worse singularity that can happen for $\partial_r^{i_l}u$ is $|r|^{\min\{s-i_l-d/2-\delta,0\}}$ where we fix $$0 < \delta < \min\{n^{-1}(n-1)(s-d/2), s-i_l-d/2: l \in \{1,...,n\}, s-i_l-d/2 > 0\}.$$ 
Hence, the product (\ref{nonlinear sobolev estimate, square term}) is in $L^2(\mathbb R^d)$ away from the origin and has a singularity at worse 
$$|r|^{\min\{s-\sum i_l - \frac{d}{2} - n\delta + (n-1) (s-d/2),0\}} \leq |r|^{-\frac{d}{2}+}$$
at the origin which is also square-integrable.
\end{proof}

\begin{proposition}\label{prop:local lipschitz}
    Let $d \in \{4,5\}$ and $d/2 < 1 + 2\alpha < 1 + (6-d)\nu/2$. In particular, $\nu > 3$ if $d = 5$ and $\nu > 1$ if $d = 4$. Let also $N, \tau_0 \gg 1 + \nu$ and consider a pair $(u_{k-1},e_{k-1})$ with $e_{k-1}$ having smallness of order $\tau^{-2N}$, obtained from Theorem \ref{Thm:Approximate Solution} or Theorem \ref{Thm:Approximate Solution d = 4}, extended outside the cone as functions having support in $0 \leq R < 2\tau$, as well as the same regularity and smallness (see Remark \ref{extension of v_k, e_k}).

    Then the forcing term in (\ref{contraction operator}) satisfies $\lambda(\tau)^{-2}\mathcal{F}R^{\frac{d-1}{2}}e_{k-1} \in L^{\infty,N} L^{2,\alpha}_{\rho}$ and the non-linear map $\tilde{\mathcal{N}}$ given by
    \begin{align*}
        \underline{\mathbf{x}} \mapsto &\lambda(\tau)^{-2}\mathcal{F} R^{\frac{d-1}{2}}\left[F\left( u_{k-1}+\chi\left(R\tau^{-1}\right) R^{-\frac{d-1}{2}}\mathcal{F}^{-1}\underline{\mathbf{x}} \right) \right.
    \\
    &\phantom{=}\left. -F(u_{k-1})-F'(u_0) \chi 
  \left(R\tau^{-1}\right)R^{-\frac{d-1}{2}}\mathcal{F}^{-1}\underline{\mathbf{x}}\right]
    \end{align*}
    is locally Lipschitz from $L^{\infty,N-2} L^{2,\alpha+1/2}_{\rho}$ to $L^{\infty,N} L^{2,\alpha}_{\rho}$, with Lipschitz constants independent of $N$ or $\tau_0$.
\end{proposition}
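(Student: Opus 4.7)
The plan is to transport everything from Fourier to physical side via the isomorphism of Lemma \ref{lemma:isometry L, H}, so that $\underline{\mathbf{x}} \in L^{2,\alpha+1/2}_\rho$ corresponds to a radial $v = R^{-(d-1)/2}\mathcal{F}^{-1}\underline{\mathbf{x}} \in H^{1+2\alpha}(\mathbb R^d)$ with equivalent norms, and likewise $L^{2,\alpha}_\rho \leftrightarrow H^{2\alpha}(\mathbb R^d)$. The proposition then reduces to two spatial estimates for each fixed $\tau \geq \tau_0$: first, $\lambda(\tau)^{-2}\|e_{k-1}(\tau,\cdot)\|_{H^{2\alpha}(\mathbb R^d)} \lesssim \tau^{-N}$; second, local Lipschitz continuity of the map
$$v \longmapsto \lambda(\tau)^{-2} \bigl[ F(u_{k-1}+\chi_\tau v) - F(u_{k-1}) - F'(u_0)\chi_\tau v \bigr]$$
from $H^{1+2\alpha}_{\mathrm{rad}}$ to $H^{2\alpha}_{\mathrm{rad}}$, with Lipschitz constant $O(\tau^{-2})$ after multiplication by $\tau^N$, where $\chi_\tau = \chi(R/\tau)$ and the $O(\tau^{-2})$ compensates the loss between the input weight $N-2$ and the output weight $N$.

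For the forcing term, I would invoke the pointwise bounds of Theorem \ref{Thm:Approximate Solution}: $e_{k-1}$ together with its $R,t$ derivatives is bounded by $\lambda^{(d-2)/2}(t\lambda)^{-2N}[1+(1-R/(t\lambda))^{1/2+\nu/2-i-j-}]$, and after the extension of Remark \ref{extension of v_k, e_k} this persists on all of $\mathbb R^d$ with support in $R \leq 2\tau$ and no worse singularity at $R = t\lambda$. The regularity at the tip is exactly $H^{1/2+\nu/2-}$ (or $H^{1/2+\nu-}$ in $d=4$), and the hypothesis $1+2\alpha < 1+\nu/2$ (resp. $1+\nu$) keeps $2\alpha$ strictly below this threshold. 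Interpolating the pointwise derivative bounds with the regularity at the tip, then using the scaling $\lambda(\tau) = (\nu\tau)^{(1+\nu)/\nu}$, yields the claimed $\tau^{-N}$ decay, and $N_0 \gg 1+\nu$ absorbs any polynomial loss.

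For the nonlinear map in dimension $4$, where $F(u)=u^3$, one expands algebraically
$$F(u_{k-1}+w) - F(u_{k-1}) - F'(u_0)w = 3(u_{k-1}^2 - u_0^2)w + 3u_{k-1}w^2 + w^3, \qquad w := \chi_\tau v,$$
and estimates each term in $H^{2\alpha}$ via Corollary \ref{Strauss Estimates Corollary} combined with Theorem \ref{Thm:Approximate Solution d = 4}: the prefactor $u_{k-1}^2 - u_0^2$ supplies the $\tau^{-2}$ gain in the linear piece, while $w$ lives in $H^{1+2\alpha}$ which embeds well enough in radial Sobolev spaces (Strauss) for the quadratic and cubic pieces to be controlled by powers of $\|w\|_{H^{1+2\alpha}}$. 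In dimension $5$, where $F(u) = |u|^{4/3}u$ is only $C^{2,1/3}$, I would use a second-order Taylor expansion with integral remainder,
$$F(u_{k-1}+w) - F(u_{k-1}) - F'(u_{k-1})w = \int_0^1 (1-s) F''(u_{k-1}+sw) w^2\, ds,$$
together with $F'(u_{k-1})-F'(u_0) = \int_0^1 F''(u_0+s(u_{k-1}-u_0))(u_{k-1}-u_0)\,ds$. Since $u_{k-1}$ is strictly positive on the cone by Theorem \ref{Thm:Approximate Solution}, and $w$ stays small, $u_{k-1}+sw$ does not vanish, so the integrand $F''(u_{k-1}+sw) = p(p-1)|u_{k-1}+sw|^{1/3}\mathrm{sgn}(\cdot)$ is uniformly bounded and $C^{1/3}$ in a neighborhood, and one applies Corollary \ref{Strauss Estimates Corollary} and Sobolev-multiplier estimates with the admissible regularity $2\alpha < \nu/2$. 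The Lipschitz property then follows from the fundamental theorem of calculus: the Fréchet derivative is $F'(u_{k-1}+w) - F'(u_0) = [F'(u_{k-1}+w) - F'(u_{k-1})] + [F'(u_{k-1}) - F'(u_0)]$, whose norm is $O(\|w\|) + O(\tau^{-2})$ by the same analysis.

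The hard part is the dimension-$5$ bilinear estimate: controlling $\|F''(u_{k-1}+sw)w^2\|_{H^{2\alpha}}$ uniformly in $s$, when $F''$ has limited (Hölder $1/3$) regularity and $2\alpha < 5/2 = d/2$ prevents $H^{2\alpha}$ from being an algebra. This is where the restriction $5/2 < 1+2\alpha < 1+\nu/2$ is essential and forces $\nu > 3$; it is precisely the same "nonlinear Sobolev" technicality alluded to in the remark following Theorem \ref{thm:blow-up, main thm}, and must be handled via radial Strauss estimates rather than purely functional-calculus arguments. Everything else is routine bookkeeping with the weights $\lambda(\tau)^{-2}$ and $\tau^{N}$ versus $\tau^{N-2}$, and the Lipschitz constants produced are purely spatial so they are independent of both $N$ and the choice of $\tau_0$.
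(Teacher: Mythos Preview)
Your proposal is correct and follows essentially the same route as the paper: transport to physical side via Lemma~\ref{lemma:isometry L, H}, write the nonlinearity as a second-order Taylor remainder $y^2\int\!\!\int s_1 F''(u_{k-1}+s_1s_2 y)\,ds_1ds_2$ plus the linear piece $y(u_{k-1}-u_0)\int F''(\cdots)\,ds$, use positivity of $u_{k-1}$ so that $F''$ makes sense, and control products via Corollary~\ref{Strauss Estimates Corollary}.

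The one place where your sketch and the paper diverge in level of detail is the ``hard part'' you flag at the end. The paper does not use multiplier or fractional-Leibniz arguments there; instead it estimates the $\dot H^{2\alpha}$ norm by taking $\lceil 2\alpha\rceil\in[2,1+2\alpha)$ \emph{integer} radial derivatives, expands $\partial_R^n F''(u_{k-1}+sy)$ by Fa\`a di Bruno, and reduces everything to products of the form $|\partial_R^i y|\cdot|\partial_R^j y|\cdot|u_{k-1}|^{p-2}\prod_l |\partial_R^l y/u_{k-1}|^{m_l}$, which are handled termwise by Corollary~\ref{Strauss Estimates Corollary}. The upper bound $1+2\alpha<1+\nu/2$ enters not through an abstract multiplier theorem but because $\partial_R^{\lceil 2\alpha\rceil}u_{k-1}$ carries a $(1-R/\tau)^{-1/2+}$ singularity at the tip (from Theorem~\ref{Thm:Approximate Solution}); the paper isolates this contribution separately and checks it is still $L^2(R\sim\tau)$ after pairing with $y^2$ or $y(u_{k-1}-u_0)$. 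Your identification of the difficulty is exactly right, and the resolution is this concrete integer-derivative computation rather than a functional-calculus estimate.
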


\begin{proof}
The forcing term $e_{k-1}$ has regularity $e_{k-1} \in L^{\infty,2N} H^{2\alpha}_{\mathrm{rad}}$ by construction, hence \sloppy $\lambda(\tau)^{-2}\mathcal{F}R^{\frac{d-1}{2}}e_{k-1} \in L^{\infty,N} L^{2,\alpha}_{\rho}$ by Proposition \ref{lemma:isometry L, H}. Using the same Proposition \ref{lemma:isometry L, H}, it suffices to prove that
    $$y \mapsto \lambda(\tau)^{-2}\left[F(u_{k-1}+\chi \left(R\tau^{-1}\right) y)-F(u_{k-1})-F'(u_0) \chi \left(R\tau^{-1}\right) y\right]$$
    is locally Lipschitz from $L^{\infty,N-2} H^{2\alpha+1}_{\mathrm{rad}}$ to $L^{\infty,N} H^{2\alpha}_{\mathrm{rad}}$. We note that the Lipschitz constants do not depend on $N$ or $\tau_0$ because if the map is locally Lipschitz from $L^{\infty,N^*-2} L^{2,\alpha+1/2}_{\rho}$ to $L^{\infty,N^*} L^{2,\alpha}_{\rho}$ for some specific $N^* = N^*(\nu)$, then it is also locally Lipschitz with the same constant for any $N \geq N^*$. The same holds if we take a bigger $\tau_0$ in the definition of these spaces. 

    If $1+2\alpha > d/2$, then $H^{2\alpha+1}(\mathbb R^d)$ is an algebra and $H^{2\alpha+1} \hookrightarrow L^{\infty}(\mathbb R^d) \cap C^0(\mathbb R^d)$. Hence, $y$ decays as $\tau^{-N} \lesssim \tau^{-N/2}R^{-N/2}$ and is negligible compared to $u_{k-1}$, whose dominant component is $u_{0} \cdot \chi(R\tau^{-1})$ (see the extension from Remark \ref{extension of v_k, e_k}). In particular, $u_{k-1} + sy$, $s \in [0,1]$, stays non-negative on $0 \leq R < +\infty, \tau \geq \tau_0$. From now on, we ignore the cutoff and simply assume that $y$ is supported on $0 \leq R < 2\tau$ and negligible compared to $u_{k-1}$. 
    
    First, we prove that the mapping has the correct range. Write
    \begin{equation}\label{nonlinear sobolev estimate, integral form}
    \begin{aligned}
        \lambda^{-2}\left[ F(u_{k-1}+ y)-F(u_{k-1})-F'(u_0)y \right] &= \lambda^{-2}\left[ F(u_{k-1}+y)-F(u_{k-1})-F'(u_{k-1})y \right. \\
        &+ \left.F'(u_{k-1})y - F'(u_0)y \right]\\
        &= \lambda^{-2} y^2 \int_0^1\int_0^1 s_1F''(u_{k-1}+s_1s_2y)ds_2ds_1 \\
        &+ \lambda^{-2} y(u_{k-1}-u_0)\int_0^1 F''(u_0 + s(u_{k-1}-u_0)) ds.
    \end{aligned}
    \end{equation}
    
    Then we need to estimate the $L^{\infty,N}L^2$ and $L^{\infty,N}\dot{H}^{2\alpha}$ norms of these products.    Combining $H^{2\alpha+1} \hookrightarrow L^2(\mathbb R^d) \cap L^{\infty}(\mathbb R^d)$ with
    $$||u_{k-1}||_{L^{\infty}(\mathbb R^d)} \asymp \lambda^{\frac{d-2}{2}}, \quad ||u_{k-1}-u_0||_{L^{\infty}(\mathbb R^d)} \asymp \lambda^{\frac{d-2}{2}}\tau^{-2},$$
    the $L^{\infty,N}L^2$ bound will follow from the inequality $|a+b|^{p-2} \leq |a|^{p-2} + |b|^{p-2}$ and a simple application of Hölder's inequality.

    As for the $L^{\infty,N}\dot{H}^{2\alpha}$ bound, for $\tau \geq \tau_0$ fixed, we can (weakly) differentiate $\ceil{2\alpha} \in [2,1+2\alpha)$ times both products in (\ref{nonlinear sobolev estimate, integral form}) and estimate them. We need to be careful with the $\ceil{2\alpha}$-th derivative which can introduce a singularity for $\partial_R^{\ceil{2\alpha}} u_{k-1}$ at $R = \tau$ or make $\partial_R^{\ceil{2\alpha}}y$ of low regularity $L^2 \cap H^{\frac{1}{2}}$ where Strauss estimates are unavailable.
    
    First, we treat the product which is nonlinear with respect to $y$. Using Faà di Bruno formula, one has
    $$\left| \partial_R^n   F''\left( u_{k-1}+s_1s_2y \right) \right| \lesssim  \sum_{1 \cdot m_1 + ... + n \cdot m_n = n} \left| F^{(2 + m_1 + ... + m_n)} \left( u_{k-1}+s_1s_2y \right)  \right| \prod_{l=1}^n \left| \partial_R^l (u_{k-1} + s_1s_2y) \right|^{m_l}.$$
    Since
    \begin{align}
        |u_{k-1}+s_1s_2y| &\asymp |u_{k-1}|,  \\
        | \partial_R^{l} (u_{k-1}+s_1s_2y)| &\lesssim \frac{|u_{k-1}|}{1+R^l} + s_1s_2|\partial_R^l y|, \quad \text{ if } l < \ceil{2\alpha}, \\
         | \partial_R^{\ceil{2\alpha}} (u_{k-1}+s_1s_2y)| &\lesssim \frac{|u_{k-1}|}{1+R^{\ceil{2\alpha}}} \cdot \left(1 + \chi(R\tau^{-1}) \cdot \left|1-\frac{R}{\tau} \right|^{-\frac{1}{2}+}\right) +  s_1s_2|\partial_R^{\ceil{2\alpha}} y|,
    \end{align}
    (see Theorem \ref{Thm:Approximate Solution} and Theorem \ref{Thm:Approximate Solution d = 4}) in order to estimate the $L^2(\mathbb R^d)$ norm of
    $$\lambda^{-2} \partial_{R}^{\ceil{2\alpha}} \left( y^2 \int_0^1\int_0^1 s_1F''(u_{k-1}+s_1s_2y)ds_2ds_1 \right),$$
    it is enough to estimate the norm of
    \begin{equation} \label{nonlinear sobolev estimate, square term}
         \lambda^{-2} \lvert \partial_R^i y \rvert \cdot \lvert \partial_R^j y \rvert \cdot |u_{k-1}|^{p-2} \left(1 + \prod_{l=1}^n \left| \frac{\partial_R^l y}{u_{k-1}} \right|^{m_l} \right) 
    \end{equation}
    when $i + j + n = \ceil{2\alpha}$, $1 \cdot m_1 + ... + n \cdot m_n = n$, and to treat separately the term
    \begin{equation} \label{nonlinear sobolev estimate, square term, special case}
        \lambda^{-2} y^2 \cdot |u_{k-1}|^{p-2} \left(1 + \chi(R\tau^{-1}) \cdot \left|1-\frac{R}{\tau} \right|^{-\frac{1}{2}+}  \right).
    \end{equation}
    In both cases, we can ignore the $\lambda^{-2}$ and $u_{k-1}$ because
     $$||u_{k-1}||_{L^{\infty}(\mathbb R^d)} \asymp  \lambda^{\frac{d-2}{2}}, \quad ||u_{k-1}^{-1}||_{L^{\infty}(R \sim \tau)} \lesssim \frac{\tau^2}{\lambda^{\frac{d-2}{2}}},$$
    which cause no issue since we will get a $\tau^{-2N}$, $N \gg \ceil{2\alpha} \geq 2$, factor from the $y$ products. Moreover, 
    \begin{align*}
        ||y^2||_{L^2(\mathbb R^d)} +  \left| \left| y^2 \cdot \partial_R^{\ceil{2\alpha}} y \right| \right|_{L^2(\mathbb R^d)} + \left| \left|  \partial_R^i y \cdot \partial_R^j y \cdot \left( \prod_{l=1}^n \partial_R^l y \right)^{m_l} \right| \right|_{L^2(\mathbb R^d)} \\
        \lesssim ||y||_{H^{1+2\alpha}}^2 + ||y||_{H^{1+2\alpha}}^3 + ||y||_{H^{1+2\alpha}}^{\ceil{2\alpha}}
    \end{align*}
    using Corollary \ref{Strauss Estimates Corollary}, which allows treating (\ref{nonlinear sobolev estimate, square term}). It remains to estimate (\ref{nonlinear sobolev estimate, square term, special case}). In that case,
    \begin{align*}
    \left| \left|  y^2 \cdot \chi(R\tau^{-1}) \cdot \left(1-\frac{R}{\tau} \right)^{-\frac{1}{2}+}  \right| \right|_{L^2(R \nsim \tau)} &\lesssim ||y^2||_{L^2} \lesssim \tau^{-2(N-2)
} ||y||_{L^{\infty,N-2}H^{1+2\alpha}}^2,  \\
\left| \left|  y^2 \cdot \chi(R\tau^{-1}) \cdot \left(1-\frac{R}{\tau} \right)^{-\frac{1}{2}+}  \right| \right|_{L^2(R \sim \tau)} 
     &\lesssim ||y||_{L^{\infty}}^2  \left| \left| \left( 1-\frac{R}{\tau} \right)^{-\frac{1}{2}+} \right| \right|_{L^2(R \sim \tau)}  \\
        &\lesssim \tau^{-2(N-2) + d/2
} ||y||_{L^{\infty,N-2}H^{1+2\alpha}}^2.
    \end{align*}

    Now, we deal with the term in (\ref{nonlinear sobolev estimate, integral form}) which is linear in $y$. Similarly to the nonlinear case, it is enough to estimate
    \begin{equation} \label{linear sobolev estimate, square term}
         \lambda^{-2} \lvert \partial_R^i y \rvert \cdot \lvert \partial_R^j (u_{k-1}-u_0) \rvert \cdot |u_{k-1}|^{p-2}
    \end{equation}
    when $i + j \leq \ceil{2\alpha}$ and to treat separately the case
    \begin{equation} \label{linear sobolev estimate, square term, special case}
                 \lambda^{-2} \lvert y \rvert \cdot \lvert u_{k-1}-u_0 \rvert \cdot |u_{k-1}|^{p-2} \cdot \left( 1 + \chi(R\tau^{-1}) \cdot R^{-\ceil{2\alpha}} \cdot \left|1-\frac{R}{\tau} \right|^{-\frac{1}{2}+} \right).
    \end{equation}
    Using 
$$||u_{k-1}||_{L^{\infty}(\mathbb R^d)}^{p-2} \asymp  \lambda^{\frac{6-d}{2}}, \quad ||u_{k-1}-u_0||_{L^{\infty}(\mathbb R^d)} \lesssim \frac{\lambda^{\frac{d-2}{2}}}{\tau^2},$$
    and noticing that the $L^2$-contribution of the singularity on $R \sim \tau$ is cancelled by the $R^{-\frac{d+3}{2}}$ pointwise decay, which comes from the $\ceil{2\alpha} \geq 2$ derivative of $u_{k-1}-u_0, u_{k-1}$ and Strauss estimates applied to $y$, we observe a smallness gain of $\tau^{-2}$ and conclude as before.

    As for the local Lipschitz bound, one needs to estimate
    \begin{align*}
       \lambda^{-2}&\left[F(u_{k-1}+ y_1)-F(u_{k-1} + y_2)-F'(u_{k-1})(y_1-y_2) \right] 
       \\ +  \lambda^{-2}&\left[F'(u_{k-1})(y_1-y_2)-F'(u_0)(y_1-y_2) \right] \\
        = \lambda^{-2} &(y_1-y_2)^2 \int_0^1\int_0^1 s_1F''(u_{k-1} + s_1s_2(y_1-y_2)]) ds_2ds_1 \\
        +  \lambda^{-2} &(y_1-y_2)(u_{k-1}-u_0)\int_0^1F''(u_0 + s(u_{k-1}-u_0))ds
    \end{align*}
    for $y = y_1-y_2 \in L^{\infty,N-2}H^{2\alpha+1}_{\mathrm{rad}}$ supported on $0 \leq R < 2\tau$. The proof is exactly the same as before.
\end{proof}

For $\nu, \alpha$ as in Proposition \ref{prop:local lipschitz}, the strategy is to choose a threshold $N^*(\nu), \tau^*(\nu)$ for which the proposition applies when $N \geq N^*(\nu), \tau_0 \geq \tau^*(\nu)$ with some Lipschitz constant $C^*$ near $0$. Now, we can choose $\kappa$ small enough in (\ref{contraction constant kappa}) (depending on $C^*$, $\mathcal{K}$, $\nu$, $\alpha$) and then $N$, $\tau_0$ large enough so that the right-hand side operator of (\ref{contraction operator}) becomes a contraction on a small closed ball centered at 
\begin{align*}
\underline{\mathbf{x}}_0 &= (\mathcal{D}^2_{\tau} + \beta(\tau) \mathcal{D}_{\tau} + \xi )^{-1} \left[ \lambda(\tau)^{-2}\mathcal{F}R^{\frac{d-1}{2}}e_{k-1}\right] \in L^{\infty,N-2}L^{2,\alpha+\frac{1}{2}}_{\rho} ,
\end{align*}
which proves that the fixed-point iteration (\ref{contraction operator}) converges.

 \section{End of the proof} \label{section:end of proof}
 This final section concludes the proof of the main theorem by showing that the exact solution $u$, which has been rigorously constructed inside the light cone, extends as an exact solution on $\mathbb R^d$. The argument proceeds in three steps. First, we use the constructed solution $u$ to define initial data at a small time $t_0$ and invoke local well-posedness theory to guarantee the existence of an exact solution $v$ evolving backward from this data. Second, we apply the principle of finite speed of propagation to the difference $w = u - v$ to prove that $w$ must be zero inside the cone. Finally, we rely on the small-data global well-posedness theory to show that $v$ does not blow up before time zero.
 
 Let $d \in \{4,5\}$ and $\nu > (d-2)/(6-d)$. Theorems \ref{Thm:Approximate Solution} and \ref{Thm:Approximate Solution d = 4} alongside the fixed point argument from Section \ref{section:exact solution fourier method} show that there exists a radial function $u(x,t)$ on $\mathbb R^d \times [0,t_0]$, $t_0 \ll 1$, which solves a nonlinear equation
\begin{align*}
    \square u &= \square u_{k-1} + e_{k-1} + F\left[ \left( 1-\chi\left(R \tau^{-1}\right) \right) u_{k-1} + \chi \left(R \tau^{-1}\right) u \right] \\
    &\phantom{=}-F(u_{k-1}) + F'(u_0)\left( 1-\chi\left(R \tau^{-1}\right) \right) (u-u_{k-1}),
\end{align*}
 where 
 \begin{enumerate}
 \item $F(x) = |x|^{p-1}x$.
     \item $0 \leq \chi \leq 1$ is a smooth cutoff which is $1$ on $|x| \leq 1$ and $0$ on $|x| \geq 2$.
     \item $u_{k-1}, e_{k-1}$ were extended outside the cone (Remark \ref{extension of v_k, e_k}).
     \item Inside the cone $0 < |x| < t, 0 < t \leq t_0$, the relation $e_{k-1} = F(u_{k-1})-\square u_{k-1}$ holds and $\chi(R\tau^{-1}) = 1$, so that $\square u = F(u)$.
 \end{enumerate}
 This solution is of the form
$$
u(x,t) = \lambda(t)^{\frac{d-2}{2}}W(\lambda(t)x) \chi(|x|/t) + \eta(x,t), \quad \eta(x,t) = u^e(x,t) + \varepsilon(x,t), \quad \lambda(t) = t^{-1-\nu},
$$
where $u^e \in C^{\frac{1}{2}+\frac{6-d}{2}\nu-}(\mathbb R^d)$ has support in $0 \leq R < 2\tau$ and
\begin{align*}
\sup_{0 < t < t_0} t^{-\frac{6-d}{2}\nu-1}||u^e||_{H^{1+\frac{6-d}{2}\nu-}(\mathbb R^d)} +  t^{-\frac{6-d}{2}\nu} ||\partial_t u^e||_{H^{\frac{6-d}{2}\nu-}(\mathbb R^d)} &< +\infty, \\
\sup_{0 < t < t_0} t^{-N_0}||\varepsilon||_{H^{1+\frac{6-d}{2}\nu-}(\mathbb R^d)}+ t^{-N_0+1}||\partial_t \varepsilon||_{H^{\frac{6-d}{2}\nu-}(\mathbb R^d)} &< +\infty,
\end{align*}
for an arbitrarily large $N_0 \gg 1+\nu$, as well as
\begin{align*}
 \lim \limits_{t \to 0} \int_{|x| < c t} \left| \partial_{x_i} \left[ \lambda(t)^{\frac{d-2}{2}}W(\lambda(t)x) \right] \right|^2 dx &= \lim \limits_{t \to 0} \int_{|x| < c \lambda t} \left| \partial_{x_i} W(x) \right|^2 dx = ||\partial_{x_i} W||_{L^2}^2, \\
    \lim \limits_{t \to 0} \int_{|x| > c t} \left| \partial_{x_i} \left[ \lambda(t)^{\frac{d-2}{2}}W(\lambda(t)x) \right] \right|^2 dx &= \lim \limits_{t \to 0} \int_{|x| > c \lambda t} \left| \partial_{x_i} W(x) \right|^2 dx = 0, \\
    \int_{|x| < c t} \left| \partial_{t} \left[ \lambda(t)^{\frac{d-2}{2}}W(\lambda(t)x) \right] \right|^2 dx &= \mathcal{O}\left((t\lambda)^{-\frac{1}{2}} \right), \\
    \int_{|x| < c t} \left| \lambda(t)^{\frac{d-2}{2}}W(\lambda(t)x) \right|^{p+1} dx &= \mathcal{O}\left((t\lambda)^{-\frac{d-2}{2}}\log(t\lambda) \right),
\end{align*}
for any constant $c > 0$. These estimates hold true with $u_0 = \lambda(t)^{\frac{d-2}{2}}W(\lambda(t)x) \chi(|x|/t)$ instead of $\lambda(t)^{\frac{d-2}{2}}W(\lambda(t)x)$. It suffices to note that the cutoff $\chi(|x|/t)$ creates harmless additional terms
$$
\lambda(t)^{\frac{d-2}{2}}W(\lambda(t)x) \chi'(|x|/t) \frac{x_i}{t^2}, \quad \lambda(t)^{\frac{d-2}{2}}W(\lambda(t)x) \chi'(|x|/t) \frac{1}{t},
$$
when taking derivatives. On the region of support $|x| \sim t$, one has 
$$
||\lambda(t)^{\frac{d-2}{2}}W(\lambda(t)x) \chi'(|x|/t) t^{-1} ||_{L^2} \lesssim t^{\frac{1}{2}} \left| \left|\nabla_x \left[ \lambda(t)^{\frac{d-2}{2}}W(\lambda(t)x) \right] \right| \right|_{L^2} \lesssim t^{\frac{1}{2}} ||W||_{\dot{H}^1}
$$
thanks to Hölder's inequality and Hardy's inequality. 

Finally, we remark that the regularity of $(\eta, \partial_t \eta)$ is at least $H^2 \times H^1$. Our final goal is to construct a solution to (NLW) on $\mathbb R^d \times (0,t_0]$ that coincides with $u$ inside the cone $0 < |x| \leq t, 0 < t \leq t_0$. Recall now the following local well-posedness theorem for (NLW).
\begin{theorem}[Local well-posedness for (NLW)]\label{thm:local well-posedness for nlw}
    Let $S(t)((u_0, u_1))$ denote the solution operator for the linear wave equation with initial conditions $(u_0, u_1) \in \dot{H}^1 \times L^2$ at $t = 0$. Let $0 \in I$ be any interval. If
    $$
    ||(u_0, u_1)||_{\dot{H}^1 \times L^2} \leq A,
    $$
    there exists $\delta(A) > 0$ for which 
    $$
    ||S(t)(u_0, u_1)||_{L_{t,x}^{\frac{2(d+1)}{(d-2)}}(\mathbb R^d \times I)} \leq \delta
    $$
    implies the existence of a unique solution $(u, \partial_t u) \in C^0(I, \dot{H}^1 \times L^2)$ of (NLW) with initial data $(u_0, u_1)$ at $t = 0$.
\end{theorem}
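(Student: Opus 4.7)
The plan is to run a standard Strichartz-based fixed-point argument using Duhamel's formula
\[
u(t) = S(t)(u_0,u_1) + \int_0^t \frac{\sin((t-s)\sqrt{-\Delta})}{\sqrt{-\Delta}} F(u(s))\,ds =: S(t)(u_0,u_1) + \mathcal{I}[F(u)](t),
\]
where $F(u) = |u|^{p-1}u$ with $p = (d+2)/(d-2)$. The critical Strichartz pair in dimension $d$ is $(q,r) = (\tfrac{2(d+1)}{d-2}, \tfrac{2(d+1)}{d-2})$, which is the scaling-invariant spacetime exponent appearing in the hypothesis, and the reason it is natural is that its dual exponent $q'$ satisfies $(p-1)\cdot(\text{something}) + q' = q$ in the sense that Hölder gives $\|F(u)\|_{L^{q'}_{t,x'}} \lesssim \|u\|^p_{L^q_{t,x}}$ on $\mathbb R^d \times I$ (indeed $p\cdot q' = q$ for this exponent, up to a lower-order integrability on $t$ that one handles by truncating $I$).

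First I would record the Strichartz inequalities for the wave equation (proved, e.g., in \cite{Kenig_Merle_local_wellposed_low_dim}, \cite{Critical_local}): there exists a constant $C>0$ with
\[
\|S(t)(u_0,u_1)\|_{L^q_{t,x}(\mathbb R^d\times \mathbb R)} + \|(S(t)(u_0,u_1),\partial_t S(t)(u_0,u_1))\|_{L^\infty_t(\dot H^1\times L^2)} \leq C\|(u_0,u_1)\|_{\dot H^1\times L^2},
\]
and the dual/inhomogeneous bound
\[
\|\mathcal{I}[G]\|_{L^q_{t,x}(\mathbb R^d\times I)} + \|(\mathcal{I}[G],\partial_t \mathcal{I}[G])\|_{L^\infty_t(\dot H^1\times L^2)} \leq C\|G\|_{L^{q'}_{t,x}(\mathbb R^d\times I)}.
\]
Then I set up the ball
\[
X_\delta = \Bigl\{ u\in C(I;\dot H^1)\cap L^q_{t,x} : \|u\|_{L^q_{t,x}(\mathbb R^d\times I)} \leq 2\delta,\ \|(u,\partial_t u)\|_{L^\infty_t(\dot H^1\times L^2)} \leq 2CA \Bigr\},
\]
equipped with the metric induced by the $L^q_{t,x}$-norm, and study the map $T(u) := S(\cdot)(u_0,u_1)+\mathcal{I}[F(u)]$.

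The heart of the argument is the pair of bounds
\[
\|F(u)\|_{L^{q'}_{t,x}} \lesssim \|u\|^p_{L^q_{t,x}}, \qquad \|F(u)-F(v)\|_{L^{q'}_{t,x}} \lesssim (\|u\|_{L^q_{t,x}}^{p-1}+\|v\|_{L^q_{t,x}}^{p-1})\|u-v\|_{L^q_{t,x}},
\]
using $|F(u)-F(v)|\lesssim (|u|^{p-1}+|v|^{p-1})|u-v|$ and Hölder with the identity $p\cdot q' = q$. Combined with the Strichartz bounds above, these give
\[
\|T(u)\|_{L^q_{t,x}} \leq \delta + C'(2\delta)^p, \qquad \|T(u)-T(v)\|_{L^q_{t,x}} \leq C'(2\delta)^{p-1}\|u-v\|_{L^q_{t,x}},
\]
so choosing $\delta=\delta(A)$ small enough forces both $T:X_\delta\to X_\delta$ and $T$ to be a strict contraction in $L^q_{t,x}$. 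Banach's fixed-point theorem then produces the unique solution, and the energy component of the Strichartz estimate automatically places $(u,\partial_t u)\in C(I;\dot H^1\times L^2)$. The only subtlety — and the step that most rewards care — is to verify that the exponent arithmetic truly matches the critical Sobolev scaling in dimension $d\geq 3$; once $pq'=q$ is checked and the Strichartz pair $(q,r)$ is confirmed to be wave-admissible at the $\dot H^1$ regularity, the rest of the argument is purely mechanical.
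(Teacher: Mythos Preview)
The paper does not prove this statement; it simply cites \cite{Kenig_Merle_local_wellposed_low_dim}, Theorem 2.7. Your sketch follows the same contraction-mapping route as that reference, so there is no alternative approach to compare.

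That said, your exponent arithmetic is not correct as written, and you even flag this as the one step requiring care. For $q = \tfrac{2(d+1)}{d-2}$ the H\"older dual is $q' = \tfrac{2(d+1)}{d+4}$, and one computes $p q' = \tfrac{(d+2)}{(d-2)}\cdot\tfrac{2(d+1)}{d+4} \neq q$; your claimed identity $pq'=q$ fails, and the hedge about ``lower-order integrability on $t$'' does not repair it. The Kenig--Merle argument does not place $F(u)$ directly in $L^{q'}_{t,x}$. It introduces a second Strichartz norm $W(I) = L^{\frac{2(d+1)}{d-1}}_{t,x}$ for $\nabla u$ and its dual $N(I) = L^{\frac{2(d+1)}{d+3}}_{t,x}$, and closes the contraction via
\[
\|\nabla F(u)\|_{N(I)} \lesssim \|u\|_{S(I)}^{\,p-1}\,\|\nabla u\|_{W(I)},
\]
which is H\"older with $\tfrac{(p-1)(d-2)}{2(d+1)} + \tfrac{d-1}{2(d+1)} = \tfrac{d+3}{2(d+1)}$ (this one \emph{does} check). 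The ball $X_\delta$ must then also control $\|\nabla u\|_{W(I)}$, which is bounded by Strichartz in terms of $A$. With this correction your outline becomes the standard proof; without the second norm the contraction does not close.
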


\begin{proof}
   See \cite[Theorem 2.7]{Kenig_Merle_local_wellposed_low_dim}.
\end{proof}

\begin{remark}[Additional properties]\label{rmk:strichartz,global well-posedness,regularity for nlw}
    Strichartz estimates (\cite[Lemma 2.1]{Kenig_Merle_local_wellposed_low_dim}) show that
$$
||S(t)(u_0, u_1)||_{L_{t,x}^{\frac{2(d+1)}{(d-2)}}(\mathbb R^d \times I)} \leq C ||(u_0, u_1)||_{\dot{H}^1 \times L^2}
$$
for some constant independent of $I$, meaning that Theorem \ref{thm:local well-posedness for nlw} applies if we choose $I$ sufficiently small.

 Moreover, if $A$ is small enough, the conclusion of the theorem always holds and we obtain existence of a global solution (see the proof of Theorem 2.7 in \cite{Kenig_Merle_local_wellposed_low_dim}, as well as Remark 2.10 in the same paper).

 One also has persistence of regularity: if $(u_0,u_1) \in \dot{H}^1 \cap \dot{H}^{1+\mu} \times H^{\mu} = \mathcal{H}$ for some $0 \leq \mu \leq 1$, then $(u,\partial_t u) \in C^0(I,\mathcal{H})$ (\cite[Remark 2.9]{Kenig_Merle_local_wellposed_low_dim}). In the $(u_0,u_1) \in H^2 \times H^1$ case, it follows from Duhamel formula that $\partial_{tt} u \in C^0(I,L^2)$. 
\end{remark}


\begin{theorem}[Finite Speed of Propagation] \label{thm:finite propagation speed}
    Let $x_0 \in \mathbb R^d$, $t_0 \geq 0$ and
    $$
    K = \{(x,t): 0 \leq |x-x_0| \leq t_0-t, 0 \leq t \leq t_0\}.
    $$ 
    Let $u(x,t)$ be a ``strong'' solution of a nonlinear wave equation
    $$
    \square u = f(u), \quad t \in I = [0,T]
    $$
    for which 
    $$
    u(x,0) = \partial_t u(x,0) = 0, \quad x \in B(x_0,t_0), \quad x_0 \in \mathbb R^d, t_0 > 0.
    $$
    By “strong” solution, we mean that $u(x,t)$ has the following smoothness
    $$
    u \in C^0([0,T], H^2), \quad \partial_t u \in C^0( [0,T], H^1), \quad \partial_{tt} u \in C^0( [0,T], L^2),
    $$
    and $u(x,t) \in L^{\infty}(K \cap \mathbb (\mathbb R^d \times [0,t_1]))$ for any $0 \leq t_1 < \min\{t_0,T\}$. Moreover, assume that, given this regularity, $f(u)$ is a measurable function of $(x,t)$ for which
    $$
|f(u)| \leq C(||u||_{L^{\infty}(K \cap (\mathbb R^d \times [0,t_1]))})|u|
    $$
    almost everywhere on $K \cap (\mathbb R^d \times [0,T])$. Then $u = 0$ on $K \cap (\mathbb R^d \times [0,T])$.
\end{theorem}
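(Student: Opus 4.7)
The plan is the classical truncated-energy / Gronwall argument for finite speed of propagation. I introduce the local energy
$$E(t) = \frac{1}{2} \int_{B(x_0, t_0-t)} \bigl(|\partial_t u|^2 + |\nabla u|^2\bigr) dx, \quad 0 \leq t \leq t_1$$
for some $0 < t_1 < \min\{t_0, T\}$. The vanishing initial data hypothesis gives $E(0) = 0$, and the goal is to close a differential inequality of the form $E'(t) \lesssim E(t) + t\int_0^t E(s)\, ds$ using only the given regularity. This will force $E \equiv 0$; combined with $u(\cdot,0) = 0$ on $B(x_0, t_0)$, this yields $u \equiv 0$ on $K \cap (\mathbb R^d \times [0, t_1])$, and then taking $t_1 \nearrow \min\{t_0, T\}$ concludes.

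Differentiating $E(t)$, the Reynolds transport formula (justified since the ball shrinks at unit speed and the integrand is in $C^0([0,T], L^1)$) produces a negative boundary flux:
$$E'(t) = \int_{B(x_0, t_0-t)} \bigl(\partial_t u \cdot \partial_{tt} u + \nabla u \cdot \nabla \partial_t u \bigr) dx - \frac{1}{2} \int_{\partial B(x_0, t_0-t)} \bigl(|\partial_t u|^2 + |\nabla u|^2 \bigr) dS$$
I then integrate by parts on the gradient term (legitimate because $u(\cdot,t) \in H^2$ has a well-defined trace $\partial_\nu u \in H^{1/2}$ on the sphere, and $\partial_t u(\cdot,t) \in H^1$ has a trace in $H^{1/2}$), and substitute the equation $\partial_{tt} u - \Delta u = f(u)$. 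Applying $|\partial_t u \cdot \partial_\nu u| \leq \frac{1}{2}(|\partial_t u|^2 + |\nabla u|^2)$ pointwise on the sphere shows the two surface integrals combine to a non-positive quantity, so
$$E'(t) \leq \int_{B(x_0, t_0-t)} \partial_t u \cdot f(u)\, dx \leq C_1 \int_{B(x_0, t_0-t)} |\partial_t u|\,|u|\, dx \leq C_1 E(t) + \frac{C_1}{2} \int_{B(x_0, t_0-t)} |u|^2\, dx,$$
where $C_1 = C(\|u\|_{L^\infty(K \cap (\mathbb R^d \times [0,t_1]))})$ is finite by hypothesis.

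For the remaining $L^2$ norm of $u$, I use the vanishing initial data: for each $(x,t) \in K$,
$$u(x,t) = \int_0^t \partial_s u(x,s)\, ds, \qquad |u(x,t)|^2 \leq t \int_0^t |\partial_s u(x,s)|^2\, ds$$
by Cauchy--Schwarz. Integrating in $x$ over $B(x_0, t_0-t)$ and using the nesting $B(x_0, t_0-t) \subset B(x_0, t_0-s)$ for $0 \leq s \leq t$,
$$\int_{B(x_0, t_0-t)} |u(x,t)|^2\, dx \leq 2t \int_0^t E(s)\, ds.$$
Plugging back yields $E'(t) \leq C_1 E(t) + C_1 t \int_0^t E(s) ds$. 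Since $E(0) = 0$, a standard integral Gronwall inequality forces $E \equiv 0$ on $[0, t_1]$, and hence $\partial_t u = \nabla u = 0$ a.e.\ on $K \cap (\mathbb R^d \times [0, t_1])$; together with the vanishing initial data this gives $u \equiv 0$ there.

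The main obstacle is the rigorous justification of the boundary manipulations (differentiation under the shrinking integral sign and integration by parts on $\partial B(x_0, t_0-t)$) at the given level of regularity; these are handled by trace theory for $H^2$ and $H^1$ functions, dominated convergence to legitimize $\frac{d}{dt}$ of the ball integral, and the assumption $\partial_{tt} u \in C^0([0,T], L^2)$ which allows us to multiply the equation by $\partial_t u$ in a classical $L^2$ pairing. The hypothesis $|f(u)| \leq C(\|u\|_{L^\infty})|u|$, verified only for $t \leq t_1 < \min\{t_0, T\}$, forces the two-step approach of proving $E \equiv 0$ on $[0, t_1]$ first and then sending $t_1 \nearrow \min\{t_0, T\}$.
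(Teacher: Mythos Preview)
Your proof is correct and follows essentially the same truncated-energy/Gronwall argument as the paper. The one minor difference is that the paper includes $u^2$ in the local energy from the outset, i.e.\ $E(t) = \frac{1}{2}\int_{B(x_0,t_0-t)} u_t^2 + |\nabla u|^2 + u^2\,dx$, which absorbs the $\int |u|^2$ term directly and yields $E'(t) \leq C E(t)$ in one line; your route of controlling $\int |u|^2$ via $u(x,t) = \int_0^t \partial_s u\,ds$ and an integral Gronwall is a perfectly valid alternative.
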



\begin{proof}
    We follow the usual energy argument (\cite[Section 12.1.2]{evans10}). For any $0 \leq t_1 < \min\{t_0,T\}$, we prove that $u = 0$ on $\{(x,t): 0 \leq |x-x_0| \leq t_0-t, 0 \leq t \leq t_1 \}$. Let
    $$
    E(t) = \frac{1}{2} \int_{B(x_0,t_0-t)} u_t^2 + |\nabla_x u|^2 + u^2 dx, \quad 0 \leq t_1.
    $$
    Differentiation yields
    $$
    E'(t) =  \int_{B(x_0,t_0-t)} u_t u_{tt} + \nabla_x u \cdot \nabla_x u_t  + u u_t dx -  \frac{1}{2} \int_{|x-x_0| = t_0-t} u_t^2 + |\nabla_x u|^2 + u^2 dS.
    $$
    The differentiation formula 
    $$
    \frac{d}{dt} \int_{B(x_0,t_0-t)} g(t,x) dx = \int_{B(x_0,t_0-t)} g_t(t,x) dx - \int_{|x-x_0| = t_0-t} g(t,x) dS
    $$
    is justified for classical $g(t,x) \in C^1(I \times \mathbb R^d)$ functions. It also holds in the vector-valued setting $g \in C^1(I, L^2(\mathbb R^d))$ by approximating 
    $$
    g(t,x) = \sum_{n=1}^{\infty}\langle g(t,\cdot), v_k(\cdot) \rangle_{L^2} \cdot v_k(x)
    $$
    using a complete orthonormal basis of smooth functions $\{v_1,v_2,...\}$ of $L^2(\mathbb R^d)$. Uniform convergence in the $C^1([0,T],L^2(\mathbb R^d))$-norm follows from Dini's theorem: the decreasing sequence of continuous functions
    $$
    t \mapsto \left| \left| \sum_{k=n}^{\infty}\langle g(t,\cdot), v_k(\cdot) \rangle_{L^2} \cdot v_k(x) \right| \right|_{L^2(\mathbb R^d)}^2 = \sum_{k=n}^{\infty}\langle g(t,\cdot), v_k(\cdot) \rangle_{L^2}^2, \quad t \in [0,T],
    $$
    converges pointwisely to $0$ as $n \to +\infty$, hence uniformly by Dini's theorem. Uniform convergence of the sequence 
    $$
    g_t(t,x) = \sum_{n=1}^{\infty}\langle g_t(t,\cdot), v_k(\cdot) \rangle_{L^2} \cdot v_k(x)
    $$
    holds in the same fashion. An integration by parts, which is valid for Sobolev functions (\cite{evans_measure_theory}, Section 4.6), yields 
    \begin{align*}
    E'(t) =  \int_{B(x_0,t_0-t)} u_t (u_{tt} + \Delta_x u + u ) dx &-  \frac{1}{2}\int_{|x-x_0| = t_0-t} u_t^2 + |\nabla_x u|^2 + u^2 dS \\
    &+ \int_{|x-x_0| = t_0-t} (\nabla_x u \cdot \nu) u_t dS, 
    \end{align*}
    where $\nu$ is the normal outward-pointing vector of the surface $|x-x_0| = t_0-t$. Hence,
    $$
    E'(t) \leq \int_{B(x_0,t_0-t)} u_t (-F(u) + u ) dx \leq C   \int_{B(x_0,t_0-t)} |u_tu| dx \leq C E(t)
    $$ 
    since $ab \leq a^2/2 + b^2/2$ for $a,b \geq 0$. Since $E(0) = 0$, Grönwall's Lemma implies that $E(t) = 0$.
\end{proof}

Let $0 < A \ll 1$ be small enough so that the global well-posedness result from Remark \ref{rmk:strichartz,global well-posedness,regularity for nlw} holds for initial data $||(u_0,u_1)||_{\dot{H}^1 \times L^2} \leq 3A$. Let $0 < t_0 \ll 1$ be small enough (depending on $A$ and $||W||_{\dot{H^1}}$) so that for all $0 < t \leq t_0$,
\begin{align} 
\left| \int_{|x| < \frac{1}{2} t} |\nabla_{x} u(t)|^2 dx  - ||W||_{\dot{H^1}}^2 \right| &\ll A, \label{estimate on nabla_x u inside cone}  \\
\int_{|x| > t} |\nabla_{x} u(t)|^2dx + \int_{x \in \mathbb R^d} |\partial_t u(t)|^2  + |u(t)|^{p+1} dx &\ll A, \label{negligble estimate on u}  
\end{align}
where $u_0 = \lambda(t)^{\frac{d-2}{2}}W(\lambda(t)x) \chi(|x|/t)$.

Let $v$ be the local solution of (NLW) constructed at $t_0$ with initial data $(u(t_0), \partial_tu(t_0))$ (we solve (NLW) backwards in time). Assume that $v$ exists on $I = [T, t_0]$ with $0 < T < t_0$. Then the difference $w = u-v$ solves a nonlinear equation
$$
\square w = f(w)
$$
with initial conditions
    $$
    u(x,0) = \partial_t u(x,0) = 0, \quad x \in B(0,t_0),
    $$
and where $f(w) = F(u)-F(u-w) = w \int_{-1}^0 F'(u + s(u-v))ds$ on the cone $0 \leq |x| \leq t, 0 < t \leq t_0$. By Theorem \ref{thm:finite propagation speed} ($w$ has the desired regularity and the local boundedness properties hold thanks to radiality of $u$, $v$), $w = 0$ and $v = u$ on the section of the cone $0 \leq |x| \leq t, 0 < T \leq t \leq t_0$.

Next, we prove that the $\dot{H}^1 \times L^2$-norm of $v$ stays small outside the cone via conservation of energy. Let 
\begin{align*}
    E(v(t),\partial_t v(t)) = E(v(t_0), \partial_t v(t_0)) &= 
    E(u(t_0),\partial_t u(t_0)) \\
    &= \int_{\mathbb R^d} \frac{1}{2} |\nabla_{t,x}u(t_0)|^2 - \frac{1}{p+1}|u(t_0)|^{p+1}dx \\
    &= \frac{1}{2}||W||_{\dot{H}^1}^2  \pm A/8, \quad t \in I
\end{align*}
using (\ref{estimate on nabla_x u inside cone}) and (\ref{negligble estimate on u}. Inside the cone, $v = u$ so we also have 
$$
\int_{|x| < \frac{1}{2} t} \frac{1}{2} |\nabla_{t,x}v(t)|^2 + \frac{1}{p+1}|v(t)|^{p+1}dx = \frac{1}{2}||W||_{\dot{H}^1}^2 \pm A/8, \quad t \in I.
$$
Hence,
\begin{align*}
        \int_{|x| \geq \frac{1}{2} t} \frac{1}{2} |\nabla_{t,x}v(t)|^2 + \frac{1}{p+1}|v(t)|^{p+1}dx &\leq A/4 + 2 \int_{|x| \geq \frac{1}{2} t} \frac{1}{p+1}|v(t)|^{p+1}dx \\
    &\leq A/4 + C \left( \int_{|x| \geq \frac{1}{2} t} |\nabla_x v(t)|^{2}dx  \right)^{\frac{p+1}{2}}, \quad t \in I,
\end{align*}
where $C$ is the norm of the Sobolev embedding $\dot{H}^1(\mathbb R^d) \hookrightarrow L^{p+1}(\mathbb R^d)$. In other words, the continuous function
$$
t \mapsto h(t) =  \int_{|x| \geq \frac{1}{2} t} |\nabla_{t,x}v(t)|^2, \quad t \in I,
$$
satisfies
\begin{align*}
    h(t) &\leq A/2 + 2Ch(t)^{\frac{p+1}{2}}, \quad t \in I, \\
    h(t_0) &\ll A.
\end{align*}
Assume for a contradiction that there is some $t \in I$ for which $h(t) > A$. By continuity, there is a $t^* \in I$  for which $h(t^*) = A$, meaning that
$$
A \leq A/2 + 2C A^{\frac{p+1}{2}} \iff A \geq \left( \frac{1}{4C} \right)^{\frac{2}{p-1}}.
$$
The quantity on the right-hand side depends only on $d$ and $p$, therefore, we can choose $A$ to be sufficiently small at the outset to prevent this from occurring. Thus, $h(t) < A$ for all $t \in I$.

Finally, we prove that $v$ exists up to time $t = 0$ (hence $v$ extends $u$ outside the cone). Assume that $v$ exists for time $t \in [T_{-},t_0)$ with $0 < T_{-} \leq t_0$. Let $0 \leq \psi \leq 1$ be a smooth cutoff which is $1$ on $|x| \leq 1/2$ and $0$ on $|x| \geq 3/4$. Consider the solution $w$ obtained by solving (NLW) with initial data
$$
(w_0, w_1) = \left(1-\psi\left( \frac{|x|}{T_{-}} \right) \right) v(T), \partial_t v(T) \in H^2 \times H^1.
$$
For $|x| \geq 3T_{-}/4$, this coincides with $v(T_{-}), \partial_t v(T_{-})$ and we have small energy
$$
\int_{|x| \geq 3T_{-}/4} |\nabla_x w_0|^2 + |w_1|^2 dx \leq A.
$$
If $t_0$ was chosen sufficiently small in the first place (depending on $\psi$, $d$, $p$, which are independent of $v$ and its interval of existence), then
$$
\int_{\frac{1}{2}T_{-} \leq |x| \leq 3T_{-}/4} |\nabla_x w_0|^2 + |w_1|^2 dx \leq 2A,
$$
as well using Hardy's inequality to estimate the components where the derivative falls in the cutoff. The small-energy global well-posedness theory implies that $w$ is a global solution and finite speed of propagation implies that $v = w$ on some neighbourhood of $\{x \in \mathbb R^d: |x| \geq 3T_{-}/4\} \times \{T_{-}\} \subset \mathbb R^d \times [T_{-},t_0]$. Hence, $v$ can be extended as
$$
v(x,t) = \begin{cases}
    u(x,t), \quad (x,t) \in \{(x,t): 0 < |x| < t, 0 < t \leq T_{-} ]\} \\
    w(x,t), \quad (x,t) \in \{(x,t): |x| \geq t, 0 < t \leq T_{-} ]
\end{cases}
$$
which concludes the proof.

\appendix
\section{Some results about regular singular ODEs} \label{section:appendix, ode}

In this appendix, we consider a linear ordinary differential equation
\begin{equation} \label{regular singular fuchs ode}
    u''(z) + p(z)u'(z) + q(z)u(z) = 0, \quad z \in \mathbb C,
\end{equation}
around a regular singular point $0 \in \mathbb C$, meaning that
$$p(z) = \frac{1}{z}\sum_{n=0}^{+\infty}p_n z^n, \quad q(z) = \frac{1}{z^2}\sum_{n=0}^{+\infty}q_n z^n, \quad |z| < R.$$
The standard method for finding solutions for this equation (with zero or analytical forcing term) is to make a power series Ansatz. This is called the Frobenius method. The goal of this appendix is to generalize the Frobenius method to solve the equation with power and logarithmic forcing terms.

We recall (see \cite[Chapter 4]{Teschl_ode}) that if $\{r_1, r_2\}$, $\Re(r_1) \geq \Re(r_2)$, are the roots of the indicial equation $\alpha^2 + (p_0-1)\alpha+q_0 = 0$, then one can find a fundamental system of the form
\begin{equation} \label{fuchs fundamental system}
    u_1(z) = z^{r_1} h_1(z), \quad u_2(z) = \underbrace{z^{r_2}h_2(z)}_{=: \tilde{u}_2(z)} + c \cdot \log(z) u_1(z),
\end{equation}
where $h_i(z)$ is analytic at $0$ with $h_i(0) = 1$ and radius of convergence at least equal to the distance between $0$ and the next singularity of $p(z)$ and $q(z)$. Moreover, if $r_1 - r_2 \notin \mathbb N_{\geq 0}$, then the constant $c \in \mathbb C$ is necessarily $0$ and if $r_1 = r_2$, then $c \in \mathbb C$ is necessarily non-zero.

Finally, one observes that $r_1 + r_2 = 1 - p_0$, $r_1r_2 = q_0$ and the Wronskian is of the form
\begin{equation} \label{fuchs wronskian}
    W(z) = Cz^{-p_0}\exp\left(-\sum_{n=1}^{+\infty}\frac{p_n}{n} z^{n} \right) = z^{-p_0}h_3(z),
\end{equation}
where $h_3(z)$ is analytic and non-zero on $|z| < R$.

In the following, we are interested in solving inhomogeneous regular singular problems where the forcing term can be a combination of powers and logarithms.



\begin{proposition}[Parseval Identity]
    Let $f(z)$ be holomorphic on $B(0,R)$ with
    $$f(z) = \sum_{n=0}^{+\infty}f_nz^n, \quad |z| < R.$$
    Then for any $0 < r < R$,
    $$\sum_{n=0}^{\infty}|f_n|^2r^{2n} = \frac{1}{2\pi}\int_0^{2\pi} |f(re^{i\theta})|^2 d\theta.$$
\end{proposition}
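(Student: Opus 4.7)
The proposal is to view this as the standard $L^2$-orthogonality of $\{e^{in\theta}\}_{n \in \mathbb{Z}}$ on the circle, applied to the Taylor expansion of $f$ restricted to $|z| = r$. The hypothesis $0 < r < R$ is crucial: it provides uniform convergence and absolute summability on the compact set $\{|z| = r\}$, which is what legitimizes interchanging sum and integral.

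First I would fix $0 < r < R$ and parameterize the circle by $z = re^{i\theta}$, writing
\[
f(re^{i\theta}) = \sum_{n=0}^{+\infty} f_n r^n e^{in\theta}.
\]
Since the Taylor series of $f$ converges absolutely and uniformly on any compact subset of $B(0,R)$, and $\{|z| = r\}$ is such a compact subset, this series converges uniformly in $\theta \in [0, 2\pi]$. The same holds for the complex-conjugate expansion
\[
\overline{f(re^{i\theta})} = \sum_{m=0}^{+\infty} \overline{f_m} r^m e^{-im\theta}.
\]

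Next I would form the product $|f(re^{i\theta})|^2 = f(re^{i\theta})\overline{f(re^{i\theta})}$. Because both factors are uniformly convergent series of bounded functions on the circle, the Cauchy product
\[
|f(re^{i\theta})|^2 = \sum_{n,m \geq 0} f_n \overline{f_m}\, r^{n+m} e^{i(n-m)\theta}
\]
converges uniformly in $\theta$. Integrating term-by-term over $[0, 2\pi]$ and invoking the orthogonality relation
\[
\frac{1}{2\pi}\int_0^{2\pi} e^{i(n-m)\theta}\, d\theta = \delta_{nm}
\]
collapses the double sum to the diagonal, yielding
\[
\frac{1}{2\pi}\int_0^{2\pi} |f(re^{i\theta})|^2\, d\theta = \sum_{n=0}^{+\infty} |f_n|^2 r^{2n},
\]
which is exactly the claimed identity.

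There is no real obstacle here: the only point needing care is the justification for swapping the infinite sum with the integral, and this is immediate from uniform convergence on the circle $|z| = r$ (which in turn follows from $r < R$ and the standard fact that power series converge uniformly on compact subsets of their disk of convergence). No additional structure from the body of the paper is needed.
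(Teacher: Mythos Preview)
Your proof is correct and takes essentially the same approach as the paper: expand $|f(re^{i\theta})|^2 = f(re^{i\theta})\overline{f(re^{i\theta})}$ as a double series and use the orthogonality of the exponentials. You have simply supplied more detail on the justification (uniform convergence on $|z|=r$) than the paper's one-line proof does.
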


\begin{proof}
    Write 
    $$ \frac{1}{2\pi}\int_0^{2\pi} |f(re^{i\theta})|^2 d\theta =  \frac{1}{2\pi}\int_0^{2\pi} f(re^{i\theta}) \overline{f(re^{i\theta})} d\theta$$
    and expand both $f(re^{i\theta})$, $\overline{f(re^{i\theta})}$ around zero.
\end{proof}

\begin{definition}[Wiener Space]\label{wiener space}
    The Wiener Space $A(|z| < R)$ is the normed vector space of holomorphic functions 
    $$f(z) = \sum_{n=0}^{\infty} f_nz^n, \quad  |z| < R,$$
    with coefficients $(f_n R^n)_{n = 0}^{\infty} \in \ell^1(\mathbb N)$. The norm on this space is defined as
    $$||f||_{A(|z| < R)}:= ||(f_n R^n)||_{\ell^1(\mathbb N)}$$
    and we observe that
    $$||\sum_{n=m_1}^{m_2}f_nz^n ||_{L^{\infty}(|z| < R)} \leq ||f||_{A(|z| < R)} \quad \forall 0 \leq m_1 \leq m_2 \leq +\infty,$$
    as well as
    $$
||f^{(k)}||_{A(|z| < \delta R)} = \left( \delta R \right)^{-k}  \sum_{n=0}^{+\infty} \frac{n!}{(n-k)!} \delta^{n} |f_n| R^n \lesssim_{\delta,R} ||f||_{A(|z| < R)} \quad \forall k \in \mathbb N_{\geq 0}
    $$
    whenever $0 < \delta < 1$.
    
    This space is an algebra and for $g(z) \in A(|z| < R)$ fixed, the multiplication operator $T_g: f \mapsto f \cdot g$ is a bounded operator with norm $||T_f|| = ||g||_{A(|z| < R)}$.
    \index{A(z < R)@$A(\lvert z \rvert < R)$, the Wiener space}
\end{definition}




\begin{theorem}[Nonhomogeneous ODE with singular forcing term] \label{thm:inhomogeneous fuchs ode}
Consider a regular singular ODE (\ref{regular singular fuchs ode}) around $0 \in \mathbb C$ with indicial roots $\{r_1,r_2\}$, $\Re(r_1) \geq \Re(r_2)$, and assume that $p(z), q(z)$ have radius convergence $R + \varepsilon$. Let $u_1(z) = z^{r_1}h_1(z), u_2(z) = z^{r_2}h_2(z) + c \cdot u_1(z)\log(z)$ be the fundamental system from (\ref{fuchs fundamental system}). Let $\beta \in \mathbb C$, $j \in \mathbb N_{\geq 0}$ and 
$$g(z) = \sum_{n=0}^{\infty}g_nz^n, \quad |z| < R+\varepsilon.$$ 
In the following, we define $g_{r_i-\beta}$ to be the $(r_i-\beta)$-th order term of $g(z)$ series expansion at $z = 0$ if $r_i - \beta \in \mathbb N_{\geq 0}$ and $g_{r_i-\beta} = 0$ otherwise.

The inhomogeneous equation
\begin{equation} \label{eq:inhomogeneous fuchs ode}
    w''(z) + p(z)w'(z) + q(z)w(z) = z^{\beta-2} g(z) \log(z)^j, \quad |z| < R+\varepsilon, z \notin \mathbb R_{\leq 0},
\end{equation} 
has a particular solution $w(z)$ given by 
\begin{equation} \label{eq:inhomogeneous fuchs power series solution}
    w(z) = z^{\beta} \sum_{k=0}^{j+2} w_k(z)\log(z)^k,
\end{equation} 
where 
\begin{enumerate}
    \item Each $w_k(z)$ is holomorphic on $|z| < R+\varepsilon$.
    \item $w_{j+2}(z) = 0$ if $r_1-r_2 \notin \mathbb N_{\geq 0}$ or $r_2-\beta \notin \mathbb N_{\geq 0}$. In other words, a non-trivial $\log(z)^{j+2}$ factor can only occur when both $r_1-\beta, r_2-\beta \in \mathbb N_{\geq 0}$. 
    \item $w_{j+1}(z) = 0$ if both $r_1-\beta, r_2-\beta \notin \mathbb N_{\geq 0}$. In other words, a non-trivial $\log(z)^{j+1}$ factor can only occur when $r_1-\beta \in \mathbb N_{\geq 0}$ or $r_2-\beta \in \mathbb N_{\geq 0}$.
    \item There exists $C(u_1(z),u_2(z),r_1,r_2,R)$ such that
    \begin{align*}
    ||w_{j+2}(z)||_{A(|z| < R)} &\leq C \cdot (j+1)^{-2} \cdot ||g(z)||_{L^{\infty}(|z| < R)},  \\   
    ||w_{j+1}(z)||_{A(|z| < R)}  &\leq C \cdot (j+1)^{-1} \cdot ||g(z)||_{L^{\infty}(|z| < R)}.
    \end{align*}    
    If both $r_1-\beta < 0, r_2-\beta < 0$, then one also has 
   \begin{equation}
          ||w_{k}(z)||_{A(|z| < R)}  \leq C \cdot \left( \frac{j}{\min_{i\in\{1,2\}} \{|\beta-r_i| \} } \right)^j \cdot ||g(z)||_{L^{\infty}(|z| < R)} \label{exponential upper bound on w_k}
   \end{equation}
    for all $k \in \{0,1,...,j\}$. Otherwise, 
    $$||w_{k}(z)||_{A(|z| < R)} \leq C \cdot j^j \cdot \min_{i \in \{1,2\}} \text{dist}(r_i-\beta,\mathbb Z \setminus \{r_i-\beta\} )^{-(j+1)} \cdot ||g(z)||_{L^{\infty}(|z| < R)}.$$
    \item If we write
   \begin{align*}
   w'(z) = z^{\beta-1} \sum_{k=0}^{j+2} w_{1,k}(z)\log(z)^k, \\
   w''(z) = z^{\beta-2} \sum_{k=0}^{j+2} w_{2,k}(z)\log(z)^k,
   \end{align*}
    then the estimates in (4) also hold with $w_{i,k}(z)$ instead of $w_k(z)$. For the second derivative, the constant $C$ will also depend on $p(z), q(z)$. 
    \item For $n > 2$, if we write
   $$w^{(n)}(z) = z^{\beta-n} \sum_{k=0}^{j+2} w_{n,k}(z)\log(z)^k,$$
   then the estimates in (4) also hold with $w_{n,k}(z)$ instead of $w_k(z)$ if we replace 
   \begin{align*}
       C(u_1,u_2,r_1,r_2,R) &\mapsto C(n,p,q,u_1,u_2,r_1,r_2,R) \cdot j^{n-2} \cdot \prod_{i=1}^{n-2}|\beta-i-1| \\
       ||g(z)||_{L^{\infty}(|z| < R)} &\mapsto \max_{0 \leq i \leq n-2} ||g^{(i)}(z)||_{L^{\infty}(|z| < R)}
   \end{align*}
\end{enumerate}
\end{theorem}

\begin{proof}
We use the fundamental system $\{u_1,u_2\}$ given by (\ref{fuchs fundamental system}). By variation of parameters, a particular solution is given by any 
\begin{align*}
    \tilde{w}(z) = \int_{[R/2,z]} [u_2(z)u_1(y)-u_1(z)u_2(y)]W(u_1,u_2)(y)^{-1}y^{\beta-2} g(y) \log(y)^j dy, \\
    \tilde{w}'(z) = \int_{[R/2,z]} [u_2'(z)u_1(y)-u_1'(z)u_2(y)]W(u_1,u_2)(y)^{-1}y^{\beta-2} g(y) \log(y)^j dy,
\end{align*}
when $|z| < R+\varepsilon, z \notin \mathbb R_{\leq 0}$, modulo some linear combination of $\{u_1,u_2\}$. Write
\begin{align*}
    u_1(y) \cdot W(u_1,u_2)(y)^{-1} \cdot g(y) &= y^{r_1 + p_0 }\underbrace{\sum_{n=0}^{\infty}a_n y^n}_{=: a(y)}, \quad |y| < R+\varepsilon, \\
    u_2(y) \cdot W(u_1,u_2)(y)^{-1} \cdot g(y) &= y^{r_2 + p_0} \underbrace{\sum_{n=0}^{\infty}b_n y^n}_{=: b(y)} + c \cdot y^{r_1 + p_0} \log(y) a(y) , \quad |y| < R+\varepsilon.
\end{align*}
Expanding everything in the variation of parameters, we find
\begin{align*}
    \tilde{w}(z) &= z^{r_2} h_2(z) \sum_{n=0}^{+\infty} a_{n} \int_{\frac{R}{2}}^z y^{r_1 + p_0 + \beta - 2+ n} \log(y)^j dy \\
    &+ c \cdot z^{r_1}h_1(z) \log(z) \sum_{n=0}^{+\infty} a_{n} \int_{\frac{R}{2}}^z y^{r_1 + p_0 + \beta - 2 + n} \log(y)^j dy \\
    &- z^{r_1}h_1(z) \sum_{n=0}^{+\infty} b_{n} \int_{\frac{R}{2}}^z y^{r_2 + p_0 + \beta - 2 + n} \log(y)^jdy \\
    &-c \cdot z^{r_1}h_1(z) \sum_{n=0}^{+\infty} a_{n} \int_{\frac{R}{2}}^z y^{r_1 + p_0 + \beta -2 + n} \log(y)^{j+1} dy.
\end{align*}
Using  $r_1 + r_2 = 1-p_0$, this rewrites as
\begin{align*}
\tilde{w}(z) &=  z^{r_2} h_2(z) \sum_{n=0}^{+\infty} a_{n} \int_{\frac{R}{2}}^z y^{\beta-r_2 + n - 1} \log(y)^j dy \\
&+ c \cdot z^{r_1}h_1(z) \log(z) \sum_{n=0}^{+\infty} a_{n} \int_{\frac{R}{2}}^z y^{\beta-r_2 + n - 1} \log(y)^j dy \\
    &- z^{r_1}h_1(z) \sum_{n=0}^{+\infty} b_{n} \int_{\frac{R}{2}}^z y^{\beta-r_1 + n - 1} \log(y)^j dy  \\
    &- c \cdot z^{r_1}h_1(z) \sum_{n=0}^{+\infty} a_{n} \int_{\frac{R}{2}}^z y^{\beta-r_2 + n - 1} \log(y)^{j+1}dy.
\end{align*}
Replacing $z^{r_i}h_i(z)$ above by 
$$\frac{d}{dz}(z^{r_i}h_i(z)) = z^{r_i-1}(r_ih_i(z) + zh_i'(z)) = z^{r_i-1}\tilde{h}_i(z),$$ 
we get a similar formula for $\tilde{w}'(z)$. If $\delta \in \mathbb R \setminus \{-1\}$, $j \in \mathbb N_{\geq 0}$, then a primitive of $y^{\delta} \log(y)^j$ is given by 
$$z^{\delta+1} \sum_{k=0}^j \frac{(-1)^k j!}{(j-k)!} \cdot \frac{\log(z)^{j-k}}{(\delta+1)^{k+1}}$$
and if $\delta = -1$, then a primitive is given by $(j+1)^{-1}\log(y)^{j+1}$. For $k \in \{0,...,j\}$, let
\begin{align*}
    A_{j,k}(z) =  \frac{(-1)^k j!}{(j-k)!} \sum_{\substack{n=0 \\ n \neq r_2-\beta}}^{+\infty} \frac{a_{n}}{(\beta-r_2+n)^{k+1} }z^n, \quad B_{j,k}(z) =  \frac{(-1)^k j!}{(j-k)!} \sum_{\substack{n=0 \\ n \neq r_1-\beta}}^{+\infty} \frac{b_{n}}{(\beta-r_1+n)^{k+1} }z^n.
\end{align*}
Ignoring the integration constants (which are zero modulo the fundamental system), we can find a solution of the form
\begin{align*}
    w(z) &= z^{\beta} \left[ h_2(z) \sum_{k=0}^j A_{j,k}(z)\log(z)^{j-k} - h_1(z)\sum_{k=0}^j B_{j,k}(z)\log(z)^{j-k} \right. \\
    &\phantom{=}+   c \cdot z^{r_1-r_2}h_1(z) \left( \sum_{k=1}^{j} A_{j,k}(z)\log(z)^{j+1-k}- \sum_{k=1}^{j+1} A_{j+1,k}(z)\log(z)^{j+1-k} \right)  \\
    &\phantom{=} + \left( h_2(z) a_{r_2-\beta} z^{r_2-\beta} - h_1(z)b_{r_1-\beta}z^{r_1-\beta}\right) \frac{\log(z)^{j+1}}{(j+1)} \\
     &\phantom{=} +  \left. c \cdot z^{r_1-r_2}h_1(z)a_{r_2-\beta}z^{r_2-\beta} \frac{\log(z)^{j+2}}{(j+1)(j+2)}  \right],
\end{align*}
where we recall that $c = 0$ if $r_1-r_2 \notin \mathbb N_{\geq 0}$ and we used the convention that $a_{r_i-\beta} = b_{r_i-\beta} = 0$ if $r_i - \beta \notin \mathbb N_{\geq 0}$. A similar expression for $w'(z)$ can be obtained by replacing $\beta$ by $\beta-1$ and $h_i(z)$ by $r_ih_i(z) + zh_i'(z)$.

\item[\textbf{Estimates on the solution: }] It is clear from the definition that
$$||a(z)||_{L^{\infty}(|z| < R)} + ||b(z)||_{L^{\infty}(|z| < R)} \lesssim_{u_1(z),u_2(z),R} ||g||_{L^{\infty}(|z| < R)}.$$
Moreover, it follows from Cauchy-Schwarz and Parseval identity that
$$||A_{j,k}(z)||_{A(|z| < R)} + ||B_{j,k}(z)||_{A(|z| < R)} \lesssim_{u_1(z),u_2(z),R} S_{r_1,r_2,j,k,\beta} \cdot ||g||_{L^{\infty}(|z| < R)},$$
where
$$S_{r_1,r_2,j,k,\beta} = \frac{j!}{(j-k)!} \sum_{i=1}^2 \left( \sum_{\substack{n=0 \\ n \neq r_i-\beta}}^{+\infty} \frac{1}{|\beta-r_i+n|^{2(k+1)} }  \right)^{\frac{1}{2}}.$$
If both $r_1-\beta, r_2-\beta \notin \mathbb R_{\geq 0}$, one has 
$$S_{r_1,r_2,j,k,\beta} \lesssim \left( \frac{j}{\min_{i\in\{1,2\}} \{|\beta-r_i| \} } \right)^j.$$
Otherwise, 
$$S_{r_1,r_2,j,k,\beta} \lesssim  j^j \cdot \min\{|\beta-r_i+n|: i \in \{1,2\}, n \in \mathbb Z, n \neq r_i-\beta \}^{-(j+1)}.$$
Finally, one has Cauchy's inequality
$$|a_{r_i-\beta}R^{r_i-\beta}| \leq ||a(z)||_{L^{\infty}(|z| < R)}, \quad |b_{r_i-\beta}R^{r_i-\beta}| \leq ||b(z)||_{L^{\infty}(|z| < R)}.$$
Combining everything, together with the boundedness of the multiplication operator on $A(|z| < R)$, we get the desired estimate for $w(z)$ and $w'(z)$. The estimates for $w^{(2+n)}(z)$, $n \geq 0$, follow by differentiating the ODE $n$ times.
\end{proof}

\begin{remark}[On analytic solutions to the inhomogeneous equation]\label{remark:analytic sol inhomogeneous fuchs equation}
    If $\beta \in \mathbb N_{\geq 2}$ and $j = 0$, i.e., the inhomogeneous equation (\ref{eq:inhomogeneous fuchs ode}) has an analytic forcing term, and both $r_1-\beta, r_2-\beta \notin \mathbb N_{\geq 0}$, we observe that the solution (\ref{eq:inhomogeneous fuchs power series solution}) is analytic. In this case, the solution takes the form
    \begin{align*}
        w(z) &= z^{\beta} \left( h_2(z) A_{0,0}(z) - h_1(z) B_{0,0}(z) \right) \\
        &= z^{\beta-r_2} u_2(z) A_{0,0}(z) - z^{\beta-r_1}u_1(z) B_{0,0}(z).
    \end{align*}
    If $\beta = 2$, $j = 0$, $r_2-\beta \notin \mathbb N_{\geq 0}$ but $r_2 \in \mathbb N_{\geq 0}$ and $r_1-\beta \in \mathbb N_{\geq 0}$, then the solution  (\ref{eq:inhomogeneous fuchs power series solution}) has a logarithmic component proportional to $u_1(z)\log(z)$ that can be removed by adding an appropriate multiple of $u_2(z)$, thus yielding an analytic solution.
\end{remark}

\section{Renormalization Step in dimension 4} \label{section:appendix, dimension 4}
We perform the main inductive argument of the renormalization procedure in dimension $d = 4$, explaining how to construct the even correction terms $v_{2k}$ from the error $e_{2k-1}$ by solving a wave-like equation in self-similar coordinates, and the odd correction terms $v_{2k+1}$ from the error $e_{2k}$ using an elliptic-like equation. We prove that at each step, there is a systematic decrease in the error, finishing the proof of Theorem \ref{Thm:Approximate Solution}. The formalism in dimension $d = 4$ is slightly different. In this situation, we can use the simpler framework from Krieger-Schlag-Tataru (\cite{Krieger_2007_waveEq}) to construct our approximate solution as the exponent $p = 3$ in the nonlinearity is an integer. Let $\nu > 0$ in this appendix. The restriction $\nu > 1$ in Theorem \ref{thm:blow-up, main thm} arises only in Proposition \ref{prop:local lipschitz}.

\begin{definition}
Let $\tilde{\mathcal{Q}}_{\beta}, \mathcal{Q}_{\beta}$ be defined as in Definition \ref{definition of tilde Q} but with no logarithmic singularity $\log(a)^j$ in the expansion at $a = 0$. In other words, we restrict to functions which are holomorphic at $a = 0$. We define $\mathcal{Q} = \mathcal{Q}_{\nu + 1/2}$ and $\mathcal{Q}' = \mathcal{Q}_{\nu - 1/2}$. This new family $\mathcal{Q}'$ is obtained from $\mathcal{Q}$ by applying $a \partial_a$, $a^{-1} \partial_a$ or $(1-a^2) \partial_{aa}$ and $\mathcal{Q}$ is obtained from $\mathcal{Q'}$ by applying $(1-a^2)$. Moreover, $\mathcal{Q} \subset \mathcal{Q}'$.
\index{Q@$\tilde{\mathcal{Q}}$, $\mathcal{Q}$, families of holomorphic functions with respect to the self-similar variable $a$}
\end{definition}

\begin{definition}[Space $S^m(R^k \log (R)^l, \mathcal{Q})$]
$S^m(R^k \log (R)^l)$ is the class of real-analytic functions $w(R): [0,\infty)  \rightarrow \mathbb R$ for which
\index{S-space@$S^{2n}(R^I,\log(R)^J)$, a vector space of smooth functions with respect to the radial variable $R$}
\begin{enumerate}
    \item $w$ has a zero of order $m$ at $R = 0$ and $R^{-m}w(R)$ has an even Taylor expansion at $R = 0$.
    \item $w(R)$ has the following expansion at $R = +\infty$
    $$w(R,a,b) = R^k \sum_{j=0}^l w_j(R^{-1}) \log(R)^{j},$$
    where $w_j$ has an even Taylor expansion at $R = 0$.
\end{enumerate}

$IS^m(R^k \log (R)^l, \mathcal{Q})$ will denote the space of analytic functions $u(r,t)$ on the cone $C_0 = \{(r,t): 0 \leq r < t, 0 < t < t_0\}$ given by a finite sum
$$u(r,t) = \fsum_{i} P_i((t\lambda)^{-2}) Q_i(r/t) w(r \lambda) = \fsum_{i} P_i(b) Q_i(a) w_i(r \lambda)$$
on the cone, for some polynomials $P_i(b)$, $Q_i \in \mathcal{Q}$ and $w_i \in S^m(R^k \log (R)^l)$. We have a similar definition with $\mathcal{Q}'$ instead of $\mathcal{Q}$.
\index{b@$b = (t\lambda)^{-2}$, a useful variable in dimension $d = 4$}
\index{IS-space@$IS^m(R^k \log (R)^l, \mathcal{Q})$, a vector space of smooth functions with respect to the variables $a$,$b$,$R$}
\end{definition}

\begin{proposition}
The following simple rules of calculations will be used throughout the proof:

\begin{enumerate}
    \item $(t\lambda)^{-2} = b = a^2R^{-2}$
    \item $IS^{m_1}(R^{k_1} \log (R)^{l_1}, \mathcal{Q}) IS^{m_2}(R^{k_2} \log (R)^{l_2}, \mathcal{Q}) \subset IS^{m_1+m_2}(R^{k_1+k_2} \log (R)^{l_1+l_2}, \mathcal{Q})$
    \item $P(b,a^2) IS^m(R^k \log (R)^l, \mathcal{Q}) \subset IS^m(R^k \log (R)^l, \mathcal{Q})$ for any bivariate polynomial $P(x,y)$
    \item $IS^m(R^k \log (R)^l, \mathcal{Q}) = R^i IS^{m-i}(R^{k-i} \log(R)^l, \mathcal{Q})$ for any $i \in \mathbb Z_{\leq m}$
    \item $IS^m(R^k \log (R)^l, \mathcal{Q}) = (1+R^2)^{i/2} IS^{m}(R^{k-i} \log(R)^l, \mathcal{Q})$ for any $i \in \mathbb Z$
    \item $b^i(1+R^2)^i IS^m(R^k \log (R)^l, \mathcal{Q}) = (b+a^2)^i  IS^m(R^k \log (R)^l, \mathcal{Q}) \subset IS^m(R^k \log (R)^l, \mathcal{Q})$ for any $i \in \mathbb N$
\end{enumerate}
The same rules hold with $\mathcal{Q}'$ instead of $\mathcal{Q}$. Moreover, any differential operator mapping $\mathcal{Q}$ to $\mathcal{Q}'$ (such as $a \partial_a$) maps $IS^m(R^k \log (R)^l, \mathcal{Q})$ to  $IS^m(R^k \log (R)^l, \mathcal{Q}')$. The same statement holds when exchanging the roles of $\mathcal{Q}$ and $\mathcal{Q}'$.
\end{proposition}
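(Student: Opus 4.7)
The proof is a routine verification of each assertion from the definitions of $IS^m(R^k\log(R)^l,\mathcal{Q})$, $S^m(R^k\log(R)^l)$, $\mathcal{Q}$ and $\mathcal{Q}'$, together with two already-established tools: Proposition \ref{q algebra calculation rules} (which asserts $\mathcal{Q}$ is closed under sums, products, and the relevant differentiations, producing $\mathcal{Q}'$), and the elementary product rule for scalar $S^m$-classes, obtained by formally multiplying Taylor expansions at $R=0$ and expansions in $R^{-1}$ at infinity.

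First I would verify (1) by direct substitution: $a^2R^{-2}=(r/t)^2(r\lambda)^{-2}=(t\lambda)^{-2}=b$. For (2) I would expand both finite sums and multiply termwise; each factor lives in an algebra (polynomials in $b$; the family $\mathcal{Q}$, which is an algebra since $\nu+\tfrac12\geq 0$; and the scalar $S^m$-classes), so the product falls into the claimed space with the index exponents added. Rule (3) follows by writing $P(b,a^2)=\sum c_{jk}b^ja^{2k}$ and distributing: $b^jP_i(b)$ is still a polynomial in $b$, and $a^{2k}Q_i(a)\in\mathcal{Q}$ because $\mathcal{Q}$ is an algebra containing the polynomial $a^{2k}$.

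For (4) and (5), the statements concern only the scalar $S^m$-factor. I would check (4) by writing $w(R)=R^m\tilde{w}(R^2)$ near $R=0$, so that $R^{-i}w=R^{m-i}\tilde{w}(R^2)$ preserves the even expansion when $i\leq m$, while at infinity $R^k\sum_j w_j(R^{-1})\log(R)^j$ becomes $R^{k-i}\sum_j w_j(R^{-1})\log(R)^j$, still of the required form. For (5) I would observe that $(1+R^2)^{i/2}\in S^0(R^i)$ by the convergent binomial expansions $(1+R^2)^{i/2}=\sum_{n\geq 0}\binom{i/2}{n}R^{2n}$ near $R=0$ and $(1+R^2)^{i/2}=R^i\sum_{n\geq 0}\binom{i/2}{n}R^{-2n}$ at infinity, both being even; applying rule (2) to both $(1+R^2)^{i/2}$ and its inverse $(1+R^2)^{-i/2}\in S^0(R^{-i})$ yields the two inclusions making up the equality.

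The only step that is more than bookkeeping is (6), which will be the main (mild) obstacle: one must convert the positive powers of $R$ coming from $(1+R^2)^i$ into powers of $a$ in order to stay in the same $IS^m$-class. The key algebraic identity is $b(1+R^2)=b+bR^2=b+a^2$, which follows from (1). Then the binomial expansion
$$
(b+a^2)^i=\sum_{j=0}^i\binom{i}{j}b^{i-j}a^{2j}
$$
rewrites $(b+a^2)^i$ as a polynomial in $(b,a^2)$, so rule (3) immediately gives $(b+a^2)^iIS^m(R^k\log(R)^l,\mathcal{Q})\subset IS^m(R^k\log(R)^l,\mathcal{Q})$, and the equality $b^i(1+R^2)^i=(b+a^2)^i$ is just (1). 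Finally, for the concluding statement about differential operators, if $D$ acts only in the $a$-variable then $D[\sum_iP_i(b)Q_i(a)w_i(R)]=\sum_iP_i(b)(DQ_i)(a)w_i(R)$; since $D\mathcal{Q}\subset\mathcal{Q}'$ by hypothesis, the result lies in $IS^m(R^k\log(R)^l,\mathcal{Q}')$, with the symmetric statement (operators mapping $\mathcal{Q}'$ into $\mathcal{Q}$, such as multiplication by $1-a^2$) being identical.
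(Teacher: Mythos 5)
Your verification is correct and is exactly the routine check the paper has in mind: the paper states this proposition without proof, and your argument — termwise multiplication of the finite sums, the algebra property of $\mathcal{Q}$ from Proposition \ref{q algebra calculation rules}, the even expansions of $(1+R^2)^{\pm i/2}$ at $R=0$ and $R=\infty$ giving rules (4)--(5), and the identity $b(1+R^2)=b+a^2$ reducing rule (6) to rule (3) — supplies precisely the omitted details. The only point worth flagging (glossed over by the paper as well) is that extending rule (2) to $\mathcal{Q}'=\mathcal{Q}_{\nu-1/2}$ relies on closure of $\mathcal{Q}_{\nu-1/2}$ under products, which Proposition \ref{q algebra calculation rules} guarantees only when $\nu\geq 1/2$; this is harmless since the dimension-4 construction ultimately requires $\nu>1$.
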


\begin{proposition} \label{b-decomposition}
Let $w(R,a,b) \in S^m(R^k \log (R)^l, \mathcal{Q})$. Then $w(R,a,b) - w(R,a,0) \in bS^m(R^k \log (R)^l, \mathcal{Q})$. 
\end{proposition}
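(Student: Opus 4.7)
The plan is to unfold the definition of $IS^m(R^k \log(R)^l, \mathcal{Q})$ directly and observe that the $b$-dependence enters only through polynomials, which by an elementary algebra fact vanish to order at least $1$ at $b = 0$ once we subtract off the value at $b = 0$. (I read the statement as referring to $IS^m$ rather than $S^m$, since $S^m$ is defined as a class of analytic functions of $R$ alone and carries no $a$ or $b$ argument; the $\mathcal{Q}$ in the statement signals that we are in $IS^m$.)

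First I would use the definition to write
\begin{equation*}
w(R,a,b) = \fsum_{i} P_i(b)\,Q_i(a)\,w_i(R),
\end{equation*}
a finite sum with $P_i \in \mathbb{R}[b]$, $Q_i \in \mathcal{Q}$, and $w_i \in S^m(R^k \log(R)^l)$. Specializing at $b = 0$ gives $w(R,a,0) = \fsum_{i} P_i(0)\,Q_i(a)\,w_i(R)$, so
\begin{equation*}
w(R,a,b) - w(R,a,0) = \fsum_{i} \bigl(P_i(b)-P_i(0)\bigr)\,Q_i(a)\,w_i(R).
\end{equation*}

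Next I would invoke the standard fact that any polynomial $P \in \mathbb{R}[b]$ satisfies $P(b) - P(0) = b\,\tilde{P}(b)$ for some polynomial $\tilde{P} \in \mathbb{R}[b]$ (just factor $b$ out of $\sum_{k \geq 1} p_k b^k$). Applying this to each $P_i$,
\begin{equation*}
w(R,a,b) - w(R,a,0) = b \cdot \fsum_{i} \tilde{P}_i(b)\,Q_i(a)\,w_i(R),
\end{equation*}
and the finite sum on the right is, by inspection, of exactly the form required to lie in $IS^m(R^k \log(R)^l, \mathcal{Q})$. This proves the claim.

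I do not expect any serious obstacle: the statement is essentially a direct unpacking of the structural definition, combined with the triviality that a polynomial minus its value at a point is divisible by the corresponding linear factor. The only bookkeeping point to flag is that the argument works verbatim with $\mathcal{Q}'$ in place of $\mathcal{Q}$, which is useful since the same decomposition is applied to error terms living in the $\mathcal{Q}'$ class later in the construction.
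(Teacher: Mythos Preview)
Your proof is correct and is essentially the paper's argument made explicit: the paper writes the one-liner $w(R,a,b)-w(R,a,0)=b\int_0^1 \partial_b w(R,a,tb)\,dt$, which for the polynomial $b$-dependence amounts precisely to your factorization $P_i(b)-P_i(0)=b\,\tilde P_i(b)$. Your version has the advantage of making transparent why the quotient still lies in $IS^m(R^k\log(R)^l,\mathcal{Q})$ (the $\tilde P_i$ are again polynomials), whereas the integral formula leaves that step implicit.
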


\begin{proof}
Write
$$w(R,a,b) - w(R,a,0) = b \int_0^1 \partial_b w(R,a,tb) dt.$$
\end{proof}

\begin{theorem}\label{Thm:Approximate Solution d = 4}
In dimension $d = 4$, we prove that
\begin{align}
    v_{2k-1} &\in \frac{\lambda}{(t\lambda)^{2k}} IS^2(R^0 \log(R)^{m_k}, Q) ,\label{v_2k-1}\\
    t^2e_{2k-1} &\in \frac{\lambda}{(t\lambda)^{2k}} IS^{2}(R^0 \log(R)^{p_k},Q'), \label{e_2k-1}\\
    v_{2k} &\in \frac{\lambda}{(t\lambda)^{2k}} a^2IS^0(R^0 \log(R)^{p_k}, Q) \subset \frac{\lambda}{(t\lambda)^{2k+2}} IS^2(R^2 \log(R)^{p_k}, Q), \label{v_2k}\\
    t^2e_{2k} &\in \frac{\lambda}{(t\lambda)^{2k}} ( IS^0(R^{-2} \log(R)^{q_k},Q) + b IS^{2}(R^0 \log(R)^{q_k},Q') ). \label{e_2k}
\end{align}

for some increasing sequences of non-negative integers $m_k$, $p_k$, $q_k$, where $p_0 = q_0 = 0$, $m_1 = 1$. Moreover, for the $IS(\cdot, \cdot)$ part of $v_{2k-1}$ and $v_{2k}$, one can find representatives which do not depend on $b$. Additionally, $v_1$ and $t^2e_1$ have no $\mathcal{Q}$ element in their definition and the dominant components of $v_1, t^2e_1, v_2$ have no logarithm.
\end{theorem}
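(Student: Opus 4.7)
The plan is to prove the four claims simultaneously by induction on $k \geq 1$, following the cycle $v_{2k-1} \rightsquigarrow t^{2} e_{2k-1} \rightsquigarrow v_{2k} \rightsquigarrow t^{2} e_{2k} \rightsquigarrow v_{2k+1}$ exactly as in Section \ref{section:renormalization step, next iterates} for $d = 5$, but exploiting the fact that $F(u) = u^{3}$ in $d = 4$ is a \emph{cubic polynomial}. This replaces every convergent multinomial expansion in the $d = 5$ argument by a three-term algebraic identity, so the cone decomposition into $C_{ori}$, $C_{mid}$, $C_{tip}$ collapses, no truncation operator $T$ is needed, and the correction/error terms live entirely in the $IS$-scale introduced above.

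\textbf{Base case.} First I would take $v_{1}$ as the explicit solution of $\mathcal{L} v_{1} = e_{0}$ constructed as in Section \ref{section:renormalization step, first iterate} (adapted to $d = 4$ via Proposition \ref{fundamental system Lu = zu} and Theorem \ref{thm:inhomogeneous fuchs ode}): $v_{1}$ has a zero of order $2$ at $R = 0$ and an expansion at $R = +\infty$ of order $R^{0}$ with a single $\log(R)$, and carries no $a$- or $b$-dependence, so $v_{1} \in (t\lambda)^{-2}\lambda \cdot IS^{2}(R^{0} \log(R)^{1}, \mathcal{Q})$, yielding $m_{1} = 1$. Next, since $F(u_{0}+v_{1}) - F(u_{0}) - 3 u_{0}^{2} v_{1} = 3 u_{0} v_{1}^{2} + v_{1}^{3}$, the cubic identity together with the product rules and the identity $b(1+R^{2}) = b + a^{2}$ place $t^{2} e_{1} = 3 t^{2} u_{0} v_{1}^{2} + t^{2} v_{1}^{3} - t^{2} \partial_{tt}(v_{1}(r\lambda, t))$ in $(t\lambda)^{-2}\lambda \cdot IS^{2}(R^{0}\log(R)^{p_{1}}, \mathcal{Q}')$, where the $\mathcal{Q}'$ (rather than $\mathcal{Q}$) appears because differentiation $a \partial_{a}, a^{-1}\partial_{a}, (1-a^{2})\partial_{aa}$ sends $\mathcal{Q} \to \mathcal{Q}'$.

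\textbf{Inductive step, even correction.} Assuming (\ref{e_2k-1}), I would apply Proposition \ref{b-decomposition} to split $t^{2} e_{2k-1} = t^{2} e_{2k-1}^{0} + b \cdot (\cdots)$ with the first summand $b$-free; this defines the forcing for (\ref{even v}). Switching to the self-similar variable $a$ and invoking Theorem \ref{hypergeometric ode with error forcing term} (with $d = 4$ and the appropriate indicial parameters $\tilde{\alpha}, \tilde{\beta}, \tilde{\gamma}$) produces $v_{2k}$ of the form $(t\lambda)^{-2k}\lambda \cdot a^{2} Q(a)$ with $Q \in \mathcal{Q}$, upgrading $\mathcal{Q}' \to \mathcal{Q}$ via the integration. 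The absorbed $b$-remainder and the nonlinear residual $t^{2}[F(u_{2k-1}+v_{2k}) - F(u_{2k-1})] = 3 t^{2} u_{2k-1}^{2} v_{2k} + 3 t^{2} u_{2k-1} v_{2k}^{2} + t^{2} v_{2k}^{3}$ together with $-t^{2}\square v_{2k}$ expand algebraically into elements of the $IS$-scale using the product/differentiation rules, producing the claimed form (\ref{e_2k}). The crucial point is that the leading $IS^{0}(R^{-2}\log(R)^{q_{k}}, \mathcal{Q})$ piece collects the terms already independent of $b$ (the "origin" part), while all terms with at least one factor of $b$ go into the $b \cdot IS^{2}(R^{0}\log(R)^{q_{k}}, \mathcal{Q}')$ summand.

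\textbf{Inductive step, odd correction.} Given (\ref{e_2k}), the $b$-independent component $t^{2} e_{2k}^{0} \in (t\lambda)^{-2k}\lambda \cdot IS^{0}(R^{-2}\log(R)^{q_{k}}, \mathcal{Q})$ is the forcing for (\ref{odd v}). Since it carries no $a$- or $b$-dependence after freezing $a$ (one treats the $\mathcal{Q}$ coefficients as fixed real numbers evaluated along the profile, or splits the $a$-dependence through an expansion), Theorem \ref{usual step 1} (Krieger--Schlag--Tataru's inversion of $\mathcal{L}$ in the $S$-scale) produces $v_{2k+1} \in (t\lambda)^{-2k-2}\lambda \cdot IS^{2}(R^{0}\log(R)^{m_{k+1}}, \mathcal{Q})$, increasing the log exponent by a bounded amount. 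Computing $t^{2} e_{2k+1}$ reduces again to the cubic identity $F(u_{2k}+v_{2k+1}) - F(u_{2k}) = 3 u_{2k}^{2} v_{2k+1} + 3 u_{2k} v_{2k+1}^{2} + v_{2k+1}^{3}$ minus $F'(u_{0}) v_{2k+1}$, plus $t^{2}\partial_{tt} v_{2k+1}$, and a product-rule computation closes the induction. The only delicate points are the bookkeeping of $\mathcal{Q}$ versus $\mathcal{Q}'$ (all differentiations drop $\mathcal{Q} \to \mathcal{Q}'$ while the hypergeometric solver in Theorem \ref{hypergeometric ode with error forcing term} restores $\mathcal{Q}' \to \mathcal{Q}$), and the proof that representatives of $v_{2k-1}, v_{2k}$ can be chosen independent of $b$, which is achieved by using the identity $b = a^{2} R^{-2}$ to trade any stray $b$-factor for a factor of $a^{2}$ and a shift in the $R$-exponent within the $IS$-scale. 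The main obstacle is simply the combinatorial patience of tracking the exponents $m_{k}, p_{k}, q_{k}$; monotonicity holds because each step of the cycle strictly adds at most a bounded number of log-factors and can only preserve or enlarge the class.
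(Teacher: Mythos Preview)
Your overall strategy matches the paper's: induction along the cycle $v_{2k-1}\to e_{2k-1}\to v_{2k}\to e_{2k}$, exploiting that $F(u)=u^3$ so no cut-offs or truncations are needed, with $\mathcal{Q}'\to\mathcal{Q}$ restored by the hypergeometric solver. That part is correct.

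There is, however, a genuine gap in your even-correction step. You cannot feed the whole $b$-free part $t^2 e_{2k-1}^0$ into Theorem \ref{hypergeometric ode with error forcing term}: that theorem takes forcing terms of the exact shape $t^s q(a) R^i \log(R)^j$, whereas $t^2 e_{2k-1}^0\in (t\lambda)^{-2k}\lambda\, IS^2(R^0\log(R)^{p_k},\mathcal{Q}')$ has full $R$-structure. The paper's move is to extract only the \emph{main asymptotic component} at $R=+\infty$,
\[
t^2\tilde e_{2k-1}^0=\frac{\lambda}{(t\lambda)^{2k}}\sum_{j=0}^{p_k}q_j(a)\log(R)^j,
\]
solve $-t^2\square \tilde v_{2k}=t^2\tilde e_{2k-1}^0$ via Theorem \ref{hypergeometric ode with error forcing term}, and then \emph{regularize at the origin} by replacing $\log(R)^j$ with $2^{-j}\log(1+R^2)^j$ to define the actual correction $v_{2k}$. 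This regularization is what puts $v_{2k}$ in $IS^0(R^0\log(R)^{p_k},\mathcal{Q})$ rather than something singular at $R=0$; your formula $v_{2k}=(t\lambda)^{-2k}\lambda\,a^2Q(a)$ drops the $\log(R)$ factors entirely and would not match the claimed space. Correspondingly, Step 4 is more delicate than ``expand algebraically'': one must show $t^2(e_{2k-1}^0-\square v_{2k})\in (t\lambda)^{-2k}\lambda\,IS^0(R^{-2}\log(R)^{p_k},\mathcal{Q}')$ by writing it as $t^2(\tilde e_{2k-1}^0-\square\tilde v_{2k})+t^2(e_{2k-1}^0-\tilde e_{2k-1}^0)-t^2\square(v_{2k}-\tilde v_{2k})$, where the first term vanishes by construction and the other two are controlled using $\log(1+R^2)^j=(\log(1+R^{-2})+2\log R)^j$.

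A smaller point: your justification that $v_{2k-1},v_{2k}$ admit $b$-free representatives via the identity $b=a^2R^{-2}$ is backwards. The $b$-independence is \emph{built in}: one first strips the $b$-part of the error via Proposition \ref{b-decomposition}, then solves the ODE with $a$ (and $t$) frozen as parameters, so the resulting $v$ depends only on $(R,a)$ by construction. Also, for $k=1$ your sentence ``the $\mathcal{Q}'$ appears because differentiation $a\partial_a,\dots$'' is vacuous: $v_1$ has no $a$-dependence, $E^a v_1=0$, and that is precisely why $t^2 e_1$ carries no $\mathcal{Q}$-element at all, as the theorem asserts.
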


\subsubsection{Initialization}

One checks that $u_0, t^2 e_0 \in \lambda IS^0(R^{-2})$. We also define
\begin{align*}
M_k(v) &= v (3u_k^2 + 3 u_k v + v^2), \\
N_{2k-1}(v) &= M_{2k-2}(v) - pu_0^{p-1}v, \quad N_{2k}(v) = M_{2k-1}(v).
\end{align*}

\subsubsection{Construction of $v_{2k-1}$ from $e_{2k-2}$}

We write $$t^2 e_{2k-2} = t^2 \hat{e}_{2k-2}^0 + t^2 \hat{e}_{2k-2}^1 \in \frac{\lambda}{(t\lambda)^{2k-2}}( IS^0(R^{-2} \log(R)^{q_{k-1}},Q) + b IS^{2}(R^0 \log(R)^{q_{k-1}},Q') )$$
and further split $t^2 \hat{e}_{2k-2}^0$ into $\lambda (t\lambda)^{-(2k-2)}(w^0 +bw^1)$, where $w^0$ does not depend on $b$, as in Proposition \ref{b-decomposition}. We then set $$t^2 e^0_{2k-1} = \frac{\lambda}{(t\lambda)^{2k-2}}w^0(R,a).$$
In radial coordinates, (\ref{odd v}) reads as 
$$t^2 \mathcal{L}_R v_{2k-1}(r,t) = t^2 e^0_{2k-2}(r,t),$$
where 
$\mathcal{L}_r = -\partial_r^2 - \frac{3}{r} \partial_r - 3u_0^2$ and $t$ is a parameter. We do the change of variables $R = r \lambda(t)$ and get
$$(t \lambda)^2 \mathcal{L} v_{2k-1}(R,t) = t^2 e^0_{2k-2}(r,t),$$
where 
$\mathcal{L} = -\partial_R^2 - \frac{3}{R} \partial_R - 3W(R)^2$. We write $t^2 e^0_{2k-2}(r,t) = \lambda (t\lambda)^{-(2k-2)}w^0(R,a)$ and look for a solution $\lambda (t\lambda)^{-2k}v(R,a)$ by treating $a, t$ as parameters. This is the same as solving
$$\mathcal{L} v(R,a) = w^0(R,a), \ a = r/t,$$
where we ignore terms of $\mathcal{L} v$ which involve $\partial_a$ or $\partial_{aa}$. The initial conditions required are $v(0,a) = v'(0,a) = 0$. Then we prove that 
$$v_{2k-1} \in \frac{\lambda}{(t\lambda)^{2k}} IS^2(R^0 \log(R)^{q_{k-1}+2}, \mathcal{Q})$$
as in dimension $5$. We note that for $v_1$, there is no logarithm in the dominant component: the equation at infinity is a regular singular ODE given by (\ref{step 1 at infinity}) with $-2z^{-1}V_1$ replaced by $-z^{-1}V_1$. Hence, applying Theorem \ref{thm:inhomogeneous fuchs ode} ($r_1 = 2, r_2 = 0, \beta = 1$) yields an analytic solution and a logarithmic component $u_1(z)\log(z)$ where $u_1(z) = o(z^2)$, meaning that $v_{1} \approx R^0 + R^{-1} + R^{-2}\log(R)$ as $R \to +\infty$.

\subsubsection{Construction of $e_{2k-1}$ from $v_{2k-1}$}

We have
$$t^2e_{2k-1} = t^2N_{2k-1}(v_{2k-1}) + t^2e^1_{2k-2} - t^2E^tv_{2k-1} - t^2E^av_{2k-1},$$
where $E^tv_{2k-1} = \partial_{tt} [\lambda (t\lambda)^{-2k} v(r \lambda ,r/t)]$ but we ignore the $a$-derivatives and $E^av_{2k-1} = \square [\lambda (t\lambda)^{-2k} v(r \lambda,r/t)]$ but we keep only the terms where at least one $a$-derivative appears. The proof that $t^2 N_{2k-1}(v_{2k-1})$ belongs to the right space is an algebraic computation using the fact that 
\begin{align}
\begin{split}
    u_{k}-u_0 = v_1 + \sum_{i=2}^{2k-2}v_j &\in \frac{\lambda}{(t \lambda)^2}IS^2(R^0 \log(R)^{n_k},\mathcal{Q}) + \frac{\lambda}{(t \lambda)^4}IS^2(R^2 \log(R)^{n_k},\mathcal{Q}) \\
    &\subset \frac{\lambda}{(t \lambda)^2}(IS^2(R^0 \log(R)^{n_k},\mathcal{Q}) + a^2R^{-2} IS^2(R^2 \log(R)^{n_k},\mathcal{Q})) \\
    &\subset \frac{\lambda}{(t \lambda)^2} IS^0(R^0 \log(R)^{n_k},\mathcal{Q}) \\
    &\subset \lambda b (1+R^2) IS^0(R^{-2} \log(R)^{n_k},\mathcal{Q}) \\
    &\subset \lambda IS^0(R^{-2} \log(R)^{n_k},\mathcal{Q})
\end{split}
\end{align}
and $u_0 \in \lambda IS^0(R^{-2})$, so that $u_{k} \in \lambda IS^0(R^{-2} \log(R)^{n_k},\mathcal{Q})$ as well.

For $t^2 E^tv_{2k-1}$, we observe that
$$t^2 \partial_{tt} \left( \frac{\lambda}{(t\lambda)^{2k}} S^2(R^0 \log(R)^{m_k}) \right) \subset \frac{\lambda}{(t\lambda)^{2k}} S^2(R^0 \log(R)^{m_k}).$$
Finally, for $t^2 E^av_{2k-1}$, we write $v_{2k-1}(r,t) = \lambda (t\lambda)^{-2k}v(R,a)$ and observe that
\begin{align*}
    t^2 E^av_{2k-1} &= 2 t^2 \partial_{t} \left( \frac{\lambda}{(t \lambda)^{2k}} \right) v_{a}(R,a) \frac{-r}{t^2} + t^2 \frac{\lambda}{(t \lambda)^{2k}} \left( 2v_{aR}(R,a) \partial_{t}(\lambda)  \frac{-r^2}{t^2}  \right. \\
    &+ \left. v_{a}(R,a) \frac{2r}{t^3} +  v_{aa}(R,a) \frac{r^2}{t^4} - v_{a}(R,a) \frac{3}{rt} - v_{aa}(R,a) \frac{1}{t^2} - v_{aR}(R,a) \frac{2 \lambda }{t} \right) \\
    &= -2 t \partial_{t} \left( \frac{\lambda}{(t \lambda)^{2k}} \right) a v_{a}(R,a) + \frac{\lambda}{(t \lambda)^{2k}} \left( 2(\nu + 1)aR v_{aR}(R,a) \right. \\
    &+ \left. 2av_{a}(R,a) - (1 - a^2)v_{aa}(R,a) - 3a^{-1}v_{a}(R,a) - 2a^{-1}Rv_{aR}(R,a)\right) \\
    &\in \frac{\lambda}{(t\lambda)^{2k}} IS^2(R^0 \log(R)^{q_{k-1}}, \mathcal{Q}')
\end{align*}
and we note that, since the dominant component of $v_1$ contains no logarithm, the same is true for the dominant component of $t^2e_1$.

\subsubsection{Construction of $v_{2k}$ from $e_{2k-1}$}

As in Step 1, we keep from $t^2 e_{2k-1}$ the part $t^2 e_{2k-1}^0$ whose $IS(\cdot, \cdot)$ component is independent of $b$. We consider the main asymptotic component of $t^2e_{2k-1}^0$, i.e.,
$$t^2 \tilde{e}^0_{2k-1} = \frac{\lambda}{(t\lambda)^{2k}} \sum_{j=0}^{p_k} q_j(a) \log(R)^j, \ q_j \in \mathcal{Q}',$$
and solve the equation
$$t^2(-\partial_t^2 + \partial_r^2 + \frac{3}{r}\partial_r) \tilde{v}_{2k} = -t^2 \tilde{e}^0_{2k-1}.$$
Using Theorem \ref{hypergeometric ode with error forcing term}, we find a solution $\tilde{v}_{2k}$ of the following form
$$\tilde{v}_{2k} = -\frac{\lambda}{(t\lambda)^{2k}} \sum_{j=0}^{p_k} W_{j}(a) \log(R)^j, \quad W_{j}(a) \in a^2\mathcal{Q},$$
where we note that no logarithmic singularity has been created at $a = 0$. Then, we define
$$v_{2k} =  -\frac{\lambda}{(t\lambda)^{2k}} \sum_{j=0}^{p_k} W_{2k,j}(a) \frac{1}{2^j} \log(1+R^2)^j \in \frac{\lambda}{(t\lambda)^{2k}} a^2IS^2(R^0 \log(R)^{p_k}, Q),$$
where we use that
\begin{align}
\begin{split}
    \log(1+R^2)^j &= \left(\log\left(1 + \frac{1}{R^2}\right) + 2 \log(R)\right)^j \\
    &= \left( \sum_{n=1}^{\infty} \frac{(-1)^{n+1}}{n} R^{-2n} + 2\log(R) \right)^j, \quad R > 1 \label{log decomposition}
\end{split}
\end{align}
to get the expansion at infinity. As for $v_1$ and $t^2e_1$, the dominant component of $v_2$ contains no logarithm.

\subsubsection{Construction of $e_{2k}$ from $v_{2k}$}

We define 
$$t^2 e^0_{2k-1} = \frac{\lambda}{(t\lambda)^{2k}} \sum_{j=0}^{p_k} q_j(a) \left(\frac{1}{2} \log(1+R^2)\right)^j$$
and write
\begin{align*}
    t^2e_{2k} &= t^2(e_{2k-1} - \square v_{2k} + N_{2k}(v_{2k})) \\
    &= t^2(e_{2k-1} - e^0_{2k-1}) + t^2(e^0_{2k-1} - \square v_{2k}) + t^2 N_{2k}(v_{2k}) 
\end{align*}
and we prove that each part belongs to the right space. The proof of $$t^2 N_{2k}(v_{2k}) \in \frac{\lambda}{(t\lambda)^{2k}}  IS^2(R^{-2} \log(R)^{p_k}, \mathcal{Q})$$ is just simple algebra. By construction, one has 
$$t^2(e_{2k-1} - e^0_{2k-1}) \in \frac{\lambda}{(t\lambda)^{2k}} IS^0(R^{-2} \log(R)^{p_k}, \mathcal{Q}'),$$
where everything is straightforward except the expansion at infinity which follows from formula (\ref{log decomposition}) and the fact that the main asymptotic component of $t^2(e_{2k-1} - e^0_{2k-1})$ is of the form
$$\frac{\lambda}{(t\lambda)^{2k}} \sum_{j=0}^{p_k} q_j(a) \left( \log(R)^j - \frac{1}{2^j} \log(1+R^2)^j \right).$$
This finishes the study of the term $t^2(e_{2k-1} - e^0_{2k-1})$ since one has the inclusion
$$IS^0(R^{-2} \log(R)^{p_k}, \mathcal{Q}') \subset IS^0(R^{-2} \log(R)^{p_k}, \mathcal{Q}) + bIS^2(R^0 \log(R)^{p_k}, \mathcal{Q}')$$
by writing $w(R,a,b) = (1-a^2)w(R,a,b) + bR^2w(R,a,b)$.

It remains to prove that 
$$f = t^2(e^0_{2k-1} - \square v_{2k}) \in \frac{\lambda}{(t\lambda)^{2k}} IS^0(R^{-2} \log(R)^{p_k}, \mathcal{Q}').$$
Again, the only part which is not straightforward is the behaviour at infinity. For this, we write
\begin{align*}
    f &= t^2(\tilde{e}^0_{2k-1} - \square \tilde{v}_{2k}) + t^2(e^0_{2k-1} - \tilde{e}^0_{2k-1}) - t^2 \square(v_{2k} - \tilde{v}_{2k}) \\
    &= 0 + t^2(e^0_{2k-1} - \tilde{e}^0_{2k-1}) + t^2\square(v_{2k} - \tilde{v}_{2k}).
\end{align*}
Note that $t^2(e^0_{2k-1} - \tilde{e}^0_{2k-1})$ is the main asymptotic component of $-t^2(e_{2k-1} - e^0_{2k-1})$ which was already studied earlier. So it remains to deal with $t^2\square(v_{2k} - \tilde{v}_{2k})$, where $$(v_{2k} - \tilde{v}_{2k}) = \frac{\lambda}{(t\lambda)^{2k}}\sum_{j=1}^{p_k}a^2W_j(a) [\log(R)^j - \log(1+R^2)^j], \quad W_j \in \mathcal{Q}.$$
One computes explicitly
\begin{align*}
t^2 \square \left( \frac{\lambda}{(t\lambda)^{2k}}w(R,a) \right) = \frac{\lambda}{(t\lambda)^{2k}} &\left[ 1 + a^{-1}\partial_a + a \partial_a + (a^2-1)\partial_{aa} + a^{-2}R\partial_R + R\partial_R  \right. \\
&\left. \phantom{[1} + a^{-1} \partial_a R \partial_R + a \partial_a R \partial_R + R^2 \partial_{RR} + a^{-2} R^2 \partial_{RR} \right] w(R,a)
\end{align*}
up to some omitted multiplicative constants depending on $k,\nu$. Hence, for terms of the form
$$w(R,a) = \frac{\lambda}{(t\lambda)^{2k}}a^2W_j(a) [\log(R)^j - \log(1+R^2)^j], \quad W_j \in \mathcal{Q},$$
we get the desired results. 

\printindex

\section*{Declaration}
The authors declare that they have no conflict of interest.

\vfill

\nocite{*}
\bibliographystyle{alpha}
\bibliography{biblio}

\end{document}